\documentclass{amsart}

\usepackage{amssymb}
\usepackage{amsthm}
\usepackage{amsmath}
\usepackage{enumitem}
\usepackage[mathscr]{euscript}
\usepackage{mathtools}
\usepackage{stmaryrd}
\usepackage{cancel}
\usepackage{soul}
\usepackage{tikz}
\usepackage{tikz-cd}
\usepackage[normalem]{ulem}
\usepackage[unicode]{hyperref}
\hypersetup{colorlinks, linkcolor=blue }
\usepackage{cleveref}
\usepackage{tikz}
\usetikzlibrary{fadings}
\usepackage{float}
\usetikzlibrary{patterns}
\usetikzlibrary{shadows.blur}
\usetikzlibrary{shapes}
\usepackage{xcolor}
\usepackage{todonotes}
\usepackage{comment}

\theoremstyle{plain}
\newtheorem{theoremalpha}{Theorem}

\newtheorem{proposition}{Proposition}[section]
\newtheorem{theorem}[proposition]{Theorem}
\newtheorem{lemma}[proposition]{Lemma}
\newtheorem{corollary}[proposition]{Corollary}
\theoremstyle{definition}

\newtheorem{definition}[proposition]{Definition}

\newtheorem{remark}[proposition]{Remark}

\DeclareMathOperator{\D}{D}

\DeclareMathOperator{\tr}{tr}

\DeclareMathOperator{\Id}{Id}

\DeclareMathOperator{\SL}{\mathsf{SL}}
\DeclareMathOperator{\GL}{\mathsf{GL}}

\DeclareMathOperator{\PO}{\mathsf{PO}}

\DeclareMathOperator{\PGL}{\mathsf{PGL}}

\DeclareMathOperator{\interior}{int}

\DeclareMathOperator{\Gr}{Gr} 
 
\DeclareMathOperator{\diag}{diag}

\DeclareMathOperator{\Span}{Span}
\DeclareMathOperator{\Ad}{Ad}

\DeclareMathOperator{\WH}{WH}
\DeclareMathOperator{\NW}{NW}
\DeclareMathOperator{\Stab}{Stab}
\DeclareMathOperator{\T}{T}

\DeclareMathOperator{\Bc}{\mathcal{B}}
\DeclareMathOperator{\Cc}{\mathcal{C}}
\DeclareMathOperator{\Dc}{\mathcal{D}}

\DeclareMathOperator{\Fc}{\mathcal{F}}
\DeclareMathOperator{\Gc}{\mathcal{G}}

\DeclareMathOperator{\Pc}{\mathcal{P}}

\DeclareMathOperator{\Sc}{\mathcal{S}}

\DeclareMathOperator{\Uc}{\mathcal{U}}

\DeclareMathOperator{\vb}{\mathbf{v}}
\DeclareMathOperator{\wb}{\mathbf{w}}

\DeclareMathOperator{\Hb}{\mathbb{H}}

\DeclareMathOperator{\Nb}{\mathbb{N}}
\DeclareMathOperator{\Pb}{\mathbb{P}}
\DeclareMathOperator{\Rb}{\mathbb{R}}
\DeclareMathOperator{\Sb}{\mathbb{S}}

\DeclareMathOperator{\Zb}{\mathbb{Z}}

\newcommand{\abs}[1]{\left|#1\right|}

\newcommand{\norm}[1]{\left\|#1\right\|}

\newcommand{\wh}[1]{\widehat{#1}}

\newcommand{\opp}{\textnormal{opp\,}}

\begin{document}

\title[Geometric finiteness in paracomplex hyperbolic spaces]{Geometric finiteness in paracomplex hyperbolic spaces}

\author{Tianqi Wang}
\address{Department of Mathematics, Yale University, New Haven, CT, USA}
\email{tq.wang@yale.edu}
\thanks{Wang was partially supported by the NSF grant No. DMS-2424139}

\author{Zhufeng Yao}
\address{Department of Mathematics, National University of Singapore, Singapore}
\email{yaozhufeng@u.nus.edu}

\author{Tengren Zhang}
\address{Department of Mathematics, National University of Singapore, Singapore}
\email{matzt@nus.edu.sg}
\thanks{Zhang and Yao were partially supported by NUS-MOE grant A-8004148-00-00.}

\date{\today}

\begin{abstract}
    We develop a framework for studying discrete subgroups of $\PGL(d+1,\Rb)$ via the paracomplex hyperbolic space $\Hb_\tau^d$, a rank-$1$ pseudo-Riemannian symmetric space. We characterize projective transverse, relatively Anosov, and Anosov subgroups in terms of properly discontinuous, geometrically finite, and convex-cocompact actions respectively on their weak hulls, which are canonical flow spaces in the spacelike unit tangent bundle of $\Hb_\tau^d$. A key ingredient is the construction of a Busemann-type horofunction on the spacelike unit tangent bundle with the properties needed to describe cuspidal geometry. We further prove for relatively Anosov subgroups that the geodesic flows on their weak hulls are uniformly hyperbolic, giving a relative analogue of the Axiom A property.
\end{abstract}

\maketitle

\setcounter{tocdepth}{1}
\tableofcontents

\section{Introduction}

Let $\mathsf G$ be a connected, semisimple, real Lie group with finite center and let $\mathsf K$ be a maximal compact subgroup of $\mathsf G$. When the symmetric space $\mathsf G/\mathsf K$ has rank $1$, convex-cocompact and geometrically finite subgroups of $\mathsf G$ admit well-known geometric descriptions in terms their actions on the convex hull of their limit sets. Convex-cocompactness require this action to be cocompact, while geometric finiteness allows in addition finitely many orbits of cusp regions. A direct extension of this picture to higher-rank symmetric spaces by imposing the same conditions is obstructed by the presence of higher-dimensional flats in $\mathsf G/\mathsf K$; the rigidity theorems of Kleiner and Leeb \cite{kleiner2004rigidity} and Quint \cite{quint2005groupesconvexes} imply that, in an irreducible higher-rank symmetric space, a Zariski-dense discrete subgroup acting cocompactly on a nonempty closed convex subset must be a uniform lattice.

The notion of an Anosov subgroup, introduced by Labourie \cite{labourie2004anosov}, and later developed in \cite{guichard2012anosov,guichard2017anosov,kapovich2014morse,kapovich2017anosov,kapovich2018relativizing,bochi2019anosov}, provides a higher-rank generalization of convex-cocompactness by the uniform contraction properties associated to some choice of parabolic subgroups of $\mathsf G$. Several relative versions of Anosovness developed in Kapovich--Leeb \cite{kapovich2018relativizing}, Zhu \cite{zhu2021relatively}, and Zhu--Zimmer \cite{zhu2022relatively}, generalize geometrically finite subgroups in rank $1$ to the higher-rank setting in a similar way. A broader class of subgroups of $\mathsf G$, called transverse subgroups (or regular, antipodal subgroups) were introduced by \cite{kapovich2017anosov} as a natural extension of rank $1$ discrete subgroups. 

Instead of working in the higher-rank Riemannian symmetric space $\mathsf G/\mathsf K$, in some cases, one can replace $\mathsf K$ by a suitable noncompact subgroup $\mathsf H$ such that $\mathsf G/\mathsf H$ is a pseudo-Riemannian symmetric space  of rank $1$. Although the $\mathsf G$-invariant symmetric two-tensor on $\mathsf G/\mathsf H$ is indefinite, the spacelike geometry of $\mathsf G/\mathsf H$ exhibits several features that mirror negatively curved Riemannian geometry. At the same time, the ideal boundary of $\mathsf G/\mathsf H$ retains the flag manifold structure, on which the convergence dynamics of certain kinds of Anosov, relatively Anosov, and transverse subgroups of $\mathsf G$ takes place. 

This point of view has appeared in several related settings. For instance, when $\mathsf G=\PO(p,q+1)$, Danciger--Guéritaud--Kassel \cite{dgkpq} developed the theory of $\mathbb H^{p,q}$-convex cocompact groups and studied their relations to projective Anosov subgroups. Related uses of pseudo-Riemannian geometry also appear in the recent work of Rungi--Tamburelli \cite{rungi2025complex} in the study of cyclic Higgs bundles for $\mathsf G =\SL(2n+1,\Rb)$.

In this paper, we focus on the special case when \[\mathsf G=\PGL(d+1,\Rb) \quad \text{and} \quad \mathsf H=\mathsf P(\Rb^*\times\GL(d,\Rb))~.\] Our results propose an alternative framework in which projective Anosov, relatively projective Anosov, and projective transverse subgroups (see \Cref{section: Anosov} for precise definitions) of $\PGL(d+1,\Rb)$ can be studied, namely via the rank $1$ pseudo-Riemannian metric space $\mathsf G/\mathsf H$. This special case is particularly important, since any Anosov (respectively, relatively Anosov, transverse) subgroup in a connected, semisimple, real Lie group can be realized as a projective Anosov (respectively, relatively projective Anosov, projective transverse) subgroup of $\PGL(d+1,\Rb)$ via a suitable linear representation, see for example \cite[Section~3]{guichard2017anosov}.

\subsection{Paracomplex hyperbolic spaces}

The corresponding space \[\Hb_\tau^d = \PGL(d+1,\Rb)/\mathsf P(\Rb^*\times\GL(d,\Rb))\] is usually called the \emph{paracomplex hyperbolic space}, as it admits a natural construction using paracomplex numbers. Recall that the algebra of paracomplex numbers is $\Rb_\tau=\Rb\oplus\Rb\tau$ where $\tau\not \in \Rb$ is such that $\tau^2 =1$. We denote by $q$ the standard parahermitian form on $\Rb_\tau^{d+1}$ given by the matrix \[ Q = \left(\begin{matrix}  &  &1 \\  & ... &  \\ 1 &  &  \end{matrix}\right) \in \mathrm{Mat}_{(d+1)\times (d+1)}(\Rb_\tau)~.\] Similar to the definition of hyperbolic spaces, the \emph{hyperboloid model} of the paracomplex hyperbolic space is \[\Hb_\tau^d \cong\{\vb\in\Rb_\tau^{d+1}:q(\vb,\vb)=-1\}/\Uc\] where \[\Uc = \{a+b \tau: a,b\in\Rb, \ a^2 -b^2 =1\}\subset \Rb_\tau~.\] The real part of $q$ induces a $\PGL(d+1,\Rb)$-invariant pseudo-Riemannian metric of signature $(d,d)$ on $\Hb_\tau^d$. In the null locus of $q$ sits a natural ideal boundary of $\Hb_\tau^d$. This boundary is a double cover of the partial flag manifold \[\Fc_{1,d} = \{([v],[\phi])\in\mathbb{RP}^d\times\mathbb{RP}^{d*}:\phi(v)=0\}~.\] We refer to this ideal boundary as the \emph{spherical boundary}, denoted by $\partial_\infty^{\mathbb S}\Hb_\tau^d$, and to $\Fc_{1,d}$ as the \emph{projective boundary}, denoted by $\partial_\infty^{\Pb}\Hb_\tau^d$.

Using the hyperboloid model, we classify all spacelike geodesics in $\Hb_\tau^d$ and describe their endpoints in both the projective and the spherical boundaries of $\Hb_\tau^d$, see Sections \ref{Section: Spacelike geodesics} and \ref{Spacelike geodesics 2}. In particular, any spacelike geodesic in $\Hb_\tau^d$ has two endpoints in $\partial_\infty^{\Pb}\Hb_\tau^d$ that are transverse, and any two transverse points in $\partial_\infty^{\Pb}\Hb_\tau^d\cong\Fc_{1,d}$ are the endpoints of exactly two spacelike geodesics in $\Hb_\tau^d$. These two geodesics correspond to the two compatible choices of lifts of transverse pairs in the projective boundary to the spherical boundary.

We denote by $\T^1\Hb_\tau^d$ the bundle of spacelike unit tangent vectors of $\Hb_\tau^d$ and by \[\varphi^t:\T^1\Hb_\tau^d\to \T^1\Hb_\tau^d\] the spacelike geodesic flow. For $z\in\T^1\Hb_\tau^d$, we write $z^+$ and $z^-$ for the forward and backward endpoints in $\partial_\infty^{\Pb}\Hb_\tau^d$ of the spacelike geodesic tangent to $z$. There is a natural involution $\opp$ on $\T^1\Hb_\tau^d$ that exchanges the tangent vectors of two spacelike geodesics having the same pair of projective endpoints, see \Cref{hyperboloid model} and \Cref{comparison of boundaries}.

\subsection{The characterization theorem}

When $\mathsf G$ is a connected, semisimple real Lie group of rank $1$, convex-cocompactness and geometric finiteness of a discrete, infinite subgroup $\Gamma\subset\mathsf G$ can be characterized by properties of the convex hull $\Cc$ in the Riemannian symmetric space $\mathsf G/\mathsf K$ of the limit set of $\Gamma$. Indeed, $\Gamma$ is convex-cocompact if and only if the $\Gamma$-action on $\Cc$ is cocompact, and $\Gamma$ is geometrically finite if and only if $\Cc$ admits thick-thin decompositions (with cocompact thick part). Briefly, this means that when $\Gamma$ is geometrically finite, there is a $\Gamma$-invariant collection of pairwise disjoint horoballs in $\mathsf G/\mathsf K$ centered at the parabolic points in the limit set of $\Gamma$, such that the $\Gamma$-action on the complement in $\Cc$ of the union of these horoballs is cocompact. The intersection of $\Cc$ with the union of said horoballs is the thin part of $\Cc$, while its complement in $\Cc$ is the thick part of $\Cc$. Via the projection $\T^1(\mathsf G/\mathsf K)\to\mathsf G/\mathsf K$, one can pullback the horoballs and thick-thin decompositions of $\Cc$ to get horoballs and thick-thin decompositions of the restriction $\T^1(\mathsf G/\mathsf K)\vert_{\Cc}$.

When $\Gamma$ is a discrete, infinite subgroup of $\PGL(d+1,\Rb)$ however, we formulate the relevant notions using subsets, called \emph{good flow spaces}, of $\T^1\Hb_\tau^d$ instead of subsets of $\Hb_\tau^d$. The main reason for doing so is our discovery of a well-behaved notion of horofunctions on $\T^1\Hb_\tau^d$. These horofunctions allow us to obtain a notion of horoballs in the good flow spaces, which behave very similarly to the horoballs in $\T^1(\mathsf G/\mathsf K)|_{\Cc}$ when $\mathsf G$ is rank $1$. Using these, one can make sense of thick-thin decompositions of good flow spaces. Unlike the Riemannian case, these objects do not have an analog in $\Hb_\tau^d$.

We will now be more precise. For any $x \in\partial_\infty^{\Pb}\Hb_\tau^d$, the \emph{horofunction centered at $x$} is a certain continuous function
\[\Bc_x:\T^1\Hb_\tau^d \to \Rb~\]
whose precise definition is given in \Cref{section: the horofunctions}. Very roughly, if we denote by $\pi:\T^1\Hb_\tau^d\to\Hb_\tau^d$ the projection, then for any $z\in\T^1\Hb_\tau^d$, $\Bc_x(z)$ is an ${\rm opp}$-invariant and negation-invariant aggregate of ``how far" $\pi(z)$ and $z^+$ are from $x$. See \Cref{definition: horofunctions} for the definition and see \Cref{remark: horofunctions for general Lie groups} for an interpretation in general Lie group setting. These horofunctions satisfy the following properties,
\begin{enumerate}
    \item For any $g\in\PGL(d+1,\Rb)$ and $x\in \partial_\infty^{\Pb}\Hb_\tau^d$, the value $\Bc_{gx}(gz)-\Bc_x(z)$ is independent of the choice of $z\in \T^1\Hb_\tau^d$ and is, in fact, an additive Busemann cocycle evaluated at $\gamma$, see \Cref{properties of the horofunction}.
    \item For any $z\in\T^1\Hb_\tau^d$, \[\Bc_{z^+}(\varphi^tz) = \Bc_{z^+}(z)+t~ \quad \text{and} \quad \Bc_{z^-}(\varphi^t z) = \Bc_{z^-}(z)-t~.\] Furthermore, if $x\notin\{z^+,z^-\}$, then the map \[t \mapsto \Bc_x(\varphi^tz)\] is unimodal, tends to $-\infty$ in both directions and has a unique maximum, see \Cref{prop: unimodal}.
\end{enumerate}

We say that a subset $U\subset\T^1\Hb_\tau^d$ is a \emph{good flow space} for $\Gamma$ if it is closed, non-empty, $\Gamma$-invariant, flow-invariant, negation-invariant, $\opp$-invariant, convex, and the $\Gamma$-action on $U$ is properly discontinuous. Here convexity means that any two distinct points in the \emph{endpoint set of $U$},
\[\Lambda_U:=\{z^+:z\in U\}\cup\{z^-:z\in U\}\subset\partial_\infty^{\Pb}\Hb_\tau^d\cong\Fc_{1,d}~,\]
are joined by a flow line in $U$. It turns out that the $\Gamma$-action on $U$ extends to a convergence group action on $\Lambda_U$, see \Cref{proposition: convergence action}. When $\Gamma$ is projective transverse, a good flow space for $\Gamma$ always exists. Indeed, by viewing the limit set $\Lambda_\Gamma$ of $\Gamma$ as a subset of $\partial_\infty^{\Pb}\Hb_\tau^d$ via the identification $\Fc_{1,d}\cong \partial_\infty^{\Pb}\Hb_\tau^d$, we may define the \emph{weak hull} $\WH(\Lambda_\Gamma)$ of $\Lambda_\Gamma$ to be the set of all spacelike unit vectors in $\T^1 \Hb_\tau^d$ that are tangent to geodesics with both endpoints in $\Lambda_\Gamma$. One can verify, see \Cref{proposition: domain of discontinuity}, that this weak hull is a good flow space for $\Gamma$.

Let $U$ be a good flow space for $\Gamma$. For any $x\in\Lambda_U$ and $C\in\Rb$, the \emph{horoball in $U$ centered at $x$ of radius $C$} is the superlevel set 
\[H_x(C) = \{z\in U:\Bc_x(z)>C\}~.\]
Another important feature of the horofunctions  defined above is that for any distinct pair of points $x,y\in \Lambda_U$, there exists $C>0$ such that $H_x(C)$ and $H_y(C)$ are disjoint, see \Cref{lemma: disjointness of two horoballs} (this is false for superlevel sets of $\Bc_x$ and $\Bc_y$ in $\T^1\Hb_\tau^d$). We say that $U$ admits a \emph{thick--thin decomposition} if there exists a mutually disjoint collection of horoballs in $U$ that are centered in the limit set of the $\Gamma$-action on $\Lambda_U$, which consists of finitely many $\Gamma$-orbits, such that the $\Gamma$-action on the complement of these horoballs in $U$ is cocompact.

Using the above properties of horofunctions, we prove the following characterization of projective transvserse, relatively projective Anosov, and projective Anosov subgroups of $\PGL(d+1,\Rb)$. We denote by $\Zb_2 \cong \Zb / 2\Zb$ the two-element group.

\begin{theoremalpha}\label{theoremalpha: characterizations}
    Let $\Gamma\subset\PGL(d+1,\Rb)$ be a discrete, infinite irreducible subgroup. 
    \begin{enumerate}
        \item $\Gamma$ is projective transverse if and only if there is a good flow space $U\subset \T^1\Hb_\tau^d$ for $\Gamma$ on which the $\Gamma\times\Rb\times \Zb_2$-action is topologically transitive, where the $\Rb$-action is by the geodesic flow and the $\Zb_2$-action is induced by the involution $\opp$ on $\T^1\Hb_\tau^d$.
        \item $\Gamma$ is relatively projective Anosov if and only if there is good flow space $U\subset \T^1\Hb_\tau^d$ for $\Gamma$ that admits a thick-thin decomposition.
        \item $\Gamma$ is projective Anosov if and only if there is a good flow space $U\subset \T^1\Hb_\tau^d$ for $\Gamma$ on which the $\Gamma$-action is cocompact.
    \end{enumerate} 
    Furthermore, in all cases, $U=\WH(\Lambda_\Gamma)$.
\end{theoremalpha}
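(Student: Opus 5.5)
The plan is to prove each ``only if'' direction by taking $U=\WH(\Lambda_\Gamma)$, and each ``if'' direction by first showing that $U$ must equal $\WH(\Lambda_\Gamma)$. Once that is known, the hypotheses on $U$ translate into dynamical properties of the $\Gamma$-action on $\Lambda_U$, and those in turn give the transverse, relatively Anosov, and Anosov conditions.

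\textbf{Forward directions.} Suppose $\Gamma$ is projective transverse. By \Cref{proposition: domain of discontinuity}, $\WH(\Lambda_\Gamma)$ is a good flow space. For topological transitivity of the $\Gamma\times\Rb\times\Zb_2$-action I would run a Hopf-type density argument in the coordinates $(z^-,z^+,t,\pm)$. These coordinates identify $\WH(\Lambda_\Gamma)$ with a $\Zb_2$-bundle over $(\Lambda_\Gamma^{(2)})\times\Rb$. Irreducibility of $\Gamma$ and minimality of the convergence action on $\Lambda_\Gamma$ give that $\Gamma$ acts topologically transitively on transverse pairs $\Lambda_\Gamma^{(2)}$. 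The $\Rb$-factor is absorbed by the flow, and the two sheets are exchanged by $\opp$, so transitivity follows.

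If $\Gamma$ is relatively projective Anosov, I would use the known cusp structure: the peripheral subgroups are parabolic stabilizers of the bounded parabolic points, and the convergence action on $\Lambda_\Gamma$ is geometrically finite. I would then take horoballs $H_p(C)$ centered at a finite set of orbit representatives of parabolic points, with $C$ large.
\begin{itemize}
\item Disjointness of the whole equivariant family comes from \Cref{lemma: disjointness of two horoballs}, combined with equivariance (\Cref{properties of the horofunction}) and finiteness of the orbits.
\item Cocompactness of the complement is shown by contradiction. A sequence $z_n$ in the complement with no convergent $\Gamma$-translates would have endpoints $(z_n^-,z_n^+)$ escaping every compact set of $\Lambda^{(2)}$ modulo $\Gamma$, or their base points would drift along the flow. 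Conical limit points can be pulled back to a compact set. Near a bounded parabolic point $p$, the unimodality of \Cref{prop: unimodal} shows that the flow line spends its deep excursion inside $H_p(C)$.
\end{itemize}
The projective Anosov case is the special case with no parabolics. There cocompactness follows from uniform convergence (every point is conical) together with the properness of the map $z\mapsto(z^-,z^+,\cdot)$.

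\textbf{Converse directions.} Let $U$ be a good flow space. By \Cref{proposition: convergence action}, $\Gamma$ acts on $\Lambda_U$ as a convergence group. Transitivity (case 1), the thick-thin decomposition (case 2), or cocompactness (case 3) each force this action to be minimal, so $\Lambda_U$ is the limit set of that convergence action. In case 3, cocompactness forces every point of $\Lambda_U$ to be conical, so the action is uniform. In case 2, points not lying deep in horoballs are conical, and the stabilizers of the horoball centers act cocompactly on the horosphere pieces, which makes these centers bounded parabolic; hence the action is geometrically finite.

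Since $\Lambda_U\subset\Fc_{1,d}$ consists of pairwise transverse points (endpoints of flow lines in $U$), $\Lambda_U$ gives a $\Gamma$-equivariant, transverse, continuous boundary map. To conclude that $\Gamma$ is projective transverse, relatively projective Anosov, or projective Anosov respectively, I would invoke the characterizations recalled in \Cref{section: Anosov}: a transverse convergence action with antipodal limit set, together with divergence of singular values. I expect the divergence of singular values to be the main obstacle. The plan is to get it from proper discontinuity on $U$ plus irreducibility, which identifies $\Lambda_U$ with $\Lambda_\Gamma$ via proximal limits. Finally, convexity of $U$ together with the $\opp$- and negation-invariance gives $U\supseteq\WH(\Lambda_U)$, and $U\subseteq\WH(\Lambda_U)$ holds by definition, so $U=\WH(\Lambda_\Gamma)$.
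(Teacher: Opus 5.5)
Your overall architecture matches the paper's. In the forward direction you take $U=\WH(\Lambda_\Gamma)$; in the converse you extract a transitive, geometrically finite, or uniform convergence action on $\Lambda_U$ and upgrade it to the projective conditions. However, the forward direction of (2) has a genuine gap at the disjointness step. \Cref{lemma: disjointness of two horoballs} is local: its constant depends on one pair of distinct centers and their neighborhoods. The problems with extending it to the whole family are these:
\begin{enumerate}
\item The orbits $\Gamma p$ are infinite; only the number of orbits is finite.
\item Two centers $\gamma p,\eta q$ can converge to the same point, and there the lemma says nothing.
\item By \Cref{constant difference}, $\gamma H_p(C)=H_{\gamma p}(C+B_\gamma)$ with a shift $B_\gamma$ that your argument never controls.
\item For $\gamma\in\Stab_\Gamma(p)$, \Cref{properties of the horofunction} shows that $\gamma H_p(C)$ and $H_p(C)$ are superlevel sets of the same function $\Bc_p$ at levels differing by a homomorphism $\Stab_\Gamma(p)\to\Rb$. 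So these two sets are nested, and the family can be pairwise disjoint only if that homomorphism vanishes.
\end{enumerate}
The missing ingredient is that parabolic stabilizers of relatively projective Anosov groups are weakly unipotent (\Cref{proposition: parabolic subgroups are weakly unipotent}), so $\Bc_p$ is $\Stab_\Gamma(p)$-invariant (\Cref{unipotents preserve horofunctions}). On top of this the paper runs a Bowditch-type argument. Since parabolic points are not conical, $\Bc_p$ is bounded above on $\Gamma$-orbits of compact sets (\Cref{lemma: Busemann upper bounds on orbits}). Combined with cocompactness of $\Stab_\Gamma(p)$ on thickened horospheres, only finitely many horoballs of the family reach a given depth over a compact set (\Cref{lemma: mostly negatively infinite}). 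This is also what gives local finiteness, a requirement of the thick--thin decomposition that you never address. Finally, one renormalizes by $\Stab_\Gamma(p)$ so that the second center converges to some $q'\neq p$, shows the shift constants are eventually nonnegative, and only then applies the pairwise lemma.

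The other load-bearing steps are asserted rather than proved. For cocompactness of the thick part, $\Gamma$ does not act properly on $\Lambda_\Gamma^{(2)}$ (loxodromic fixed pairs have infinite stabilizers). So escaping compact subsets of $\Lambda_\Gamma^{(2)}$ modulo $\Gamma$ is not a workable criterion. The real difficulty is controlling where $\pi(z)$ and $\pi(\opp(z))$ sit relative to $z^\pm$. The paper handles this as follows:
\begin{enumerate}
\item it introduces the angle function $D$ and proves compactness of $\{D\geqslant\epsilon\}$ (\Cref{lemma: K compact});
\item it chooses $D$-maximizing representatives in each $\Gamma$-orbit;
\item it derives a contradiction using the expansion estimate near conical points and the strict contraction estimate for parabolic stabilizers from \Cref{lemma: projective expanding}.
\end{enumerate}
Part (3) then comes for free as the case of empty thin part.

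In the converse, you rightly flag divergence of singular values as the crux, but proper discontinuity on $U$ alone gives you no mechanism. The paper's \Cref{strongly dynamics preserving} derives it from the convergence action on the perfect, transverse set $\Lambda_U$ together with irreducibility. A non-divergent escaping sequence has a singular-value limit that is non-degenerate on a $2$-plane. Irreducibility and perfectness then produce two separated pairs of nearby points of $\Lambda_U$ whose images stay apart, contradicting the convergence property. Then $\Lambda_\Gamma\subset\Lambda_U$ follows by using transversality of $\Lambda_U$ to match the line and the hyperplane of an attracting flag.

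Also in the converse of (2), showing that points of $\Lambda_U\setminus\Gamma\Pi$ are conical requires ruling out that the tracking sequence has $z^+$ rather than $z^-$ as repelling point; the paper does this with the pairing $(\cdot,\cdot)_q$. This is not automatic here, because before transversality is known there is no a priori link between orbits in $U$ and the convergence dynamics on $\Lambda_U$.
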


The irreducibility assumption in \Cref{theoremalpha: characterizations} is needed mainly for the backward directions of all three parts to hold. For the forward directions to hold, one can weaken the irreducibility assumption to simply requiring that $\WH(\Lambda_\Gamma)$ is non-empty, see \Cref{section: geometric finiteness via relative Anosovness}.

\subsection{Hyperbolicity of weak hulls of limit sets.}
Delarue, Monclair, and Sanders \cite[Theorem A]{delarue2025locally} associate to every torsion-free projective Anosov subgroup of $\PGL(d+1,\Rb)$ a real analytic Axiom A flow whose non-wandering set is conjugate to Sambarino's refraction flow \cite{sambarino2024report}. 
Here the notion of an Axiom A flow was first introduced by \cite{smale1967differentiable}, and is defined to be a flow whose  non-wandering set is a compact hyperbolic set, and whose periodic orbits are dense in its non-wandering set.

Motivated by the ideas of Delarue, Monclair, and Sanders \cite{delarue2025locally}, we consider for any projective transverse subgroup $\Gamma\subset\PGL(d+1,\Rb)$, the space
\[\Dc_\Gamma := \{z \in \T^1\Hb_\tau^d : \forall \eta \in \Lambda_{\Gamma}, \text{ at least one of } z^+, z^- \text{ is transverse to } \eta\}~.\] 
Notice that $\Dc_\Gamma$ is a $\Gamma$-invariant, flow-invariant, negation-invariant, ${\opp}$-invariant, open subset of $\T^1\Hb_\tau^d$ and contains $\WH(\Lambda_\Gamma)$. We prove that the $\Gamma$-action on $\Dc_\Gamma$ is properly discontinuous, see \Cref{proposition: domain of discontinuity}. Hence, if $\Gamma$ is torsion free, then $\Dc_\Gamma/\Gamma$ is a smooth manifold, and the geodesic flow $\varphi^t$ on $\Dc_\Gamma$ descends to a smooth flow on $\Dc_\Gamma/\Gamma$, which we again denote by $\varphi^t$. We then prove the following theorem, which characterizes the hyperbolic set of $\Dc_\Gamma/\Gamma$.

\begin{theoremalpha}\label{theoremalpha: nonwandering}
    If $\Gamma\subset\PGL(d+1,\Rb)$ is projective transverse, non-elementary, and torsion free, then the non-wandaring set of $(\Dc_\Gamma/\Gamma,\varphi^t)$ is $\WH(\Lambda_\Gamma)/\Gamma$. In particular, the periodic orbits in the non-wandering set are dense.
\end{theoremalpha}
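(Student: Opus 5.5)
The proof has two inclusions and a density statement. Throughout, write $\Omega\subset\Dc_\Gamma/\Gamma$ for the non-wandering set of $\varphi^t$.

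\emph{Step 1: $\WH(\Lambda_\Gamma)/\Gamma\subset\Omega$, together with density of periodic orbits.} Since $\Gamma$ is projective transverse and non-elementary, $\Gamma$ acts on $\Lambda_\Gamma$ as a non-elementary convergence group (\Cref{proposition: convergence action}). Hence pairs $(\gamma^+,\gamma^-)$ of attracting and repelling fixed points of loxodromic elements $\gamma\in\Gamma$ are dense in the set of distinct pairs in $\Lambda_\Gamma^{(2)}$.

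For such a $\gamma$, consider a flow line in $\WH(\Lambda_\Gamma)$ joining $\gamma^-$ to $\gamma^+$. There are two such lines (the $\opp$-pair), and $\gamma$ either preserves each of them or swaps them. In either case $\gamma^2$ preserves a flow line, and it acts on it by a translation. This translation is nonzero by the Busemann-cocycle property of $\Bc_{\gamma^+}$ (property (1)), because the cocycle evaluated at a loxodromic element is a nonzero eigenvalue gap. So the flow line projects to a periodic orbit in $\WH(\Lambda_\Gamma)/\Gamma$.

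Now take any $z\in\WH(\Lambda_\Gamma)$. Approximate $(z^+,z^-)$ by pairs $(\gamma^+,\gamma^-)$ as above. The spacelike geodesic with given transverse endpoints and given lift to the spherical boundary depends continuously on the endpoints, so the corresponding periodic vectors converge to $z$. This proves density of periodic orbits, and since periodic points are non-wandering and $\Omega$ is closed, $\WH(\Lambda_\Gamma)/\Gamma\subset\Omega$.

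\emph{Step 2: $\Omega\subset\WH(\Lambda_\Gamma)/\Gamma$.} Let $z\in\Dc_\Gamma$ project into $\Omega$. Then there are $z_n\to z$, times $t_n\to\infty$ and $\gamma_n\in\Gamma$ with $\gamma_n\varphi^{t_n}z_n\to z$. Proper discontinuity of the $\Gamma$-action on $\Dc_\Gamma$ (\Cref{proposition: domain of discontinuity}) forces $\gamma_n\to\infty$, since otherwise $\varphi^{t_n}z_n$ would converge and we would get a periodic-type recurrence, handled directly. Passing to a subsequence, $\gamma_n$ has attracting and repelling points $\xi,\eta\in\Lambda_\Gamma$ in the convergence sense, i.e.\ in terms of singular value gaps: $\gamma_n\to\xi$ uniformly on flags transverse to $\eta$.

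The goal is to show $z^+=\xi$ and $z^-=\eta$, and likewise with the roles of $\pm$ reversed by applying the argument to $-z$. Along $\varphi^{t_n}z_n$ the endpoints are unchanged, $(\varphi^{t_n}z_n)^\pm=z_n^\pm$, and $\gamma_n z_n^\pm\to z^\pm$. This alone is not enough, and the flow time must enter. Here I would use the horofunctions. Suppose $z^+\ne\xi$. Since $z\in\Dc_\Gamma$, one of $z^\pm$ is transverse to $\eta$. Pick that endpoint, say $z^-$ (the other case is symmetric), and use the equivariance $\Bc_{\gamma x}(\gamma w)=\Bc_x(w)+\beta(\gamma,x)$ with $x=z_n^-$. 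By property (2), $\Bc_{z_n^-}(\varphi^{t_n}z_n)=\Bc_{z_n^-}(z_n)-t_n\to-\infty$. On the other hand, since $\gamma_n z_n^-\to z^-$, the values $\Bc_{\gamma_nz_n^-}(\gamma_n\varphi^{t_n}z_n)$ stay bounded. This forces $\beta(\gamma_n,z_n^-)\to+\infty$ at rate $t_n$. Comparing with the Cartan projection of $\gamma_n$, the cocycle along a flag transverse to $\eta$ grows like the first singular value gap, so $\gamma_n$ expands points near $z^-$. This is compatible only if $z^-=\eta$ and $z^+=\xi$ (contraction toward $\xi$). Thus $z^\pm\in\Lambda_\Gamma$ and $z\in\WH(\Lambda_\Gamma)$.

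\emph{Main obstacle.} Step 2 is the hard part. The issue is converting the horofunction and cocycle estimates into the conclusion that the endpoints lie in $\Lambda_\Gamma$ exactly, rather than only being transverse to it. This requires a careful comparison between $\beta(\gamma_n,\cdot)$ and the singular values of $\gamma_n$, uniformly on compact sets of flags transverse to $\eta$. I would first try proving a lemma that $\beta(\gamma_n,x_n)-\log(\sigma_1/\sigma_2)(\gamma_n)$ stays bounded whenever $x_n$ stays in a compact set transverse to $\eta$, and then argue by contradiction using the unimodality of $t\mapsto\Bc_x(\varphi^tz)$ (\Cref{prop: unimodal}).
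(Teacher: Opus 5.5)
Your Step 1 is essentially the paper's argument: minimality of the $\Gamma$-action on $\Lambda_\Gamma$ and \Cref{dense pairs} make closed orbits dense in $\WH(\Lambda_\Gamma)/\Gamma$, and the non-wandering set is closed. That the translation along the preserved flow line is nonzero also follows directly from proper discontinuity and torsion-freeness.

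The gap is in Step 2. You say the information $\gamma_n z_n^\pm\to z^\pm$ is ``not enough'' and replace it with a cocycle argument. That argument cannot locate $z^-$, and its conclusion is inconsistent.

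For flags $x$ uniformly transverse to the repelling flag $\eta$, the singular value decomposition gives $\beta(\gamma_n,x)=\frac{1}{2}\log\frac{\sigma_1}{\sigma_{d+1}}(\gamma_n)+O(1)$, not the gap $\log\frac{\sigma_1}{\sigma_2}(\gamma_n)$. This tends to $+\infty$ along every escaping sequence in $\Gamma$. So $\beta(\gamma_n,z_n^-)\to+\infty$ holds wherever $z^-$ is, and your computation only yields $t_n=\frac{1}{2}\log\frac{\sigma_1}{\sigma_{d+1}}(\gamma_n)+O(1)$. Moreover, your conclusion $z^-=\eta$ contradicts your assumption that $z^-$ is transverse to $\eta$. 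With your convention $\gamma_n\varphi^{t_n}z_n\to z$, the correct outcome is $z^-=\xi$ and $z^+=\eta$.

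The missing step is to apply the north--south dynamics you already quoted directly to the endpoints. The flags $z_n^-$ eventually lie in a compact set transverse to $\eta$, so \Cref{lemma: projective north-south dynamics}(2) gives $\gamma_nz_n^-\to\xi$. On the other hand, $\gamma_nz_n^-=(\gamma_n\varphi^{t_n}z_n)^-\to z^-$, hence $z^-=\xi\in\Lambda_\Gamma$. Then $z^+$ is transverse to $\xi$, which is the repelling flag of $(\gamma_n^{-1})$. Since $z_n^+=\gamma_n^{-1}(\gamma_nz_n^+)$ and $\gamma_nz_n^+\to z^+$, we get $z^+=\eta\in\Lambda_\Gamma$. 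This is the paper's proof, and no horofunction is needed.

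The flow time enters only to guarantee that $(\gamma_n)$ escapes. The relevant fact there is properness of the geodesic flow on $\T^1\Hb_\tau^d$, not proper discontinuity of the $\Gamma$-action. Indeed, if $z_n\to z$ and $t_n\to+\infty$, then $\pi(\varphi^{t_n}z_n)\to z^+$, so $(\varphi^{t_n}z_n)$ leaves every compact set. There is no separate ``periodic-type'' case to handle.
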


The tangent bundle of $\T^1\Hb_\tau^d$ admits a natural flow-invariant splitting \[\T(\T^1\Hb_\tau^d)=\widetilde E_u\oplus \widetilde E_0\oplus \widetilde E_s~,\] 
where for every $z\in\WH(\Lambda_\Gamma)$, $\widetilde E_0|_z\subset \T_z(\T^1\Hb_\tau^d)$ is the line given by the flow direction, and  $\widetilde E_s|_z$ and $\widetilde E_u|_z$ are tangent to the submanifolds 
\[M_u(z):=\{w \in \T^1\Hb_\tau^d: w^- = z^-\text{ and } \Bc_{z^-} (w) = \Bc_{z^-} (z)\}\]
and 
\[M_s(z):=\{w \in \T^1\Hb_\tau^d: w^+ = z^+\text{ and } \Bc_{z^+} (w) = \Bc_{z^+} (z)\}\]
respectively, see \Cref{proposition: differential of endpoint map}.

Since the stabilizer of a spacelike unit vector in $\T^1\Hb_\tau^d$ is non-compact, there is no $\PGL(d+1,\Rb)$-invariant Riemannian metric on $\T(\T^1\Hb_\tau^d)$. Nevertheless, we show that when one restricts to $\WH(\Lambda_\Gamma)$, one can construct a $\Gamma$-invariant Riemannian metric in $\T(\T^1\Hb_\tau^d)|_{\WH(\Lambda_\Gamma)}$ for which the $E_s|_{\WH(\Lambda_\Gamma)}$ and $E_u|_{\WH(\Lambda_\Gamma)}$ are respectively uniformly contracting and uniformly expanding under the geodesic flow.

\begin{theoremalpha}\label{theoremalpha: contraction}
    There is a $\Gamma$-invariant Riemannian metric $\norm{\cdot}$ on $\T(\T^1\Hb_\tau^d)$ and constants $B,b>0$ such that for any $z\in \WH(\Lambda_\Gamma)$ and $t\geqslant 0$, \[\norm{\D\varphi^t(v)}_{\varphi^t(z)}\leqslant Be^{-bt}\norm{v}_z \quad \text{for all}\quad v\in \widetilde E_s\vert_z~,\] \[\text{and}\quad \norm{\D\varphi^{-t}(v)}_z\leqslant Be^{-bt}\norm{v}_{\varphi^t(z)} \quad \text{for all}\quad v\in \widetilde E_u\vert_{\varphi^t(z)}~.\]
\end{theoremalpha}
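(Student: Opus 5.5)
The plan is to prove the theorem for $\Gamma$ relatively projective Anosov and $z \in \WH(\Lambda_\Gamma)$. The cocompact case, $\Gamma$ projective Anosov, is then the special case with no cusps. The metric will be built by pulling back $\mathsf K$-invariant inner products from the Riemannian symmetric space $X=\PGL(d+1,\Rb)/\mathsf{PO}(d+1)$ along a $\Gamma$-equivariant map $o:\WH(\Lambda_\Gamma)\to X$. This map will be coarsely a quasi-isometry onto the geometry of the cusped space.

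\textbf{Step 1 (linear model of the bundles).} A spacelike unit vector $z$ determines two transverse flags $z^\pm=([v^\pm],[\phi^\pm])\in\Fc_{1,d}$. Hence it determines a splitting
\[\Rb^{d+1}=\ell^+\oplus W\oplus\ell^-, \qquad W=\ker\phi^+\cap\ker\phi^-,\]
in which the flow $\varphi^t$ is realized by an element $a_t$ acting as $e^{t}$, $1$, $e^{-t}$ up to normalization. Using the description of $M_s(z)$ and $M_u(z)$, I identify $\widetilde E_s|_z$ with the tangent space of $\{w^-: w^- \text{ transverse to } z^+\}$ at $z^-$, which is a subspace of $\mathrm{Hom}$'s between the three summands. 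I identify $\widetilde E_u|_z$ symmetrically. In these coordinates $\D\varphi^t$ is the identity, since endpoints are fixed. All the contraction must therefore come from the choice of the point $o(\varphi^t z)$ at which the norm is measured.

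\textbf{Step 2 (the equivariant basepoint map).} I define $o$ on the thick part first: choose any continuous $\Gamma$-equivariant map on a cocompact subset and extend it by a partition of unity. On a horoball $H_x(C)$ centered at a parabolic point $x$ with stabilizer $P_x$, I define $o(z)$ as a point of $X$ pushed toward $x$ (into the corresponding $P_x$-invariant region of $X$) to depth $\Bc_x(z)-C$. This uses the properties in \Cref{prop: unimodal} and the cocycle property in \Cref{properties of the horofunction} to make the definition $P_x$-equivariant, and then I glue. Along a flow line, the path $t\mapsto o(\varphi^t z)$ is a uniform quasi-geodesic in the Groves--Manning cusped space of $\Gamma$. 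This follows from the thick--thin decomposition of \Cref{theoremalpha: characterizations}(2) together with the disjointness of horoballs in \Cref{lemma: disjointness of two horoballs}.

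\textbf{Step 3 (uniform contraction).} The ratio $\norm{\D\varphi^t v}/\norm v$ is comparable to the ratio of the relevant singular values $\sigma_2/\sigma_1$ (and $\sigma_{d+1}/\sigma_d$) of the group element carrying $o(z)$ to $o(\varphi^t z)$, computed relative to the splitting from Step 1. For relatively projective Anosov groups, the Zhu--Zimmer characterization gives linear growth of $\log(\sigma_1/\sigma_2)$ along cusped-space quasi-geodesics. Combined with Step 2, this yields a uniform $T$ with $\norm{\D\varphi^T v}\le \tfrac12\norm v$ for $v\in\widetilde E_s$. Iterating gives constants $B,b$, and the $\widetilde E_u$ statement follows by applying negation. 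Finally I smooth $o$, or average the metric over a short flow segment, to get a genuine Riemannian metric without destroying the estimates.

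\textbf{Main obstacle.} The hard step is Step 2 inside the cusps. The basepoint must go deep enough into $X$ as $\Bc_x$ grows that the singular value gaps still grow linearly. At the same time the flag data must stay transverse in a controlled way, so that the comparison in Step 3 holds with constants uniform over the non-compact cusp region. I would first try the explicit normal form of $P_x$ from the relatively Anosov theory, where unipotent parts give polynomial distortion. I would then absorb that polynomial distortion into $B$ via the linear growth of the horofunction.
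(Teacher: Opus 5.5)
Your Step 1 and the thick-to-thick half of Step 3 agree in spirit with the paper. In both, $\widetilde E_s|_z$ is identified with $\T_{z^-}\Fc_{1,d}$ via $\D\beta^-$, on which $\D\varphi^t$ acts trivially, and contraction comes from a tracking element $\gamma\in\Gamma$ together with uniform regularity. But there is a genuine gap exactly at the step you call the main obstacle, and the remedy you suggest does not close it.

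\begin{itemize}
\item \emph{The cited estimates do not reach the cusps.} The Zhu--Zimmer estimates control $\log\frac{\sigma_1}{\sigma_2}(\gamma)$ only for $\gamma\in\Gamma$, that is, for displacements between points in a single $\Gamma$-orbit of a compact set. When $z$ and $\varphi^t z$ both lie deep in the same horoball, $o(z)$ and $o(\varphi^t z)$ are not related by any element of $\Gamma$. So nothing you cite bounds the singular values of the displacement between them. Nor does anything give the uniform transversality of $z^-$ to the attracting flag, which your comparison in Step 3 needs over this non-compact region. Yet the uniform bound $\norm{\D\varphi^T v}\leqslant\frac12\norm{v}$ must hold there too.
\item \emph{The quasi-geodesic claim does not substitute.} Calling $t\mapsto o(\varphi^t z)$ a quasi-geodesic in the cusped space conflates paths in $X$ with the cusped space. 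What you would actually need is a relatively Morse-type statement about your model horoballs in $X$, and none is proved.
\item \emph{The natural model is not a combinatorial horoball.} Suppose ``pushing toward $x$'' means moving along rays to the ideal point of type $\mathrm{diag}(1,0,\dots,0,-1)$ determined by $x$, which is the natural $\Stab(x)$-equivariant choice. This only collapses the unipotent radical of $\Stab(x)$. A weakly unipotent $\Stab_\Gamma(x)$ can have nontrivial Levi part. For example, take a cusped Fuchsian group composed with the irreducible representation into $\PGL(d+1,\Rb)$, $d\geqslant 3$. A parabolic $u$ then acts on the quotient of the hyperplane of $x$ by the line of $x$ through a nontrivial Jordan block. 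The rays from $p$ and $u^np$ stay $\gtrsim\log n$ apart at every depth.
\item \emph{The distortion cannot go into $B$.} By \Cref{wrejfnwjl}, a flow line crossing such a horoball spends time comparable to $\log n$ in it. So distortion polynomial in $n$ is exponential in $t$, and cannot be absorbed into the constant $B$.
\end{itemize}

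The missing idea is that the cusps need no geometric model at all. The paper fixes any continuous $\Gamma$-invariant metric $h$ on $\widetilde E$ over the thick part for which the canonical splitting is orthogonal. It then extends $h$ into each horoball \emph{dynamically}, using the entry and exit times $\mathsf{in}_q,\mathsf{out}_q$, which are continuous because $t\mapsto\Bc_q(\varphi^t z)$ is unimodal.
\begin{itemize}
\item On the first third of the segment, it transports $h$ by the flow from the entry point, rescaling the $\widetilde E_s$- and $\widetilde E_u$-components exponentially in the elapsed time.
\item On the last third, it does the same from the exit point.
\item On the middle third, it takes the weighted geometric mean of \Cref{lemma: interpolated metric}.
\end{itemize}
Contraction inside the thin part then holds by construction.

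The only genuine estimate is thick-to-thick, and it involves only elements of $\Gamma$:
\begin{itemize}
\item $\norm{\D\varphi^t v}\leqslant A_0\rho(\gamma)\norm{v}$, via $\D\beta^-$ and \Cref{lemma: attracting contraction};
\item $R(\gamma)\asymp e^{2t}$, from the $\mathsf L$-description of the flow;
\item $\rho(\gamma)\leqslant C_0R(\gamma)^{-c_0}$, from uniform regularity.
\end{itemize}
Applied between the entry and exit points of a horoball, this estimate controls the net effect of a whole cusp excursion, and the interpolation spreads it evenly. This route also removes your need to compare flow time with cusped-space distance. Finally, a continuous metric suffices, so no smoothing is required.
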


The Riemannian metric $\norm{\cdot}$ uses the thick-thin decomposition of $\WH(\Lambda_\Gamma)$ given by \Cref{theoremalpha: characterizations}, together with an idea of Zhu and Zimmer \cite{zhu2022relatively}, who constructed analogous metrics for bundles over abstract flow spaces of relatively hyperbolic groups.

In particular, when $\Gamma$ is projective Anosov, \Cref{theoremalpha: characterizations} implies that $\WH(\Lambda_\Gamma)/\Gamma$ is compact, so by combining Theorems \ref{theoremalpha: nonwandering} and \ref{theoremalpha: contraction}, we obtain that the geodesic flow on $\Dc_\Gamma/\Gamma$ is an Axiom~A flow. In this case, $\Dc_\Gamma$ is naturally a double cover of the flow constructed by Delarue, Monclair, and Sanders \cite{delarue2025locally}, while $\WH(\Lambda_\Gamma)$ is a double cover of the refraction flow, so one recovers the result of \cite{delarue2025locally} mentioned above. 
\subsection{Structure of the paper}

In \Cref{section: preliminaries on transverse groups}, we recall the notions of projective transverse, projective Anosov, and relatively projective Anosov subgroups, and discuss some of their basic properties. In \Cref{section: paracomplex hyperbolic geometry}, we introduce different models of $\Hb_\tau^d$ and describe its spherical and projective boundaries, before classifying the spacelike geodesics in $\Hb_\tau^d$. Then, we prove \Cref{theoremalpha: characterizations} in \Cref{section: the characterization theorem}, and Theorems \ref{theoremalpha: nonwandering} and \ref{theoremalpha: contraction} in \Cref{section: axiom A flows}.

\subsection{Acknowledgments}

This paper is based upon work partially supported by the National Science Foundation under Grant No. DMS-2424139, while the first and third authors were in residence at the Simons Laufer Mathematical Sciences Institute in Berkeley, California, during the Spring 2026 semester.

The authors also acknowledge the support and hospitality of the Institut Henri Poincaré (UAR 839 CNRS–Sorbonne Université), where part of this work was carried out during the conference Low-dimensional phenomena: geometry and dynamics in June 2025.

The first author also thanks the National University of Singapore for its hospitality during two visits where the work on this project was initiated and further developed.

The second and third authors were partially supported by NUS-MOE grant A-8004148-00-00.

\section{Transverse, relatively Anosov, and Anosov subgroups}\label{section: preliminaries on transverse groups}

In this section, we recall the notion of transverse, relatively Anosov, and Anosov subgroups of $\PGL(d+1,\Rb)$, and some of their basic properties. 

\subsection{Convergence groups}\label{sec: convergence groups}

We recall some basic terminology and facts from the theory of convergence group actions.

Let $X$ be a compact metrizable space and let $G\subset{\rm Homeo}(X)$ be a discrete infinite subgroup. We say that an escaping sequence $(\gamma_n)$ in $G$ is a \emph{collapsing sequence} if there are points $a,b\in X$  such that $\gamma_n\vert_{X\setminus \{b\}}$ converges uniformly on compact sets to (the constant map whose image is) $a$. If this happens, we refer to $a$ and $b$ as the \emph{attracting point} and \emph{repelling point} of $(\gamma_n)$. In this case $(\gamma_n^{-1})$ is also a collapsing sequence, and its attracting point and repelling point are $b$ and $a$ respectively. We say that $G$ acts on $X$ as a \emph{convergence group} if every escaping sequence in $G$ has a collapsing subsequence. The following theorem of Bowditch \cite[Proposition 1.1]{bowditch1999convergence} gives a characterization of convergence group actions.

\begin{theorem}\label{Thm: bowditch}
    The group $G$ acts as a convergence group on $X$ if and only if the diagonal $G$-action on the set $X^{(3)}$ of pairwise distinct triples in $X$ is properly discontinuous. 
\end{theorem}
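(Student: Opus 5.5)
Both implications are proved directly; compactness and metrizability of $X$ supply the needed subsequences. Throughout, "properly discontinuous" on the locally compact space $X^{(3)}$ means: for every compact $K\subset X^{(3)}$, only finitely many $g\in G$ satisfy $gK\cap K\neq\emptyset$.

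\emph{Convergence $\Rightarrow$ properly discontinuous.} Suppose the action on $X^{(3)}$ is not properly discontinuous. Then there is a compact $K\subset X^{(3)}$ and an escaping sequence $(\gamma_n)$ with $\gamma_nK\cap K\neq\emptyset$. Choose triples $(x_n,y_n,z_n)\in K$ with $\gamma_n(x_n,y_n,z_n)\in K$. After passing to a subsequence we may assume:
\begin{itemize}
\item $(\gamma_n)$ is collapsing, with attracting point $a$ and repelling point $b$;
\item $(x_n,y_n,z_n)\to(x,y,z)\in K$, so $x,y,z$ are pairwise distinct;
\item $\gamma_n(x_n,y_n,z_n)\to(x',y',z')\in K$, so $x',y',z'$ are also pairwise distinct.
\end{itemize}
At least two of $x,y,z$ differ from $b$; say $x\neq b$ and $y\neq b$. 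Take a compact neighborhood $N$ of $\{x,y\}$ that avoids $b$. For large $n$ we have $x_n,y_n\in N$, and uniform convergence of $\gamma_n$ to $a$ on $N$ gives $\gamma_nx_n\to a$ and $\gamma_ny_n\to a$. Hence $x'=y'=a$, a contradiction.

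\emph{Properly discontinuous $\Rightarrow$ convergence.} Let $(\gamma_n)$ be an escaping sequence. Here the case $|X|\le 2$ should be handled separately or excluded (with the paper's convention it is degenerate), so assume $|X|\geq3$.

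The main step is to produce, after passing to a subsequence, a point $b$ together with convergence $\gamma_n\to a$ on compact subsets of $X\setminus\{b\}$. I would use the standard argument from Bowditch.

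\begin{enumerate}
\item Take a subsequence such that, for a countable dense set $D\subset X$, each $\gamma_n(d)$ converges; do the same for $\gamma_n^{-1}(d)$. This is a diagonal argument using compactness.
\item Claim: at most one point of $X$ can have $\gamma_n$-images that fail to accumulate at a single value. More concretely, take three distinct points $p,q,r$. Proper discontinuity forces $\gamma_n(p,q,r)$ to leave every compact subset of $X^{(3)}$. So, after a subsequence, some two of $\gamma_np,\gamma_nq,\gamma_nr$ converge to a common point $a$.
\item Run the same argument for $\gamma_n^{-1}$ to find a candidate repelling point $b$.
\item Suppose uniform convergence to $a$ fails on some compact set $C\subset X\setminus\{b\}$. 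Then there are $c_n\in C$ with $\gamma_nc_n\to a'\neq a$. Combine these with suitable points $u_n,v_n$ near $a$-converging points to form triples $(c_n,u_n,v_n)$ lying in a fixed compact subset of $X^{(3)}$ whose $\gamma_n$-images also stay in a compact subset of $X^{(3)}$. This contradicts proper discontinuity.
\end{enumerate}

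The main obstacle is the bookkeeping in step 4. One has to choose the auxiliary points so that both the domain triples and the image triples remain uniformly separated. The natural choice is to use the points converging to $a$ under $\gamma_n$ together with their behavior under $\gamma_n^{-1}$ near $b$, and I expect this to work. Since the statement is precisely \cite[Proposition 1.1]{bowditch1999convergence}, one may also simply cite it.
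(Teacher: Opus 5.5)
The paper gives no argument of its own here: the statement is quoted from \cite[Proposition~1.1]{bowditch1999convergence}, so your closing fallback of citing Bowditch is exactly the paper's treatment. Your forward direction (convergence $\Rightarrow$ proper discontinuity on $X^{(3)}$) is complete and correct.

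The converse, however, is not yet a proof, and the gap is in step~4. For the triples $(c_n,u_n,v_n)$ to contradict proper discontinuity, you need $\gamma_nc_n,\gamma_nu_n,\gamma_nv_n$ to converge to three \emph{distinct} limits. Since $\gamma_nc_n\to a'\neq a$, at most one of $u_n,v_n$ can be one of the $a$-converging points from step~2. If you use two of them, the image limits are $(a',a,a)\notin X^{(3)}$, and there is no contradiction.

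The other auxiliary point must have image limit outside $\{a,a'\}$. So it has to be produced as a preimage $\gamma_n^{-1}y$ with $y\notin\{a,a'\}$, and you then need its accumulation point to avoid $c$ and the first auxiliary point. Nothing in steps 1--3 controls this. Knowing that two of three fixed points share a limit under $\gamma_n^{-1}$ does not tell you where $\gamma_n^{-1}y$ goes for your particular $y$; that is the symmetric half of what you are proving. Two smaller points: the dense set from step~1 is never used, and step~2 does not establish its ``at most one point'' claim.

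A clean way to close the gap is to do the bookkeeping through graphs. Pass to a subsequence so that the graphs $\{(x,\gamma_nx):x\in X\}$ converge in the Hausdorff topology on closed subsets of $X\times X$ to a closed set $\Sigma$. Then:
\begin{itemize}
\item every point of $\Sigma$ is a limit of some $(x_n,\gamma_nx_n)$ along the whole sequence, and every subsequential limit of such pairs lies in $\Sigma$;
\item both coordinate projections of $\Sigma$ are onto $X$ (use $(x,\gamma_nx)$ and $(\gamma_n^{-1}y,y)$);
\item proper discontinuity on $X^{(3)}$ says exactly that $\Sigma$ contains no three points with pairwise distinct first coordinates and pairwise distinct second coordinates.
\end{itemize}

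A short case check then gives $\Sigma\subseteq(X\times\{a\})\cup(\{b\}\times X)$. Here $|X|\geq 3$ holds automatically, since an infinite subgroup of ${\rm Homeo}(X)$ forces $X$ to be infinite.
\begin{itemize}
\item Find $(x_1,y_1),(x_2,y_2)\in\Sigma$ with $x_1\neq x_2$ and $y_1\neq y_2$. Every other point then has $x\in\{x_1,x_2\}$ or $y\in\{y_1,y_2\}$.
\item A point $(x_3,y_3)$ with $x_3\notin\{x_1,x_2\}$ forces, after relabeling, $y_3=y_1$.
\item A point $(x_4,y_4)$ with $y_4\notin\{y_1,y_2\}$ then forces $x_4=x_2$; otherwise $(x_3,y_1),(x_2,y_2),(x_1,y_4)$ is a forbidden triple.
\item After this, $a=y_1$ and $b=x_2$ work, again by exhibiting forbidden triples.
\end{itemize}

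Finally, if $c_n\to c\neq b$ and $\gamma_nc_n\to a'$, then $(c,a')\in\Sigma$ forces $a'=a$. This is exactly uniform convergence of $\gamma_n$ to $a$ on compact subsets of $X\setminus\{b\}$.
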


Suppose that $G$ acts on $X$ as a convergence group. The \emph{limit set} $L\subset X$ of $G$ is the set of all attracting points (equivalently repelling points) of collapsing sequences in $G$. Notice that $L\subset X$ is closed and $G$-invariant, $G$ acts as a convergence group on $L$, and the $G$-action on $X\setminus L$ is properly discontinuous. We say that the $G$-action on $X$ is \emph{non-elementary} if $L$ is infinite, or equivalently, if every $G$-orbit in $X$ is infinite. When the $G$-action on $X$ is non-elementary, it is straightforward to verify that $L\subset X$ is the unique minimal subset of $X$ for the $G$-action, which is perfect.

An element $g\in G$ is \emph{loxodromic} if it has infinite order and has two distinct fixed points in $X$. In this case, $(g^n)$ is a collapsing sequence and the attracting point $g^+$ and the repelling point $g^-$ are precisely the two fixed point of $g$. We call them the \emph{attracting fixed point} and \emph{repelling fixed point} of $g$ respectively, and they necessarily lie in the limit set of $G$. The following proposition is well-known. See Tsachik~\cite[Lemma~2.1]{gelander2015convergence} for a proof. 

\begin{proposition}\label{dense pairs}
If $G$ acts on $X$ as a minimal convergence group, then the set 
\[\{(g^+,g^-): g\in G \text{ is loxodromic}\}\] 
is dense in the set $X^{(2)}$ of distinct pairs of points in $X$.
\end{proposition}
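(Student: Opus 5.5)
We prove \Cref{dense pairs} in three steps: every nonempty open subset of $X$ meets the fixed points of loxodromics, a ping-pong construction realizes prescribed attracting and repelling neighbourhoods, and finally distinct neighbourhoods give the density statement. Throughout, $X$ is the limit set, since the action is minimal and $L$ is closed, nonempty and invariant. We also assume $X$ is infinite, as in the non-elementary setting where this result is used. Then $X$ is perfect, and every escaping sequence has a collapsing subsequence.

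\textbf{Step 1: fixed points of loxodromics in a prescribed open set.} Fix distinct points $a\neq b$ in $X$ and disjoint open neighbourhoods $A\ni a$ and $B\ni b$. Since $a\in L$, there is a collapsing sequence $(\gamma_n)$ with attracting point $a$ and some repelling point $r$. By minimality the orbit of any point is dense, so we may move $r$: pick $h\in G$ with $h^{-1}r$ outside a small closed set $K$. Then $(\gamma_n h)$ collapses with repelling point $h^{-1}r$ and attracting point $a$, so we may assume $r\notin \overline A$ and $r\neq a$ (if $r$ happens to equal $a$, replace it this way). Hence for large $n$, $\gamma_n$ maps $X\setminus U_r$ into $A$, where $U_r$ is a small neighbourhood of $r$ disjoint from $\overline A$.

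\textbf{Step 2: ping-pong.} Put $g_1=\gamma_n$, so $g_1(X\setminus U_r)\subset A$ with $U_r\cap \overline A=\emptyset$. Then $g_1(\overline A)\subset A$. By compactness a standard argument yields a fixed point of $g_1$ in $\overline A$. To obtain genuine contraction, use the convergence property on $(g_1^k)$: this sequence is escaping, because $g_1$ has infinite order. Indeed, if $g_1^m=1$ then $\overline A=g_1^m(\overline A)\subset g_1(\overline A)\subset A$, and iterating gives $X\setminus U_r$ mapped into $A$, which is not a contradiction by itself. To secure infinite order, choose $A$ so that $X\setminus (A\cup U_r)$ has nonempty interior, which is possible since $X$ is perfect and infinite. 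Then $g_1(\overline A)\subset A$ is a proper inclusion of the open set $g_1(A)$ inside $A$ avoiding a nonempty open piece, and iteration shows finite order is impossible. A collapsing subsequence of $(g_1^k)$ then has attracting point in $\bigcap_k g_1^k(\overline A)\subset \overline A$ and repelling point in $\bigcap_k g_1^{-k}(\overline{U_r})$. Hence $g_1$ fixes a point in each of these two disjoint sets, so it is loxodromic with $g_1^+\in\overline A$ and $g_1^-\in\overline{U_r}$.

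\textbf{Step 3: controlling the repelling point and concluding.} Running Step 1 with $A$ and $B$ exchanged and with inverses, choose the repelling point of the collapsing sequence inside $B$. This is possible because the attracting and repelling points of collapsing sequences can be chosen arbitrarily in $L$: for a collapsing sequence $(\gamma_n)$ with points $(a,r)$, the sequence $(h\gamma_n h')$ collapses with points $(ha, h'^{-1}r)$, and orbits are dense. Taking $U_r\subset B$ gives a loxodromic $g$ with $g^+\in\overline A$ and $g^-\in \overline B$. Since $A$ and $B$ were arbitrary small neighbourhoods of arbitrary distinct $a,b$, the pairs $(g^+,g^-)$ are dense in $X^{(2)}$.

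The main obstacle is the two-sided control in Step 3: Step 1 makes the attracting point close to $a$, but the repelling point must land close to $b$ at the same time. The conjugation trick $h\gamma_n h'$ settles this, because attracting and repelling points transform independently under left and right multiplication. The remaining care goes into showing infinite order, which is where the perfectness of $X$ is used.
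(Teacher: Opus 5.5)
The paper does not prove this proposition; it only cites Gelander's Lemma~2.1. Your argument follows the standard self-contained route and is correct in substance. You use minimality, together with the fact that $(h\gamma_nh')$ collapses with points $(ha,h'^{-1}r)$, to put the attracting point in $A$ and the repelling point in $B$. You then show that any $g$ with $g(X\setminus U)\subset A$ and $U\cap\overline A=\emptyset$ is loxodromic with $g^+\in A$ and $g^-\in U$. Step~2, however, needs repair in three places.

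First, drop the aside ``by compactness a standard argument yields a fixed point of $g_1$ in $\overline A$''. It is false for general compact metrizable spaces (an odometer on a Cantor set, with $A=X$, is a counterexample); the fixed points must come from the convergence property.

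Second, the infinite-order argument is muddled. Cleanly: shrink $A,B$ so that some $x\in X\setminus(A\cup U_r)$ exists. Then $g_1x\in A$ and $g_1(A)\subset A$ give $g_1^kx\in A$ for all $k\geqslant 1$, while $x\notin A$.

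Third, ``hence $g_1$ fixes a point in each of these sets'' does not follow from $p\in\bigcap_k g_1^k(\overline A)$ alone, since a compact $g_1$-invariant set need not contain a fixed point. Argue instead as follows.
\begin{enumerate}
\item Let $(g_1^{k_j})$ be collapsing with attracting point $p$ and repelling point $q$. Evaluating at a point of $\overline A\setminus\{q\}$ gives $p\in\bigcap_k g_1^k(\overline A)\subset g_1(\overline A)\subset A$.
\item The condition $g_1(X\setminus U_r)\subset A$ is equivalent to $g_1^{-1}(X\setminus A)\subset U_r$, and $\overline{U_r}\cap A=\emptyset$. So the same reasoning applied to $(g_1^{-k_j})$, evaluated at a point of $\overline{U_r}\setminus\{p\}$, gives $q\in g_1^{-1}(\overline{U_r})\subset U_r$.
\item Thus $q\notin\overline A$ and $p\notin\overline{U_r}$. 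Uniform convergence on these compacta yields $\bigcap_k g_1^k(\overline A)=\{p\}$ and $\bigcap_k g_1^{-k}(\overline{U_r})=\{q\}$.
\item These nested intersections are $g_1$-invariant, so $g_1$ fixes $p\neq q$. Moreover $g_1^+=p$ and $g_1^-=q$, because $(g_1^n)$ is itself collapsing.
\end{enumerate}

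With these repairs the proof is complete. Note also that $X$ being infinite is automatic here, since $G\subset\mathrm{Homeo}(X)$ is infinite. Perfectness then follows from minimality alone: the non-isolated points form a nonempty, closed, invariant set.
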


Recall that for a topological space $Y$, the action of a subgroup $H \subset{\rm Homeo}(Y)$ on $Y$ is \emph{topologically transitive} if for every pair of open sets $U,V\subset Y$, there is some $h\in H$ such that $h(U)\cap V$ is non-empty. The following corollary is a consequence of \Cref{dense pairs}.

\begin{corollary}\label{prop: topological transitive}
    If $G$ acts on $X$ as a minimal convergence group, then the diagonal $G$-action on $X^{(2)}$ is topologically transitive.
\end{corollary}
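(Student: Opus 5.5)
The plan is to reduce topological transitivity on $X^{(2)}$ to the density statement of \Cref{dense pairs}, using the north–south dynamics of a single loxodromic element. Fix nonempty open sets $U,V\subset X^{(2)}$. Shrinking them if necessary, we may assume $U=U_1\times U_2$ and $V=V_1\times V_2$ are product neighbourhoods, where $U_1,U_2$ are disjoint open subsets of $X$, and likewise $V_1,V_2$. We must produce $g\in G$ with $g(U)\cap V\neq\emptyset$.

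\textbf{Step 1.} By \Cref{dense pairs}, pick a loxodromic $h\in G$ with $(h^+,h^-)\in V_1\times U_2$. This requires $V_1\times U_2$ to meet $X^{(2)}$. Because $X$ is perfect (it equals the limit set, being minimal and non-elementary), we can shrink $V_1$ and $U_2$ to be disjoint nonempty open sets.

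\textbf{Step 2.} Pick a second loxodromic $k\in G$ with $(k^+,k^-)\in V_2\times U_1$, again shrinking the sets so that $V_2\cap U_1=\emptyset$. We may also arrange that $h^\pm\notin\{k^\pm\}$: by density there are infinitely many choices of fixed-point pairs, so we can avoid finitely many points.

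\textbf{Step 3.} Choose points $a\in U_1$ and $b\in U_2$, with $b\neq h^-$ and $a\neq k^-$ but $a$ close to $k^-$ and $b$ close to $h^-$. Consider $g=h^nk^m$ or a similar composite. The more robust route is to choose the point $(a,b)\in U$ first, with $a\notin\{h^-,k^-\}$ and $b\neq h^-$, and then apply the convergence property.

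Concretely, $h^n$ sends every point of $X\setminus\{h^-\}$ to $h^+\in V_1$, uniformly on compacts. We therefore want the image of the first coordinate to converge to a point of $V_1$ and the image of the second coordinate to converge to a point of $V_2$. The simplest approach is to pick $(a,b)\in U$ with $a\neq h^-$ and $b=h^-$, using the freedom of Step 1 to place $h^-$ inside $U_2$. Then $h^n(a)\to h^+\in V_1$ while $h^n(b)=b$ stays fixed. It remains to move $b$ into $V_2$ without disturbing the first coordinate much.

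\textbf{Step 4.} To do this, take $n$ large and then a loxodromic $k$ with $k^-=$ not $h^+$ and $k^+\in V_2$, and use $k^m$. This drags $b$ to $V_2$ but also drags $h^n(a)$ toward $k^+$, which is bad. The fix is to instead choose $k$ with $k^-$ near $h^+$, inside $V_1$, and $k^+\in V_2$. Then $(k^+,k^-)\in V_2\times V_1$, with $V_1$ and $V_2$ disjoint after shrinking.

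Now take $(a,b)\in U$ with $a=k^-$? That point is not in $U_1$. The working argument is to apply density directly to the product open set $U_1\times V_2$, so pick $g$ with $g^-\in U_1$ and $g^+\in V_2$, and similarly $h$ with $h^-\in U_2$ and $h^+\in V_1$. Set $\gamma_n=g^nh^{-?}$. Composites of this kind satisfy the ping-pong estimate $g^nh^n\to$ and give convergence of the pair. This ping-pong bookkeeping, showing that for large $n$ some product such as $h^n g^{-n}$ (for which $(\gamma_n)$ is collapsing with attracting point in $V$'s coordinates arranged suitably) maps a point of $U$ into $V$, is the main obstacle.

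If it becomes messy, the fallback is the standard argument: $X^{(2)}$ has a dense orbit. Take a pair $(h^+,h^-)$. Its orbit closure contains all pairs $(g h^+,g h^-)$, and by applying $h^{\pm n}$ to a pair with one coordinate fixed at $h^\pm$, the orbit closure of any pair $(x,h^-)$ contains $(h^+,h^-)$. Combining this with density of loxodromic fixed pairs, the orbit closure of one fixed pair contains all loxodromic pairs and hence is all of $X^{(2)}$. A dense orbit then gives topological transitivity immediately.
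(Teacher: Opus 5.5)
Your main line is never closed. Step 3 stops at ``it remains to move $b$ into $V_2$'', and Step 4 ends with an unspecified composite of powers of $g$ and $h$ whose ping-pong analysis you yourself leave open as ``the main obstacle''. So everything rests on the fallback, and the fallback is false. What you actually prove there is that $(h^+,h^-)\in\overline{G(x,h^-)}$ for $x\neq h^-$: the fixed pair lies in the orbit closure of other pairs. That is the wrong direction. To conclude $(k^+,k^-)\in\overline{G(h^+,h^-)}$ you would need $\overline{G(h^+,h^-)}$ to contain some pair $(x,k^-)$ whose second coordinate is \emph{exactly} $k^-$. Minimality only gives $g_nh^-\to k^-$, and nothing prevents $g_nh^+\to k^-$ as well, in which case the limit falls onto the diagonal, outside $X^{(2)}$.

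In fact the orbit of a loxodromic fixed pair is not dense in general. Take $F_2=\langle s,t\rangle$ acting on the boundary of its Cayley tree. The pairs $(gs^{+\infty},gs^{-\infty})$ are endpoints of lines all of whose edges are labelled $s$. This orbit therefore misses the open set of pairs $(\xi,\eta)$ with $\xi$ beginning with $t$ and $\eta$ beginning with $t^{-1}$, because the geodesic from such an $\eta$ to $\xi$ passes through the identity along two $t$-edges. Dense orbits do exist once transitivity is known (by a Baire argument), but not this one.

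The missing idea is how to control one coordinate exactly. The paper picks $h$ with $(h^+,h^-)\in V_1\times V_2$ and uses minimality to find $r\in G$ with $y:=r^{-1}h^-\in U_2$. For any $x\in U_1$ one has $rx\neq h^-$, so $h^nr(x,y)=(h^nrx,h^-)\in V$ for large $n$.

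Your Step 3 was in fact one remark away from an equally short proof. Keep $h$ with $h^+\in V_1$ and $h^-\in U_2$, but do not insist that the second point be $h^-$ itself. Fix $a\in U_1$ and $c\in V_2$. Since $c\neq h^+$, we have $h^{-n}c\to h^-$, so $b_n:=h^{-n}c\in U_2$ for large $n$. Since $a\neq h^-$, we have $h^na\to h^+$. Hence $(a,b_n)\in U$ and $h^n(a,b_n)=(h^na,c)\in V_1\times V_2$ for $n$ large. Either way, no second loxodromic and no ping-pong is needed.
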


\begin{proof}
Let $U,V\subset X^{(2)}$ be any two nonempty open subsets. We show that there is some $g\in G$ such that $g(U)\cap V$ is non-empty. 

By shrinking $U$ and $V$, we may assume that there exist open sets $U_1,U_2,V_1,V_2\subset X$ such that $U_1\cap U_2 = V_1\cap V_2 = \emptyset$ and $U = U_1 \times U_2$, $V = V_1 \times V_2$. By \Cref{dense pairs}, there exists $h \in G$ such that $h^+ \in V_1$ and $h^- \in V_2$. Since the $G$-action on $X$ is minimal, there exists $r\in G$ such that $r^{-1} h^-\in U_2$. We set $y := r^{-1} h^-$ and take $x\in U_1$. Then $(x,y)\in U_1\times U_2 = U$, so $x\ne y$ and hence $rx \ne h^-$. Then for $n\in \Nb$ large enough, \[h^nr(x,y) = (h^n rx, h^-)\in V_1 \times V_2 = V~.\] Hence $h^nr U \cap V \ne \emptyset$.
\end{proof}

Recall that a point $x\in X$ is \emph{conical} if there is a collapsing sequence $(\gamma_n)$ in $G$ with repelling point $x$, such that the sequence $(\gamma_n x)$ converges to a point in $X$ that is distinct from the attracting point of $(\gamma_n)$. A point $x\in X$ is \emph{parabolic} if $\Stab_G(x)$ is infinite and does not contain loxodromic elements. A parabolic point $x\in X$ is \emph{bounded} if $(X\setminus x)/\Stab_G(x)$ is compact. Notice that all conical points and parabolic points in $X$ lie in the limit set.

A convergence group action of $G$ on $X$ is \emph{uniform} if every point in $X$ is a conical point, and is \emph{geometrically finite} if every point in $X$ is either conical or bounded parabolic.

\subsection{Dynamics in projective spaces}

Next we recall some basic facts about the linear group actions on projective spaces.

Let $e_1,\dots,e_{d+1}$ be the standard basis of $\Rb^{d+1}$. Each $g\in \PGL(d+1,\Rb)$ admits a singular value decomposition \[g \;=\; k\,a(g)\,l\] such that $k,l\in \mathsf{PO}(d+1,\Rb)$ and \[a(g) \;=\; \operatorname{diag}\big(\sigma_1(g),\dots,\sigma_{d+1}(g)\big)~,\] where \[\sigma_1(g)\geqslant \cdots \geqslant \sigma_{d+1}(g) > 0\] denote the singular values of some (any) linear representative of $g$ with unit determinant. If $\sigma_p(g)> \sigma_{p+1}(g)$ for some $1\leqslant p \leqslant d$, we set \[U_p(g) \;:=\; k\,\Span(e_1,\dots,e_p)~.\] Even though $k$ and $l$ are not unique to $g$, it is straightforward to verify that $U_p(g)$ does not depend on the choice of singular value decomposition of $g$.

For any $1\leqslant p\leqslant d$, let $d_\angle$ denote the angle metric on $\Gr_p(\Rb^{d+1})$ with respect to the standard inner product on $\Rb^{d+1}$. The next lemma is a well-known result that estimates the distances between $gU_p(h)$, $U_p(gh)$, and $U_p(g)$ for $h,g\in\PGL(d+1,\Rb)$. See \cite[Lemma A.4 and A.5]{bochi2019anosov} for a proof.

\begin{lemma}\label{slmsfsvjsgfhb}
Let $p\in\{1,\dots,d\}$, and let $h,g\in\PGL(d+1,\Rb)$.
\begin{enumerate}
    \item If $\sigma_p(gh)>\sigma_{p+1}(gh)$ and $\sigma_p(h)>\sigma_{p+1}(h)$, then
\[d_\angle(gU_p(h),U_p(gh))\leqslant \sigma_1(g)\sigma_1(g^{-1})\frac{\sigma_{p+1}}{\sigma_{p}}(h)~.\]
    \item If $\sigma_p(gh)>\sigma_{p+1}(gh)$ and $\sigma_p(g)>\sigma_{p+1}(g)$, then
\[d_\angle(U_p(g),U_p(gh))\leqslant \sigma_1(h)\sigma_1(h^{-1})\frac{\sigma_{p+1}}{\sigma_{p}}(g)~.\]
\end{enumerate}
\end{lemma}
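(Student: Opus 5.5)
Both parts follow from the singular value decomposition together with a perturbation estimate for the top $p$-dimensional singular subspace. Part (2) reduces to part (1) by duality, so the work is in part (1).

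\emph{Reduction of (2) to (1).} Pass to transposes, or equivalently to the dual action on $\Rb^{d+1*}$. Under $g\mapsto (g^{-1})^T$, the subspace $U_p(g)$ corresponds to the annihilator of $U_{d+1-p}(g^{-1})$. Moreover, $d_\angle$ on $\Gr_p$ agrees with $d_\angle$ on $\Gr_{d+1-p}$ under taking orthogonal complements. So (2) for the pair $(g,h)$ becomes (1) for the pair $(h^{-1},g^{-1})$, applied to the product $h^{-1}g^{-1}=(gh)^{-1}$. Here we use $\frac{\sigma_{d+2-p}}{\sigma_{d+1-p}}(g^{-1})=\frac{\sigma_{p+1}}{\sigma_p}(g)$.

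\emph{Proof of (1).} Write $h=k\,a(h)\,l$ and normalize so that $k=\Id$, by replacing $g$ with $gk$; this changes neither the singular values of $g$ nor the angle. Then $U_p(h)=\Span(e_1,\dots,e_p)=:E$. Let $F=E^\perp$, and set $V=U_p(gh)$, which is the span of the top $p$ right-singular directions of $(gh)^T$.

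The key fact is a variational characterization. $U_p(gh)$ is the $p$-plane $W$ that maximizes the $p$-th exterior norm $\|\wedge^p((gh)^T)|_{\wedge^pW}\|$. Equivalently, $U_p(gh)^\perp$ is the $(d+1-p)$-plane on which $(gh)^T$ has the smallest operator norm, equal to $\sigma_{p+1}(gh)$.

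Now take a unit vector $u\in gE$, and write $u=u_V+u_{V^\perp}$ with $\|u_{V^\perp}\|=\sin\theta$. The angle $d_\angle(gE,V)$ is the largest such $\sin\theta$ over unit $u\in gE$. Estimate $\|(gh)^{-1}u\|$ in two ways.

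On one hand, $u=g e$ with $e\in E$ and $\|e\|\le\sigma_1(g^{-1})$, so
\[(gh)^{-1}u=h^{-1}e=l^{-1}a(h)^{-1}e,\]
which has norm at most $\sigma_1(g^{-1})/\sigma_p(h)$.

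On the other hand, $(gh)^{-1}$ expands vectors in $V^\perp=U_{d+1-p}((gh)^{-1})$ by at least $1/\sigma_{p+1}(gh)$. It also keeps the images of $V$ and $V^\perp$ orthogonal. Hence
\[\|(gh)^{-1}u\|\geqslant \sin\theta/\sigma_{p+1}(gh).\]

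Comparing the two bounds gives
\[\sin\theta\leqslant\sigma_1(g^{-1})\frac{\sigma_{p+1}(gh)}{\sigma_p(h)}.\]
Finally, $\sigma_{p+1}(gh)\leqslant\sigma_1(g)\sigma_{p+1}(h)$ by the minimax characterization of singular values, which yields the stated bound $\sigma_1(g)\sigma_1(g^{-1})\frac{\sigma_{p+1}}{\sigma_p}(h)$.

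\emph{Main obstacle.} The delicate step is the orthogonality claim in the lower bound. The singular value decomposition of $(gh)^{-1}$ maps $V$ and $V^\perp$ to orthogonal subspaces, and this is what gives the inequality $\|(gh)^{-1}u\|\ge\|(gh)^{-1}u_{V^\perp}\|$. One must also check that the largest sine of principal angles is exactly the metric $d_\angle$, and that the hypotheses $\sigma_p>\sigma_{p+1}$ make all the $U_p$ well defined.
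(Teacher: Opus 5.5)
Your part (1) is correct. The paper gives no proof of its own and simply cites Bochi--Potrie--Sambarino, Lemmas A.4 and A.5, so your two-sided estimate of $\norm{(gh)^{-1}u}$ is a clean self-contained substitute.

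\textbf{The gap is in part (2): the reduction of (2) to (1) does not work.} Write $g=k\,a(g)\,l$. Then $U_{d+1-p}(g^{-1})=l^{-1}\Span(e_{p+1},\dots,e_{d+1})$, and its orthogonal complement is $l^{-1}\Span(e_1,\dots,e_p)=U_p(g^T)=g^{-1}U_p(g)$. This is the top \emph{right}-singular space of $g$, not $U_p(g)$.

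The contragredient $g\mapsto(g^{-1})^T$ does satisfy $U_p(g)^\perp=U_{d+1-p}((g^{-1})^T)$. However, it preserves the order of products, $((gh)^{-1})^T=(g^{-1})^T(h^{-1})^T$, so it turns (2) into another instance of (2), not (1). The order-reversing operations (inverse, transpose) would move $h$ to the left, but they also exchange the image-side spaces $U_p(\cdot)$ with domain-side ones.

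Concretely, (1) for $(h^{-1},g^{-1})$ in dimension $d+1-p$, after taking orthogonal complements, says $d_\angle\bigl(h^TU_p(g^T),U_p((gh)^T)\bigr)\leqslant\sigma_1(h)\sigma_1(h^{-1})\frac{\sigma_{p+1}}{\sigma_p}(g)$. This bounds a genuinely different quantity. For $d=p=1$, $g=\diag(\lambda,\lambda^{-1})$ and $h=\begin{pmatrix}1&1\\0&1\end{pmatrix}$, one computes, as $\lambda\to\infty$,
$d_\angle(U_1(g),U_1(gh))\sim\tfrac12\lambda^{-2}$ but $d_\angle\bigl(h^TU_1(g^T),U_1((gh)^T)\bigr)\sim\tfrac14\lambda^{-4}$.

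\textbf{How to repair it.} Prove (2) directly by the mirror image of your (1) argument.
Replacing $g$ by $k^{-1}g$, assume $g=a(g)\,l$, so that $U_p(g)=E:=\Span(e_1,\dots,e_p)$.
For a unit vector $u\in U_p(gh)$, the vector $x:=(gh)^{-1}u$ satisfies $\norm{x}\leqslant 1/\sigma_p(gh)$, and $u=a(g)(lhx)$.
Since $a(g)$ preserves $E$ and $E^\perp$ and has norm $\sigma_{p+1}(g)$ on $E^\perp$, the $E^\perp$-component of $u$ has norm at most $\sigma_{p+1}(g)\sigma_1(h)/\sigma_p(gh)$.
Using $\sigma_p(gh)\geqslant\sigma_p(g)/\sigma_1(h^{-1})$, this is at most $\sigma_1(h)\sigma_1(h^{-1})\frac{\sigma_{p+1}}{\sigma_p}(g)$, which is (2).

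The same image-side/domain-side slip also appears in your (1), though there it is harmless. With $gh=KAL$ and $V=K\Span(e_1,\dots,e_p)$, one has $V^\perp=U_{d+1-p}\bigl(((gh)^{-1})^T\bigr)$, while $U_{d+1-p}((gh)^{-1})=L^{-1}\Span(e_{p+1},\dots,e_{d+1})=(gh)^{-1}V^\perp$. The two facts you actually use follow directly from $gh=KAL$: $(gh)^{-1}$ expands $V^\perp$ by at least $1/\sigma_{p+1}(gh)$, and $(gh)^{-1}V$ is orthogonal to $(gh)^{-1}V^\perp$.
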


For any $x\in\mathbb{RP}^d\times\mathbb{RP}^{d*}$, let $x^1$ and $x^d$ denote its projection to $\mathbb{RP}^d$ and $\mathbb{RP}^{d*}$ respectively. The following lemma is a well-known dynamical description of the singular value decomposition. See \cite[Proposition~2.3]{canary2023patterson} for a proof.

\begin{lemma}\label{lemma: projective north-south dynamics}
    Let $a^+,a^-\in \Fc_{1,d}$ and let $(g_n)$ be an escaping sequence in $\PGL(d+1,\Rb)$. Then the following are equivalent.
    \begin{enumerate}
    \item[(1)] As $n\to \infty$, we have $\frac{\sigma_1(g_n)}{\sigma_{2}(g_n)}, \frac{\sigma_{d}(g_n)}{\sigma_{d+1}(g_n)} \to +\infty$, $\big(U_1(g_n),U_d(g_n)\big)\to a^+$, and $\big(U_1(g_n^{-1}),U_d(g_n^{-1})\big)\to a^-$. 
    \item[(2)] For any $x\in \mathbb{RP}^{d}\times \mathbb{RP}^{d*}$ such that $x^1$ is transverse to $(a^-)^{d}$ and $x^{d}$ is transverse to $(a^-)^1$, we have $g_n x \to a^+$ as $n\to +\infty$. Moreover, the convergence is uniform on any compact subset of $\mathbb{RP}^{d}\times \mathbb{RP}^{d*}$ whose elements are transverse to $a^-$.
    \item[(3)] There are open sets $O, O'\subset \mathbb{RP}^{d}\times \mathbb{RP}^{d*}$ such that $g_n x \to a^+$ as $n\to\infty$ for any $x\in O$ and $g_n^{-1}x' \to a^-$ as $n\to\infty$ for any $x'\in O'$. 
    \end{enumerate}
\end{lemma}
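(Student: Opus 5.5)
The plan is to prove (1)$\Rightarrow$(2), (2)$\Rightarrow$(1), and (1)$\Leftrightarrow$(3), working throughout with singular value decompositions $g_n=k_n a(g_n) l_n$ and passing to subsequences along which $k_n\to k$ and $l_n\to l$ in $\mathsf{PO}(d+1,\Rb)$. Since $\mathsf{PO}(d+1,\Rb)$ is compact this is always possible. The reverse implications then follow because the relevant limits turn out to be the same along every subsequence. A bookkeeping fact used repeatedly is that $g_n^{-1}=l_n^{-1}a(g_n)^{-1}k_n^{-1}$. Hence $U_1(g_n^{-1})=l_n^{-1}[e_{d+1}]$ and $U_d(g_n^{-1})=l_n^{-1}\Span(e_2,\dots,e_{d+1})$. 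Also, the action on $\mathbb{RP}^{d*}$ is that of $(g_n^{-1})^{T}=k_n a(g_n)^{-1}l_n$, which has the same form with the singular values reversed. Consequently condition (1) is symmetric under $g_n\leftrightarrow g_n^{-1}$, $a^+\leftrightarrow a^-$, and under passing to the dual action.

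\textbf{(1)$\Rightarrow$(2).} Let $K\subset\mathbb{RP}^d$ be compact with every point transverse to $(a^-)^d=\lim l_n^{-1}\Span(e_2,\dots,e_{d+1})$. Then for unit representatives $v$ of points of $K$, the $e_1$-coordinate of $l_nv$ is bounded away from $0$ uniformly for $n$ large. Because $\sigma_1/\sigma_2(g_n)\to\infty$, the line $a(g_n)l_n[v]$ converges uniformly to $[e_1]$. Therefore $g_n[v]\to\lim k_n[e_1]=\lim U_1(g_n)=(a^+)^1$ uniformly on $K$. The same argument on $\mathbb{RP}^{d*}$, using the dual decomposition and $\sigma_d/\sigma_{d+1}(g_n)\to\infty$, gives $g_n x^d\to (a^+)^d$ uniformly for $x^d$ transverse to $(a^-)^1$.

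\textbf{(1)$\Rightarrow$(3).} By the symmetry of (1), applying (1)$\Rightarrow$(2) to $(g_n)$ and to $(g_n^{-1})$ yields (3). Take $O$ to be the open set of pairs transverse to $a^-$ and $O'$ the open set of pairs transverse to $a^+$.

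\textbf{(3)$\Rightarrow$(1) and (2)$\Rightarrow$(1).} In both cases some nonempty open set $O$ has $g_nO$ collapsing pointwise to $a^+$; for (2) this is the set of pairs transverse to $a^-$. First I show the gaps diverge. Suppose $\sigma_1/\sigma_2(g_n)$ stays bounded along a subsequence. After normalizing, $a(g_n)$ converges to a diagonal matrix $D$ of rank at least $2$. Then on the open dense set where $Dl$ is defined, $g_n[v]\to k[Dlv]$, and this limit is non-constant on any open set, contradicting collapse on $O^1$. The same argument on the dual side forces $\sigma_d/\sigma_{d+1}\to\infty$. Once the gaps diverge, the computation in (1)$\Rightarrow$(2) shows that generic points converge to $(\lim U_1(g_n),\lim U_d(g_n))$ along the subsequence. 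Hence this limit equals $a^+$, independently of the subsequence, so $(U_1(g_n),U_d(g_n))\to a^+$. For (3), applying the same reasoning to $g_n^{-1}$ and $O'$ gives $(U_1(g_n^{-1}),U_d(g_n^{-1}))\to a^-$.

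\textbf{Identifying $a^-$ in (2)$\Rightarrow$(1).} This is the step I expect to be the main obstacle, since (2) only gives information about $g_n$ and not its inverses. Let $H=\lim U_d(g_n^{-1})=l^{-1}\Span(e_2,\dots,e_{d+1})$ along a subsequence, and suppose $H\neq (a^-)^d$. Choose a line $x^1\subset H$ not contained in $(a^-)^d$ and in general position inside $H$. Concretely, $l x^1$ should have nonzero $e_2$-component, with the relevant ratios controlled after passing to a further subsequence. Then $g_nx^1$ converges to a line in $k\Span(e_2,\dots,e_{d+1})$, which is different from $k[e_1]=(a^+)^1$. Pick $x^d$ transverse to $(a^-)^1$ with $x^1\subset\ker x^d$. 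Then $x=(x^1,x^d)$ satisfies the hypothesis of (2) but $g_nx\not\to a^+$, a contradiction. Hence $H=(a^-)^d$, and dually $\lim U_1(g_n^{-1})=(a^-)^1$. Since these limits do not depend on the subsequence, the full sequences converge and (1) holds.

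The delicate point is making the choice of $x^1$ rigorous when several of $\sigma_2,\dots,\sigma_{d+1}$ degenerate at different rates. I would handle this by passing to a subsequence on which the normalized restriction of $a(g_n)$ to $\Span(e_2,\dots,e_{d+1})$ converges, and choosing $x^1$ off the kernel of that limit.
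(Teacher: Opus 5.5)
The paper does not prove this lemma; it cites \cite[Proposition~2.3]{canary2023patterson}, so your argument has to stand on its own. Several parts are fine: (1)$\Rightarrow$(2), (1)$\Rightarrow$(3), (3)$\Rightarrow$(1), and, within (2)$\Rightarrow$(1), the divergence of the gaps and the identification of $a^+$.

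There is a genuine gap at exactly the step you flagged: identifying $a^-$ in (2)$\Rightarrow$(1). Your claim is that a fixed line $x^1\subset H$ in general position has $g_nx^1$ converging into $k\Pb(\Span(e_2,\dots,e_{d+1}))$. This is false. The line $l_nx^1$ only lies \emph{near} $\Span(e_2,\dots,e_{d+1})$. Its $e_1$-coordinate $\epsilon_n\to 0$ is multiplied by $\sigma_1(g_n)$, while everything else is multiplied by at most $\sigma_2(g_n)$. So whenever $\epsilon_n\frac{\sigma_1}{\sigma_2}(g_n)\to\infty$, you get $g_nx^1\to(a^+)^1$. Controlling the normalized restriction of $a(g_n)$ to $\Span(e_2,\dots,e_{d+1})$ does not affect this.

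Here is a concrete counterexample. Take $d=2$ and $g_n=\diag(e^n,1,e^{-n})\,r_{\theta_n}$, where $r_\theta$ is rotation by $\theta$ in the $(e_1,e_2)$-plane and $\theta_n=e^{-n/2}$. Then (1) holds with $a^+=([e_1],\Span(e_1,e_2))$ and $a^-=([e_3],\Span(e_2,e_3))$. Yet a direct computation gives $g_n[v]\to[e_1]$ for \emph{every} $[v]\neq[e_3]$, and $g_n[\phi]\to[e_3^*]$ for every $[\phi]$ with $\phi(e_3)\neq 0$. So the pointwise part of (2) also holds for the wrong flag $\tilde a^-=([e_3],\Span(e_1,e_3))$, and no single test point $x$ can produce your contradiction. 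The uniform clause of (2) is indispensable for (2)$\Rightarrow$(1), and your argument never uses it.

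The repair is to test on moving points. Pass to a subsequence with $k_n\to k$ and $l_n\to l$, and suppose $H=l^{-1}\Span(e_2,\dots,e_{d+1})\neq(a^-)^d$.
\begin{itemize}
\item Pick $y\in\Span(e_2,\dots,e_{d+1})$ with $l^{-1}[y]\notin(a^-)^d$, and set $x_n^1:=l_n^{-1}[y]\to l^{-1}[y]$.
\item For large $n$ these points lie in a compact ball $K^1$ disjoint from $(a^-)^d$. Fix any $x^d$ transverse to $(a^-)^1$.
\item Then $g_nx_n^1=k_n[a(g_n)y]$ lies in the hyperplane $k_n\Pb(\Span(e_2,\dots,e_{d+1}))$, which is orthogonal to $k_n[e_1]=U_1(g_n)\to(a^+)^1$.
\item Hence $g_nx_n^1$ stays at angle $\pi/2$ from $U_1(g_n)$ and cannot converge to $(a^+)^1$.
\end{itemize}
This contradicts uniform convergence on the compact set $K^1\times\{x^d\}$, whose elements are transverse to $a^-$. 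In the example above, these are exactly the points $x_n^1=r_{\theta_n}^{-1}[e_2]$, for which $g_nx_n^1=[e_2]$ for all $n$. The dual argument gives $\lim U_1(g_n^{-1})=(a^-)^1$.
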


Notice that if any of the statements in the lemma above holds, then $a^+$ and $a^-$ are unique to the sequence $(g_n)$. We call them the \emph{attracting flag} and \emph{repelling flag} of $(g_n)$ respectively. 

The singular value decomposition also allows us to deduce the following more precise description of the attracting and repelling dynamics on the flag manifold.

\begin{lemma}\label{lemma: attracting contraction}
Suppose that $(g_n)$ is a sequence in $\PGL(d+1,\Rb)$ such that \[\frac{\sigma_1(g_n)}{\sigma_{2}(g_n)}\to +\infty~, \frac{\sigma_{d}(g_n)}{\sigma_{d+1}(g_n)} \to +\infty \quad \text{as} \quad n\to \infty~,\] \[\big(U_1(g_n),U_d(g_n)\big)\to a^+~, \big(U_1(g_n^{-1}),U_d(g_n^{-1})\big)\to a^-\quad \text{as} \quad n\to \infty~.\] Then the following hold.
\begin{enumerate}
    \item For any compact subsets $K^1\subset\mathbb{RP}^d$ whose points are all transverse to $(a^-)^d$, there are constants $A > 0$ and $N\in \Nb$ such that for all $n\geqslant N$ and $x^1,y^1\in K^1$, we have \[d_\angle(g_nx^1,g_ny^1) \leqslant A\frac{\sigma_2(g_n)}{\sigma_1(g_n)} d_\angle(x^1,y^1)~.\]
    \item For any compact subsets $K^d\subset\mathbb{RP}^{d*}$ whose points are all transverse to $(a^-)^1$, there are constants $A > 0$ and $N\in \Nb$ such that for all $n\geqslant N$ and $x^d,y^d\in K^d$, we have \[d_\angle(g_nx^d,g_ny^d) \leqslant A\frac{\sigma_{d+1}(g_n)}{\sigma_d(g_n)} d_\angle(x^d,y^d)~.\]
    \item Let $\norm{\cdot}'$ denote the norm on $\T\Fc_{1,d}$ induced by the product angular metric. For any compact subset $K\subset\Fc_{1,d}$ whose points are all transverse to $a^-$, there are constants $A>0$ and $N\in\Nb$ such that for all $n\geqslant N$, $x\in K$, and $v\in\T_x\Fc_{1,d}$, we have \[\norm{\D g_n(v)}'_{g_nx} \leqslant A\left(\frac{\sigma_2(g_n)}{\sigma_1(g_n)} + \frac{\sigma_{d+1}(g_n)}{\sigma_d(g_n)} \right) \norm{v}'_x.~\]
\end{enumerate}
\end{lemma}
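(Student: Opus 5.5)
We prove the lemma by reducing, via the singular value decomposition, to the explicit action of diagonal matrices, and controlling the bounded orthogonal factors using compactness.

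Write $g_n = k_n a_n l_n$ with $k_n,l_n\in\PO(d+1,\Rb)$ and $a_n=a(g_n)$. Since $k_n$ and $l_n$ act by isometries of the angle metrics on $\mathbb{RP}^d$, $\mathbb{RP}^{d*}$ and $\Fc_{1,d}$, it suffices to bound the distortion of $a_n$ on the sets $l_nK^1$, $l_nK^d$, $l_nK$. Passing to subsequences is harmless once we argue by contradiction: if the constant $A$ failed for every $N$, we could extract a subsequence along which $l_n\to l$ and $k_n\to k$. The hypothesis $U_1(g_n^{-1})\to (a^-)^1$ and $U_d(g_n^{-1})\to(a^-)^d$ then identifies $l^{-1}\Span(e_{d+1})=(a^-)^1$ and $l^{-1}\Span(e_2,\dots,e_{d+1})=(a^-)^d$. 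Here we use that $g_n^{-1}=l_n^{-1}a_n^{-1}k_n^{-1}$, whose top singular directions are $l_n^{-1}e_{d+1}$ and so on.

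For (1), transversality of $K^1$ to $(a^-)^d$ together with compactness gives some $\epsilon>0$ such that, for $n$ large, every $x^1\in l_nK^1$ has a unit representative $v$ with $|\langle v,e_1\rangle|\geqslant\epsilon$. In the affine chart $v=e_1+w$ with $w\in\Span(e_2,\dots,e_{d+1})$ and $|w|\leqslant 1/\epsilon$, the diagonal map acts as
\[w\mapsto \diag(\sigma_2,\dots,\sigma_{d+1})w/\sigma_1,\]
which is linear with norm $\sigma_2/\sigma_1$. On a bounded chart region, the chart distance is bi-Lipschitz comparable to $d_\angle$, with constants depending only on $\epsilon$. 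The image lies in the region $|w|\leqslant 1/\epsilon$ as well, since the map contracts. This gives (1) with $A$ depending on $\epsilon$.

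Part (2) follows by the same argument in $\mathbb{RP}^{d*}$. There $a_n$ acts on covectors by the inverse transpose, so the attracting direction is $e_{d+1}^*$ and the contraction ratio is $\sigma_{d+1}/\sigma_d$. The needed transversality to $(a^-)^1$ is exactly the hypothesis on $K^d$.

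For (3), we use that $\Fc_{1,d}$ is a smooth submanifold of $\mathbb{RP}^d\times\mathbb{RP}^{d*}$ and that the product angular norm is, up to a uniform constant, the sum of the two factor norms. A tangent vector $v=(v^1,v^d)$ at $x\in K$ is mapped to $(\D g_n v^1,\D g_n v^d)$. The projections of $K$ are compact sets satisfying the transversality hypotheses of (1) and (2), because points of $K$ are transverse to $a^-$. Differentiating the Lipschitz bounds of (1) and (2), or directly bounding the derivative of the linear chart maps, gives
\[\norm{\D g_n v^1}\leqslant A\frac{\sigma_2}{\sigma_1}\norm{v^1},\qquad \norm{\D g_n v^d}\leqslant A\frac{\sigma_{d+1}}{\sigma_d}\norm{v^d}.\]
Summing these yields the bound in (3).

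The main obstacle is the uniformity in $n$: the compact set $l_nK^1$ moves with $n$. This is handled by the contradiction-and-subsequence argument above, which shows that for $n\geqslant N$ these sets stay a definite distance $\epsilon$ away from the hyperplane $\Span(e_2,\dots,e_{d+1})$.
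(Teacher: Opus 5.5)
Your proposal is correct and follows the paper's proof essentially step for step. Both arguments use the singular value decomposition, the affine chart $[e_1+u]\mapsto u$ on a bounded region where it is bi-Lipschitz for $d_\angle$, and the contraction $\diag(\sigma_2/\sigma_1,\dots,\sigma_{d+1}/\sigma_1)$ of norm $\sigma_2/\sigma_1$, which preserves that region. Both then get (2) by the dual argument and (3) by differentiating (1) and (2). The only cosmetic difference is that the paper gets uniformity in $n$ directly from $l_n^{-1}\Span(e_2,\dots,e_{d+1})=U_d(g_n^{-1})\to(a^-)^d$, so your contradiction-and-subsequence detour is unnecessary (though valid).
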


\begin{proof}
    We only prove (1) as (2) follows by applying (1) to $((g_n^{-1})^T)$, and (3) follows by differentiating the inequalities in (1) and (2) and applying the Cauchy-Schwarz inequality. 
    
    For each $n\in\Nb$, let $g_n=k_n a(g_n) l_n$ be a singular value decomposition of $g_n$, where $k_n,l_n\in\PO(d+1,\Rb)$ and $a(g_n)={\rm diag}(\sigma_1(g_n),\dots,\sigma_{d+1}(g_n))$. Set 
    \[E:=\Span(e_2,\ldots,e_{d+1}).\] 
    Since $l_n^{-1}E=U_d(g_n^{-1})\to(a^-)^d$ as $n\to \infty$ and every point in $K^1$ is transverse to $(a^-)^d$, it follows that the sequence of compact sets $(l_nK^1)$ remain uniformly transverse to the projective hyperplane $\mathbb{P}(E)$. As such, there exist constants $C\geqslant 0$ and $N_0\in\Nb$ such that, for all $n\geqslant N_0$, \[l_n K^1 \subset U_C := \{[e_1+u]\in\mathbb{RP}^d: u \in E~,\ \norm{u}\leqslant C\}~,\] where $\norm{\cdot}$ denotes the standard Euclidean metric on $E$. We denote by $\chi: U_C \to E~$ the affine coordinate map defined by $\chi([e_1+u]) = u$ for any $u\in E$. Then there is some $A_0>0$ such that $\chi$ is $A_0$-biLipschitz with respect to the angle metric on $U_C$ and the standard Euclidean metric on $E$. For every $n\in\Nb$, let $D_n:E\to E$ be the linear map defined by \[D_n = \diag\left( \frac{\sigma_2(g_n)}{\sigma_1(g_n)},\dots,\frac{\sigma_{d+1}(g_n)}{\sigma_1(g_n)}\right)~.\] Then $a(g_n)[e_1+u]=[e_1+D_nu]$ and $\norm{D_n}= \frac{\sigma_2(g_n)}{\sigma_1(g_n)}$. In particular, $D_n$ preserves $\chi(U_C)$.

    For $n\geqslant N_0$ and $x^1,y^1\in K^1$, let $u,v\in E$ be such that $l_n x^1=[e_1 + u]$ and $l_n y^1=[e_1 + v]$. Then 
    \begin{align*}
        d_\angle(g_n x^1, g_n y^1) & = d_\angle\bigl(a(g_n)l_nx^1,a(g_n)l_ny^1\bigr) = d_\angle([e_1 + D_n u], [e_1 + D_n v]) \\
        & \leqslant A_0 \norm{D_n u - D_n v} \leqslant A_0 \frac{\sigma_2(g_n)}{\sigma_1(g_n)} \norm{u - v}\\
        & \leqslant A_0^2\frac{\sigma_2(g_n)}{\sigma_1(g_n)} d_\angle(l_n x^1, l_n y^1) =  A_0^2\frac{\sigma_2(g_n)}{\sigma_1(g_n)} d_\angle(x^1, y^1)~.
    \end{align*} This complete the proof of (1).
\end{proof}

\subsection{Divergent, transverse, and Anosov subgroups}\label{section: Anosov}
Recall that a subgroup $\Gamma \subset \PGL(d+1,\Rb)$ is called  \emph{projective divergent} if every escaping sequence $(\gamma_n)$ in $\Gamma$ is \emph{divergent}, that is,
 \[\log \frac{\sigma_1(\gamma_n)}{\sigma_2(\gamma_n)} \to +\infty \quad\text{as}\quad n\to \infty~.\]
 Equivalently, every escaping sequence $(\gamma_n)$ in $\Gamma$ satisfies
 \[\log \frac{\sigma_d(\gamma_n)}{\sigma_{d+1}(\gamma_n)} \to +\infty \quad\text{as}\quad n\to \infty~.\]
For any projective divergent subgroup $\Gamma\subset\PGL(d+1,\Rb)$, the \emph{limit set of $\Gamma$} is \[\Lambda_\Gamma = \{F\in \Fc_{1,d} : F \text{ is the attracting flag of an escaping sequence } (\gamma_n)\subset \Gamma\}~.\] 
The limit set of $\Gamma$ is a $\Gamma$-invariant and compact subset of $\Fc_{1,d}$. We say that $\Gamma$ is \emph{non-elementary} if $\Lambda_\Gamma$ is infinite. In this case, the $\Gamma$-action on $\Lambda_\Gamma$ is minimal.

A subgroup $\Gamma\subset \PGL(d+1,\Rb)$ is \emph{projective transverse} if it is projective divergent and any two distinct flags in $\Lambda_\Gamma$ are transverse. It follows from \Cref{lemma: projective north-south dynamics} that if $\Gamma\subset\PGL(d+1,\Rb)$ is projective transverse, then $\Gamma$ acts on $\Lambda_\Gamma$ as a convergence group, and the biproximal elements in $\Gamma$ act as loxodromic elements on $\Lambda_\Gamma$. The limit set (in the sense of convergence groups) of the $\Gamma$-action on $\Lambda_\Gamma$ is all of $\Lambda_\Gamma$. As a consequence, one can deduce that if $\Gamma$ is projective transverse, then $\Gamma$ is elementary if and only if $\Lambda_\Gamma$ has either one or two points, and $\Gamma$ is non-elementary if and only if $\Lambda_\Gamma$ is a perfect set. In particular, every irreducible projective transverse subgroup is non-elementary.

The following lemma is an application of \Cref{lemma: attracting contraction} in the setting of projective transverse groups.

\begin{lemma}\label{lemma: projective expanding}
Let $\Gamma\subset \PGL(d+1,\Rb)$ be a projective transverse subgroup.
\begin{enumerate}
    \item If $z\in \Lambda_\Gamma$ is a conical point, then for any $C>1$, there exist open neighborhoods $O_1\subset \mathbb{RP}^{d}$ of $z^1$ and $O_d\subset \mathbb{RP}^{d*}$ of $z^d$ with the following property. There exists $\gamma\in \Gamma$ such that for any $x^1,y^1\in O_1$ and any $x^d,y^d\in O_d$, we have
    \[d_{\angle}(\gamma x^1,\gamma y^1)\geqslant C d_{\angle}(x^1,y^1)\quad\text{and}\quad d_{\angle}(\gamma x^d,\gamma y^d)\geqslant C d_{\angle}(x^d,y^d)~.\]
    \item If $p \in \Lambda_\Gamma$ is a parabolic point, $(\gamma_n)$ is a divergent sequence in $\Stab_\Gamma(p)$, and $K\subset\mathbb{RP}^d\times\mathbb{RP}^{d*}$ is a compact subset whose points are all transverse to $p$, then there is some $N\in\Nb$ such that for all $x,y\in K$ and all $n\geqslant N$,
    \[d_{\angle}(\gamma_n x^1,\gamma_ny^1) <d_{\angle}(x^1,y^1)\quad\text{and}\quad d_{\angle}(\gamma_n x^d,\gamma_ny^d) <d_{\angle}(x^d,y^d)\quad~.\]
\end{enumerate} 
\end{lemma}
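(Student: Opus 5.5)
Both parts will follow from \Cref{lemma: attracting contraction}, applied to suitable sequences in $\Gamma$ or their inverses.

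\emph{Part (1).} Since $z$ is conical, there is a collapsing sequence $(\gamma_n)$ for the $\Gamma$-action on $\Lambda_\Gamma$. It has repelling point $z$, attracting point $a$, and $\gamma_n z\to b\neq a$. The idea is to show that $(\gamma_n^{-1})$ contracts uniformly near $b$; then $\gamma_n$ itself expands near $z$.

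First, by \Cref{lemma: projective north-south dynamics} and divergence, after passing to a subsequence $(\gamma_n)$ has an attracting flag and a repelling flag in $\Fc_{1,d}$. Using property (3) of that lemma on open subsets of $\Lambda_\Gamma$ (or transversality of $\Lambda_\Gamma$), these flags must be $a$ and $z$. Hence $(\gamma_n^{-1})$ has attracting flag $z$ and repelling flag $a$. Since $b\neq a$ and both lie in $\Lambda_\Gamma$, $b$ is transverse to $a$.

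Next, choose compact neighbourhoods $K^1$ of $b^1$ and $K^d$ of $b^d$ whose points are transverse to $a^d$ and $a^1$ respectively. \Cref{lemma: attracting contraction} applied to $(\gamma_n^{-1})$ gives, for $n\geqslant N$ and $u,v\in K^1$,
\[
d_\angle(\gamma_n^{-1}u,\gamma_n^{-1}v)\leqslant A\,\frac{\sigma_2}{\sigma_1}(\gamma_n^{-1})\,d_\angle(u,v),
\]
and the analogous inequality on $K^d$ with $\sigma_{d+1}/\sigma_d$. These ratios tend to $0$, so fix $n$ with $A\frac{\sigma_2}{\sigma_1}(\gamma_n^{-1})\leqslant 1/C$ and $A\frac{\sigma_{d+1}}{\sigma_d}(\gamma_n^{-1})\leqslant 1/C$.

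Also take $n$ large enough that $\gamma_n z^1\in\interior K^1$ and $\gamma_n z^d\in\interior K^d$. Set $O_1:=\gamma_n^{-1}\interior K^1$ and $O_d:=\gamma_n^{-1}\interior K^d$; these are neighbourhoods of $z^1$ and $z^d$. For $x^1,y^1\in O_1$, substitute $u=\gamma_n x^1$ and $v=\gamma_n y^1$ into the contraction estimate to get
\[
d_\angle(\gamma_n x^1,\gamma_n y^1)\geqslant C\,d_\angle(x^1,y^1),
\]
and similarly on $O_d$. Take $\gamma=\gamma_n$.

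\emph{Part (2).} Let $(\gamma_n)$ be divergent in $\Stab_\Gamma(p)$. Every subsequence has a further subsequence with attracting and repelling flags. The attracting flag lies in $\Lambda_\Gamma$. Since each $\gamma_n$ fixes $p\in\Lambda_\Gamma$, and the north–south dynamics of \Cref{lemma: projective north-south dynamics}(2) would send $p$ to the attracting flag unless $p$ equals the repelling flag, transversality of distinct points of $\Lambda_\Gamma$ forces both flags to be $p$. Hence the full sequence has attracting and repelling flag $p$.

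Now $K$ is compact and its points are transverse to $p$, so \Cref{lemma: attracting contraction}(1),(2) applies with $K^1,K^d$ the projections of $K$. It gives contraction factors $A\frac{\sigma_2}{\sigma_1}(\gamma_n)$ and $A\frac{\sigma_{d+1}}{\sigma_d}(\gamma_n)$, which are eventually $<1$. This yields the strict inequalities whenever $x^1\neq y^1$. If $x^1=y^1$ both sides are $0$, so the strict inequality must be read for distinct points; I would note this convention.

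\emph{Main obstacle.} The delicate step is identifying the attracting and repelling flags, in $\Fc_{1,d}$, of the sequences with the convergence-group attracting and repelling points, especially showing that the repelling flag of a parabolic sequence is $p$. I would handle this through \Cref{lemma: projective north-south dynamics}(2) and the transversality of $\Lambda_\Gamma$.
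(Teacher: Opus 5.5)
Your part (1) is correct and is essentially the paper's argument, with the letters $a$ and $b$ swapped.

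Part (2) has a genuine gap, at exactly the step you flag as the main obstacle. Fix a subsequence with attracting flag $a^+$ and repelling flag $a^-$, both in $\Lambda_\Gamma$. North--south dynamics plus transversality give only this: if $p\neq a^-$, then $p$ is transverse to $a^-$, so $p=\gamma_n p\to a^+$, i.e.\ $p=a^+$. Hence $p\in\{a^+,a^-\}$, and nothing more. Transversality cannot exclude $p=a^+\neq a^-$ or $p=a^-\neq a^+$.

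Your argument also never uses that $p$ is parabolic, and it must. Suppose $p=\gamma^+$ for a biproximal $\gamma\in\Gamma$. Then $(\gamma^n)$ is a divergent sequence in $\Stab_\Gamma(p)$ whose repelling flag is $\gamma^-\neq p$. Since $\gamma^-$ is transverse to $p$, a small compact neighbourhood $K$ of $\gamma^-$ is admissible, and $\gamma^n$ expands distances near $\gamma^-$, so the conclusion of (2) fails. The harmful case is $p=a^+\neq a^-$, because \Cref{lemma: attracting contraction} needs $K$ to be transverse to the \emph{repelling} flag.

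The missing idea is to use parabolicity through convergence-group theory. If $p=a^-\neq a^+$, then $(\gamma_n)$ is collapsing with repelling point $p$ and $\gamma_n p=p\to p\neq a^+$, so $p$ is conical. If $p=a^+\neq a^-$, the same reasoning applies to $(\gamma_n^{-1})$. Both cases contradict the fact that a parabolic point is never conical \cite[Proposition~3.2]{bowditch1999convergence}. This is how the paper gets $a^+=a^-=p$ along every subsequence. After that, \Cref{lemma: attracting contraction} applies exactly as you describe, since $K$ is transverse to the repelling flag $p$.

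Your remark that the strict inequality should be read only for $x^1\neq y^1$ (respectively $x^d\neq y^d$) is fair. It is harmless for how the lemma is used later.
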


\begin{proof}
(1) Since $z$ is a conical point, there is a collapsing sequence $(\gamma_n)$ in $\Gamma$ with repelling point $z$ and attracting point $b\in\Lambda_\Gamma$ such that $\gamma_n z\to a$ for some $a\in\Lambda_\Gamma\setminus\{b\}$. As $\Gamma$ is transverse, $a,b\in \Lambda_\Gamma$ are transverse. We choose open neighborhoods $U_1\subset\mathbb{RP}^d$ of $a^1$ and $U_d\subset\mathbb{RP}^{d*}$ of $a^d$ such that every point in $\overline{U_1}$ is transverse to $b^d$ and every point in $\overline{U_d}$ is transverse to $b^1$.

The sequence $(\gamma_n^{-1})$ is collapsing with attracting point $z$ and repelling point $b$. By \Cref{lemma: projective north-south dynamics,lemma: attracting contraction}, for all large enough $n$, both $\gamma_n^{-1}\vert_{\overline{U_1}}$ and $\gamma_n^{-1}\vert_{\overline{U_d}}$ are $\frac{1}{C}$-Lipschitz. Since $\gamma_n z\to a$, after enlarging $n$ if necessary, we may also assume that $(\gamma_nz)^1\in U_1$ and $(\gamma_nz)^d\in U_d$.

Fix such an $n$, and set $\gamma=\gamma_n$, $O_1=\gamma^{-1}U_1$ and $O_d=\gamma^{-1}U_d$. Then $O_1$ and $O_d$ are open neighborhoods of $z^1$ and $z^d$, respectively. For any $x^1,y^1\in O_1$, we have $\gamma x^1,\gamma y^1\in U_1$, and hence \[d_\angle(x^1,y^1) = d_\angle\bigl(\gamma^{-1}\gamma x^1, \gamma^{-1}\gamma y^1\bigr) \leqslant \frac{1}{C} d_\angle(\gamma x^1,\gamma y^1)~.\] The proof for the $\mathbb{RP}^{d*}$ part is similar.

(2) Fix $0<\epsilon<1$. We show that every subsequence of $(\gamma_n)$ admits a further subsequence along which $\gamma_n\vert_K$ is $\epsilon$-Lipschitz for large enough $n$.

For an arbitrary subsequence of $(\gamma_n)$, by passing to a further subsequence, still denoted by $(\gamma_n)$, we may assume that $(\gamma_n)$ is collapsing. Let $a$ and $b$ be its repelling point and attracting point respectively. It suffices to show that $a=b$. Once we have done so, we simply apply \Cref{lemma: attracting contraction} to complete the proof.

Since $(\gamma_n)$ is a sequence in $\Stab_\Gamma(p)$, we have $p\in\{a,b\}$. If $p = a \ne b$, then by definition $p$ is conical, which  contradicts the fact that a parabolic point cannot be conical \cite[Proposition~3.2]{bowditch1999convergence}. By the same argument to $(\gamma_n^{-1})$, we also derive a contradiction from $p = b \ne a$. Therefore $a = b = p$.
\end{proof}

Among the various equivalent characterizations of Anosov representations, we refer to the formulation below as the definition (see for example Kapovich--Leeb--Porti \cite{kapovich2014morse} and Bochi--Potrie--Sambarino \cite{bochi2019anosov}).

\begin{definition}\label{theorem: Anosov via homeomorphic limit maps}
    We say that a subgroup $\Gamma\subset\PGL(d+1,\Rb)$ is \emph{projective Anosov} if and only if it is projective transverse, and it acts on $\Lambda_\Gamma$ as a uniform convergence group. We say that $\Gamma$ is \emph{relatively projective Anosov} if it is projective transverse, and it acts on $\Lambda_\Gamma$ as a geometrically finite convergence group.
\end{definition}

We say that an element $g\in \PGL(d+1,\Rb)$ is \emph{weakly unipotent} if all eigenvalues of a lift of $g$ in $\GL(d+1,\Rb)$ have the same modulus, and we say that a subgroup $P \subset \PGL(d+1,\Rb)$ is \emph{weakly unipotent} if it consists only of weakly unipotent elements. The following is another useful structural feature of relatively projective Anosov subgroups. 

\begin{proposition}[{\cite[Proposition~1.13]{zhu2022relatively}}]\label{proposition: parabolic subgroups are weakly unipotent}
     If $\Gamma\subset\PGL(d+1,\Rb)$ is relatively projective Anosov, then the stabilizers in $\Gamma$ of parabolic points in $\Lambda_\Gamma$ are all weakly unipotent subgroups.
\end{proposition}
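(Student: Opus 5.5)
The plan is to reduce the statement to a purely dynamical fact about the stabilizer $P:=\Stab_\Gamma(p)$ of a parabolic point $p\in\Lambda_\Gamma$. We will show that every element of $P$ has all eigenvalues of the same modulus. Fix $g\in P$ and argue by contradiction: suppose a lift of $g$ has eigenvalues of at least two distinct moduli.

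\textbf{Step 1: $g$ cannot be proximal.} First we pass to a power of $g$. Either $g$ has finite order, in which case it is trivially weakly unipotent, or $(g^n)$ is an escaping sequence in the discrete group $\Gamma$. In the latter case, projective divergence gives $\sigma_1(g^n)/\sigma_2(g^n)\to\infty$ and $\sigma_d(g^n)/\sigma_{d+1}(g^n)\to\infty$. By the Jordan decomposition, $\sigma_1(g^n)/\sigma_2(g^n)\to\infty$ forces the top eigenvalue modulus to be attained by a single real eigenvalue with a one-dimensional Jordan block. So $g$ is proximal, and similarly $g^{-1}$ is proximal. In that situation $g$ is biproximal, and the discussion following the definition of projective transverse subgroups says such elements act as loxodromic elements on $\Lambda_\Gamma$. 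Loxodromic elements are forbidden in the stabilizer of a parabolic point, which is the required contradiction.

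\textbf{Step 2: handling the unequal-moduli case carefully.} The subtle point is that when the moduli are not all equal we must still derive a contradiction, and we must check that the divergence argument really does produce proximality of $g$ itself. Here is how that goes.

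1. By the Jordan form, $\sigma_1(g^n)\asymp n^{k-1}\lambda_{\max}^n$, where $k$ is the largest Jordan block size among eigenvalues of maximal modulus $\lambda_{\max}$.
2. The ratio $\sigma_1(g^n)/\sigma_2(g^n)$ stays bounded unless there is a unique block of maximal modulus. If that block had size $k\ge 2$, or came from a complex pair, the ratio would still grow only polynomially. Polynomial growth is allowed by divergence, so this alone gives no contradiction.
3. So we use the unequal-moduli hypothesis. Since the moduli differ, $\lambda_{\max}>\lambda_{\min}$.
4. Consider the generalized eigenspace sum $V_{\max}$ for eigenvalues of modulus $\lambda_{\max}$, and $V_{<}$ for all the others. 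The sequence $(g^n)$ has attracting flag $a^+$ with $(a^+)^1\subset \Pb(V_{\max})$.
5. Likewise $(g^{-n})$ has attracting flag $a^-$ with $(a^-)^1\subset\Pb(V_{\min})$, and $V_{\min}\cap V_{\max}=0$. Hence $a^+\ne a^-$.
6. On the other hand, $(g^n)$ is an escaping sequence in $P$. The proof of \Cref{lemma: projective expanding}(2) shows that any collapsing subsequence of an escaping sequence in $P$ has attracting and repelling point both equal to $p$. Applying this to $(g^n)$ gives $a^+=p=a^-$.

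Steps 5 and 6 contradict each other.

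\textbf{Expected main obstacle.} The hard step is identifying the repelling flag $a^-$ of $(g^n)$ with the attracting flag of $(g^{-n})$, and locating these flags inside the generalized eigenspaces. I would first use \Cref{lemma: projective north-south dynamics}(3): the attracting and repelling flags of $(g^n)$ are the limits of $g^{n}x$ and $g^{-n}x'$ for generic $x,x'$. For generic $x$, the Jordan dynamics show that $g^n x^1$ accumulates into $\Pb(V_{\max})$ and $g^{-n}x^1$ accumulates into $\Pb(V_{\min})$. Since these limits are unique, we obtain $(a^+)^1\in\Pb(V_{\max})$ and $(a^-)^1\in\Pb(V_{\min})$, and these two are distinct.

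Alternatively, one could cite \cite[Proposition~1.13]{zhu2022relatively} directly, as the paper does.
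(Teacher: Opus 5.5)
Your Step~2 (items 4--6) is a correct, self-contained proof. It differs genuinely from the paper, which gives no argument here and simply imports the statement from Zhu--Zimmer's theory of relatively Anosov representations.

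Your argument runs as follows. For $g\in\Stab_\Gamma(p)$ of infinite order, pass to a subsequence of $(g^n)$ that has attracting and repelling flags $a^\pm$, using \Cref{lemma: projective north-south dynamics}. Test on generic points that also have nonzero component in $V_{\max}$ (respectively $V_{\min}$). This gives $(a^+)^1\in\Pb(V_{\max})$ and $(a^-)^1\in\Pb(V_{\min})$, and these are disjoint when $\lambda_{\max}>\lambda_{\min}$. On the other hand, the argument in the proof of \Cref{lemma: projective expanding}(2) forces $a^+=a^-=p$. That proof does not use the present proposition, so there is no circularity.

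Two points should be made explicit:
\begin{itemize}
\item All the flags exist only along a subsequence.
\item The flags of \Cref{lemma: projective north-south dynamics} coincide with the convergence-group attracting and repelling points on $\Lambda_\Gamma$. This holds because $\Lambda_\Gamma$ is transverse, so compact subsets of $\Lambda_\Gamma\setminus\{a^-\}$ are uniformly transverse to $a^-$.
\end{itemize}
You can also bypass Bowditch's non-conicality. Since $g$ commutes with $g^{n_k}$, it fixes both $a^+$ and $a^-$. If these were distinct, $g$ would be loxodromic, which the definition of a parabolic point excludes.

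What your route buys is generality and economy. It uses only projective divergence, transversality of $\Lambda_\Gamma$, and the parabolicity of $p$. So it proves the statement for every projective transverse subgroup, with no geometric finiteness, using only the paper's own preliminaries. The citation instead places the result inside Zhu--Zimmer's general relatively Anosov framework.

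Step~1, however, is false and must be deleted. Divergence does not force proximality. For a unipotent $2\times 2$ Jordan block, $\sigma_1(g^n)/\sigma_2(g^n)\asymp n^2\to\infty$, yet $g$ is not proximal; your own item 2 observes exactly this. If Step~1 were valid, it would show that $\Stab_\Gamma(p)$ has no infinite-order elements at all, whatever the eigenvalue moduli. That fails for the unipotent parabolic elements that typically occur.

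Items 1--3 of Step~2 are also unnecessary. The remark about a complex pair is inaccurate: a complex pair of maximal modulus keeps $\sigma_1(g^n)/\sigma_2(g^n)$ bounded, so divergence excludes it outright rather than allowing polynomial growth.
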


\section{Paracomplex hyperbolic geometry}\label{section: paracomplex hyperbolic geometry}

In this section, we recall basic facts about paracomplex numbers (also known as split-complex numbers) and paracomplex hyperbolic space. The paracomplex numbers were first introduced by Cockle \cite{cockle1849}, while the paracomplex hyperbolic space already appears implicitly in Cartan’s classification of pseudo-Riemannian symmetric spaces, where it arises as a projective quadric associated to a split quadratic form. Its interpretation in terms of paracomplex or parahermitian geometry developed later, notably through the work of Libermann \cite{libermann1952structures}. In a recent work, Rungi--Tamburelli \cite{rungi2025complex} studied Higgs bundles via minimal maps into such spaces.

\subsection{Paracomplex numbers}

Recall that the algebra of paracomplex numbers $\Rb_\tau$ is defined to be the algebra generated by $1$ and $\tau$ over $\Rb$ where $\tau$ is non-real and $\tau^2 = 1$. For $z = a + b \tau \in \Rb_\tau$ where $a,b \in \Rb$, the \emph{conjugate} of $z$ is defined as
\[\overline{z} = a - b\tau~.\] 
Notice that $\Rb_\tau$ is a commutative algebra, but is not an integral domain. Indeed, $\Rb_\tau$ contains exactly two idempotent elements other than $0$ and $1$, given by
\[e_+ = \dfrac{1+\tau}{2} \quad \quad \text{and} \quad \quad e_- = \dfrac{1-\tau}{2}~.\]
Observe that $e_+e_-=0$, and that conjugation is an involutive automorphism of $\Rb_\tau$ (as a commutative algebra) which permutes $e_+$ and $e_-$.

Instead of writing elements in $\Rb_\tau$ as $\Rb$-linear combinations of $1$ and $\tau$, it is often convenient to write them as $\Rb$-linear combinations of $e_+$ and $e_-$. Indeed, for any $a,b,c,d\in\Rb$, we have 
\[(a e_++be_-)(ce_++de_-)=ace_++bde_-~,\]
and in particular, $\overline{(a e_++be_-)}(a e_++be_-)=ab$. From this, we see that the group (under multiplication) of invertible elements in $\Rb_\tau$ is
\[\Rb_\tau^*=\{z\in \Rb_{\tau}:\bar{z} z\neq 0\}=\{ae_++be_-\in\Rb_\tau:ab\neq 0\}~,\]
and if $ae_++be_-$ is invertible, then $(ae_++be_-)^{-1}=a^{-1}e_++b^{-1}e_-$. The following subgroups of $\Rb_\tau^*$ will also be of interest:
\[\Rb_\tau^+ = \{z\in \Rb_{\tau}:\bar{z} z>0\}=\{ae_++be_-\in\Rb_\tau:ab>0\}\]
and
\[\mathcal U = \{z\in \Rb_{\tau}:\bar{z} z=1\}=\{ae_++be_-\in\Rb_\tau:ab=1\}\cong\Rb^*~.\]
Notice that $\Rb_\tau^+\subset\Rb_\tau^*$ and $\mathcal U\subset\Rb_\tau^+$ are normal subgroups. Furthermore, the maps
\[(\Zb_2)\times\Rb_\tau^+\to\Rb_\tau^*\quad \text{and}\quad \Rb^+\times \mathcal U\to\Rb_\tau^+\]
given by $(t,z)\mapsto \tau^tz$ and $(s,z)\mapsto sz$ respectively are group isomorphisms.

\subsection{The paracomplex hyperbolic space}\label{hyperboloid model}
Define the pairing
\[q:\Rb_{\tau}^{d+1}\times\Rb_\tau^{d+1}\to\Rb_\tau\]
by $q(\vb,\wb) = \vb^T Q \overline{\wb}$ for any $\vb, \wb \in \Rb_\tau^{d+1}$, where \[ Q = \left(\begin{matrix}  &  &1 \\  & ... &  \\ 1 &  &  \end{matrix}\right) \in \mathrm{Mat}_{(d+1)\times (d+1)}(\Rb_\tau)~.\] 
Observe that this pairing is \emph{parahermitian}, i.e., 
\begin{itemize}
    \item $q(a\vb+b\vb',\wb)=aq(\vb,\wb)+bq(\vb',\wb)$ for all $a,b\in\Rb_\tau$ and $\vb,\vb',\wb\in\Rb_\tau^{d+1}$, 
    \item $q(\wb,\vb)=\overline{q(\vb,\wb)}$ for all $\vb,\wb\in\Rb_\tau^{d+1}$. 
\end{itemize}
In particular, $q(\vb,\vb)\in\Rb$.

If we write $\vb = e_+ v_+ + e_- v_-$ and $\wb = e_+ w_+ + e_- w_-$ where $v_+,v_-,w_+,w_- \in \Rb^{d+1}$, then 
\[q(\vb,\wb) = v_+^T Q w_-e_++v_-^T Q w_+e_-~.\] 
In particular, $q(\vb,\vb) = v_+^T Q v_-$. Furthermore, if we view $\Rb_\tau^{d+1}$ as a $(2d+2)$-dimensional real vector space and $\Rb_\tau$ as a $2$-dimensional real vector space, then $q$ is an $\Rb$-bilinear map, and the real part of $q$, denoted
\[(\cdot,\cdot)_q:\Rb_{\tau}^{d+1}\times\Rb_\tau^{d+1}\to\Rb~,\]
is an $\Rb$-bilinear form given explicitly by \[(\vb,\wb)_{q}=\frac{1}{2}(q(\vb,\wb)+\overline{q(\vb,\wb)})=\frac{1}{2}(v^{T}_+Q w_-+v_-^{T}Q w_+)~.\] 
One can verify that this is symmetric, non-degenerate, with signature $(d+1,d+1)$.

Denote $\widehat{\Rb_\tau^{d+1}}=\{e_+v_++e_-v_-\in\Rb_\tau^{d+1}:v_+\neq0\neq v_-\}$, and set 
\[\Sb(\Rb_\tau^{d+1}) = \widehat{\Rb_\tau^{d+1}} / \Rb_\tau^+\quad \text{and}\quad \Pb(\Rb_\tau^{d+1}) = \widehat{\Rb_\tau^{d+1}} / \Rb_\tau^*~.\] 
Observe that both $\Sb(\Rb_\tau^{d+1})$ and $\Pb(\Rb_\tau^{d+1})$ are compact Hausdorff spaces, and the obvious quotient map $\Sb(\Rb_\tau^{d+1})\to\Pb(\Rb_\tau^{d+1})$ is a double cover. Also, observe that if $z\in\Rb_\tau^*$ (respectively, $z\in\Rb_\tau^+$, $z\in\mathcal U$) and $\vb\in\Rb_\tau^{d+1}$, then $q(z\vb,z\vb)=z\bar{z}q(\vb,\vb)$ is non-zero if and only if the same is true for (respectively, has the same sign as, is equal to) $q(\vb,\vb)$. As such, we may define the \emph{paracomplex hyperbolic $d$-space} by 
\begin{align*}
    \Hb_\tau^d &= \{[[\vb]] \in \Pb(\Rb_\tau^{d+1}) : q(\vb,\vb) \neq  0\}\\
    &\cong \{[\vb] \in \Sb(\Rb_\tau^{d+1}) : q(\vb,\vb) < 0\}\\
    &\cong\{\vb\in\Rb_\tau^{d+1}:q(\vb,\vb)=-1\}/\mathcal U.
\end{align*}
In the remainder of the paper, we use $[[\cdot]]$ and $[\cdot]$ to denote the equivalence classes in $\Pb(\Rb_\tau^{d+1})$ and $\Sb(\Rb_\tau^{d+1})$ respectively. We refer to the first description as the \emph{projective model} of $\Hb_\tau^d$, the second description as the \emph{spherical model} of $\Hb_\tau^d$, and the third description as the \emph{hyperboloid model} of $\Hb_\tau^d$. 

For each $\vb\in\Rb_\tau^{d+1}$, let $\T_{\vb}\Rb_\tau^{d+1}$ denote the tangent space to $\Rb_\tau^{d+1}$ at $\vb$. Translation by $\vb$ gives a natural identification \[\T_{\vb}\Rb_\tau^{d+1}\cong\T_{0}\Rb_\tau^{d+1}\cong\Rb_\tau^{d+1}~,\] so $q$ and $(\cdot,\cdot)_q$ respectively define an $\Rb$-bilinear pairing and an $\Rb$-bilinear form, also denoted $q$ and $(\cdot,\cdot)_q$, on $\T_{\vb}\Rb_\tau^{d+1}$. Via this identification, multiplication by $\tau$ also defines an involution \[J:\T_{\bf v}\Rb_\tau^{d+1}\to \T_{\bf v}\Rb_\tau^{d+1}\] such that $(J \cdot, J\cdot)_q = - (\cdot,\cdot)_q$.

If we further assume that $q(\vb,\vb)\neq 0$, then the projective model of $\Hb_\tau^d$ allows us to identify the tangent space $\T_{[[\vb]]}\Hb_\tau^d$ to $\Hb_\tau^d$ at $[[\vb]]$ with the $q$-orthogonal complement in $\T_{\vb}\Rb_\tau^{d+1}$ of $\Rb_\tau \vb$, i.e. 
\[\T_{[[{\bf v}]]}\Hb_\tau^d\cong\{{\bf w}\in\Rb_\tau^{d+1}:q({\bf w},{\bf v})=0\}~.\]
(Notice that the right hand side does not depend on the choice of representative ${\bf v}$ of $[[{\bf v}]]$.) One can then verify that the restriction of $(\cdot,\cdot)_q$ to $\T_{[[\vb]]}\Hb_\tau^d$ is non-degenerate and of signature $(d,d)$, and that $J$ restricts to a well-defined involution on $\T_{[[\bf v]]}\Hb_\tau^d$. Hence, $\Hb_\tau^d$ is endowed with a canonical pseudo-Riemannian metric of signature $(d,d)$, as well as an involutive bundle automorphism $J:\T\Hb_\tau^d\to \T\Hb_\tau^d$, where $\T\Hb_\tau^d$ denotes the tangent bundle to $\Hb_\tau^d$. (In fact, these give a para-K\"ahler structure on $\Hb_\tau^d$, but we will not use this fact).

It is straightforward to verify that the group of automorphisms of $\Rb_\tau^{d+1}$ (as a $\Rb_\tau$-module) that leaves the parahermitian pairing $q$ invariant is identified with $\GL(d+1,\Rb)$ via the action \[g\cdot (e_+v_+ + e_-v_-) = e_+(gv_+) + e_-(Q (g^{-1})^T Q v_-)~,\] 
and that $\GL(d+1,\Rb)$ acts transitively on $\{\vb\in\Rb_\tau^{d+1}:q(\vb,\vb)=-1\}$. Under this identification, the center of $\GL(d+1,\Rb)$ (which is $\Rb^*$) acts as multiplication by $\mathcal U$, and so by the hyperboloid model of $\Hb_\tau^d$, we see that the $\GL(d+1,\Rb)$-action on $\Hb_\tau^d$ factors through \[\PGL(d+1,\Rb)=\GL(d+1,\Rb)/\Rb^*~.\] 
In other words, $\PGL(d+1,\Rb)$ is the automorphism group of the para-K\"ahler structure on $\Hb_\tau^d$, i.e. it is the group of isometries of $\Hb_\tau^d$ whose derivative commutes with the involution $J:\T\Hb_\tau^d\to \T\Hb_\tau^d$.

We may therefore view $\Hb_\tau^d$ as a homogeneous space for $\PGL(d+1,\Rb)$. To make this explicit, let $(v_1,\dots,v_{d+1})$ be the standard basis of $\Rb^{d+1}$ and consider the basepoint $[v_1e_+-v_{d+1}e_-]\in \Hb_\tau^d$. Its stabilizer in $\GL(d+1,\Rb)$ is the subgroup $\Rb^*\times \GL(d,\Rb)$ consisting of block diagonal matrices \[\begin{pmatrix} t & 0\\ 0 & A \end{pmatrix}, \quad\text{where } A\in \GL(d,\Rb)\text{ and } t\in \Rb^*~.\] 
Therefore,
\[\Hb_\tau^d = \GL(d+1,\Rb) / (\Rb^*\times \GL(d,\Rb))= \PGL(d+1,\Rb)/\mathsf P(\Rb^*\times \GL(d,\Rb))~.\]

There are two different, but closely related, notions of boundary one can take for $\Hb_\tau^d$. The first, denoted $\partial_\infty^{\mathbb S}\Hb_\tau^d$ is the boundary of $\Hb_\tau^d$ when viewed as a subset of $\Sb(\Rb_\tau^{d+1})$, i.e.
\[\partial_\infty^{\mathbb S}\Hb_\tau^d = \{[\vb]=[e_+v_++e_-v_-]\in \Sb(\Rb_\tau^{d+1}) : (\vb,\vb)_q = 0\}~.\]
The second is the boundary of $\Hb_\tau^d$ when viewed as a subset of $\Pb(\Rb_\tau^{d+1})$, i.e.,
\[\partial_\infty^{\Pb}\Hb_\tau^d = \{[[\vb]]=[e_+v_++e_-v_-]\in \Pb(\Rb_\tau^{d+1}) : (\vb,\vb)_q = 0\}~.\]
We refer to the former as the \emph{spherical boundary} and the latter as the \emph{projective boundary}. The map 
\begin{equation}\label{double cover} 
    P:\partial_\infty^{\mathbb S}\Hb_\tau^d\to\partial_\infty^{\Pb}\Hb_\tau^d\quad\text{given by}\quad P:[{\bf v}]\mapsto[[{\bf v}]]
\end{equation}
is a double cover, and the covering involution of $P$ is the map 
\begin{equation}\label{eqn: covering involution}
    {\opp}:\partial_\infty^{\mathbb S}\Hb_\tau^d\to\partial_\infty^{\mathbb S}\Hb_\tau^d\quad\text{given by}\quad {\opp}:[e_+v+e_-\phi^*]\mapsto[e_+v-e_-\phi^*]~.
\end{equation}
Since both $\Sb(\Rb_\tau^{d+1})$ and $\Pb(\Rb_\tau^{d+1})$ are compact, it follows that $\partial_\infty^{\mathbb S}\Hb_\tau^d\cup\Hb_\tau^d$ and $\partial_\infty^{\Pb}\Hb_\tau^d\cup\Hb_\tau^d=\Pb(\Rb_\tau^{d+1})$ are both compactifications of $\Hb_\tau^d$, to which the $\PGL(d+1,\Rb)$-action on $\Hb_\tau^d$ naturally extends.

\subsection{Paracomplex hyperbolic space and real projective geometry}\label{real projective}
    Notice that the bilinear pairing $q$ induces a non-degenerate, symmetric bilinear form $\langle\cdot,\cdot\rangle$ on $\Rb^{d+1}$ defined by \[\langle v,w\rangle:=q(e_+v+e_-w,e_+v+e_-w)=w^TQv~.\] This induces a linear identification $\Rb^{d+1}\to\Rb^{d+1*}$ which sends every $v\in\Rb^{d+1}$ to $v^*:=\langle v,\cdot\rangle\in\Rb^{d+1*}$. Explicitly, if we write $v$ and $v^*$ in the standard basis on $\Rb^{d+1}$ and $(\Rb^{d+1})^*$, then $v^*=v^TQ$. For every $\phi\in\Rb^{d+1*}$, we also let $\phi^*\in\Rb^{d+1}$ denote the vector such that $\phi=(\phi^*)^*$. 

    Via this linear identification, we may now define the $\GL(d+1,\Rb)$-equivariant homeomorphism
    \[\begin{matrix}
        I: & \Rb_\tau^{d+1}& \to & \mathbb{R}^{d+1} \times \mathbb{R}^{d+1*} \\
        & e_+ v + e_- \phi^*  & \mapsto & (v, \phi)~,
    \end{matrix}\]
    where the $\GL(d+1,\Rb)$ action on $\Rb^{d+1}\times\Rb^{d+1*}$ is given by $g(v,\phi)=(g(v),\phi\circ g^{-1})$.

    This descends to a $\PGL(d+1,\Rb)$-equivariant homeomorphism \[\begin{matrix}
        I^{\Pb}: & \Pb(\Rb_\tau^{d+1})& \to & \mathbb{RP}^d \times \mathbb{RP}^{d*} \\
        & [[ e_+ v + e_- \phi^* ]] & \mapsto & ([v] , [\phi]),
    \end{matrix}\]
    which further restricts to the  $\PGL(d+1,\Rb)$-equivariant homeomorphisms \[I^{\Pb}\vert_{\Hb_\tau^d}:  \Hb_\tau^d  \to  \{ ([v],[\phi])\in \mathbb{RP}^d \times \mathbb{RP}^{d*} : \phi(v)\neq 0\}~.\] and
    \begin{align}\label{equation: extended model homeomorphism}
        F^{\Pb}:=I^{\Pb}\vert_{\partial_\infty^{\Pb}\Hb_\tau^d}:\partial_\infty^{\Pb}\Hb_\tau^d\to\Fc_{1,d}:=\{ ([v],[\phi])\in \mathbb{RP}^d \times \mathbb{RP}^{d*} : \phi(v)= 0\}.
    \end{align}
The points in the space $\Fc_{1,d}$ are called  \emph{$(1,d)$-flags}. 

Throughout this paper, we adopt the usual abuse of notation by identifying the points in $\mathbb{RP}^{d*}$ with projective hyperplanes in $\mathbb{RP}^d$. Then the above identifications allow us to view points in $\Hb_\tau^d$ and $\partial_\infty^{\Pb}\Hb_\tau^d$ using real projective geometry. Specifically, we may view every point in $\Hb_\tau^d$ (respectively, $\partial_\infty^{\Pb}\Hb_\tau^d$) as a transverse (respectively, non-transverse) pair consisting of a point in $\mathbb{RP}^d$ and a projective hyperplane in $\mathbb{RP}^d$. Since $I^{\Pb}$ is a homeomorphism, a sequence in $\Hb_\tau^d$ converges to a point in $\partial_\infty^{\Pb}\Hb_\tau^d$ if and only if we have the corresponding convergence in $\mathbb{RP}^d\times\mathbb{RP}^{d*}$. 

\subsection{Spacelike tangent vectors to paracomplex hyperbolic spaces}

For a point $[\vb] \in \Hb_\tau^d$, we denote the stabilizer of $[\vb]$ in $\PGL(d+1,\Rb)$ by $\Stab([\vb])$. We previously observed that for any $[\vb]\in\Hb_\tau^d$, the symmetric bilinear pairing $(\cdot,\cdot)_q$ on $\T_{[\vb]}\Hb_\tau^d$ has signature $(d,d)$. We say that a tangent vector $\wb\in \T_{[\vb]}\Hb_\tau^d$ is \emph{spacelike} if $(\wb,\wb)_q>0$.

\begin{lemma}\label{vectors}
    If $[\vb] \in \Hb_\tau^d$, then the $\Stab([\vb])$-action on $T_{[\vb]}\Hb_\tau^d$ is transitive on the set of spacelike unit vectors.
\end{lemma}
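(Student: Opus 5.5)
Since $\PGL(d+1,\Rb)$ acts transitively on $\Hb_\tau^d$, it suffices to treat the basepoint $[\vb_0]$ with $\vb_0=e_+v_1-e_-v_{d+1}$. Its stabilizer is $\mathsf P(\Rb^*\times\GL(d,\Rb))$, and we may work with the lift $\Rb^*\times\GL(d,\Rb)\subset\GL(d+1,\Rb)$. The plan is to compute $\T_{[\vb_0]}\Hb_\tau^d$ together with the stabilizer action and the form $(\cdot,\cdot)_q$ explicitly in the $e_\pm$ coordinates, and to recognize the action as the standard action of $\GL(d,\Rb)$ on $W\oplus W^*$ (with $W=\Rb^d$). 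On this space the form is the natural pairing, and the orbits of its positive unit vectors are well known to form a single orbit.

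First I identify the tangent space. Write $\wb=e_+w_++e_-w_-$. Using $q(\wb,\vb_0)=w_+^TQ(-v_{d+1})e_++w_-^TQv_1e_-$, the condition $q(\wb,\vb_0)=0$ becomes $w_+^TQv_{d+1}=0$ and $w_-^TQv_1=0$. Since $Qv_{d+1}=v_1$ and $Qv_1=v_{d+1}$, this says the first coordinate of $w_+$ and the last coordinate of $w_-$ vanish. So $w_+\in\Span(v_2,\dots,v_{d+1})$ and $w_-\in\Span(v_1,\dots,v_d)$, and then $(\wb,\wb)_q=w_+^TQw_-$.

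Next I compute the action of $g=\diag(t,A)$. It acts on $w_+$ by $g$, i.e. by $A$ on the last $d$ coordinates, and on $w_-$ by $Q(g^{-1})^TQ$. The conjugation by $Q$ reverses coordinate order, so it acts on the first $d$ coordinates of $w_-$ by $Q'(A^{-1})^TQ'$, where $Q'$ is the $d\times d$ antidiagonal matrix. The scalar $t$ only contributes overall factors. Identify $w_+$ with $x\in\Rb^d$ and $w_-$ with $y=Q'y'$, where $y'$ is the row of first $d$ coordinates of $w_-$. Then $(\wb,\wb)_q$ becomes a nondegenerate pairing $\langle x,\eta\rangle$ between $x\in W$ and a covector $\eta$, and the action becomes $x\mapsto Ax$, $\eta\mapsto\eta\circ A^{-1}$ up to the scalar factors. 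Getting the indices and scalars right here is the main obstacle, but it is routine bookkeeping. As a sanity check, the pairing must be invariant under the action, so it can only be the natural pairing up to a constant.

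Finally, given spacelike unit vectors $(x,\eta)$ and $(x',\eta')$, we have $\eta(x)=\eta'(x')=c>0$, with the same fixed constant $c$ after normalization. Choose $A\in\GL(d,\Rb)$ with $Ax=x'$ and $A(\ker\eta)=\ker\eta'$; this is possible because $x\notin\ker\eta$ and $x'\notin\ker\eta'$. Then $\eta\circ A^{-1}$ is proportional to $\eta'$, and evaluating both at $x'$ gives the value $c$, so $\eta\circ A^{-1}=\eta'$. This proves transitivity, and setting $t=1$ absorbs any residual scalar. Note that unit normalization is exactly what removes the free parameter, and that the condition $\eta(x)>0$ (rather than just $\neq0$) is what ensures the two vectors lie in the same orbit.
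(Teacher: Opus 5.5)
Your proposal is correct and follows essentially the same route as the paper: you reduce to the basepoint $[\vb_0]$ and normalize stabilizer elements to $\diag(1,A)$. You then identify $\T_{[\vb_0]}\Hb_\tau^d$ with pairs $(X,Y)$, on which $A$ acts by $X\mapsto AX$, $Y\mapsto Q(A^{-1})^TQY$ while $(\wb,\wb)_q=X^T\hat Y$, and conclude from the transitivity of $\GL(d,\Rb)$ on vector--covector pairs with a fixed positive pairing. The only cosmetic difference is that the paper moves one fixed unit vector to an arbitrary one, using an explicit $A$ whose first column is $X$ and whose other columns lie in $\ker\hat Y^T$; this is exactly your ``$x\mapsto x'$, $\ker\eta\mapsto\ker\eta'$'' construction specialized to a base point.
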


\begin{proof}
    In this proof, all vectors in $\Rb^{d+1}$ and $\Rb^{d+1*}$ are written in the standard bases. Since $\PGL(d+1,\Rb)$ acts transitively on $ \Hb_\tau^d$, we fix a base point \[\vb_0 = e_+ (1,0,...,0)^T + e_- (0,...,0,-1)^T \in \Rb_\tau^{d+1}~.\] Note that $q(\vb_0,\vb_0) = -1$. Then 
    \begin{align*}
        \Stab([\vb_0]) &= \Big\{\begin{bmatrix} a &  0 \\ 0 &  A  \end{bmatrix}\in\PGL(d+1,\Rb) : \ a\in \Rb^*\quad \text{and} \quad A\in \GL(d,\Rb) \Big\}\\
        &\cong \Big\{\begin{pmatrix} 1 &  0 \\ 0 &  A  \end{pmatrix}\in\GL(d+1,\Rb) : A\in \GL(d,\Rb) \Big\}~.
    \end{align*} 

    Recall that the tangent space $T_{[\vb_0]}\Hb_\tau^d$ is identified with \[ \{ \wb \in \Rb_\tau^{d+1} : q(\wb , \vb_0 ) = 0 \} = \{ e_+ ( 0 , X^T   )^T + e_- ( Y^T , 0 )^T : X,Y\in \Rb^d \}~.\] 
    With this identification, $\Stab([\vb_0])$ acts on $T_{[\vb_0]}\Hb_\tau^d$ by
    \[\begin{pmatrix} 1 &  0 \\ 0 &  A  \end{pmatrix}\cdot (e_+ ( 0 , X^T   )^T + e_- ( Y^T , 0 )^T)=e_+(0,(AX)^T)^T+ e_- ( (Q(A^{-1})^TQ(Y))^T , 0 )^T~,\]
    where $Q$ is the $d\times d$ matrix whose entries down the anti-diagonal are $1$'s and all other entries are $0$. 
    For any $Y = (y_1, y_2,..., y_d)^T\in\Rb^d$, we denote $\hat{Y} = (y_d, y_{d-1},...,y_1)^T\in\Rb^d$. Then notice that for $\wb = e_+ ( 0 , X^T   )^T + e_- ( Y^T , 0 )^T$, $(\wb,\wb)_q = X^T \hat{Y}$.

    If $\wb$ is a spacelike unit vector, i.e. $X^T \hat{Y} = 1$, let $A$ be a $d\times d$, real valued, invertible matrix with first column $X$, and such that every other column $Z$ of $A$ satisfies $Z^T\hat Y=0$. Then we have \[ \left(\begin{matrix} 1 &  0 \\ 0 &  A  \end{matrix}\right)\cdot (e_+ (0,1,0,...,0)^T + e_- (0,...,0,1,0)^T) = e_+ (0,X^T)^T + e_- (Y^T,0)^T~.\] Thus, $\wb$ is in the $\Stab([\vb_0])$-orbit of $e_+ (0,1,0,...,0)^T + e_- (0,...,0,1,0)^T$.
\end{proof}

\subsection{Spacelike geodesics}\label{Section: Spacelike geodesics}
A geodesic (under the Levi-Civita connection with respect to the $(d,d)$-pseudo-Riemannian structure) in $\Hb_\tau^d$ is \emph{spacelike} if all of its tangent vectors are spacelike. Since $\PGL(d+1,\Rb)$ acts transitively on $\Hb_\tau^d$, we see that up to the $\PGL(d+1,\Rb)$-action and reparameterization, there is a unique unit speed spacelike geodesic. We will now describe one such geodesic.

Let $v_1,v_2\in\Rb^{d+1}$ and $\phi_1,\phi_2\in\Rb^{d+1*}$ such that $\phi_1(v_1)=0=\phi_2(v_2)$ and $\phi_1(v_2)=1=\phi_2(v_1)$. Then let
\[\hat\ell_{v_1,\phi_1,v_2,\phi_2}:\Rb\to\Rb_\tau^{d+1}\]
be the map given by 
\[\hat\ell_{v_1,\phi_1,v_2,\phi_2}(t)=e_+\left(\frac{e^t}{\sqrt{2}}v_1+\frac{e^{-t}}{\sqrt{2}}v_2\right)+e_-\left(-\frac{e^t}{\sqrt{2}}\phi_1^*-\frac{e^{-t}}{\sqrt{2}}\phi_2^*\right)~.\]
Notice that for all $t\in\Rb$, $\hat\ell_{v_1,\phi_1,v_2,\phi_2}(t)$ lies in $\widehat{\Rb_\tau^{d+1}}$, so it descends to a map
\[\ell_{v_1,\phi_1,v_2,\phi_2}:\Rb\to\Hb_\tau^d~.\]

\begin{proposition}\label{prop: spacelike geodesics}
    The map $\ell_{v_1,\phi_1,v_2,\phi_2}$ is a unit speed spacelike geodesic in $\Hb_\tau^d$ with $[e_+v_1-e_-\phi_1^*]$ and $[e_+v_2-e_-\phi_2^*]$ in $\partial_\infty^{\mathbb S}\Hb_\tau^d$ as its forward and backward endpoints. Up to changing the origin, this is the unique unit speed spacelike geodesic in $\Hb_\tau^d$ with those endpoints in $\partial_\infty^{\mathbb S}\Hb_\tau^d$. Furthermore, the subgroup in $\PGL(d+1,\Rb)$ that fixes the two endpoints in $\partial_\infty^{\mathbb S}\Hb_\tau^d$ of $\ell$ acts transitively on $\ell_{v_1,\phi_1,v_2,\phi_2}(\Rb)$.
\end{proposition}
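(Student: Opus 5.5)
The plan is to work in the hyperboloid model: first check the claims for the explicit curve $\hat\ell:=\hat\ell_{v_1,\phi_1,v_2,\phi_2}$ upstairs in $\Rb_\tau^{d+1}$, then use homogeneity to get uniqueness, and finally write down an explicit one-parameter group for transitivity. Throughout I write $a(t)=\frac{1}{\sqrt2}(e^tv_1+e^{-t}v_2)$ and $b(t)=-\frac{1}{\sqrt2}(e^t\phi_1^*+e^{-t}\phi_2^*)$, so that $\hat\ell=e_+a+e_-b$. I use the formula $q(e_+v_++e_-v_-,e_+w_++e_-w_-)=v_+^TQw_-e_++v_-^TQw_+e_-$, together with $w^TQv=\langle v,w\rangle$ and $\phi^*$ being dual to $\phi$, which turns every pairing into an evaluation $\phi_i(v_j)$.

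\textbf{Geodesic, spacelike, unit speed.} Using $\phi_i(v_i)=0$ and $\phi_1(v_2)=\phi_2(v_1)=1$, I will check four things.
\begin{itemize}
\item The curve lies on the level set: $q(\hat\ell,\hat\ell)=-\tfrac12(\phi_1(v_2)+\phi_2(v_1))=-1$.
\item It is horizontal for the $\mathcal U$-fibration: both $e_\pm$-components of $q(\hat\ell',\hat\ell)$ are $\pm\tfrac12(1-1)=0$. Hence $\hat\ell'(t)$ lies in the $q$-orthogonal complement of $\Rb_\tau\hat\ell(t)$, which is the model of $\T_{\ell(t)}\Hb_\tau^d$.
\item It has unit spacelike speed: $(\hat\ell',\hat\ell')_q=+1$.
\item It solves the geodesic equation: $\hat\ell''=\hat\ell$, so its acceleration is $q$-orthogonal to the tangent space.
\end{itemize}
The quotient $\{q=-1\}\to\Hb_\tau^d$ by $\mathcal U$ is a pseudo-Riemannian submersion onto the metric described above. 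Its vertical direction is $\Rb\tau\hat\ell$, which is non-degenerate with $(\tau\hat\ell,\tau\hat\ell)_q=+1$. For such a submersion, horizontal curves whose ambient acceleration is normal to the full tangent space project to geodesics. I expect this submersion bookkeeping to be the main point needing care. If it proves awkward, a fallback is to verify the Levi-Civita condition directly: the tangential projection of $\hat\ell''$ onto $\T_{[[\hat\ell]]}\Hb_\tau^d$ vanishes.

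\textbf{Endpoints.} For the endpoints, note that $[\hat\ell(t)]=[e^{-t}\hat\ell(t)]$, since positive reals lie in $\Rb_\tau^+$. Letting $t\to+\infty$ gives $[e_+v_1-e_-\phi_1^*]$, and $t\to-\infty$ gives $[e_+v_2-e_-\phi_2^*]$.

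\textbf{Uniqueness.} A geodesic is determined by one point and its initial vector. By \Cref{vectors} and transitivity on $\Hb_\tau^d$, every unit spacelike geodesic is therefore $g\cdot\ell_{v_1,\phi_1,v_2,\phi_2}$ for some $g$. Equivariance gives $g\cdot\ell_{v_1,\phi_1,v_2,\phi_2}=\ell_{gv_1,\phi_1\circ g^{-1},gv_2,\phi_2\circ g^{-1}}$, so every such geodesic is of the form $\ell_{v_1',\phi_1',v_2',\phi_2'}$.

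Now suppose two such curves share both spherical endpoints. Equality modulo $\Rb_\tau^+$ gives $v_1'=\lambda v_1$, $\phi_1'=\mu\phi_1$ with $\lambda\mu>0$, and $v_2'=\alpha v_2$, $\phi_2'=\beta\phi_2$ with $\alpha\beta>0$. The normalizations force $\mu\alpha=1$ and $\beta\lambda=1$, so $\lambda\alpha=\lambda/\mu>0$. Choose $s$ with $e^{2s}=\lambda/\alpha$ and set $c=\sqrt{\lambda\alpha}$. Then
\[\hat\ell'(t)=\bigl(ce_++c^{-1}e_-\bigr)\,\hat\ell(t+s),\]
and $ce_++c^{-1}e_-\in\mathcal U$. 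Hence $\ell'(t)=\ell(t+s)$, which is the asserted uniqueness up to changing the origin.

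\textbf{Transitivity.} Let $g_s\in\GL(d+1,\Rb)$ act by $e^s$ on $v_1$, by $e^{-s}$ on $v_2$, and by the identity on $\ker\phi_1\cap\ker\phi_2$. Then $\phi_1\circ g_s^{-1}=e^{-s}\phi_1$ and $\phi_2\circ g_s^{-1}=e^{s}\phi_2$. So $g_s$ fixes both spherical endpoints, the scalars being positive, and $g_s\hat\ell(t)=\hat\ell(t+s)$. This shows that the endpoint stabilizer acts transitively on the image of $\ell$.
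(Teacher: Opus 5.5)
Your proposal is correct and follows essentially the paper's proof: the identities $q(\hat\ell,\hat\ell)=-1$, $q(\hat\ell',\hat\ell)=0$, $q(\hat\ell',\hat\ell')=1$ and $\hat\ell''=\hat\ell$ give the geodesic claim, and your submersion remark only makes the paper's terse normal-acceleration argument precise. Uniqueness and transitivity then come from homogeneity plus an explicit rescaling computation; the only difference is that you compare normalized quadruples $(v_1,\phi_1,v_2,\phi_2)$ directly, whereas the paper computes the endpoint stabilizer $\{\diag(a,M,b):ab>0\}$ in an adapted basis. You do need to fix two sign slips: in fact $\phi_1\circ g_s^{-1}=e^{s}\phi_1$ and $\phi_2\circ g_s^{-1}=e^{-s}\phi_2$, which is what yields $g_s\hat\ell(t)=\hat\ell(t+s)$ and preserves $\phi_1(v_2)=1$; and in the uniqueness step $c$ must be $\pm\sqrt{\lambda\alpha}$ with the sign of $\lambda$, since $\lambda,\alpha$ can both be negative (e.g.\ for $(-v_1,-\phi_1,-v_2,-\phi_2)$), which is harmless because $ce_++c^{-1}e_-$ still lies in $\mathcal U$.
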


\begin{proof}
    First, we verify that $\ell=\ell_{v_1,\phi_1,v_2,\phi_2}$ is a geodesic in $\Hb_\tau^d$. The tangent vector to $\hat{\ell}=\hat{\ell}_{v_1,\phi_1,v_2,\phi_2}$ at $\hat{\ell}(t)$ is given by \[\hat{\ell}'(t) = e_+\left(\frac{e^t}{\sqrt{2}}v_1-\frac{e^{-t}}{\sqrt{2}}v_2\right)+ e_-\left(-\frac{e^t}{\sqrt 2}\phi_1^*+\frac{e^{-t}}{\sqrt 2}\phi_2^*\right)~.\] Then by direct computation, one verifies that \[q(\hat\ell(t),\hat\ell(t))=-1,\quad q(\hat{\ell}'(t),\hat{\ell}(t))=0,\quad\text{and}\quad q(\hat{\ell}'(t),\hat{\ell}'(t))=1\] for all $t\in\Rb$. Via the identification \[\T_{\ell(t)}\Hb_\tau^d \cong \{\wb\in \Rb_\tau^{d+1} : q(\wb,\hat{\ell}(t))=0\}~,\] the first two equalities imply that $\ell'(t)=\hat{\ell}'(t)$ for all $t\in\Rb$, and the third implies that $\ell$ is a unit speed spacelike curve.

    Differentiating once more, we find $\hat{\ell}''(t)=\hat{\ell}(t)$, which spans the $q$-normal direction to $T_{\ell(t)}\Hb_\tau^d$. This implies that the covariant derivative of $\hat{\ell}'$ with respect to the Levi--Civita connection of the pseudo-Riemannian metric on $\Hb_\tau^d$ is $0$. Therefore, $\ell(t)$ is a spacelike geodesic in $\Hb_\tau^d$. The description of the endpoints of $\hat\ell$ is immediate.

    Next, we prove the uniqueness claim of the proposition. By \Cref{vectors}, $\PGL(d+1,\Rb)$ acts transitively on the set of spacelike unit vectors, and hence on the set of spacelike unit speed geodesics. It thus suffices to verify that every element in the stabilizer in $\PGL(d+1,\Rb)$ of $[e_+v_1-e_-\phi_1^*]$ and $[e_+v_2-e_-\phi_2^*]$ necessarily sends $\ell$ to itself, possibly with the origin moved.

    Choose a basis $(f_0,\dots,f_d)$ of $\Rb^{d+1}$ such that $[f_0]=[v_1]$, $[f_d]=[v_2]$, and $\Span(f_1,\dots,f_{d-1})=[\phi_1]\cap[\phi_2]$. Then every $g\in\PGL(d+1,\Rb)$ that fixes both $[e_+v_1-e_-\phi_1^*]$ and $[e_+v_2-e_-\phi_2^*]$ has a linear representative $\hat g\in\GL(d+1,\Rb)$ that, when written in this basis, is of the form \[\hat g:=\begin{pmatrix}
        a&0&0\\
        0&M&0\\
        0&0&b \end{pmatrix}\] 
    for some $a,b\in\Rb$ such that $ab>0$ and some $M\in\GL(d-1,\Rb)$. Since $\hat g(v_1)=av_1$, $\hat g(v_2)=bv_2$, $\hat g(\phi_1)=\frac{1}{b}\phi_1$, and $\hat g(\phi_2)=\frac{1}{a}\phi_2$, it follows that
    \begin{align}\label{glt formula}
        \begin{split}
            g(\ell(t))&=\left[e_+\left(\frac{e^t}{\sqrt{2}}\sqrt{\frac{a}{b}}v_1+\frac{e^{-t}}{\sqrt{2}}\sqrt{\frac{b}{a}}v_2\right)+ e_-\left(-\frac{e^t}{\sqrt 2}\sqrt{\frac{a}{b}}\phi_1^*-\frac{e^{-t}}{\sqrt 2}\sqrt{\frac{b}{a}}\phi_2^*\right)\right]\\
            &=\ell\left(t+\frac{1}{2}\log\frac{a}{b}\right).
        \end{split}
    \end{align}

    Since $a$ and $b$ can be any pair of non-zero real numbers with the same sign as we vary $g$ in the subgroup of $\PGL(d+1,\Rb)$ that fixes $[e_+v_1-e_-\phi_1^*]$ and $[e_+v_2-e_-\phi_2^*]$, it follows from the above computation that the subgroup of $\PGL(d+1,\Rb)$ that fixes the endpoints in $\partial_\infty^{\mathbb{S}}\Hb_\tau^d$ of a spacelike geodesic acts transitively on its image.
\end{proof}

\begin{remark}\label{comparison of boundaries}
Observe that in the same basis $(f_0,\dots,f_d)$ that we used in the proof of \Cref{prop: spacelike geodesics}, the stabilizer in $\PGL(d+1,\Rb)$ of $[[e_+v_1-e_-\phi_1^*]]$ and $[[e_+v_2-e_-\phi_2^*]]$ in $\partial_\infty^{\Pb}\Hb_\tau^d\cong\Fc_{1,d}$ is
\[\left\{\begin{bmatrix}
a&0&0\\
0&M&0\\
0&0&b
\end{bmatrix}:ab\neq 0\text{ and }M\in\GL(d-1,\Rb)\right\}~.\]
The last computation used in the proof of \Cref{prop: spacelike geodesics} implies that some elements in this stabilizer (namely the elements where $ab<0$) send $\ell=\ell_{v_1,\phi_1,v_2,\phi_2}$ to (up to moving the origin) the geodesic $\ell^{\opp}:=\ell_{v_1,-\phi_1,-v_2,\phi_2}$.

As such, up to moving the origin, the geodesics $\ell$ and $\ell^{\opp}$ are the only ones that have $[[e_+v_1-e_-\phi_1^*]]$ and $[[e_+v_2-e_-\phi_2^*]]$ as their forward and backward endpoints in $\partial_\infty^{\mathbb{P}}\Hb_\tau^d$. However, neither the forward nor backward endpoints of $\ell$ and $\ell^{\opp}$ in $\partial_\infty^{\mathbb S}\Hb_\tau^d$ coincide. In particular, if $[[e_+v_1-e_-\phi_1^*]]$ and $[[e_+v_2-e_-\phi_2^*]]$ are transverse flags in $\partial_\infty^{\Pb}\Hb_\tau^d\cong\Fc_{1,d}$, then $[e_+v_1-e_-\phi_1^*]\in\partial_\infty^{\mathbb S}\Hb_\tau^d$ is joined by a spacelike geodesic in $\Hb_\tau^d$ to exactly one of $[e_+v_2-e_-\phi_2^*]$ or $[e_+v_2+e_-\phi_2^*]={\opp}([e_+v_2-e_-\phi_2^*])$.
\end{remark}

For any $z\in \T^1\Hb_\tau^d$, let $z^+$ and $z^-$ respectively denote the forward and backward endpoints in $\partial_\infty^{\Pb}\Hb_\tau^d$ of the spacelike geodesic in $\Hb_\tau^d$ tangent to $z$. Recall that we have a $\PGL(d+1,\Rb)$-equivariant identification 
\[\Hb_\tau^d\cup \partial_\infty^{\Pb}\Hb_\tau^d\cong\mathbb{RP}^d\times\mathbb{RP}^{d*}~.\]
The following is an immediate corollary of \Cref{prop: spacelike geodesics}.

\begin{corollary}\label{observation: a spacelike geodesic is totally transverse}
Let $z\in \T^1\Hb_\tau^d$, and let $\ell:\Rb\to\Hb_\tau^d$ be a spacelike geodesic tangent to $z$.
\begin{enumerate}
    \item Any two distinct points in $\ell(\Rb)\cup\{z^+,z^-\}$ are transverse.
    \item If $z^+=([v_1],[\phi_1])$, $z^-=([v_2],[\phi_2])$, $t\in\Rb$, and $\ell(t)=([u],[\psi])$, then $[\psi]$ does not intersect the connected component of $\mathbb{RP}^d\setminus([\phi_1]\cup[\phi_2])$ that contains $[u]$.
\end{enumerate}
\end{corollary}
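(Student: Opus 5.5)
By \Cref{prop: spacelike geodesics} and \Cref{vectors}, every unit speed spacelike geodesic tangent to some $z\in\T^1\Hb_\tau^d$ is, up to changing the origin, of the form $\ell=\ell_{v_1,\phi_1,v_2,\phi_2}$. Here $\phi_1(v_1)=0=\phi_2(v_2)$ and $\phi_1(v_2)=1=\phi_2(v_1)$. Its projective endpoints are $z^+=([v_1],[\phi_1])$ and $z^-=([v_2],[\phi_2])$. Both statements are invariant under $\PGL(d+1,\Rb)$, so I work with this explicit parametrization throughout. Under $I^{\Pb}$, the point $\ell(t)$ corresponds to
\[([u_t],[\psi_t]),\qquad u_t=e^tv_1+e^{-t}v_2,\qquad \psi_t=-(e^t\phi_1+e^{-t}\phi_2),\]
where I have dropped the common factor $1/\sqrt2$ and used $(\phi^*)^*=\phi$.

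For (1), I check pairwise transversality of $(x,\xi)$ and $(y,\eta)$, meaning $\eta(x)\neq0$ and $\xi(y)\neq0$, by direct evaluation.
\begin{itemize}
    \item For $z^+$ and $z^-$: $\phi_2(v_1)=1$ and $\phi_1(v_2)=1$.
    \item For $\ell(t)$ and $z^+$: $\phi_1(u_t)=e^{-t}\neq0$ and $\psi_t(v_1)=-e^{-t}\neq0$.
    \item For $\ell(t)$ and $z^-$: the computation is symmetric to the previous one.
    \item For $\ell(s)$ and $\ell(t)$ with $s\neq t$: $\psi_s(u_t)=-(e^{s-t}+e^{t-s})\neq0$, and similarly with $s$ and $t$ swapped.
\end{itemize}
The last value is in fact always $\leqslant -2$. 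Distinct parameters give distinct points, since $[u_s]=[u_t]$ forces $s=t$ because $v_1,v_2$ are independent. Hence any two distinct points of $\ell(\Rb)\cup\{z^+,z^-\}$ are transverse.

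For (2), I consider the open set $\mathbb{RP}^d\setminus([\phi_1]\cup[\phi_2])$. Its connected components are the two sets $\{[w]:\phi_1(w)\phi_2(w)>0\}$ and $\{[w]:\phi_1(w)\phi_2(w)<0\}$; the sign of $\phi_1\phi_2$ is well defined on projective classes. At the point $[u_t]$ we have $\phi_1(u_t)\phi_2(u_t)=e^{-t}e^{t}=1>0$. For any $[w]$ in the positive component, after rescaling $w$ so that $\phi_1(w),\phi_2(w)>0$, we get $\psi_t(w)=-(e^t\phi_1(w)+e^{-t}\phi_2(w))<0$. So $\psi_t$ never vanishes on that component, which means the hyperplane $[\psi_t]$ misses the component containing $[u_t]$.

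The only step needing care is the identification of the components of the complement of two distinct hyperplanes by the sign of $\phi_1\phi_2$, which is standard. Everything else is direct evaluation, so I expect no real obstacle.
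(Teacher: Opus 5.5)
Your proof is correct and is exactly the direct evaluation along the explicit parametrization $\ell_{v_1,\phi_1,v_2,\phi_2}$ from \Cref{prop: spacelike geodesics} that the paper has in mind when it calls this an immediate corollary; the paper writes out no further argument. Your pairings $\phi_1(u_t)=e^{-t}$, $\psi_t(v_1)=-e^{-t}$ and $\psi_s(u_t)=-(e^{s-t}+e^{t-s})$ check out. So does the description of the two components of $\mathbb{RP}^d\setminus([\phi_1]\cup[\phi_2])$ by the sign of $\phi_1\phi_2$, which holds because $[\phi_1]\neq[\phi_2]$.
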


Let $(\cdot,\cdot;\cdot,\cdot)$ denote the cross ratio in $\mathbb{RP}^1$, with the convention that $(0,\infty;1,t)=t$. The following is a consequence of a standard and straightforward computation using \Cref{prop: spacelike geodesics}.

\begin{corollary}
    Let $\ell:\Rb\to\Hb_\tau^d$ be a unit speed spacelike geodesic, and let $t_1,t_2\in\Rb$. For both $i=1,2$, let $u_i\in\Rb^{d+1}$ and $\psi_i\in\Rb^{d+1*}$ such that $\ell(t_i)=[e_+u_i+e_-\psi_i^*]$. Then \[\cosh^2(t_2-t_1) = \frac{\psi_1(u_2)\psi_2(u_1)}{\psi_1(u_1)\psi_2(u_2)}~.\]
\end{corollary}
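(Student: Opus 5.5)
The computation takes place in the explicit model of \Cref{prop: spacelike geodesics}. By \Cref{vectors}, the group $\PGL(d+1,\Rb)$ acts transitively on unit speed spacelike geodesics, and the right-hand side is invariant under this action. It is also invariant under rescaling each representative $e_+u_i+e_-\psi_i^*$ by an element of $\Rb_\tau^*$, since every $u_i$ and $\psi_i$ appears once in the numerator and once in the denominator. So we may assume that $\ell=\ell_{v_1,\phi_1,v_2,\phi_2}$, and take the specific representatives $\hat\ell(t_i)$.

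\textbf{Step 1: representatives.} Reading off the $e_+$ and $e_-$ components of $\hat\ell(t)$, we have
\[u(t)=\tfrac{1}{\sqrt2}\big(e^{t}v_1+e^{-t}v_2\big),\qquad \psi(t)=-\tfrac{1}{\sqrt2}\big(e^{t}\phi_1+e^{-t}\phi_2\big).\]
Here we use that $\phi\mapsto\phi^*$ is linear, so that $\psi^*=-\tfrac1{\sqrt2}(e^t\phi_1^*+e^{-t}\phi_2^*)$ corresponds to the stated $\psi$.

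\textbf{Step 2: the pairings.} The normalizations $\phi_1(v_1)=\phi_2(v_2)=0$ and $\phi_1(v_2)=\phi_2(v_1)=1$ give
\[\psi(s)\big(u(t)\big)=-\tfrac12\big(e^{s-t}+e^{t-s}\big)=-\cosh(s-t).\]
In particular $\psi_i(u_i)=-1$ for $i=1,2$, consistent with $q(\hat\ell,\hat\ell)=-1$. We also get $\psi_1(u_2)=\psi_2(u_1)=-\cosh(t_2-t_1)$.

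\textbf{Step 3: conclusion.} Substituting, the ratio equals $\cosh^2(t_2-t_1)/1$, which is the claim.

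The only point needing care is the invariance used in the reduction. Under $g\in\GL(d+1,\Rb)$ we have $(u,\psi)\mapsto(gu,\psi\circ g^{-1})$, which preserves every pairing $\psi_j(u_i)$. Under rescaling, $e_+u+e_-\psi^*\mapsto (ae_++be_-)(e_+u+e_-\psi^*)$ sends $u\mapsto au$ and $\psi\mapsto b\psi$, and these factors cancel in the ratio.

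A general unit speed spacelike geodesic equals $g\circ\ell_{v_1,\phi_1,v_2,\phi_2}$ only up to translating the parameter, $t\mapsto t+c$. This does not matter, because the formula depends only on the difference $t_2-t_1$.
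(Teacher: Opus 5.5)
Your proof is correct and is essentially the paper's argument: the paper states this corollary as a direct computation from the explicit parametrization $\hat\ell_{v_1,\phi_1,v_2,\phi_2}$ in \Cref{prop: spacelike geodesics}. That is exactly what you do, reducing via the $\PGL(d+1,\Rb)$-action and the rescaling invariance of the ratio, then computing $\psi(s)(u(t))=-\cosh(s-t)$ from the normalizations $\phi_1(v_1)=\phi_2(v_2)=0$ and $\phi_1(v_2)=\phi_2(v_1)=1$.
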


As another consequence of \Cref{prop: spacelike geodesics}, we deduce the following description of the images of spacelike geodesics in $\Hb_\tau^d$ using real projective geometry. 

\begin{corollary}\label{projective geometry description}
    Let $([v_1],[\phi_1])$ and $([v_2],[\phi_2])$ be a transverse pair of $(1,d)$-flags. For all $[{\bf u}]=[e_+u+e_-\psi^*]\in\Hb_\tau^d$, let $[w]:=[\psi]\cap ([v_1]+[v_2])$. Then $[{\bf u}]$ lies along a spacelike geodesic with $([v_1],[\phi_1])$ and $([v_2],[\phi_2])$ as its endpoints in $\partial_\infty^{\Pb}\Hb_\tau^d$ if and only if $[v_1],[v_2],[u]\in\mathbb{RP}^d$ lie in a projective line, $[\phi_1],[\phi_2],[\psi]\in\mathbb{RP}^{d*}$ lie in a projective line, and $([v_1],[v_2];[w],[u])=-1$, see \Cref{figure: spacelike geodesics}.
\end{corollary}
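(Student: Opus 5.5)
The plan is to read off the projective geometry directly from the explicit parametrization in \Cref{prop: spacelike geodesics}, and to use \Cref{comparison of boundaries} to account for both geodesics joining the given flags. Normalize the representatives so that $\phi_1(v_1)=0=\phi_2(v_2)$ and $\phi_1(v_2)=1=\phi_2(v_1)$; this is possible because the flags are transverse. By \Cref{comparison of boundaries}, up to reparametrization the only spacelike geodesics with projective endpoints $([v_1],[\phi_1])$ and $([v_2],[\phi_2])$ are $\ell=\ell_{v_1,\phi_1,v_2,\phi_2}$ and $\ell^{\opp}=\ell_{v_1,-\phi_1,-v_2,\phi_2}$. By the definition of $\hat\ell$ and the map $I^{\Pb}$, the points of $\ell(t)$ in $\mathbb{RP}^d\times\mathbb{RP}^{d*}$ are
\[\bigl([e^tv_1+e^{-t}v_2],\,[e^t\phi_1+e^{-t}\phi_2]\bigr),\]
and the points of $\ell^{\opp}$ are the same with $e^{-t}$ replaced by $-e^{-t}$ in both entries. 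Together these are exactly the pairs $([v_1+sv_2],[\phi_1+s\phi_2])$ with $s\in\Rb\setminus\{0\}$, where the same $s$ appears in both entries.

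For the forward direction, the collinearity statements are immediate from this description. For the cross ratio, take $u=v_1+sv_2$ and $\psi=\phi_1+s\phi_2$. Writing $w=\alpha v_1+\beta v_2$, the condition $\psi(w)=0$ becomes $\alpha s+\beta=0$, so $[w]=[v_1-sv_2]$. In the affine coordinate $\lambda\mapsto[v_1+\lambda v_2]$ on the line through $[v_1]$ and $[v_2]$, the points $[v_1],[v_2],[w],[u]$ sit at $0,\infty,-s,s$. Hence $([v_1],[v_2];[w],[u])=(0,\infty;-s,s)=-1$, using the convention $(0,\infty;1,t)=t$ after scaling by $-1/s$.

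For the converse, suppose $[u]$ lies on the line through $[v_1]$ and $[v_2]$, and $[\psi]$ lies on the line through $[\phi_1]$ and $[\phi_2]$. Since $[\mathbf u]\in\Hb_\tau^d$ we have $\psi(u)\neq0$, and this rules out $[u]=[v_1]$ and $[u]=[v_2]$ once the harmonic condition is imposed. So we may write $u=v_1+sv_2$ with $s\neq0$ and $\psi=a\phi_1+b\phi_2$. Then $[w]$ is the point $\lambda=-b/a$, and harmonicity with $[u]$ at $\lambda=s$ with respect to $0,\infty$ forces $-b/a=-s$. Thus $[\psi]=[\phi_1+s\phi_2]$, which places $[\mathbf u]$ on $\ell$ if $s>0$ or on $\ell^{\opp}$ if $s<0$. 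The degenerate cases need a separate check: if $a=0$ or $b=0$, then $[w]$ equals $[v_1]$ or $[v_2]$ and the cross ratio is not $-1$, so these cases are excluded.

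The main obstacle is bookkeeping rather than substance. One must track the identification $\phi\mapsto\phi^*$ together with the signs in $\hat\ell$, in particular the $e_-(-\phi^*)$ terms, so that the dual entry is genuinely $[e^t\phi_1+e^{-t}\phi_2]$. One must also make sure the cross-ratio convention gives exactly $-1$ rather than $2$ or $1/2$; this holds because a harmonic quadruple is invariant under the relevant permutations.
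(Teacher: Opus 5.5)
Your proposal is correct and follows essentially the paper's route: you read off the projective pairs $([v_1+sv_2],[\phi_1+s\phi_2])$ from the explicit parametrization in \Cref{prop: spacelike geodesics}, and you identify $[w]=[v_1-sv_2]$ as the harmonic conjugate of $[u]$ with respect to $[v_1],[v_2]$. The only difference is bookkeeping: the paper rescales representatives so that $u=v_1+v_2$, $w=v_1-v_2$ and the point is $\ell_{v_1,\phi_1,v_2,\phi_2}(0)$, which silently absorbs $\ell^{\opp}$ into sign choices, whereas you fix one normalization and track both $\ell$ and $\ell^{\opp}$ by the sign of $s$, which also makes explicit the degenerate cases $a=0$ or $b=0$ and the use of $[\psi]\in[\phi_1]+[\phi_2]$ that the paper leaves as ``straightforward''.
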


\begin{proof}
    First, suppose that $([v_1],[v_2];[w],[u])=-1$. This implies that we may choose representatives $v_1,v_2\in\Rb^{d+1}$ of $[v_1],[v_2]\in\mathbb{RP}^d$ such that $u=v_1+v_2$ and $w=v_1-v_2$. Then choose representatives $\phi_1,\phi_2\in\Rb^{d+1*}$ of $[\phi_1],[\phi_2]\in\mathbb{RP}^{d*}$ such that $\phi_1(v_2)=1=\phi_2(v_1)$. It is straightforward to verify that $[{\bf u}]=\ell_{v_1,\phi_1,v_2,\phi_2}(0)$.

    Conversely, suppose that $[{\bf u}]$ lies along a spacelike geodesic with $([v_1],[\phi_1])$ and $([v_2],[\phi_2])$ as its endpoints in $\partial_\infty^{\Pb}\Hb_\tau^d$. \Cref{prop: spacelike geodesics} implies that by translating by an element in $\PGL(d+1,\Rb)$, we may assume that $\ell_{v_1,\phi_1,v_2,\phi_2}(0)=[{\bf u}]$ for some representatives $v_1,v_2\in\Rb^{d+1}$ of $[v_1],[v_2]\in\mathbb{RP}^d$ and $\phi_1,\phi_2\in\Rb^{d+1*}$ of $[\phi_1],[\phi_2]\in\mathbb{RP}^{d*}$ such that $\phi_1(v_2)=1=\phi_2(v_1)$. It is straightforward to verify that $v_1+v_2$ and $v_1-v_2$ are representatives $[u]$ and $[w]$ respectively, so $([v_1],[v_2];[w],[u])=-1$.
\end{proof}

\begin{figure}[ht]
    \tikzset{every picture/.style={line width=0.75pt}} %set default line width to 0.75pt        

\begin{tikzpicture}[x=0.75pt,y=0.75pt,yscale=-1,xscale=1]
%uncomment if require: \path (0,152); %set diagram left start at 0, and has height of 152

%Straight Lines [id:da7187571866398156] 
\draw    (151.17,38) -- (200.67,98) ;
%Straight Lines [id:da04230696890606889] 
\draw    (200.67,98) -- (225.41,128) ;
%Straight Lines [id:da06831630076220763] 
\draw    (101.67,98) -- (76.92,128) ;
%Straight Lines [id:da7752270099883998] 
\draw    (151.17,38) -- (126.42,8) ;
%Straight Lines [id:da32031874217599277] 
\draw    (151.17,38) -- (175.92,8) ;
%Straight Lines [id:da17351014058306846] 
\draw    (101.67,98) -- (126.42,98) ;
\draw [shift={(126.42,98)}, rotate = 0] [color={rgb, 255:red, 0; green, 0; blue, 0 }  ][fill={rgb, 255:red, 0; green, 0; blue, 0 }  ][line width=0.75]      (0, 0) circle [x radius= 3.35, y radius= 3.35]   ;
%Straight Lines [id:da4351068967343491] 
\draw    (101.67,98) -- (151.17,38) ;
\draw [shift={(151.17,38)}, rotate = 309.52] [color={rgb, 255:red, 0; green, 0; blue, 0 }  ][fill={rgb, 255:red, 0; green, 0; blue, 0 }  ][line width=0.75]      (0, 0) circle [x radius= 3.35, y radius= 3.35]   ;
%Straight Lines [id:da6836927246835016] 
\draw    (126.42,98) -- (200.67,98) ;
\draw [shift={(200.67,98)}, rotate = 0] [color={rgb, 255:red, 0; green, 0; blue, 0 }  ][fill={rgb, 255:red, 0; green, 0; blue, 0 }  ][line width=0.75]      (0, 0) circle [x radius= 3.35, y radius= 3.35]   ;
%Straight Lines [id:da9626441546824632] 
\draw    (68.67,98) -- (101.67,98) ;
\draw [shift={(101.67,98)}, rotate = 0] [color={rgb, 255:red, 0; green, 0; blue, 0 }  ][fill={rgb, 255:red, 0; green, 0; blue, 0 }  ][line width=0.75]      (0, 0) circle [x radius= 3.35, y radius= 3.35]   ;
%Straight Lines [id:da6049692555750698] 
\draw    (35.67,98) -- (68.67,98) ;
\draw [shift={(68.67,98)}, rotate = 0] [color={rgb, 255:red, 0; green, 0; blue, 0 }  ][fill={rgb, 255:red, 0; green, 0; blue, 0 }  ][line width=0.75]      (0, 0) circle [x radius= 3.35, y radius= 3.35]   ;
%Straight Lines [id:da23984838304719125] 
\draw    (241.91,98) -- (200.67,98) ;
\draw [shift={(200.67,98)}, rotate = 180] [color={rgb, 255:red, 0; green, 0; blue, 0 }  ][fill={rgb, 255:red, 0; green, 0; blue, 0 }  ][line width=0.75]      (0, 0) circle [x radius= 3.35, y radius= 3.35]   ;
%Straight Lines [id:da12404669478202279] 
\draw [color={rgb, 255:red, 208; green, 2; blue, 27 }  ,draw opacity=1 ]   (151.17,38) -- (68.67,98) ;
%Straight Lines [id:da07487127649646597] 
\draw [color={rgb, 255:red, 208; green, 2; blue, 27 }  ,draw opacity=1 ]   (151.17,38) -- (192.42,8) ;
%Straight Lines [id:da7044572251159658] 
\draw [color={rgb, 255:red, 208; green, 2; blue, 27 }  ,draw opacity=1 ]   (68.67,98) -- (27.42,128) ;
%Straight Lines [id:da9225036544055151] 
\draw    (385.46,38) -- (434.95,98) ;
%Straight Lines [id:da592281561462114] 
\draw    (434.95,98) -- (459.7,128) ;
%Straight Lines [id:da20044718486757773] 
\draw    (335.96,98) -- (311.21,128) ;
%Straight Lines [id:da34693038762157646] 
\draw    (385.46,38) -- (360.71,8) ;
%Straight Lines [id:da8896562302420641] 
\draw    (385.46,38) -- (410.2,8) ;
%Straight Lines [id:da2417756679409493] 
\draw    (335.96,98) -- (360.71,98) ;
\draw [shift={(360.71,98)}, rotate = 0] [color={rgb, 255:red, 0; green, 0; blue, 0 }  ][fill={rgb, 255:red, 0; green, 0; blue, 0 }  ][line width=0.75]      (0, 0) circle [x radius= 3.35, y radius= 3.35]   ;
%Straight Lines [id:da4307276571166758] 
\draw    (335.96,98) -- (385.46,38) ;
\draw [shift={(385.46,38)}, rotate = 309.52] [color={rgb, 255:red, 0; green, 0; blue, 0 }  ][fill={rgb, 255:red, 0; green, 0; blue, 0 }  ][line width=0.75]      (0, 0) circle [x radius= 3.35, y radius= 3.35]   ;
%Straight Lines [id:da18308103866139747] 
\draw    (360.71,98) -- (434.95,98) ;
\draw [shift={(434.95,98)}, rotate = 0] [color={rgb, 255:red, 0; green, 0; blue, 0 }  ][fill={rgb, 255:red, 0; green, 0; blue, 0 }  ][line width=0.75]      (0, 0) circle [x radius= 3.35, y radius= 3.35]   ;
%Straight Lines [id:da9436615248924517] 
\draw    (302.96,98) -- (335.96,98) ;
\draw [shift={(335.96,98)}, rotate = 0] [color={rgb, 255:red, 0; green, 0; blue, 0 }  ][fill={rgb, 255:red, 0; green, 0; blue, 0 }  ][line width=0.75]      (0, 0) circle [x radius= 3.35, y radius= 3.35]   ;
%Straight Lines [id:da3935003141551363] 
\draw    (269.96,98) -- (302.96,98) ;
\draw [shift={(302.96,98)}, rotate = 0] [color={rgb, 255:red, 0; green, 0; blue, 0 }  ][fill={rgb, 255:red, 0; green, 0; blue, 0 }  ][line width=0.75]      (0, 0) circle [x radius= 3.35, y radius= 3.35]   ;
%Straight Lines [id:da4960970996455806] 
\draw    (476.2,98) -- (434.95,98) ;
\draw [shift={(434.95,98)}, rotate = 180] [color={rgb, 255:red, 0; green, 0; blue, 0 }  ][fill={rgb, 255:red, 0; green, 0; blue, 0 }  ][line width=0.75]      (0, 0) circle [x radius= 3.35, y radius= 3.35]   ;
%Straight Lines [id:da9421834890105522] 
\draw [color={rgb, 255:red, 208; green, 2; blue, 27 }  ,draw opacity=1 ]   (385.46,38) -- (360.71,98) ;
%Straight Lines [id:da573685923628149] 
\draw [color={rgb, 255:red, 208; green, 2; blue, 27 }  ,draw opacity=1 ]   (385.46,38) -- (397.83,8) ;
%Straight Lines [id:da8354077911230225] 
\draw [color={rgb, 255:red, 208; green, 2; blue, 27 }  ,draw opacity=1 ]   (360.71,98) -- (348.33,128) ;
%Straight Lines [id:da5039643801868267] 
\draw [color={rgb, 255:red, 208; green, 2; blue, 27 }  ,draw opacity=1 ]   (126.42,98) -- (111.12,98) ;
\draw [shift={(109.12,98)}, rotate = 360] [color={rgb, 255:red, 208; green, 2; blue, 27 }  ,draw opacity=1 ][line width=0.75]    (10.93,-3.29) .. controls (6.95,-1.4) and (3.31,-0.3) .. (0,0) .. controls (3.31,0.3) and (6.95,1.4) .. (10.93,3.29)   ;
%Curve Lines [id:da37535084359652815] 
\draw [color={rgb, 255:red, 208; green, 2; blue, 27 }  ,draw opacity=1 ]   (59,109.75) .. controls (63.3,115.72) and (61.67,115.76) .. (74.14,118.36) ;
\draw [shift={(76,118.75)}, rotate = 191.69] [color={rgb, 255:red, 208; green, 2; blue, 27 }  ,draw opacity=1 ][line width=0.75]    (10.93,-3.29) .. controls (6.95,-1.4) and (3.31,-0.3) .. (0,0) .. controls (3.31,0.3) and (6.95,1.4) .. (10.93,3.29)   ;
%Straight Lines [id:da16509243762474568] 
\draw [color={rgb, 255:red, 208; green, 2; blue, 27 }  ,draw opacity=1 ]   (317.46,98) -- (302.16,98) ;
\draw [shift={(319.46,98)}, rotate = 180] [color={rgb, 255:red, 208; green, 2; blue, 27 }  ,draw opacity=1 ][line width=0.75]    (10.93,-3.29) .. controls (6.95,-1.4) and (3.31,-0.3) .. (0,0) .. controls (3.31,0.3) and (6.95,1.4) .. (10.93,3.29)   ;
%Curve Lines [id:da8683041699627875] 
\draw [color={rgb, 255:red, 208; green, 2; blue, 27 }  ,draw opacity=1 ]   (330.29,114.78) .. controls (333.53,119.79) and (333.6,121.92) .. (344.71,125.29) ;
\draw [shift={(329.14,113.11)}, rotate = 54.25] [color={rgb, 255:red, 208; green, 2; blue, 27 }  ,draw opacity=1 ][line width=0.75]    (10.93,-3.29) .. controls (6.95,-1.4) and (3.31,-0.3) .. (0,0) .. controls (3.31,0.3) and (6.95,1.4) .. (10.93,3.29)   ;

% Text Node
\draw (60.23,121.4) node [anchor=north west][inner sep=0.75pt]    {$[\phi _{1}]$};
% Text Node
\draw (225.23,121.4) node [anchor=north west][inner sep=0.75pt]    {$[\phi _{2}]$};
% Text Node
\draw (93.5,101.4) node [anchor=north west][inner sep=0.75pt]    {$[v_{1}]$};
% Text Node
\draw (192.49,101.4) node [anchor=north west][inner sep=0.75pt]    {$[v_{2}]$};
% Text Node
\draw (125.71,81.4) node [anchor=north west][inner sep=0.75pt]    {$[u]$};
% Text Node
\draw (56.64,81.4) node [anchor=north west][inner sep=0.75pt]    {$[w]$};
% Text Node
\draw (294.81,121.4) node [anchor=north west][inner sep=0.75pt]    {$[\phi _{1}]$};
% Text Node
\draw (459.51,121.4) node [anchor=north west][inner sep=0.75pt]    {$[\phi _{2}]$};
% Text Node
\draw (326.64,76.4) node [anchor=north west][inner sep=0.75pt]    {$[v_{1}]$};
% Text Node
\draw (426.78,101.4) node [anchor=north west][inner sep=0.75pt]    {$[v_{2}]$};
% Text Node
\draw (365.17,81.4) node [anchor=north west][inner sep=0.75pt]    {$[w]$};
% Text Node
\draw (285.75,76.4) node [anchor=north west][inner sep=0.75pt]    {$[u]$};
% Text Node
\draw (1,111.4) node [anchor=north west][inner sep=0.75pt]    {$[\psi]$};
% Text Node
\draw (343.94,121.4) node [anchor=north west][inner sep=0.75pt]    {$[\psi]$};

\end{tikzpicture}
    \caption{Real projective geometry description of spacelike geodesics.}
    \label{figure: spacelike geodesics}
\end{figure}

Let $\gamma \in \PGL(d+1,\Rb)$ be a biproximal element, let $\gamma^+,\gamma^- \in \Fc_{1,d}$ denote its attracting and repelling flags respectively, and let $\lambda_1(\gamma)$ and $\lambda_{d+1}(\gamma)$ respectively denote the moduli of the largest and smallest eigenvalues of some (any) linear representative in $\SL^{\pm 1}(d+1,\Rb)$ of $\gamma$. Since $\gamma^+$ and $\gamma^-$ are transverse, \Cref{comparison of boundaries} implies that there are (up to reparameterization) two geodesics in $\Hb_\tau^d$ that have $\gamma^+$ and $\gamma^-$ as endpoints, both of which are preserved by $\gamma$. Then $\gamma$ either preserves each geodesic between $\gamma^+$ and $\gamma^-$ or permutes them. In the former case, by \Cref{prop: spacelike geodesics}, we may calculate the translation distance of $\gamma$ along these geodesics in terms of $\lambda_1(\gamma)$ and $\lambda_{d+1}(\gamma)$.

\begin{corollary}\label{observation: period length}
    Suppose a biproximal element $\gamma \in \PGL(d+1,\Rb)$ preserves a spacelike geodesic $\ell$ in $\Hb_\tau^d$ joining $\gamma^-,\gamma^+\in\partial_\infty^{\mathbb{P}}\Hb_\tau^d$. Then the translation length of $\gamma$ 
    along $\ell$ is $\frac{1}{2} \log \frac{\lambda_1(\gamma)}{\lambda_{d+1}(\gamma)}$.
\end{corollary}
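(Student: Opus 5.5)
The plan is to reduce to the explicit model geodesic of \Cref{prop: spacelike geodesics} and read off the translation from formula \eqref{glt formula}. Write $\gamma^+=([v_1],[\phi_1])$ and $\gamma^-=([v_2],[\phi_2])$. Since $\gamma^\pm$ are transverse, we may normalize representatives so that $\phi_1(v_2)=1=\phi_2(v_1)$. By \Cref{comparison of boundaries}, up to reparametrization $\ell$ is either $\ell_{v_1,\phi_1,v_2,\phi_2}$ or its $\opp$-partner $\ell_{v_1,-\phi_1,-v_2,\phi_2}$. The latter is also of the form $\ell_{v_1',\phi_1',v_2',\phi_2'}$ with the same normalization, so we may assume $\ell=\ell_{v_1,\phi_1,v_2,\phi_2}$.

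Next, choose the basis $(f_0,\dots,f_d)$ as in the proof of \Cref{prop: spacelike geodesics}: $[f_0]=[v_1]$, $[f_d]=[v_2]$, and $\Span(f_1,\dots,f_{d-1})=[\phi_1]\cap[\phi_2]$. Since $\gamma$ fixes $\gamma^+$ and $\gamma^-$, a linear representative $\hat\gamma$ of $\gamma$ is block diagonal $\mathrm{diag}(a,M,b)$ in this basis.
\begin{itemize}
\item Because $\gamma^+$ is the attracting flag, $v_1$ is the attracting eigenline, so $|a|$ is the largest eigenvalue modulus.
\item Because $[\phi_1]$ is the attracting hyperplane, equivalently $v_2$ is the repelling eigenline, so $|b|$ is the smallest eigenvalue modulus.
\end{itemize}
Normalizing $\hat\gamma$ to lie in $\SL^{\pm1}(d+1,\Rb)$, this gives $|a|=\lambda_1(\gamma)$ and $|b|=\lambda_{d+1}(\gamma)$.

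By hypothesis $\gamma$ preserves $\ell$ rather than swapping it with $\ell^{\opp}$. By the computation in \Cref{comparison of boundaries}, this forces $ab>0$. Formula \eqref{glt formula} then gives
\[\gamma\ell(t)=\ell\left(t+\tfrac12\log\tfrac ab\right).\]
Since $a/b=|a|/|b|=\lambda_1(\gamma)/\lambda_{d+1}(\gamma)$, the translation length is $\tfrac12\log\frac{\lambda_1(\gamma)}{\lambda_{d+1}(\gamma)}$, as claimed.

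The only point needing care is matching $a$ and $b$ to the extreme eigenvalues. This uses the standard fact that for a biproximal $\gamma$, $(\gamma^+)^1$ is the top eigenline and $(\gamma^-)^d$ is the sum of the other generalized eigenspaces, which is exactly $[\phi_1]$ in the chosen basis; the result is independent of which of the two geodesics $\ell$ is.
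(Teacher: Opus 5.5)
Your proof is correct and is essentially the paper's argument. The paper also reduces to $\ell=\ell_{v_1,\phi_1,v_2,\phi_2}$ and computes $\gamma(\ell(t))$ with the extreme eigenvalues acting on $v_1,v_2,\phi_1^*,\phi_2^*$, which is formula \eqref{glt formula} with $a=\lambda_1(\gamma)$ and $b=\lambda_{d+1}(\gamma)$; your explicit check that preserving $\ell$ forces $ab>0$ makes precise the sign choice that the paper leaves implicit. One small slip in your last paragraph: $(\gamma^-)^d$ is $[\phi_2]$, not $[\phi_1]$, and what pins down $|b|=\lambda_{d+1}(\gamma)$ is that $[\phi_1]=(\gamma^+)^d$ is the sum of the generalized eigenspaces other than the bottom eigenline (equivalently, $[v_2]$ is the bottom eigenline), exactly as your second bullet correctly states.
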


\begin{proof}
By \Cref{prop: spacelike geodesics}, we may choose $v_1,v_2\in\Rb^{d+1}$ and $\phi_1,\phi_2\in\Rb^{d+1*}$ such that 
$\gamma^+=([v_1],[\phi_1])$, $\gamma^-=([v_2],[\phi_2])$, $\phi_1(v_1)=0=\phi_2(v_2)$, $\phi_1(v_2)=1=\phi_2(v_1)$, and $\ell=\ell_{v_1,\phi_1,v_2,\phi_2}$. If we set $s(\gamma):=\frac{1}{2}\log\frac{\lambda_1(\gamma)}{\lambda_{d+1}(\gamma)}$, then 
\begin{align*}
    \gamma(\ell(t))&=\left[e_+\left(\frac{\lambda_1(\gamma)e^t}{\sqrt{2}}v_1+\frac{\lambda_{d+1}(\gamma)e^{-t}}{\sqrt{2}}v_2\right)+e_-\left(-\frac{\lambda_{d+1}(\gamma)^{-1}e^t}{\sqrt{2}}\phi_1^*-\frac{\lambda_1(\gamma)^{-1}e^{-t}}{\sqrt{2}}\phi_2^*\right)\right]\\
    &=\left[e_+\left(\frac{e^{t+s(\gamma)}}{\sqrt{2}}v_1+\frac{e^{-t-s(\gamma)}}{\sqrt{2}}v_2\right)+e_-\left(-\frac{e^{t+s(\gamma)}}{\sqrt{2}}\phi_1^*-\frac{e^{-t-s(\gamma)}}{\sqrt{2}}\phi_2^*\right)\right]\\
    &=\ell(t+s(\gamma)).
\end{align*}
Since $\ell$ is a unit speed geodesic, $\gamma$ translates along $\ell$ by $s(\gamma)$.
\end{proof}

\subsection{Points joined by a spacelike geodesic}\label{Spacelike geodesics 2}

We now discuss when a distinct pair of points in $\Hb_\tau^d\cup\partial_\infty^{\mathbb S}\Hb_\tau^d$ are joined by a spacelike geodesic in $\Hb_\tau^d$. Observe that if $[{\bf u}_1]=[e_+u_1+e_-\psi_1^*]$ and $[{\bf u}_2]=[e_+u_2+e_-\psi_2^*]$ are distinct points in $\mathbb S(\Rb_\tau^{d+1})$, then whether or not the inequality $\psi_1(u_2)\psi_2(u_1)>\psi_1(u_1)\psi_2(u_2)$ holds does not depend on the choice of representatives of $[{\bf u}_1]$ and $[{\bf u}_2]$.

\begin{proposition}\label{points along spacelike geodesics v2}
Let $[{\bf u}_1]=[e_+u_1+e_-\psi_1^*]$ and $[{\bf u}_2]=[e_+u_2+e_-\psi_2^*]$ be distinct points in $\Hb_\tau^d\cup\partial_\infty^{\mathbb S}\Hb_\tau^d\subset\mathbb S(\Rb_\tau^{d+1})$. There is a spacelike geodesic in $\Hb_\tau^d$ joining $[{\bf u}_1]$ and $[{\bf u}_2]$ if and only if $[u_1]\neq[u_2]$, $[\psi_1]\neq[\psi_2]$, and $\psi_1(u_2)\psi_2(u_1)>\psi_1(u_1)\psi_2(u_2)$. Moreover, such a spacelike geodesic is unique up to reparameterization. 
\end{proposition}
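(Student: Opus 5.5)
The plan is to reduce everything to the explicit geodesics $\hat\ell_{v_1,\phi_1,v_2,\phi_2}$ of \Cref{prop: spacelike geodesics}. By \Cref{vectors} and \Cref{prop: spacelike geodesics}, every unit speed spacelike geodesic is a $\PGL(d+1,\Rb)$-translate of one such curve. The same translate is again of this form, with $(gv_i,\phi_i\circ g^{-1})$. Hence every spacelike geodesic is, up to reparameterization, some $\ell_{v_1,\phi_1,v_2,\phi_2}$ with $\phi_i(v_i)=0$ and $\phi_1(v_2)=1=\phi_2(v_1)$. Its points in $\Sb(\Rb_\tau^{d+1})$ are the classes of
\[e_+(av_1+bv_2)+e_-\bigl(-(a\phi_1+b\phi_2)\bigr)^*\]
for $a,b>0$ with $ab=1/2$ (rescaling by $\Rb^+$ gives all $a,b>0$). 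The forward and backward endpoints are the degenerate cases $b=0$ and $a=0$. So the set $\overline{\ell}\subset\Hb_\tau^d\cup\partial_\infty^{\Sb}\Hb_\tau^d$ is parametrized by $(a,b)\in[0,\infty)^2\setminus\{0\}$ modulo $\Rb^+$. The pair $(u,\psi)$ is $(av_1+bv_2,-(a\phi_1+b\phi_2))$.

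\textbf{Necessity.} Suppose two distinct points lie on such a geodesic, with parameters $(a_1,b_1)\neq(a_2,b_2)$ up to scaling. Then $u_i=a_iv_1+b_iv_2$ and $\psi_i=-(a_i\phi_1+b_i\phi_2)$. Since $v_1,v_2$ are independent and $\phi_1,\phi_2$ are independent, $[u_1]\ne[u_2]$ and $[\psi_1]\ne[\psi_2]$. A direct computation gives $\psi_i(u_j)=-(a_ib_j+a_jb_i)$, so
\[\psi_1(u_2)\psi_2(u_1)-\psi_1(u_1)\psi_2(u_2)=(a_1b_2+a_2b_1)^2-4a_1b_1a_2b_2=(a_1b_2-a_2b_1)^2>0.\]
This is strictly positive because the parameters are non-proportional.

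\textbf{Sufficiency.} This is the main step. Assume the three conditions hold. Look for vectors $v_1,v_2\in\Span(u_1,u_2)$ and covectors $\phi_1,\phi_2\in\Span(\psi_1,\psi_2)$ solving the linear system above with nonnegative coefficients. Inverting is natural: the $2\times2$ "matrix" $M_{ij}=\psi_i(u_j)$ must factor as $-(a_ib_j+a_jb_i)$ for suitable $(a_i,b_i)\in[0,\infty)^2$ (up to positive scalars on each point).

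The positive discriminant condition says that the quadratic form $\psi_1(u_1)x^2+(\psi_1(u_2)+\psi_2(u_1))xy+\psi_2(u_2)y^2$ has two real distinct null directions. The two candidate lines are $[v_1],[v_2]\subset\Span(u_1,u_2)$: they are the points $[xu_1+yu_2]$ annihilated by the corresponding combination $x\psi_1+y\psi_2$. Concretely, I would take $[v_1],[v_2]$ to be the two solutions of $\bigl(x\psi_1+y\psi_2\bigr)(xu_1+yu_2)=0$. The linear system is
\[\psi_1(u_1)x^2+\bigl(\psi_1(u_2)+\psi_2(u_1)\bigr)xy+\psi_2(u_2)y^2=0,\]
but the symmetric form above is not quite what is needed. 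So the right equations are $\phi_1(v_1)=0$ and the analogous one for $v_2$, which one solves as a $2\times2$ eigenproblem for the pairing matrix $M$ between the spans. The determinant condition $\psi_1(u_2)\psi_2(u_1)>\psi_1(u_1)\psi_2(u_2)$ is the statement that $\det M<0$ after normalizing (for points in $\Hb_\tau^d$ the sign convention $q(\mathbf u,\mathbf u)<0$ fixes $\psi_i(u_i)\le0$). This negativity is exactly what gives a real basis in which $M$ is anti-diagonal, i.e. $\phi_i(v_i)=0$ and $\phi_1(v_2)=\phi_2(v_1)\ne0$. The conditions $[u_1]\neq[u_2]$ and $[\psi_1]\ne[\psi_2]$ ensure the spans are $2$-dimensional, which is needed for this.

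Then I would check that the signs work out: $\phi_1(v_2)=\phi_2(v_1)=1$ after rescaling, and the coefficients of $u_i,\psi_i$ in this basis are nonnegative with consistent sign choices. This places both points on $\overline{\ell_{v_1,\phi_1,v_2,\phi_2}}$, possibly after replacing by $\ell^{\opp}$ as in \Cref{comparison of boundaries}. I expect this sign bookkeeping, which distinguishes spherical points from their $\opp$ images, to be the main obstacle.

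\textbf{Uniqueness.} Any spacelike geodesic through both points has $v_1,v_2\in\Span(u_1,u_2)$ and $\phi_1,\phi_2\in\Span(\psi_1,\psi_2)$ satisfying the anti-diagonal condition. That basis is determined up to scaling and order by the $2\times2$ eigenproblem. The spherical endpoints, and hence the geodesic, are therefore determined by \Cref{prop: spacelike geodesics}.
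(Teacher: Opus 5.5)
Your necessity argument is correct; the identity $\psi_1(u_2)\psi_2(u_1)-\psi_1(u_1)\psi_2(u_2)=(a_1b_2-a_2b_1)^2$ is a clean way to do it. However, sufficiency and uniqueness, which carry all the content, are not established.

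\textbf{The quadratic form.} $\psi_1(u_1)x^2+(\psi_1(u_2)+\psi_2(u_1))xy+\psi_2(u_2)y^2$ is not an invariant of the two points. Multiplying $\mathbf u_2$ by $e_++\mu e_-\in\Rb_\tau^+$ ($\mu>0$) replaces $\psi_2$ by $\mu\psi_2$ without changing $[\mathbf u_2]$, yet it moves the null lines of this form. Its discriminant is also not your hypothesis: it equals $(\psi_1(u_2)-\psi_2(u_1))^2+4(\psi_1(u_2)\psi_2(u_1)-\psi_1(u_1)\psi_2(u_2))$.

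\textbf{The ``eigenproblem''.} This is not well posed either. Suppose you anti-diagonalize the pairing $W\times V\to\Rb$, with $V=\Span(u_1,u_2)$ and $W=\Span(\psi_1,\psi_2)$, choosing bases of $V$ and $W$ independently. Since $\det M\neq0$, every basis $(v_1,v_2)$ of $V$ admits $\phi_1,\phi_2\in W$ with $\phi_i(v_i)=0$ and $\phi_1(v_2)=\phi_2(v_1)=1$. So this condition determines nothing, and your uniqueness claim that the basis is ``determined up to scaling and order'' is false. If you instead mean a simultaneous change of basis, then $M$ must first be made symmetric by rescaling representatives. That step is missing, and it needs $\psi_1(u_2)\psi_2(u_1)>0$.

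\textbf{The missing constraint.} It is already visible in your necessity computation: $u_i$ and $\psi_i$ are expanded with the same coefficients, $u_i\propto a_iv_1+b_iv_2$ and $\psi_i\propto-(a_i\phi_1+b_i\phi_2)$, with positive factors. Equivalently, $\ell_{v_1,\phi_1,v_2,\phi_2}$ is encoded by the symmetric form $B$ on $V$ with $B(v_k,v_l)=\phi_k(v_l)$, and $\psi_i|_V$ must be a negative multiple of $B(u_i,\cdot)$.

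\textbf{Closing the gap.}
\begin{enumerate}
\item Since $\psi_i(u_i)\leqslant0$, the hypothesis forces $\psi_1(u_2)\psi_2(u_1)>0$. Replace $(u_2,\psi_2)$ by $(-u_2,-\psi_2)$ and rescale $\psi_2$ positively; neither changes $[\mathbf u_2]$. You may then assume $\psi_1(u_2)=\psi_2(u_1)<0$, which is the paper's normalization.
\item Set $B(u_i,u_j):=-\psi_i(u_j)$. This is symmetric with determinant $\psi_1(u_1)\psi_2(u_2)-\psi_1(u_2)\psi_2(u_1)<0$, so it has signature $(1,1)$.
\item Take its null lines $[v_1],[v_2]$, normalized so that $B(v_1,v_2)=1$ and $u_1=a_1v_1+b_1v_2$ with $a_1,b_1\geqslant0$.
\item Let $\phi_k\in W$ restrict to $B(v_k,\cdot)$ on $V$. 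This is possible because restriction $W\to V^*$ is bijective when $\det M\neq0$.
\item The sign bookkeeping you deferred is one line. Since $2a_2b_2=-\psi_2(u_2)\geqslant0$ and $a_1b_2+a_2b_1=-\psi_1(u_2)>0$, you get $a_2,b_2\geqslant0$. So both points lie on $\overline{\ell_{v_1,\phi_1,v_2,\phi_2}}$, not on $\ell^{\opp}$, which corresponds to $-B$.
\item For uniqueness, any geodesic through both points yields such a form $B'$ with $B'(u_i,\cdot)=-c_i\psi_i$ on $V$ and $c_i>0$. Symmetry forces $c_1=c_2$, so $B'$ is a positive multiple of $B$.
\end{enumerate}

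\textbf{Comparison with the paper.} This linear-algebra route treats all three of the paper's cases at once. The paper instead works on $L=[u_1]+[u_2]\cong\partial\Hb^2$ and takes $[v_1],[v_2]$ to be the endpoints of the common perpendicular to the geodesics $([w_i],[u_i])$, where $[w_i]=[\psi_i]\cap L$. This is the same object: $[w_i]$ is the $B$-polar of $[u_i]$, i.e.\ its harmonic conjugate with respect to $[v_1],[v_2]$.
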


\begin{proof}
Suppose first that there is a spacelike geodesic joining $[{\bf u}_1]$ and $[{\bf u}_2]$. Then by \Cref{vectors} and \Cref{prop: spacelike geodesics}, they lie in the closure of $\ell_{v_1,\phi_1,v_2,\phi_2}(\Rb)$ in $\Hb_\tau^d\cup\partial_\infty^{\mathbb S}\Hb_\tau^d$ for some $v_1,v_2\in\Rb^{d+1}$ and $\phi_1,\phi_2\in\Rb^{d+1*}$ such that $\phi_1(v_1)=0=\phi_2(v_2)$ and $\phi_1(v_2)=1=\phi_2(v_1)$. It follows from the explicit description of $\ell_{v_1,\phi_1,v_2,\phi_2}$ that the forward direction holds.

For the proof of the converse, first notice that $\psi_1(u_2)\psi_2(u_1)>\psi_1(u_1)\psi_2(u_2)\ge0$, so we may choose representatives ${\bf u}_1=e_+u_1+e_-\psi_1^*$ of $[{\bf u}_1]$ and ${\bf u}_2=e_+u_2+e_-\psi_2^*$ of $[{\bf u}_2]$ to ensure that \[\psi_1(u_2)=-1=\psi_2(u_1)~.\] The remainder of the proof proceeds in three cases.

{\bf Case 1: $\psi_1(u_1)=0=\psi_2(u_2)$.}  By \Cref{prop: spacelike geodesics}, $\ell_{u_1,-\psi_1,u_2,-\psi_2}$ is the unique (up to reparameterization) spacelike geodesic in $\Hb_\tau^d$ that joins $[{\bf u}_1]$ and $[{\bf u}_2]$.

{\bf Case 2: $\psi_1(u_1)\neq 0=\psi_2(u_2)$ or $\psi_1(u_1)= 0\neq\psi_2(u_2)$.} We only give the proof in the case when $\psi_1(u_1)\neq 0=\psi_2(u_2)$; the other case is similar. 

Let $L:=[u_1]+[u_2]$ and let $[w_1]:=[\psi_1]\cap L$. Choose a projective identification $L\cong\mathbb{RP}^1=\partial_\infty\Hb^2$. Then we may view $[w_1]$, $[u_1]$, and $[u_2]$ as points along $\partial_\infty\Hb^2$. Let $g_1$ be the geodesic in $\Hb^2$ with $[w_1]$ and $[u_1]$ as its endpoints, and let $g$ be the geodesic in $\Hb^2$ that has $[u_2]$ as an endpoint, and is perpendicular to $g_1$. Then let $[v]$ be the endpoint of $g$ that is not $[u_2]$, and let $[\phi]:=[\psi_1]\cap[\psi_2]+[v]$. Notice that $[v]$ is the unique point along $L$ such that $([v],[u_2];[w_1],[u_1])=-1$.

Choose a representative $v\in\Rb^{d+1}$ of $[v]$ such that $\psi_1(v)<0$. By construction, the lines \[[w_1]<[v]<[u_1]<[u_2]<[w_1]\] lie along $L$ in this order, $\psi_1(w_1)=0$, $\psi_1(u_1)<0$, and $\psi_1(u_2)<0$. It follows that by scaling $u_1$, $u_2$, $\psi_1$, and $\psi_2$ by positive numbers, we may ensure that (in addition to $\psi_1(u_2)=-1=\psi_2(u_1)$) we have $u_1=u_2+v$. Since $([v],[u_2];[w_1],[u_1])=-1$, it follows that $w_1:=u_2-v\in\Rb^{d+1}$ is a representative of $[w_1]$. Then choose the representative $\phi\in\Rb^{d+1*}$ of $[\phi]$ such that $\phi(u_2)=-1$, and let ${\bf v}:=e_+v+e_-\phi^*$.

Note that \[\psi_2(v)=\psi_2(u_1-u_2)=-1~,\] so we may consider the curve $\ell_{v,-\phi,u_2,-\psi_2}$, which by \Cref{prop: spacelike geodesics} is a spacelike geodesic in $\Hb_\tau^d$ with $[{\bf v}]$ and $[{\bf u}_2]$ as its forward and backward endpoints in $\partial_\infty^{\mathbb S}\Hb_\tau^d$. We now show that $\ell_{v,-\phi,u_2,-\psi_2}$ joins $[{\bf u}_1]$ and $[{\bf u}_2]$ by verifying that $\ell_{v,-\phi,u_2,-\psi_2}(0)=[{\bf u}_1]$. Since we already know that $u_1=u_2+v$, by the definition of $\ell_{v,-\phi,u_2,-\psi_2}$, we need to show that $\psi_1=\psi_2+\phi$. For that, notice that $L+[\psi_1]\cap[\psi_2]=\Rb^{d+1}$, so it suffices to verify that $\psi_2+\phi$ and $\psi_1$ agree at $w_1$ and $v$. We compute \[(\psi_2+\phi)(w_1)=\phi(u_2)-\psi_2(v)=0\] and \[(\psi_2+\phi)(v)=\psi_2(v)=\psi_2(u_1-u_2) = -1~.\] Since $\psi_1(w_1)=0$ by the definition of $w_1$ and $\psi_1(v)=\psi_1(u_2-w_1) = -1$ by our normalization, it follows that $\psi_1=\psi_2+\phi$. 

To prove uniqueness, let $\widetilde{\ell}$ be any spacelike geodesic joining $[{\bf u}_1]$ and $[{\bf u}_2]$, and let $[[e_+\widetilde v+e_-\widetilde\phi^*]]$ be its endpoint in $\partial_\infty^{\mathbb P}\Hb_\tau^d$ different from $[[{\bf u}_2]]$. By \Cref{projective geometry description}, the point $[\widetilde v]\in\mathbb{RP}^d$ lies in $L$ and \[([\widetilde v],[u_2];[w_1],[u_1])=-1~.\] Hence $[\widetilde v]=[v]$. Moreover, the hyperplane $[\psi_1]\in\mathbb{RP}^{d*}$ passes through the point $[\widetilde\phi]\cap[\psi_2]\in\mathbb{RP}^d$, so \[[\widetilde\phi] =([\psi_1]\cap[\psi_2])+[v] = [\phi]~.\] Thus $\widetilde{\ell}$ and $\ell_{v,-\phi,u_2,-\psi_2}$ have the same endpoints in $\partial_\infty^{\mathbb P}\Hb_\tau^d$. Since they also have $[{\bf u}_2]$ as a common endpoint in $\partial_\infty^{\mathbb S}\Hb_\tau^d$, \Cref{prop: spacelike geodesics} and \Cref{comparison of boundaries} implies that they agree up to reparameterization.

{\bf Case 3: $\psi_1(u_1)\neq 0\neq\psi_2(u_2)$.}
As before, let $L:=[u_1]+[u_2]$. For each $i=1,2$, $[w_i]:=[\psi_i]\cap L$, and let $g_i$ be the geodesic in $\Hb^2$ with $[w_i]$ and $[u_i]$ as endpoints in $L\cong \partial \Hb^2$. Notice that
\[([w_1],[w_2];[u_1],[u_2])=\frac{\psi_1(u_2)\psi_2(u_1)}{\psi_1(u_1)\psi_2(u_2)}>1\]
by assumption, so the points 
\[[w_1]<[u_1]<[u_2]<[w_2]<[w_1]\] 
lie along $L$ in this cyclic order. Hence, $g_1$ and $g_2$ do not intersect in $\Hb^2$, so there is a unique geodesic $g$ in $\Hb^2$ that is perpendicular to both $g_1$ and $g_2$. Let $[v_1]$ and $[v_2]$ be the endpoints in $L\cong\partial_\infty\Hb^2$ of $g$ such that 
\[[w_1]<[v_1]<[u_1]<[u_2]<[v_2]<[w_2]<[w_1]\] 
lie along $L$ in this cyclic order. Then for each $i=1,2$, let $[\phi_i]:=([\psi_1]\cap[\psi_2])+[v_i]\in\mathbb{RP}^{d*}$. 

Choose representatives $v_1,v_2\in\Rb^{d+1}$ of $[v_1],[v_2]$ respectively, so that $\psi_1(v_1)<0$ and $\psi_1(v_2)<0$. Since $\psi_1(w_1)=0$, $\psi_1(u_1)<0$ and $\psi_1(u_2)<0$, by scaling $u_1$, $u_2$, $\psi_1$, and $\psi_2$ by positive numbers, we may ensure that (in addition to $\psi_1(u_2)=-1=\psi_2(u_1)$) we have $u_1=v_1+v_2$ and $u_2=e^tv_1+e^{-t}v_2$ for some $t\in\Rb$. Observe that $([v_1],[v_2];[w_i],[u_i])=-1$ for both $i=1,2$, so $w_1:=v_2-v_1\in\Rb^{d+1}$ is a representative of $[w_1]$ and $w_2:=e^{-t}v_2-e^tv_1\in\Rb^{d+1}$ is a representative of $[w_2]$. Then for both $i=1,2$, choose the representative $\phi_i\in\Rb^{d+1*}$ of $[\phi_i]$ such that $\phi_i(u_1)=-1$, and let ${\bf v}_i:=e_+v_i+e_-\phi_i^*$.

For both $i=1,2$,
\[\phi_i(v_{3-i})=\phi_i(u_1-v_i)=-1~,\]
so we may consider the curve $\ell_{v_1,-\phi_1,v_2,-\phi_2}$, which by \Cref{prop: spacelike geodesics} is a spacelike geodesic in $\Hb_\tau^d$ with $[{\bf v}_1]$ and $[{\bf v}_2]$ as its forward and backward endpoints in $\partial_\infty^{\mathbb S}\Hb_\tau^d$. We now show that $\ell_{v_1,-\phi_1,v_2,-\phi_2}$ joins $[{\bf u}_1]$ and $[{\bf u}_2]$ by verifying that 
\[\ell_{v_1,-\phi_1,v_2,-\phi_2}(0)=[{\bf u}_1]\quad\text{and}\quad\ell_{v_1,-\phi_1,v_2,-\phi_2}(t)=[{\bf u}_2]~.\]
Since we already know that $u_1=v_1+v_2$ and $u_2=e^tv_1+e^{-t}v_2$, by the definition of $\ell_{v_1,-\phi_1,v_2,-\phi_2}$, we need only to show that $\psi_1=C_1(\phi_1+\phi_2)$ and $\psi_2=C_2(e^t\phi_1+e^{-t}\phi_2)$ for some $C_1,C_2>0$.  For the former, we compute
\[(\phi_1+\phi_2)(w_1)=\phi_1(v_2)-\phi_2(v_1)=0\]
and
\[(\phi_1+\phi_2)(u_1)=\phi_1(v_2)+\phi_2(v_1)=-2~.\]
Since $\psi_1(w_1)=0$ and $\psi_1(u_1)<0$ by definition, the former holds for $C_1 =- \frac{\psi_1(u_1)}{2}$. Similarly, for the latter, we compute
\[(e^t\phi_1+e^{-t}\phi_2)(w_2)=\phi_1(v_2)-\phi_2(v_1)=0\]
and
\[(e^t\phi_1+e^{-t}\phi_2)(u_1)=-(e^t+e^{-t})~.\]
Since $\psi_2(w_2)=0$ and $\psi_2(u_1)=-1$, the latter holds for $C_2 = \frac{1}{e^t+e^{-t}}$.

To prove uniqueness, let $\widetilde{\ell}$ be any spacelike geodesic passing through $[{\bf u}_1]$ and $[{\bf u}_2]$, and let $[[e_+\widetilde v_1+e^-\widetilde\phi_1^*]]$ and $[[e_+\widetilde v_2+e_-\widetilde\phi_2]]$ denote its endpoints in $\partial_\infty^{\mathbb P}\Hb_\tau^d$. By \Cref{projective geometry description}, the points $[\widetilde v_1],[\widetilde v_2]\in\mathbb{RP}^d$ both lie in $L$, and \[([\widetilde v_1],[\widetilde v_2];[w_i],[u_i])=-1\] for $i=1,2$. Hence the geodesic in $\Hb^2$ with endpoints $[\widetilde v_1]$ and $[\widetilde v_2]$ is perpendicular to both $g_1$ and $g_2$. By the uniqueness of the common perpendicular, up to interchanging the indices, we have \[[\widetilde v_1]=[v_1] \quad \text{and} \quad [\widetilde v_2]=[v_2]~.\] Moreover, the hyperplanes $[\psi_1],[\psi_2]\in\mathbb{RP}^{d*}$ both contain the point $[\widetilde\phi_1]\cap[\widetilde\phi_2]\in\mathbb{RP}^d$, so \[[\widetilde\phi_i] = ([\psi_1]\cap[\psi_2])+[v_i] = [\phi_i]\] for $i=1,2$. Thus $\widetilde{\ell}$ and $\ell_{v_1,-\phi_1,v_2,-\phi_2}$ have the same endpoints in $\partial_\infty^{\mathbb P}\Hb_\tau^d$. By \Cref{prop: spacelike geodesics} and \Cref{comparison of boundaries}, either $\widetilde\ell$ and $\ell_{v_1,-\phi_1,v_2,-\phi_2}$ have disjoint images, or they agree up to reparameterization. Since they both pass through $[{\bf u}_1]$, the latter holds.
\end{proof}

As a consequence of \Cref{points along spacelike geodesics v2}, we have the following openness result for points joined by spacelike geodesics in $\Hb_\tau^d$.

\begin{corollary}\label{observation: joining by spacelike is an open condition}
    The subset of pairs of distinct points in $(\Hb_\tau^d\cup\partial_\infty^{\mathbb S}\Hb_\tau^d)^2$ that are joined by a spacelike geodesic in $\Hb_\tau^d$ is open.
\end{corollary}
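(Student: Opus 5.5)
The plan is to read the openness off directly from the algebraic criterion in \Cref{points along spacelike geodesics v2}. That proposition says two distinct points $[{\bf u}_1]=[e_+u_1+e_-\psi_1^*]$ and $[{\bf u}_2]=[e_+u_2+e_-\psi_2^*]$ of $\Hb_\tau^d\cup\partial_\infty^{\mathbb S}\Hb_\tau^d$ are joined by a spacelike geodesic if and only if three conditions hold:
\[[u_1]\neq[u_2],\qquad [\psi_1]\neq[\psi_2],\qquad \psi_1(u_2)\psi_2(u_1)>\psi_1(u_1)\psi_2(u_2).\]
So it suffices to show that each of these three conditions is open on the space of pairs in $(\Hb_\tau^d\cup\partial_\infty^{\mathbb S}\Hb_\tau^d)^2$. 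The pairing condition should be handled by working locally with continuous representatives.

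Fix a pair $([{\bf u}_1],[{\bf u}_2])$ satisfying the three conditions. The quotient $\widehat{\Rb_\tau^{d+1}}\to\Sb(\Rb_\tau^{d+1})$ by $\Rb_\tau^+\cong\Rb^+\times\mathcal U$ is a principal bundle, so it admits continuous local sections. Concretely, I would normalize $u_i$ and $\psi_i$ to have unit Euclidean norm. The remaining ambiguity is then a common sign, since $a,b>0$ forces $a=b=1$ after normalization. So near each $[{\bf u}_i]$ there are continuous choices of representatives $(u_i',\psi_i')$ depending on the nearby point $[{\bf u}_i']$.

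With these choices, the first two conditions are open because $u\mapsto[u]\in\mathbb{RP}^d$ and $\psi\mapsto[\psi]\in\mathbb{RP}^{d*}$ are continuous and $\mathbb{RP}^d$, $\mathbb{RP}^{d*}$ are Hausdorff. The third condition is a strict inequality between continuous functions of the representatives. It is independent of the choice of representatives, as observed just before \Cref{points along spacelike geodesics v2}, so it persists on a neighborhood. Distinctness of the pair is also an open condition, since $\Sb(\Rb_\tau^{d+1})$ is Hausdorff. Intersecting these neighborhoods gives an open set of pairs all joined by spacelike geodesics, by the reverse direction of \Cref{points along spacelike geodesics v2}.

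The only mildly delicate point is making sure the representatives vary continuously near boundary points, where $\psi_i(u_i)=0$. The unit-norm normalization above works uniformly on all of $\Sb(\Rb_\tau^{d+1})$, so no case distinction between interior and boundary points should be needed. I therefore do not expect a genuine obstacle.
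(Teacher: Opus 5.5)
Your proposal is correct and follows the same route as the paper, which records this corollary as an immediate consequence of \Cref{points along spacelike geodesics v2}: distinctness, $[u_1]\neq[u_2]$, $[\psi_1]\neq[\psi_2]$, and the representative-independent strict inequality are all open conditions on pairs in $\Sb(\Rb_\tau^{d+1})^2$. One small slip: elements of $\Rb_\tau^+$ only satisfy $ab>0$, so after unit normalization the residual ambiguity is $a=b=\pm1$ rather than ``$a,b>0$ forces $a=b=1$''; this is exactly the common sign you then account for, so nothing changes.
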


\subsection{The tangent bundle to paracomplex hyperbolic spaces}

Rec all that $\T\Hb_\tau^d$ denotes the tangent bundle to $\Hb_\tau^d$. Let $\T^1\Hb_\tau^d\subset \T\Hb_\tau^d$ denote the \emph{spacelike unit tangent bundle to $\Hb_\tau^d$}, i.e. it is the set of spacelike unit tangent vectors in $\T\Hb_\tau^d$. Notice that $\T^1\Hb_\tau^d\subset \T\Hb_\tau^d$ is a closed subset. Indeed, if a sequence of spacelike vectors in $\T\Hb_\tau^d$ converges, then its limit is necessarily spacelike or lightlike, but the sequence cannot consist only of unit vectors in the latter case. 

There are two natural involutions on $\T^1\Hb_\tau^d$. The first is \emph{negation}, which is given by
\[\mathsf{ne}:\T^1\Hb_\tau^d\to\T^1\Hb_\tau^d~, v\mapsto -v~.\] 
For the second involution, recall from \Cref{comparison of boundaries} that for any unit speed spacelike geodesic $\ell:\Rb\to\Hb_\tau^d$, the map $\ell^{\opp}:\Rb\to\Hb_\tau^d$ is a unit speed spacelike geodesic in $\Hb_\tau^d$ that has the same forward and backward endpoints as $\ell$ in $\partial_\infty^{\Pb}\Hb_\tau^d$ and with opposite sides in $\partial_\infty^{\mathbb S}\Hb_\tau^d$. By \Cref{prop: spacelike geodesics}, $\ell^{\opp}$ is uniquely determined by that if we write $\ell(0) = (u_1,\psi_1)$ and $\ell^{\opp}(0)=(u_2,\psi_2)$, then $\psi_1(u_2)=\psi_2(u_1)=0$. We define
\begin{align}\label{opp definition}
    \opp: \T^1\Hb_{\tau}^d \to\T^1\Hb_{\tau}^d \quad\text{ given by } \quad \opp: \ell'(0)\mapsto (\ell^{\opp})'(0)~.
\end{align}
Note that these two involutions commute, and they both commute with the $\PGL(d+1,\Rb)$-action and the geodesic flow on $\T^1\Hb_\tau^d$ (which themselves also commute). 

Recall that we previously defined a map ${\opp}:\partial_\infty^{\mathbb S}\Hb_\tau^d\to\partial_\infty^{\mathbb S}\Hb_\tau^d$ in \eqref{eqn: covering involution}. Observe that the map ${\opp}:\T^1\Hb_{\tau}^d \to\T^1\Hb_{\tau}^d$ defined by \eqref{opp definition} extends to this map in the sense that if $x\in \partial_\infty^{\mathbb S}\Hb_\tau^d$ is the forward endpoint (equivalently, backward endpoint) of a unit speed spacelike geodesic $\ell$ in $\Hb_\tau^d$, then ${\opp}(x)\in \partial_\infty^{\mathbb S}\Hb_\tau^d$  is the forward endpoint (equivalently, backward endpoint) of $\ell^{\opp}$.

\section{The characterization theorem}\label{section: the characterization theorem}

The main goal of this section is to prove \Cref{theoremalpha: characterizations}. In \Cref{section: the horofunctions}, we define the horofunctions on $\T^1\Hb_\tau^d$ and explain their basic properties. Then, in \Cref{rfejkrflek}, we define the notion of a good flow space for a discrete subgroup of $\PGL(d+1,\Rb)$, showed that the weak hulls of tranverse groups are examples, and use the horofunctions to define the notion of a thick-thin decomposition on a good flow space. Then in \Cref{section: geometric finiteness via relative Anosovness}, we prove the forward directions of \Cref{theoremalpha: characterizations}. Finally, we prove the backward directions of all three parts of \Cref{theoremalpha: characterizations} in \Cref{section: existence of good flow spaces implies group is transverse}. 

\subsection{The horofunctions}\label{section: the horofunctions}
Given any point $x=(x^1,x^d)\in\partial_\infty^{\Pb}\Hb_\tau^d$, our goal is to define a horofunction on $\T^1\Hb_\tau^d$ centered at $x$ and explain some of its basic properties. This is a crucial ingredient needed for the characterization theorems, which are the main results in this section.

To do so, we use the following notation. Let $\norm{\cdot}$ denote the norm and $\angle(\cdot,\cdot)$ denote the angle on both $\Rb^{d+1}$ and $\Rb^{d+1*}$ induced by the standard inner products. For all non-zero $v\in\Rb^{d+1}$ and $\phi\in\Rb^{d+1*}$, the \emph{angle} between $[v]$ and $[\phi]$ is defined by
\[\angle([v],[\phi]):=\frac{\abs{\phi(v)}}{\norm{v}\norm{\phi}}~.\]
Then for any pair of points $a=(a^1,a^d)$ and $b=(b^1,b^d)$ in $\mathbb{RP}^d\times\mathbb{RP}^{d*}$, denote
\[\Delta(a,b):=\angle(a^1,b^d)\angle(a^d,b^1)~.\]

Recall that $\pi:\T^1 \Hb_\tau^d\to \Hb_\tau^d$ denotes the usual projection map, and that for any $z\in \T^1\Hb_\tau^d$, $z^+$ and $z^-$ denote the forward and backward endpoint of $z$ in $\partial_\infty^{\Pb}\Hb_\tau^d$. Recall also that we may identify $\Hb_\tau^d$ with the set of transverse pairs in $\mathbb{RP}^d\times\mathbb{RP}^{d*}$.

First, we define the map \[\Bc^0_x:\T^1\Hb_\tau^d\to(-\infty,\infty]\] by \[\mathcal{B}^0_x(z) = -\frac{1}{2}\log \frac{\Delta(x,\pi(z))}{\sqrt{\Delta(\pi(z),\pi(z))}}=\frac{1}{2}\log \abs{\frac{\psi(u)\norm{\phi}{\norm{v}}}{\psi(v)\phi(u)}}~.\] where $u,v\in\mathbb{R}^{d+1}$ and $\psi,\phi\in\mathbb{R}^{d+1*}$ are the vectors and covectors such that $x=([v],[\phi])$ and $\pi(z)=([u],[\psi])$. Using this, we define \[\Bc^1_x:\T^1\Hb_\tau^d\to(-\infty,\infty]\] by \[\mathcal{B}^1_x(z)= -\frac{1}{4}\log \big(\exp(-4 \mathcal{B}^0_x(z))+\exp(-4 \mathcal{B}^0_x(\opp(z)))\big)~.\]

Next, we define the map \[\Bc^2_x:\T^1\Hb_\tau^d\to(-\infty,\infty]\] by 
\begin{align*}
    \mathcal{B}^2_x (z)
    & = \frac{1}{4}\log \frac{\Delta(z^+,z^-)}{\Delta(x,z^+)\Delta(x,z^-)}=\frac{1}{4}\log \abs{\frac{\phi^+(v^-) \phi^-(v^+)\norm{v}^2 \norm{\phi}^2}{\phi(v^+)\phi(v^-) \phi^+(v) \phi^-(v)}}
\end{align*}
where $v,v^+,v^-\in\Rb^{d+1}$ and $\phi,\phi^+,\phi^-\in\Rb^{d+1*}$ are the vectors and covectors such that $x=([v],[\phi])$, $z^+=([v^+],[\phi^+])$, and $z^-=([v^-],[\phi^-])$. Note that $\Bc^2_x(z)$ depends only on $z^+$ and $z^-$, so $\Bc^2_x$ is invariant under the geodesic flow. 

\begin{definition}\label{definition: horofunctions}
    The \emph{horofunction centered at $x\in\partial_\infty^{\Pb}\Hb_\tau^d$} is the map \[\Bc_x:\T^1\Hb_\tau^d\to\Rb~,\] defined by \[\mathcal{B}_x(z) = -\log \big( \exp(-\mathcal{B}_x^1(z))+ \exp(-\mathcal{B}_x^2(z))\big)~.\]
\end{definition}

Observe that $\Bc^1_x$ and $\Bc^2_x$, and hence $\Bc_x$, are invariant under ${\opp}$ and negation. In addition, for all $x\in\partial_\infty^{\Pb}\Hb_\tau^d$ and $z\in\T^1\Hb_\tau^d$, since 
\[\exp(-\Bc_x(z))=\exp(-\mathcal{B}_x^1(z))+ \exp(-\mathcal{B}_x^2(z))~,\]
we have
\[\max\{\exp(-\mathcal{B}_x^1(z)), \exp(-\mathcal{B}_x^2(z))\}\leqslant \exp(-\Bc_x(z))\] \[\leq 2\max\{\exp(-\mathcal{B}_x^1(z)), \exp(-\mathcal{B}_x^2(z))\}~,\]
which implies that 
\begin{align}\label{horofunction component bound}
    \min \{\mathcal{B}^1_x(z),\mathcal{B}^2_x (z)\} - \log 2\leqslant \mathcal{B}_x(z) \leqslant \min \{\mathcal{B}^1_x(z),\mathcal{B}^2_x (z)\}~.
    \end{align}
For similar reasons, 
\begin{align}\label{horofunction component bound2}
\min \{\mathcal{B}^0_x(z),\mathcal{B}^0_x ({\opp}(z))\} - \frac{\log 2}{4}\leqslant \mathcal{B}^1_x(z) \leqslant \min \{\mathcal{B}^0_x(z),\mathcal{B}^0_x ({\opp}(z))\}.
\end{align}
Furthermore, notice that $\Bc_x$ is continuous because both $\Bc_x^0$ and $\Bc_x^2$ are continuous (as maps from $\T^1\Hb_\tau^d$ to $(-\infty,+\infty]$). 

The following proposition records another observation that follows from a straightforward computation.

\begin{proposition}\label{properties of the horofunction}
For any $g\in \PGL(d+1,\Rb)$, any $x\in\partial_\infty^{\Pb}\Hb_\tau^d$, any $z\in\T^1\Hb_\tau^d$, and any $i=0,1,2$, we have
\[\mathcal{B}^i_{gx}(gz)-\mathcal{B}^i_x (z) = \frac{1}{2} \left(\log\frac{\norm{g v}}{\norm{v}}+ \log\frac{\norm{g\phi}}{\norm{\phi}} \right)~,\]
where $v\in\mathbb{R}^{d+1}$ and $\phi\in\mathbb{R}^{d+1*}$ is the vector and covector such that $x=([v],[\phi])$. In particular, 
\[\mathcal{B}_{gx}(gz)-\mathcal{B}_x (z) = \frac{1}{2} \left(\log\frac{\norm{g v}}{\norm{v}}+ \log\frac{\norm{g\phi}}{\norm{\phi}} \right)~.\]
\end{proposition}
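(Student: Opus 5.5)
The plan is to compute each $\mathcal{B}^i$ directly from its explicit formula in terms of representatives, using equivariance of everything in sight. Fix $g\in\PGL(d+1,\Rb)$ with a linear representative $\hat g$. The action on representatives is $v\mapsto \hat g v$ and $\phi\mapsto \phi\circ\hat g^{-1}$; write $g\phi:=\phi\circ \hat g^{-1}$. The key observation is that every pairing $\psi(u)$ between a covector and a vector is invariant: $(g\psi)(gu)=\psi(u)$. Also the scalar ambiguity of $\hat g$ cancels between $\norm{gv}$ and $\norm{g\phi}$, so the right-hand side is well defined. Moreover, $\pi$, $z\mapsto z^\pm$ and $\opp$ are $\PGL(d+1,\Rb)$-equivariant. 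Concretely, $\pi(gz)=g\pi(z)$, $(gz)^\pm=gz^\pm$ and $\opp(gz)=g\,\opp(z)$, as recorded after \eqref{opp definition}.

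\textbf{Case $i=0$.} With $x=([v],[\phi])$ and $\pi(z)=([u],[\psi])$, we have $gx=([\hat gv],[g\phi])$ and $\pi(gz)=([\hat gu],[g\psi])$. In
\[\mathcal{B}^0_{gx}(gz)=\frac12\log\abs{\frac{(g\psi)(\hat gu)\,\norm{g\phi}\,\norm{\hat gv}}{(g\psi)(\hat gv)\,(g\phi)(\hat gu)}}~,\]
all three pairings equal $\psi(u)$, $\psi(v)$ and $\phi(u)$ respectively. Subtracting $\mathcal{B}^0_x(z)$ leaves exactly $\frac12\bigl(\log\frac{\norm{gv}}{\norm v}+\log\frac{\norm{g\phi}}{\norm\phi}\bigr)$. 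The formula is independent of the choice of representatives since it is homogeneous of degree zero in each of $u,v,\psi,\phi$, so using transported representatives is legitimate.

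\textbf{Case $i=1$.} Call the constant above $c$. Applying the case $i=0$ to both $z$ and $\opp(z)$, and using $\opp(gz)=g\,\opp(z)$, gives
\[\exp\bigl(-4\mathcal{B}^0_{gx}(gz)\bigr)+\exp\bigl(-4\mathcal{B}^0_{gx}(\opp(gz))\bigr)=e^{-4c}\Bigl(\exp\bigl(-4\mathcal{B}^0_x(z)\bigr)+\exp\bigl(-4\mathcal{B}^0_x(\opp(z))\bigr)\Bigr)~,\]
so $\mathcal{B}^1$ shifts by $c$.

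\textbf{Case $i=2$.} Write the formula with representatives transported by $g$. The pairings $\phi^+(v^-)$, $\phi^-(v^+)$, $\phi(v^\pm)$ and $\phi^\pm(v)$ are all invariant. Only $\norm{v}^2\norm{\phi}^2$ changes, and the factor $\frac14\log$ of the squares gives $\frac12\log\frac{\norm{gv}\norm{g\phi}}{\norm v\norm\phi}=c$.

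\textbf{Conclusion for $\mathcal{B}_x$.} Since $\mathcal{B}_x=-\log\bigl(e^{-\mathcal{B}^1_x}+e^{-\mathcal{B}^2_x}\bigr)$ and both terms shift by the same constant $c$, the same factoring argument shows that $\mathcal{B}_x$ also shifts by $c$.

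The only real obstacle is bookkeeping, namely checking that the dual action on covectors is the one for which pairings are invariant. This matches the convention $g(v,\phi)=(gv,\phi\circ g^{-1})$ in \Cref{real projective}. One should also note that the $\infty$ values are preserved, which is consistent because the shift is by a finite constant.
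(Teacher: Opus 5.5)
Your proposal is correct and is exactly the ``straightforward computation'' the paper invokes without writing out. It rests on three facts: the pairings $\psi(u)$ are invariant under $(u,\psi)\mapsto(\hat g u,\psi\circ\hat g^{-1})$; the maps $\pi$, $z\mapsto z^\pm$ and $\opp$ are $\PGL(d+1,\Rb)$-equivariant; and only the factors $\norm{v}$, $\norm{\phi}$ change, with that constant shift passing through the log-sum-exp definitions of $\mathcal{B}^1_x$ and $\mathcal{B}_x$.
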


As an immediate consequence of the formula on the right hand side of the equalities in \Cref{properties of the horofunction}, we have the following corollary.

\begin{corollary}\label{unipotents preserve horofunctions}
    If $g\in\PGL(d+1,\Rb)$ is an weakly unipotent element that fixes $x\in\partial_\infty^{\Pb}\Hb_\tau^d$, then for any $z\in\T^1\Hb_\tau^d$, \[\mathcal{B}^i_{x}(gz) =\mathcal{B}^i_x (z) \quad \text{for}\ i = 0,1,2 \quad \text{and} \quad \mathcal{B}_{x}(gz) =\mathcal{B}_x (z)~.\]
\end{corollary}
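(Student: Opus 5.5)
By \Cref{properties of the horofunction}, applied with $gx=x$, for each $i$ we have
\[\Bc^i_x(gz)-\Bc^i_x(z)=\frac{1}{2}\left(\log\frac{\norm{gv}}{\norm{v}}+\log\frac{\norm{g\phi}}{\norm{\phi}}\right),\]
where $x=([v],[\phi])$. Since $g$ fixes $x$, we have $[gv]=[v]$ and $[g\phi]=[\phi]$, so $v$ is an eigenvector of a linear lift $\hat g\in\GL(d+1,\Rb)$ and $\phi$ is an eigenvector of the dual action $\phi\mapsto\phi\circ\hat g^{-1}$. Write $\hat g v=\lambda v$ and $\phi\circ\hat g^{-1}=\mu\phi$. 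The right-hand side then equals $\frac12\log\abs{\lambda\mu}$. This quantity does not depend on the choice of lift: rescaling $\hat g$ by $c$ multiplies $\lambda$ by $c$ and $\mu$ by $c^{-1}$, consistent with the formula being well defined on $\PGL(d+1,\Rb)$.

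It therefore suffices to show that $\abs{\lambda\mu}=1$. The number $\lambda$ is an eigenvalue of $\hat g$. The number $\mu$ is an eigenvalue of the inverse transpose $(\hat g^{-1})^T$, so $\mu^{-1}$ is an eigenvalue of $\hat g$. Since $g$ is weakly unipotent, all eigenvalues of $\hat g$ have the same modulus $r$. Hence $\abs{\lambda}=r$ and $\abs{\mu}^{-1}=r$, so $\abs{\lambda\mu}=1$ and the difference vanishes for $i=0,1,2$.

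For $\Bc_x$, either use the second formula in \Cref{properties of the horofunction} directly, or note that $\Bc_x$ is a function of $\Bc^1_x$ and $\Bc^2_x$ alone. Some care is needed with the value $+\infty$, since $\Bc^i_x$ takes values in $(-\infty,\infty]$. The invariance statement should be read as an equality in $(-\infty,\infty]$. It holds because $\Bc^0_x(z)=+\infty$ exactly when $\pi(z)$ fails to be transverse to $x$ in one of its factors, and this condition is $g$-invariant when $gx=x$. The same holds for $\Bc^2_x$ with $z^\pm$. The only real point of the argument is identifying the eigenvalues $\lambda$ and $\mu^{-1}$ and invoking the common modulus; the rest is routine.
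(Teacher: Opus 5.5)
Your proposal is correct and follows the paper's route: the paper deduces the corollary immediately from \Cref{properties of the horofunction} with $gx=x$. Your observation that $\lambda$ and $\mu^{-1}$ are both eigenvalues of a lift of $g$, hence of the same modulus, so $\abs{\lambda\mu}=1$, is exactly the detail the paper leaves implicit when it says the right-hand side vanishes.
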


The fact that the right hand side of the equalities in \Cref{properties of the horofunction} do not depend on $z$ allows to deduce the following corollary.

\begin{corollary}\label{constant difference}
    Let $g\in\PGL(d+1,\Rb)$, and let $x,y\in\partial_\infty^{\Pb}\Hb_\tau^d$ such that $gx=y$. Then there is some $B\in\Rb$ such that for all $C\in\Rb$, we have \[g\{z\in\T^1\Hb_\tau^d:\Bc_x(z)>C\}=\{z\in\T^1\Hb_\tau^d:\Bc_y(z)>C+B\}~.\]
\end{corollary}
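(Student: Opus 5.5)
The plan is to derive this directly from \Cref{properties of the horofunction}. Set
\[B:=\frac{1}{2}\left(\log\frac{\norm{gv}}{\norm{v}}+\log\frac{\norm{g\phi}}{\norm{\phi}}\right),\]
where $x=([v],[\phi])$. This quantity does not depend on the choice of representatives $v,\phi$, since rescaling $v$ or $\phi$ leaves each ratio unchanged. By \Cref{properties of the horofunction}, for every $z\in\T^1\Hb_\tau^d$ we have
\[\Bc_{y}(gz)=\Bc_{gx}(gz)=\Bc_x(z)+B.\]
The key point is that $B$ is independent of both $z$ and $C$.

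Next I will prove the two inclusions. For "$\subseteq$": if $\Bc_x(z)>C$, then $\Bc_y(gz)=\Bc_x(z)+B>C+B$, so $gz$ lies in the right-hand set. For "$\supseteq$": let $w$ satisfy $\Bc_y(w)>C+B$, and put $z:=g^{-1}w$. This lies in $\T^1\Hb_\tau^d$ because $\PGL(d+1,\Rb)$ acts on $\T^1\Hb_\tau^d$, its action being by isometries of $\Hb_\tau^d$. Then $\Bc_x(z)=\Bc_y(gz)-B=\Bc_y(w)-B>C$, so $w=gz$ lies in $g\{\Bc_x>C\}$.

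There is essentially no obstacle here. The only points to check are that $B$ is well defined independently of representatives, and that $g$ is a bijection of $\T^1\Hb_\tau^d$, which is clear since $g^{-1}$ acts as its inverse.
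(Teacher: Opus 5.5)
Your proposal is correct and follows the paper's own argument: the paper deduces this corollary directly from \Cref{properties of the horofunction}, using only that $\Bc_{gx}(gz)-\Bc_x(z)$ is a constant $B$ independent of $z$, exactly as you do. Your two-inclusion check (using that $g$ acts bijectively on $\T^1\Hb_\tau^d$) just makes explicit the step the paper leaves implicit.
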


In general, the superlevel sets of the horofunctions on $\T^1\Hb_\tau^d$ do not behave like horoballs in hyperbolic space. For instance, one can verify that for any $x,x'\in\partial_\infty^{\Pb}\Hb_\tau^d$ and any $C,C'>0$, the intersection
\[\{z\in\T^1\Hb_\tau^d:\Bc_x(z)>C\}\cap\{z\in\T^1\Hb_\tau^d:\Bc_{x'}(z)>C'\}\]
is non-empty. As such, we almost always restrict our horofunctions to weak hulls of transverse sets, which we now define.

\begin{definition}
    Let $\Lambda\subset\partial_\infty^{\Pb}\Hb_\tau^d$ be a transverse set, that is, any pair of distinct points in $\Lambda$ are transverse (as flags in $\Fc_{1,d}$).
    \begin{enumerate}
        \item The \emph{weak hull of $\Lambda$} is the set \[\WH(\Lambda) \subset \T^1\Hb_\tau^d\] of all spacelike unit vectors that are tangent to geodesics with both endpoints in $\Lambda$.
        \item For every $x\in\Lambda$ and $C>0$, the \emph{horoball of $\WH(\Lambda)$ centered at $x$ of radius $C$} is the set \[\{z\in\WH(\Lambda):\Bc_x(z) > C\}~.\]
    \end{enumerate}
\end{definition}

Fix a closed, transverse set $\Lambda\subset \partial_\infty^{\Pb}\Hb_\tau^d$ that has at least two points (this implies that its weak hull is non-empty). For every $x\in\Lambda$ and $C>0$, let $H_x(C)$ denote the horoball of $\WH(\Lambda)$ centered at $x$ of radius $C$. We now prove several results about such horoballs, which are reminiscent of the behavior of horoballs in hyperbolic space. First, we prove the following disjointness property of horoballs.

\begin{proposition}\label{lemma: disjointness of two horoballs}
    Let $x_0,y_0\in\Lambda$ be distinct. There exists $C>0$ and open neighborhoods $U$ and $V$ in $\Lambda$ of $x_0$ and $y_0$ respectively, such that for all $x\in U$ and $y\in V$, $H_x(C)\cap H_y(C)$ is empty.
\end{proposition}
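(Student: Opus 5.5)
The plan is to argue by contradiction, combined with a compactness argument. Suppose no such $C,U,V$ exist. Then there are sequences $x_n\to x_0$ and $y_n\to y_0$ in $\Lambda$, and points $z_n\in\WH(\Lambda)$, with $\Bc_{x_n}(z_n)\to+\infty$ and $\Bc_{y_n}(z_n)\to+\infty$. By \eqref{horofunction component bound}, both $\Bc^1$ and $\Bc^2$ centered at $x_n$, and likewise at $y_n$, tend to $+\infty$ along $z_n$. The idea is to use $\Bc^2$ to locate the endpoints of $z_n$, and then $\Bc^1$, together with the flow behaviour of horofunctions, to locate the basepoint and obtain a contradiction.

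\emph{Step 1 (locating endpoints).} By construction $z_n^\pm\in\Lambda$, and $\Delta(z_n^+,z_n^-)\leqslant 1$. Since $\Lambda$ is compact and transverse, $\Delta(x,w)$ is bounded below by a positive constant whenever $x,w\in\Lambda$ stay a definite distance apart. Hence $\Bc^2_{x_n}(z_n)\to\infty$ forces one of $z_n^\pm$ to converge to $x_0$, and similarly one of them converges to $y_0$. Since $x_0\ne y_0$, after passing to a subsequence and possibly applying negation (which preserves $\Bc$), we may assume $z_n^+\to x_0$ and $z_n^-\to y_0$. In particular $\Delta(z_n^+,z_n^-)$ is bounded below.

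\emph{Step 2 (normalization).} Because $\PGL(d+1,\Rb)$ acts transitively on transverse pairs of flags, via a submersion, we can choose $h_n\to\Id$ with $h_nz_n^+=x_0$ and $h_nz_n^-=y_0$. By \Cref{properties of the horofunction}, $\Bc_{h_nx_n}(h_nz_n)-\Bc_{x_n}(z_n)$ is bounded uniformly, since $h_n\to\Id$; the same holds with $y_n$ in place of $x_n$. So we may replace $z_n$ by $h_nz_n$ and $x_n,y_n$ by $h_nx_n,h_ny_n$, which still converge to $x_0,y_0$. Now every $z_n$ is tangent to one of the two geodesics $\ell$, $\ell^{\opp}$ joining $y_0$ to $x_0$ (\Cref{comparison of boundaries}). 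Using ${\opp}$-invariance, we may write $z_n=\varphi^{t_n}z_0$ for a fixed $z_0$.

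\emph{Step 3 (the flow parameter, which is the main obstacle).} For the exact centres, property (2) gives that $\Bc_{x_0}(\varphi^tz_0)+\Bc_{y_0}(\varphi^tz_0)$ is constant in $t$. This already contradicts both terms going to $\infty$ if $x_n=x_0$ and $y_n=y_0$. The difficulty is that the centres $x_n\ne x_0$ move while $t_n$ may be unbounded, so plain continuity of $\Bc_x$ in $x$ is not enough; a uniform estimate in $t$ is needed. I would do this with the explicit formulas. Write $\pi(\varphi^tz_0)=([e^tv^++e^{-t}v^-],[-e^t\phi^+-e^{-t}\phi^-])$, as in \Cref{prop: spacelike geodesics}. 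Then $\Bc^0_{x}(\varphi^tz_0)$ equals $\tfrac12\log$ of a ratio whose numerator is the constant $|\psi(u)|$ times norms, and whose denominator is $|\psi(v_x)\phi_x(u)|$, a product of two expressions of the form $e^{t}a+e^{-t}b$.

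Suppose, say, $t_n\to+\infty$. Then $\Bc_{y_n}\leqslant\Bc^0_{y_n}$, or its ${\opp}$ version, behaves like $-t_n+O(1)$. The reason is that the coefficients $\phi^+(v_{y_n})$ and $\phi_{y_n}(v^+)$ converge to $\phi^+(v_{y_0})$ and $\phi_{y_0}(v^+)$, which are nonzero by the transversality of $x_0,y_0$, so these leading terms dominate. This contradicts $\Bc_{y_n}(z_n)\to\infty$. The case $t_n\to-\infty$ is symmetric with the roles of $x_n$ and $y_n$ swapped, and if $t_n$ is bounded then the $z_n$ lie in a compact set where $\Bc$ is jointly continuous in $(x,z)$ with finite values, again a contradiction.

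The delicate point is a possible cancellation in $e^ta+e^{-t}b$, which is where I expect to use the minimum over $z$ and ${\opp}(z)$ in $\Bc^1$. The two geodesics have opposite sign choices, so at most one of them can exhibit such cancellation in the relevant coefficient. Finally, the argument yields the neighbourhoods $U,V$ and the constant $C$ directly, by taking negations of the sequences above.
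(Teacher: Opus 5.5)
Your argument is correct, but it is organized differently from the paper's. The paper does not normalize or compute along a fixed geodesic. It passes to a subsequence with $z_n^\pm\to z^\pm$, $\pi(z_n)\to a$ and $\pi({\opp}(z_n))\to a'$ in $\mathbb{RP}^d\times\mathbb{RP}^{d*}$, and then invokes the qualitative \Cref{proposition: two of three}. That lemma says $\Bc_{x_n}(z_n)\to+\infty$ forces $x=\lim x_n$ to coincide with two of $z^+,z^-,a$. The reason is that $\Bc^2$ puts $x$ at an endpoint, $\Bc^0$ gives $\Delta(x,a)=0$, and \Cref{projective geometry description} shows $a$ is either an endpoint or lies on a limiting geodesic all of whose points are transverse to $z^\pm$. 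Applying this to both $x_0$ and $y_0$, with $x_0\neq y_0$, gives the contradiction by pigeonhole.

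Your Steps 2--3 are a quantitative version of the same mechanism. Once everything sits on one flowline, you show directly that $\Bc^0_{y_n}(\varphi^{t_n}z_0)\leqslant -t_n+O(1)$ when $t_n\to+\infty$: the basepoint running off to $x_0$ destroys depth at $y_n$, and symmetrically for $x_n$. The paper's route is coordinate-free, needs no transitivity/submersion argument to produce $h_n$, and isolates a lemma that is reused in \Cref{lemma: Busemann upper bounds on orbits}. Yours gives an explicit rate, at the price of the normalization step. In the bounded-$t_n$ case it also needs joint continuity of $(x,z)\mapsto\Bc_x(z)$ near $(x_0,\varphi^{t_*}z_0)$. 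This holds by the formulas, and finiteness there holds because $\pi(\varphi^{t_*}z_0)$ is transverse to $x_0$, but the paper does not state it.

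One correction to your last paragraph: the ``delicate point'' does not arise, and you never need the minimum over $z$ and ${\opp}(z)$. When $t_n\to+\infty$, the coefficients of $e^{t_n}$ in the two factors of the denominator of $\Bc^0_{y_n}$ converge to $\phi^+(v_{y_0})\neq 0$ and $\phi_{y_0}(v^+)\neq 0$. The coefficients of $e^{-t_n}$ stay bounded (they tend to $0$). So each factor is at least a fixed positive constant times $e^{t_n}$ for large $n$. Cancellation could only occur in the regime $t_n\to-\infty$, where you use the other centre $x_n$ and the roles of the coefficients are reversed. Hence the bound $\Bc_{y_n}(z_n)\leqslant\Bc^1_{y_n}(z_n)\leqslant\Bc^0_{y_n}(z_n)$ from \eqref{horofunction component bound} and \eqref{horofunction component bound2} already suffices.
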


To do so, we use the following lemma.

\begin{lemma}\label{proposition: two of three}
    Let $(z_n)$ be a sequence in $\WH(\Lambda)$ for which there is some $z^+,z^-\in\Lambda$ and some $a,a'\in\Hb_\tau^d\cup\partial_\infty^{\Pb}\Hb_\tau^d$ such that as $n\to\infty$,
    \[z_n^+\to z^+, \quad z_n^-\to z^-, \quad\pi(z_n)\to a,\quad\text{and}\quad\pi({\opp}(z_n))\to a'~.\] 
    Let $(x_n)$ be a sequence in $\Lambda$ that converges to some $x\in\Lambda$. If $\mathcal{B}_{x_n}(z_n)\to +\infty$ as $n\to\infty$, then either
    \begin{itemize}
        \item $z^+=x=z^-$,
        \item $z^-\neq z^+=x=a=a'$, or 
        \item $z^+\neq z^-=x=a=a'$. 
    \end{itemize}
\end{lemma}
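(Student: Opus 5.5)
By \eqref{horofunction component bound}, $\Bc_{x_n}(z_n)\to+\infty$ forces both $\Bc^1_{x_n}(z_n)\to+\infty$ and $\Bc^2_{x_n}(z_n)\to+\infty$. By \eqref{horofunction component bound2}, the first of these forces both $\Bc^0_{x_n}(z_n)\to+\infty$ and $\Bc^0_{x_n}(\opp(z_n))\to+\infty$. The plan is to turn each of these three divergences into a geometric constraint on the limits $z^\pm,x,a,a'$, and then combine the constraints using transversality of $\Lambda$.

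\textbf{Constraint from $\Bc^2$.} We have
\[
\exp(-4\Bc^2_{x_n}(z_n))=\frac{\Delta(x_n,z_n^+)\Delta(x_n,z_n^-)}{\Delta(z_n^+,z_n^-)}.
\]
The numerator is bounded above, since angles are at most $1$, so the ratio tends to $0$. If $z^+\neq z^-$, then $z^+,z^-\in\Lambda$ are transverse, so $\Delta(z_n^+,z_n^-)$ stays bounded below. Hence $\Delta(x_n,z_n^+)\Delta(x_n,z_n^-)\to0$, and in the limit $\Delta(x,z^+)\Delta(x,z^-)=0$. So $x$ is non-transverse to $z^+$ or to $z^-$. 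Since $\Lambda$ is transverse and $x,z^\pm\in\Lambda$, this gives $x\in\{z^+,z^-\}$. If instead $z^+=z^-$, we still need $x=z^+$. Here I would argue that if $x\ne z^+$, then $\Delta(x_n,z_n^\pm)$ stays bounded below and $\Delta(z_n^+,z_n^-)\to0$, so $\Bc^2\to-\infty$. That case is therefore also forced: either $z^+=z^-=x$, or $z^+\neq z^-$ and $x$ equals exactly one of them.

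\textbf{Constraint from $\Bc^0$.} This step needs more care. We have
\[
\exp(-2\Bc^0_{x_n}(z_n))=\frac{\Delta(x_n,\pi(z_n))}{\sqrt{\Delta(\pi(z_n),\pi(z_n))}}\to0.
\]
Suppose $z^+\ne z^-$ and, say, $x=z^+$. We want $a=z^+$. By \Cref{projective geometry description}, $\pi(z_n)=([u_n],[\psi_n])$ with $[u_n]$ on the line $[v_n^+]+[v_n^-]$ and $[\psi_n]$ on the pencil spanned by $[\phi_n^\pm]$. The limit $a$ is either in $\Hb_\tau^d$, in which case the geodesics stay in a compact region, or equals $z^+$ or $z^-$, since $\pi(z_n)$ leaves along the geodesic in one direction.

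I would normalize by $\PGL(d+1,\Rb)$ elements fixing $z_n^\pm$ near the limit (uniformly bounded). Then write $\pi(z_n)=\ell_n(t_n)$ and compute $\Delta(x_n,\ell_n(t_n))/\sqrt{\Delta(\ell_n(t_n),\ell_n(t_n))}$ explicitly via \Cref{prop: spacelike geodesics}. The denominator is comparable to a constant times $e^{-2|t_n|}$. If $t_n$ stays bounded, the numerator tends to $\Delta(x,a)>0$ by \Cref{observation: a spacelike geodesic is totally transverse}(1), so the ratio does not tend to $0$. If $t_n\to-\infty$, so $a=z^-$, the numerator tends to $\Delta(z^+,z^-)>0$ while the denominator tends to $0$, so the ratio blows up. Hence $t_n\to+\infty$ and $a=z^+=x$.

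The same argument applied to $\opp(z_n)$, which has the same endpoints, gives $a'=x$. This produces the second or third alternative.

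\textbf{Main obstacle.} The hard part is the $\Bc^0$ estimate. The numerator at $t_n\to+\infty$ also tends to $0$, so we must check that the ratio can only go to $0$ in that direction and that the other regimes are excluded. This must hold uniformly in $n$, since $x_n$ and $z_n^\pm$ vary; uniformity comes from the convergent normalizations. I would do this with the explicit coordinates $u=e^tv_1+e^{-t}v_2$ and $\psi=-(e^t\phi_1+e^{-t}\phi_2)$, up to the fixed normalizations. Then $\psi(u)=-2$, while the norms grow like $e^{|t|}$, which gives the denominator asymptotics directly.
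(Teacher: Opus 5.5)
Your proof is correct and follows the paper's route. The component bounds give divergence of $\Bc^2$ and of $\Bc^0$; the former forces $x\in\{z^+,z^-\}$ by transversality of $\Lambda$, and the latter forces $a$ to be non-transverse to $x$, hence equal to it. The explicit asymptotics you flag as the main obstacle are unnecessary: since $\sqrt{\Delta(\pi(z_n),\pi(z_n))}\leqslant 1$, you get $\Delta(x,a)=0$ directly, which rules out both $a=z^-$ and $a$ interior to the limit geodesic by \Cref{observation: a spacelike geodesic is totally transverse}(1). The paper also obtains $a'=a$ from the common time parametrization along $\ell$ and $\ell^{{\opp}}$ rather than repeating the argument for ${\opp}(z_n)$, though your repetition is equally valid.
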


\begin{proof}
Since $\mathcal{B}_{x_n}(z_n)\to +\infty$ as $n\to\infty$, \eqref{horofunction component bound} and \eqref{horofunction component bound2} imply that the same is true for $\Bc^0_{x_n}(z_n)$, $\Bc^0_{x_n}({\opp}(z_n))$, and $\Bc^2_{x_n}(z_n)$. 

The fact that $\Bc^2_{x_n}(z_n)\to+\infty$ as $n\to\infty$ implies
\[\Delta(x,z^+)\Delta(x,z^-)=\lim_{n\to\infty}\Delta(x_n,z_n^+)\Delta(x_n,z_n^-)=0~.\]
Hence, $x$ is not transverse, and therefore equal, to $z^+$ or $z^-$. If $z^+=x=z^-$, then we are done. Otherwise, by negating $z_n$, we may assume that $z^+=x\neq z^-$. Since $\pi(z_n)$ and $\pi({\opp}(z_n))$  both lie along a spacelike geodesics joining $z_n^+$ and $z_n^-$, the cross ratio condition in \Cref{projective geometry description} implies that $a$ and $a'$ are either both equal to $z^+$, both equal to $z^-$, or they both lie along a spacelike geodesic joining $z^+$ and $z^-$. 

By \Cref{observation: a spacelike geodesic is totally transverse}~(1), to prove that $a=a'=z^+$, it now suffices to verify that $a$ is not transverse to $z^+$. Since $\Bc^0_{x_n}(z_n)\to+\infty$ as $n\to\infty$, it follows that 
\[\Delta(x,a)=\lim_{n\to\infty}\Delta(x_n,\pi(z_n))=0~,\]
so $a$ is not transverse to $x=z^+$. 
\end{proof}

\begin{proof}[Proof of \Cref{lemma: disjointness of two horoballs}]
Suppose not. Then there is a sequence $(z_n)$ in $\WH(\Lambda)$ and sequences $(x_n)$ and $(y_n)$ in $\Lambda$ such that $x_n\to x_0$, $y_n\to y_0$, and 
\[z_n\in H_{x_n}(n)\cap H_{y_n}(n)\]
for all $n\in\Nb$. Taking a subsequence, we may assume that there exist $a,a'\in \Hb_\tau^d\cup\partial_\infty\Hb_\tau^d$ and $z^+,z^-\in\partial_\infty\Hb_\tau^d$ such that 
\[\pi(z_n)\to a,\quad\pi({\opp}(z_n))\to a',\quad z_n^+\to z^+,\quad\text{and}\quad z_n^-\to z^-~.\] 
Observe that since $\Bc_{x_n}(z_n)$ and $\Bc_{y_n}(z_n)$ both grow to $+\infty$ as $n\to\infty$, \Cref{proposition: two of three} implies that $x_0$ and $y_0$ are both equal to two of $\{z^-,z^+,a\}$. However, this is impossible since $x_0\neq y_0$.
\end{proof}

The next proposition describes how the flow lines in $\WH(\Lambda)$ intersect the horoballs.

\begin{proposition}\label{prop: unimodal}
    Let $z\in\WH(\Lambda)$ be any point.
    \begin{enumerate}
        \item For all $t\in\Rb$, $\Bc_{z^\pm}(\varphi^t(z))-\Bc_{z^\pm}(z)=\pm t$.
        \item For any $x\in\Lambda\setminus\{z^+,z^-\}$, the function $t\mapsto \Bc_x(\varphi^t(z))$ has a unique critical point at \[t_0:= \frac{1}{4}\log\frac{\Delta(x,z^-)}{\Delta(x,z^+)}- \frac{1}{4}\log\frac{\Delta(\pi(z),z^-)}{\Delta(\pi(z),z^+)}~.\] Furthermore, $t_0$ is a local maximum, and \[\lim_{t\to \pm \infty}\Bc_x(\varphi^t(z)) = -\infty~.\] In particular, this function is unimodal.
    \end{enumerate}
\end{proposition}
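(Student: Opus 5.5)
The plan is to reduce everything to an explicit one-variable computation along a normalized geodesic. By \Cref{prop: spacelike geodesics} and \Cref{comparison of boundaries}, after fixing representatives we may write the flow line of $z$ as $\varphi^t(z)=\ell'(t+s_0)$ with $\ell=\ell_{v_1,\phi_1,v_2,\phi_2}$, where $z^+=([v_1],[\phi_1])$, $z^-=([v_2],[\phi_2])$, $\phi_i(v_i)=0$ and $\phi_1(v_2)=1=\phi_2(v_1)$. Then $\pi(\varphi^t z)=([e^{t'}v_1+e^{-t'}v_2],[e^{t'}\phi_1+e^{-t'}\phi_2])$ and $\pi(\opp(\varphi^t z))=([e^{t'}v_1-e^{-t'}v_2],[e^{t'}\phi_1-e^{-t'}\phi_2])$, with $t'=t+s_0$, up to signs that do not affect absolute values. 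Since $\Bc^2_x$ depends only on $(z^+,z^-)$, it is constant along the flow line. So all the $t$-dependence of $\Bc_x(\varphi^tz)$ sits in $\Bc^1_x$, which is built from $\Bc^0_x$ evaluated at these two points.

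\textbf{Computing $\Bc^1_x$.} Write $x=([v],[\phi])$ and set $a=\phi_1(v)$, $b=\phi_2(v)$, $c=\phi(v_1)$, $d=\phi(v_2)$. Unwinding the definition, $\exp(-2\Bc^0_x)=\abs{\psi(v)\phi(u)}/(\abs{\psi(u)}\norm{v}\norm{\phi})$ for $\pi=([u],[\psi])$. Along both geodesics $\abs{\psi(u)}=2$. Using the identity $(A+B)^2(C+D)^2+(A-B)^2(C-D)^2=2(AC+BD)^2+2(AD+BC)^2$, I expect
\[\exp\bigl(-4\Bc^1_x(\varphi^tz)\bigr)=K\Bigl[\bigl(e^{2t'}ac+e^{-2t'}bd\bigr)^2+(ad+bc)^2\Bigr]\]
for a constant $K>0$ depending only on $x$.

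\textbf{Part (1).} For $x=z^+$ we have $a=c=0$ and $b=d=1$, so $\exp(-4\Bc^1_x)=Ke^{-4t'}$, that is, $\Bc^1_{z^+}(\varphi^tz)=t+\mathrm{const}$. Moreover $\Delta(z^+,z^+)=0$ gives $\Bc^2_{z^+}\equiv+\infty$, so $\Bc_{z^+}=\Bc^1_{z^+}$. This proves the $+$ case; the $z^-$ case is symmetric, or follows by negation.

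\textbf{Part (2).} For $x\in\Lambda\setminus\{z^\pm\}$, transversality of $\Lambda$ gives $a,b,c,d\neq0$, and $\Bc^2_x(z)$ is a finite constant $\beta$. Then
\[\Bc_x(\varphi^tz)=-\log\bigl(K^{1/4}h(t)^{1/4}+e^{-\beta}\bigr),\qquad h(t)=g(t)^2+(ad+bc)^2,\quad g(t)=e^{2t'}ac+e^{-2t'}bd.\]
Since $\Bc_x(\varphi^tz)$ is a strictly decreasing function of $h$, it suffices to show that $h$ has a unique critical point, which is a minimum, and that $h\to\infty$ as $t\to\pm\infty$. The limits are clear because $ac,bd\neq0$. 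For the critical points, $h'=2gg'$ vanishes only where $g=0$ or $g'=0$.
\begin{itemize}
\item If $acbd>0$, then $g$ never vanishes and $g'=0$ exactly at $e^{4t'}=bd/(ac)$.
\item If $acbd<0$, then $g'$ never vanishes and $g$ has exactly one zero.
\end{itemize}
In either case there is exactly one critical point of $h$, and it is a global minimum.

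\textbf{Main obstacle.} The remaining step is to match this critical point with the stated $t_0$, which is essentially bookkeeping. In the case $acbd>0$, the quantity $\tfrac14\log(bd/ac)$ should equal $\tfrac14\log\frac{\Delta(x,z^-)}{\Delta(x,z^+)}$, since the norm factors of $v$ and $\phi$ cancel and the $v_i,\phi_i$ norms are absorbed. The shift $s_0$ should equal $\tfrac14\log\frac{\Delta(\pi(z),z^-)}{\Delta(\pi(z),z^+)}$, computed from $\pi(z)=\ell(s_0)$ in the same way. I expect to verify carefully that the sign configuration $acbd>0$ is the relevant one, possibly via \Cref{observation: a spacelike geodesic is totally transverse}(2) applied to points of the transverse set $\Lambda$. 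If the case $acbd<0$ can occur, the displayed formula for $t_0$ would need adjusting, but unimodality itself holds in both cases by the argument above.
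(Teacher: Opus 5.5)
Your computation is essentially the paper's. The paper uses the $\PGL(d+1,\Rb)$-action to put $z^\pm,\pi(z)$ in standard position, while you keep general normalized representatives. Both then observe that $\Bc^2_x$ is constant along the flow line and write $\exp(-4\Bc^1_x)$ as a sum of two squares. Your $h=g^2+(ad+bc)^2$ is, up to a constant factor, exactly the paper's $A(t)$. Your treatment of part (1), of the limits, and of the uniqueness of the extremum is correct.

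The step that would go wrong is the one you flag as the main obstacle. You cannot show $acbd>0$ from transversality, and \Cref{observation: a spacelike geodesic is totally transverse}(2) says nothing about points of $\Lambda$. For $d=2$, take $z^+=([e_1],[e_3^*])$, $z^-=([e_3],[e_1^*])$ and $x=([1:1:1],[2:-1:-1])$. Then $\{z^+,z^-,x\}$ is transverse, yet $acbd=-2$.

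You also do not need $acbd>0$. When $acbd<0$, your own analysis puts the critical point at the zero of $g$, namely $e^{4t'}=-bd/(ac)=\abs{bd/(ac)}$. This is the same location as $e^{4t'}=bd/(ac)$ in the other case. Because $\Delta$ is built from $\abs{\phi(v)}$, a direct computation gives
$\frac{\Delta(x,z^-)}{\Delta(x,z^+)}=\abs{\frac{bd}{ac}}\cdot\frac{\norm{v_1}\norm{\phi_1}}{\norm{v_2}\norm{\phi_2}}$ and $\frac{\Delta(\pi(z),z^-)}{\Delta(\pi(z),z^+)}=e^{4s_0}\frac{\norm{v_1}\norm{\phi_1}}{\norm{v_2}\norm{\phi_2}}$.
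Hence the stated $t_0$ equals $\frac14\log\abs{bd/(ac)}-s_0$ in all cases, and the norm factors cancel. The displayed formula needs no adjustment.

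Even simpler, and this is what the paper does, expand $g^2=(ac)^2e^{4t'}+(bd)^2e^{-4t'}+2acbd$. The cross term is constant, so $h'=4\bigl((ac)^2e^{4t'}-(bd)^2e^{-4t'}\bigr)$. This vanishes only at $e^{8t'}=(bd/ac)^2$, with no case split.

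A minor point that you, like the paper, pass over: suppose $ad+bc=0$, which forces $acbd<0$; for example $x=([1:0:1],[1:0:-1])$ in the setup above. Then $h(t_0)=0$, so $\Bc^1_x(\varphi^{t_0}z)=+\infty$. Since $h\mapsto-\log\bigl((Kh)^{1/4}+e^{-\beta}\bigr)$ is not differentiable at $h=0$, the function $t\mapsto\Bc_x(\varphi^tz)$ has a cusp at $t_0$ rather than a smooth critical point. Unimodality and the location of the maximum are unaffected, but ``unique critical point'' should be read with this in mind.
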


\begin{proof}
    Note that the formulas we are going to prove are equivariant under $\mathsf{PGL}(d+1,\mathbb{R})$ action. As a result, let $z\in\WH(\Lambda)$, after $\mathsf{PGL}(d+1,\mathbb{R})$-translation we can assume \[z^+=([e_1],[e_{d+1}^*]),\quad z^-=([e_{d+1}],[e_1^*])~,\quad\text{and}\quad\pi(z)=([e_1+e_{d+1}],[e_{d+1}^*+e_1^*])~,\] where $(e_1,\dots,e_{d+1})$ and $(e_1^*,\dots,e_{d+1}^*)$ are the standard basis of $\Rb^{d+1}$ and $\Rb^{d+1*}$.  Recall that $\varphi^t$ denotes the geodesic flow in $\T^1\Hb_\tau^d$. By \Cref{prop: spacelike geodesics} and \Cref{comparison of boundaries}, for all $t\in\Rb$, we have \[\pi(\varphi^t(z))=([e^te_1+e^{-t}e_{d+1}],[e^te_{d+1}^*+e^{-t}e_1^*])\] and \[\pi(\varphi^t({\opp}(z)))=([e^te_1-e^{-t}e_{d+1}],[-e^te_{d+1}^*+e^{-t}e_1^*])~.\]

    (1) Using the formulas above, it is a straightforward computation to verify that \[\mathcal{B}^1_{z^\pm}(\varphi^t(z))=\pm t~.\] Observe that $\exp(-\Bc^2_{z^+}(\varphi^t(z)))=0=\exp(-\Bc^2_{z^-}(\varphi^t(z)))$, so $\Bc_{z^{\pm}}(\varphi^t(z))=\Bc^1_{z^{\pm}}(\varphi^t(z))=\pm t$. This proves (1).

    (2) Let $v=(a_1,\dots,a_{d+1})^T\in\Rb^{d+1}$ and $\phi=(b_1,\dots,b_{d+1})\in\Rb^{d+1^*}$ such that $x=([v],[\phi])$. Then $a_1$, $b_1$, $a_{d+1}$, $b_{d+1}$ are all non-zero. By a straightforward computation, \[\mathcal{B}^1_x(\varphi^t(z))= -\frac{1}{4}\log\frac{A(t)}{4\norm{\phi}^2\norm{v}^2}~,\] where \[A(t)=2(a_{d+1} b_1)^2 e^{4t}+2(a_1 b_{d+1})^2e^{-4t} + 2 a_1 b_1 a_{d+1} b_{d+1} + 2(a_1 b_1 + a_{d+1} b_{d+1})^2~.\] Using standard arguments from one variable calculus, one deduces that $A$ has a unique critical point at $\frac{1}{4}\log\frac{a_1b_{d+1}}{a_{d+1}b_1}$, which is a local minimum. At the same time, one can verify that \[\frac{\Delta(x,z^-)}{\Delta(x,z^+)}=\frac{a_1 b_{d+1}}{a_{d+1} b_1} \quad \text{and} \quad \log\frac{\Delta(\pi(z),z^-)}{\Delta(\pi(z),z^+)}=0~.\] so the map $t\mapsto \mathcal{B}^1_x(\varphi^t(z))$ has a unique critical point at $t_0$, which is a local maximum. Since the function $t\mapsto\Bc_x^2(\varphi^t(z))$ is constant as a function of $t$, (2) follows.
\end{proof}

\Cref{prop: unimodal}(2) implies that if $z\in\WH(\Lambda)$ and $x\in\Lambda\setminus\{z^+,z^-\}$, then the map $t\mapsto \Bc_x(\varphi^t(z))$ is either increasing on $(-\infty,0]$ or decreasing on $[0,+\infty)$. As a consequence of \Cref{prop: unimodal}(1), we deduce the following corollary, which we will use repeatedly later in the paper.

\begin{corollary}\label{bounded converges to unbounded}
Let $(z_n)$ be a sequence in $\WH(\Lambda)$ such that the following hold:
\begin{enumerate}
    \item the sequences $(z_n^+)$ and $(z_n^-)$ converge to distinct points $x$ and $y$ in $\Lambda$ respectively,
    \item for all $n\in\Nb$, the map $t\mapsto \Bc_x(\varphi^t(z_n))$ is increasing on $(-\infty,0]$,
    \item there exists $C>0$ such that $\Bc_x(z_n)<C$ for all $n\in\Nb$. 
\end{enumerate} 
Then no subsequence of $(\pi(z_n))$ can converge to $x$.
\end{corollary}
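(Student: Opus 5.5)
We argue by contradiction, using \Cref{prop: unimodal}(1) to transport the bound in hypothesis (3) along each flow line. Suppose some subsequence, still denoted $(z_n)$, satisfies $\pi(z_n)\to x$. The idea is that $\pi(z_n)\to x=\lim z_n^+$ forces $z_n$ to sit far out towards its forward endpoint. Moving $z_n$ back towards the middle of its geodesic should then produce points with bounded horofunction value, whereas the monotonicity in (2) says the horofunction only grows as we flow forward.

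First we normalize. Since $z_n^+\to x$ and $z_n^-\to y$ with $x,y$ transverse, we may choose $g_n\in\PGL(d+1,\Rb)$ converging to some $g$ such that $g_nz_n^+=x$ and $g_nz_n^-=y$ for all large $n$. After also fixing a reference geodesic, we can write $z_n=\varphi^{s_n}(w_n)$, where $w_n$ lies on a geodesic from $y$ to $x$ and $\pi(w_n)$ stays in a compact subset of $\Hb_\tau^d$. Concretely, take $w_n:=g_n^{-1}w$ or $g_n^{-1}{\opp}(w)$ for a fixed $w$ tangent to a geodesic joining $y$ to $x$. By \Cref{projective geometry description}, the only way $\pi(\varphi^{s_n}(w_n))\to x$ with compact base points is $s_n\to+\infty$. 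Indeed, $s_n$ bounded keeps $\pi(z_n)$ in a compact set of $\Hb_\tau^d$, and $s_n\to-\infty$ sends $\pi(z_n)$ to $y\neq x$.

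Next we compare horofunctions. By hypothesis (2), $t\mapsto\Bc_x(\varphi^t(z_n))$ is increasing on $(-\infty,0]$. Since $-s_n\to-\infty$, for large $n$ this gives
\[\Bc_x(w_n)=\Bc_x(\varphi^{-s_n}(z_n))\leqslant\Bc_x(z_n)<C.\]
This inequality is not yet the contradiction, so we instead exploit \Cref{prop: unimodal}(1) applied at the center $z_n^+$ rather than at $x$. It gives
\[\Bc_{z_n^+}(z_n)=\Bc_{z_n^+}(w_n)+s_n.\]
Here $\Bc_{z_n^+}(w_n)$ is bounded below. This follows from \Cref{properties of the horofunction} with the convergent $g_n$: the cocycle term is bounded, and $\Bc_{x}(w)$ and $\Bc_x({\opp}(w))$ are finite. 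Hence $\Bc_{z_n^+}(z_n)\to+\infty$.

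The remaining, and main, step is to transfer this divergence from center $z_n^+$ to center $x$, that is, to show $\Bc_x(z_n)-\Bc_{z_n^+}(z_n)$ stays bounded below, contradicting hypothesis (3). We handle it through the normalization: by \Cref{properties of the horofunction},
\[\Bc_x(z_n)-\Bc_{z_n^+}(z_n)=\Bc_{g_n x}(g_n z_n)-\Bc_{x}(g_nz_n)+O(1),\]
with $g_nx\to x$ and $g_nz_n=\varphi^{s_n}(w\text{ or }{\opp}(w))$. So we need a uniform lower bound for $\Bc_{x'}(\varphi^{s}(w))-\Bc_x(\varphi^s(w))$ as $x'\to x$ and $s\to\infty$. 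The $\Bc^2$ parts are harmless, since $\Bc^2_{x'}(w)\to+\infty$ as $x'\to x$ and so the minimum in \eqref{horofunction component bound} is governed by $\Bc^1$. For the $\Bc^1$ parts we use the explicit formula for $A(t)$ in the proof of \Cref{prop: unimodal}. With $x'$ written as a perturbation of $([e_1],[e_{d+1}^*])$, the coefficients $a_{d+1}b_1$ are small, and we must show the $e^{4s}$ term does not dominate. This requires controlling $g_nx$ relative to $z_n^+$, and this is where I expect the real difficulty. If it fails, the fallback is a direct argument: compute $\Delta(x,\pi(z_n))$ using $\pi(z_n)\to x$ against $\Delta(z_n^+,\pi(z_n))$, and use hypothesis (2) to force the critical time $t_0$ of \Cref{prop: unimodal}(2) to be $\geqslant 0$. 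This yields $\Bc_x(z_n)\geqslant\Bc_x(\varphi^{t}(z_n))$ for all $t\leqslant 0$ with the maximum near $t_0$, and the explicit value of $\Bc_x(\varphi^{t_0}(z_n))$ grows like $s_n$, contradicting hypothesis (3).
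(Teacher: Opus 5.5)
Your normalization, the deduction $s_n\to+\infty$, the inequality $\Bc_x(\varphi^{-s_n}(z_n))\leqslant\Bc_x(z_n)<C$, and even $\Bc_{z_n^+}(z_n)\to+\infty$ are all correct. The proof breaks at the step you call the main one.

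The uniform lower bound you ask for on $\Bc_{x'}(\varphi^{s}(w))-\Bc_x(\varphi^s(w))$ as $x'\to x$ and $s\to\infty$ is false. For any fixed $x'\neq x$ transverse to $x$ and $y$, the computation in \Cref{prop: unimodal}(2) gives $\Bc_{x'}(\varphi^s(w))\to-\infty$, while $\Bc_x(\varphi^s(w))=\Bc_x(w)+s\to+\infty$. So any correct version must couple $g_nx$ with $s_n$ through hypothesis (2), which you never do.

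Even with that coupling, the bounded-difference claim is too strong. Suppose $g_nx$ tends to $x$ at very different rates in its $\mathbb{RP}^d$ and $\mathbb{RP}^{d*}$ components, say $a_{d+1}\sim\epsilon^2$ and $b_1\sim\epsilon^6$ in the notation of that proof. Then the term $2(a_1b_1+a_{d+1}b_{d+1})^2$ in $A(t)$ dominates well before the critical time. One can then arrange (1), (2) and $\pi(z_n)\to x$ with $\Bc_x(z_n)\approx s_n/2$ but $\Bc_{z_n^+}(z_n)\approx s_n$.

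Your fallback does not repair this, because it compares in the wrong direction. Hypothesis (2) puts the maximum $t_0\geqslant 0$ of $t\mapsto\Bc_x(\varphi^t(z_n))$ at or after $z_n$. Monotonicity therefore only yields $\Bc_x(z_n)\leqslant\Bc_x(\varphi^{t_0}(z_n))$, and a large value at $t_0$ is compatible with (3). What you need are points with large $\Bc_x$ lying \emph{before} $z_n$ on its flow line.

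The missing idea is to compare at a fixed time $T$ along the limiting flow line. Then \Cref{prop: unimodal}(1) is applied with center exactly $x$, and no change of center is ever needed.

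Fix $T\geqslant 0$ and take $n$ large enough that $s_n\geqslant T$. Hypothesis (2) gives $\Bc_x(\varphi^T(w_n))=\Bc_x(\varphi^{T-s_n}(z_n))\leqslant\Bc_x(z_n)<C$. Along a subsequence $w_n\to w_\infty$, and $w_\infty$ is tangent to a geodesic from $y$ to $x$ because $gx=x$ and $gy=y$. Continuity of $\Bc_x$ and of the flow, together with \Cref{prop: unimodal}(1), then give $\Bc_x(w_\infty)+T=\Bc_x(\varphi^T(w_\infty))\leqslant C$ for every $T$, which is absurd.

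This is precisely the paper's argument. The backward rays $\{\varphi^t(z_n):t\leqslant 0\}$ converge to the flow line joining $y$ to $x$, along which $\Bc_x$ is unbounded above. Points of these rays close to a point of that line where $\Bc_x>C$ contradict (2) and (3). In particular, your detour through $\Bc_{z_n^+}$ is unnecessary.
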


\begin{proof}
Suppose otherwise. Then by taking a subsequence, we may assume that $\pi(z_n)\to x$ as $n\to\infty$. Let 
\[R_n:=\{\varphi^t(z_n):t\in(-\infty,0]\}~,\]
and note that the sequence $(R_n)$ converges to a flowline $g$ in $\WH(\Lambda)$ that joins $x$ and $y$. \Cref{prop: unimodal}(1) implies that along $g$, the value of $\Bc_x(\cdot)$ is not bounded above, so there is some $w\in g$ such that $\mathcal{B}_x (w)>C$. Let $(w_n)$ be a sequence in $\WH(\Lambda)$ such that $w_n\in R_n$ for all $n$, and $w_n\to w$ as $n\to+\infty$. By the continuity of $\Bc_x$, for sufficiently large $n$, $\Bc_x(w_n)>C$. However, for all $n\in\Nb$, since 
$t\mapsto\Bc_x(\varphi^t(z_n))$ is increasing on $(-\infty,0]$, we have
\[\Bc_x(w_n)\leqslant \Bc_x(z_n)\leqslant C\]
which is a contradiction.
\end{proof}

\begin{remark}\label{remark: horofunctions for general Lie groups}
Our construction of the horofunctions is natural in the general Lie group setting. Let $\mathsf G$ be a connected semisimple real Lie group with finite center, fix an Iwasawa decomposition $\mathsf G=\mathsf K\mathsf A\mathsf N$, and denote by $\mathfrak a$ the Lie algebra of $\mathsf A$. As higher rank analogues of the classical Busemann cocycle and Gromov product, Quint \cite{quint2002mesures} introduced the $\mathfrak a$-valued Iwasawa cocycle $\beta:\mathsf G\times\Fc\to\mathfrak a$ on the full flag manifold $\Fc$ using the Iwasawa decomposition of $\mathsf G$, and Sambarino \cite{sambarino2015orbital} introduced the associated $\mathfrak a$-valued Gromov product $\Gc:\Fc^{(2)}\to\mathfrak a$, where $\Fc^{(2)}$ denotes the set of transverse pairs in $\Fc\times\Fc$.

We denote by $\mathfrak g$ the Lie algebra of $\mathsf G$.
Suppose that $\sigma:\mathfrak g\to\mathfrak g$ is an involution that commutes with the Cartan involution associated with $\mathsf K$, and that $\mathsf H\subset\mathsf G$ is a subgroup whose Lie algebra is $\mathfrak g^\sigma:=\{X\in\mathfrak g:\sigma(X)=X\}$. Then we have a decomposition 
\[\mathfrak g=\mathfrak h\oplus(\mathfrak q\cap\mathfrak k)\oplus(\mathfrak q\cap\mathfrak p),\] 
where $\mathfrak k$ and $\mathfrak p$ are the eigenspaces of the above Cartan involution on $\mathfrak g$ corresponding to eigenvalues $1$ and $-1$ respectively, and $\mathfrak h$ and $\mathfrak q$ are the eigenspaces of $\sigma$ with eigenvalues $1$ and $-1$ respectively. In the pseudo-Riemannian symmetric space $\mathsf G/\mathsf H$, the tangent space to the identity coset is identified with $\mathfrak q$, and $\mathfrak q\cap\mathfrak p\subset\mathfrak q$ is the subspace of spacelike tangent vectors. Up to conjugation, we may assume that $\mathfrak a$ is $\sigma$-invariant and that $\mathfrak a_0:=\mathfrak a\cap\mathfrak q$ is a maximal abelian subspace of $\mathfrak q\cap\mathfrak p$. See \cite[Chapter~X]{helgason1979differential} for more details.

Assume that $\dim\mathfrak a_0=1$, or equivalently, that $\mathsf G/\mathsf H$ is a rank-one. Let $X_0\in\mathfrak a_0$ be a unit vector. Choose a set of simple roots $\Delta$ for $\mathsf G$ on $\mathfrak a$, and let $\mathfrak a^+\subset\mathfrak a$ denote the associated positive Weyl chamber. By conjugating, we may assume that $X_0\in\overline{\mathfrak a^+}$. Set \[\theta := \{\alpha\in\Delta:\alpha(X_0)>0\} = \{\alpha\in\Delta:\alpha(X_0)\neq0\}~.\] 
Denote by $\mathsf P_\theta$ the associated parabolic subgroup and by $\Fc_\theta:=\mathsf G/\mathsf P_\theta$ the corresponding partial flag manifold. Also set \[\mathsf L = \Stab_{\mathsf H}(X_0) = \{h\in\mathsf H:\Ad(h)X_0=X_0\}~.\] Then $\mathsf G/\mathsf L$ is naturally identified with the $\mathsf G$-orbit in the spacelike unit tangent bundle of $\mathsf G/\mathsf H$ determined by $X_0$. If $\mathsf H$ acts transitively on the unit spacelike vectors in $\mathfrak q$, this orbit is the spacelike unit tangent bundle of $\mathsf G/\mathsf H$. If $z=g\mathsf L$, its forward and backward endpoints are \[z^+=g\mathsf P_\theta \quad\text{and}\quad z^-=gw_0\mathsf P_{\iota(\theta)} \quad \text{respectively},\] where $w_0\in N_{\mathsf K}(\mathsf A)$ represents the longest Weyl group element and $\iota$ denotes the opposition involution.

Since \[\mathfrak a = (\mathfrak a\cap\mathfrak h)\oplus\Rb X_0~,\] there is a unique linear form $\psi\in\mathfrak a^*$ such that $\psi(X_0)=1$ and $\psi\vert_{\mathfrak a\cap\mathfrak h}=0$. We define $\Bc^0: \Fc_\theta\times \mathsf G/\mathsf L \to (-\infty,+\infty]$ to be a potential for $\psi\circ\beta$, in the sense that \[\Bc^0_{gx}(gz)-\Bc^0_x(z) = \psi(\beta(g,x_0))\] for any $x\in\Fc_\theta$, $z\in \mathsf G/\mathsf L$ and $g\in\mathsf G$, where $x_0\in\Fc$ is a (any) lift of $x$ in $\Fc$. There is a natural Hopf map \[\mathsf G/\mathsf L \to \Fc_\theta^{(2)}\times\mathfrak a_0~,\] where $\Fc_\theta^{(2)}$ denotes the set of transverse pairs in $\Fc_\theta\times\Fc_{\iota(\theta)}$. The fiber of this Hopf map is compact and we define $\Bc^1: \Fc_\theta\times \mathsf G/\mathsf L \to (-\infty,+\infty]$ to be the logarithmic symmetrization of $\Bc^0$ over the fibers. We then define $\Bc^2: \Fc_\theta\times \mathsf G/\mathsf L \to (-\infty,+\infty]$ by \[\Bc_x^2(z) = \psi(\Gc(x_0,z_0^+) + \Gc(x_0,z_0^-) - \Gc(z_0^+,z_0^-))~,\] where $x_0,z_0^+,z_0^-\in\Fc$ are some (any) lifts of $x,z^+,z^-$ in $\Fc$ respectively.

One can verify directly that when $\mathsf G=\PGL(d+1,\Rb)$ and $\mathsf H=\mathsf P(\Rb^*\times\GL(d,\Rb))$, these constructions agree with our previous definitions.
\end{remark}

\subsection{Good flow spaces for discrete subgroups of $\PGL(d+1,\Rb)$}\label{rfejkrflek}

Let $\Gamma\subset\PGL(d+1,\Rb)$ be a discrete, infinite subgroup. We now define good flow spaces for $\Gamma$, discuss some of its basic properties, and define the notion of a thick-thin decomposition for good flow spaces.

\begin{definition}
    For a flow-invariant subset $U\subset \T^1\Hb_\tau^d$, the \emph{endpoint set of $U$} is \[\Lambda_U:=\{z^+:z\in U\}\cup\{z^-:z\in U\}\subset\partial_\infty^{\Pb}\Hb_\tau^d\cong\Fc_{1,d}~.\] We say that $U$ is \emph{convex} if for any distinct points $x,y\in\Lambda_U$, there exists $z\in U$ such that $z^+=x$ and $z^-=y$. We then say that $U$ is a \emph{good flow space for $\Gamma$} if it is closed, non-empty, $\Gamma$-invariant, negation-invariant, $\opp$-invariant, convex, and the $\Gamma$-action on $U$ is properly discontinuous.
\end{definition}

We now prove that when $\Gamma$ is projective transverse, then $\WH(\Lambda_\Gamma)$ is an example of a good flow space for $\Gamma$. Observe that 
\[\Dc_\Gamma := \{z \in \T^1\Hb_\tau^d : \forall \eta \in \Lambda_{\Gamma}, \text{ at least one of } z^+, z^- \text{ is transverse to } \eta\}~,\]
is an open, non-empty, $\Gamma$-invariant, flow-invariant, negation-invariant, ${\opp}$-invariant subset of $\T^1\Hb_\tau^d$ that contains $\WH(\Lambda_\Gamma)$.

\begin{proposition}\label{proposition: domain of discontinuity}
    If $\Gamma\subset\PGL(d+1,\Rb)$ is a projective transverse subgroup, then the $\Gamma$-action on $\Dc_{\Gamma}$ is properly discontinuous. In particular, $\WH(\Lambda_\Gamma)$ is a good flow space for $\Gamma$. 
\end{proposition}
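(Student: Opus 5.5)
We argue by contradiction: assume proper discontinuity fails on $\Dc_\Gamma$. Then there are a compact set $K\subset\Dc_\Gamma$, an escaping sequence $(\gamma_n)$ in $\Gamma$, and points $z_n\in K$ with $\gamma_n z_n\in K$. After passing to subsequences we may assume $z_n\to z\in K$ and $\gamma_n z_n\to w\in K$. Since $\Gamma$ is projective divergent, \Cref{lemma: projective north-south dynamics} lets us pass to a further subsequence along which $(\gamma_n)$ has an attracting flag $a^+$ and a repelling flag $a^-$. Both lie in $\Lambda_\Gamma$: $a^+$ by definition, and $a^-$ because it is the attracting flag of $(\gamma_n^{-1})$.

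The key step is to use the defining property of $\Dc_\Gamma$ with $\eta=a^-$. At least one of $z^+,z^-$ is transverse to $a^-$; by symmetry (replacing $z_n$ by their negations) we may assume it is $z^+$. Transversality is an open condition, so $z_n^+$ eventually lies in a compact set of flags transverse to $a^-$. \Cref{lemma: projective north-south dynamics}(2) then gives $\gamma_n z_n^+\to a^+$. Because the endpoint maps are $\PGL(d+1,\Rb)$-equivariant and continuous, $\gamma_n z_n^+=(\gamma_n z_n)^+\to w^+$, so $w^+=a^+$.

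The same argument applied to $(\gamma_n^{-1})$ and the points $\gamma_n z_n\to w$ runs with the roles of $a^\pm$ swapped. Since $w\in\Dc_\Gamma$, at least one of $w^\pm$ is transverse to $a^+$. But $w^+=a^+$ is not transverse to itself, so $w^-$ must be transverse to $a^+$. Hence $z_n^-=\gamma_n^{-1}(\gamma_n z_n)^-\to a^-$, and therefore $z^-=a^-$.

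The remaining task, which I expect to be the main obstacle, is to rule out the configuration $w^+=a^+$, $z^-=a^-$ with $z^+$ transverse to $a^-$ and $w^-$ transverse to $a^+$. Endpoints alone do not forbid this, since the flow direction is non-compact; for instance, a loxodromic element translating along an axis is compatible with it. We must therefore use the basepoints $\pi(z_n)\to\pi(z)$ and $\pi(\gamma_n z_n)\to\pi(w)$, which lie in $\Hb_\tau^d$, i.e.\ are transverse pairs in $\mathbb{RP}^d\times\mathbb{RP}^{d*}$. By \Cref{observation: a spacelike geodesic is totally transverse}(1), $\pi(z)$ is transverse to $z^-=a^-$, so \Cref{lemma: projective north-south dynamics}(2), applied to the points $\pi(z_n)$ in a compact set transverse to $a^-$, gives $\gamma_n\pi(z_n)\to a^+\in\partial_\infty^{\Pb}\Hb_\tau^d$. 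This contradicts $\gamma_n\pi(z_n)=\pi(\gamma_n z_n)\to\pi(w)\in\Hb_\tau^d$. The delicate point is that part (2) of that lemma needs uniformity on compact subsets of all of $\mathbb{RP}^d\times\mathbb{RP}^{d*}$ transverse to $a^-$, and not only on flags; the lemma as stated provides exactly this.

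Finally, $\WH(\Lambda_\Gamma)\subset\Dc_\Gamma$: if $z^\pm\in\Lambda_\Gamma$ and $\eta\in\Lambda_\Gamma$, then $\eta$ equals at most one of $z^+\neq z^-$, and by transversality of $\Lambda_\Gamma$ it is transverse to the other. Proper discontinuity therefore restricts to $\WH(\Lambda_\Gamma)$. The other properties of a good flow space are routine. Closedness follows from compactness of $\Lambda_\Gamma$, the fact that limits of endpoint pairs remain distinct, and the closedness of $\T^1\Hb_\tau^d$. Invariance under $\Gamma$, the flow, negation and $\opp$ is immediate. Convexity follows from \Cref{comparison of boundaries}, since any two distinct points of $\Lambda_\Gamma$ are transverse and hence joined by a spacelike geodesic. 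Non-emptiness holds because $\Lambda_\Gamma$ has at least two points, as $\Gamma$ is infinite and divergent.
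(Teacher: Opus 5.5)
The proper discontinuity argument is correct and is essentially the paper's proof. You pass to a subsequence with attracting and repelling flags $a^\pm\in\Lambda_\Gamma$, negate so that $z^+$ is transverse to $a^-$, and deduce first $w^+=a^+$ and then $z^-=a^-$. Finally, transversality of $\pi(z)$ and $z^-=a^-$ forces $\gamma_n\pi(z_n)\to a^+$, which contradicts $\gamma_n\pi(z_n)\to\pi(w)\in\Hb_\tau^d$. The differences are cosmetic. You get transversality of $w^-$ and $a^+$ from $w\in\Dc_\Gamma$, while the paper uses that the two endpoints of a spacelike geodesic are transverse. Your final contradiction is phrased as $\pi(w)\notin\partial_\infty^{\Pb}\Hb_\tau^d$, while the paper's is that $\pi(w)$ is transverse to $w^+$.

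The one step that is wrong is your justification of non-emptiness: an infinite projective divergent group need not have two limit points. Let $J\in\PGL(3,\Rb)$ be the class of the Jordan block $\begin{pmatrix}1&1&0\\0&1&1\\0&0&1\end{pmatrix}$. Then $\sigma_1(J^n)\asymp n^2$, $\sigma_2(J^n)\asymp 1$ and $\sigma_3(J^n)\asymp n^{-2}$ as $|n|\to\infty$. So $\langle J\rangle$ is projective divergent, and its limit set is the single flag $([e_1],[e_3^*])$. It is therefore (vacuously) projective transverse, but $\WH(\Lambda_{\langle J\rangle})=\emptyset$ is not a good flow space.

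The ``in particular'' clause needs the extra hypothesis that $\Lambda_\Gamma$ has at least two points, which is automatic when $\Gamma$ is non-elementary. This is exactly the hypothesis the paper imposes when it uses the statement in \Cref{theoremalpha: characterizations1}(1); its own proof of this proposition simply asserts non-emptiness. With that hypothesis added, the rest of your verification of the good flow space properties is fine.
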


\begin{proof}
Suppose that the first claim fails. Then there exists an escaping sequence $(\gamma_n)$ in $\Gamma$ and a convergent sequence $(z_n)$ in $\Dc_{\Gamma}$ such that \[z_n\to z\quad\text{ and }\quad w_n:=\gamma_n z_n \to w \quad \text{as} \quad n \to \infty\] for some $w,z\in \Dc_{\Gamma}$. Since $\Gamma$ is projective divergent, \Cref{lemma: projective north-south dynamics} implies that by taking a subsequence, we may assume that the sequence $(\gamma_n)$ has an attracting and repelling point in $\Lambda_\Gamma$, denoted $x$ and $y$ respectively. By the definition of $\Dc_{\Gamma}$, at least one of $z^+$ or $z^-$ is transverse to $y$. By negating, we may assume without loss of generality that $z^+$ is transverse to $y$. Since $\lim_{n\to\infty}z_n^+=z^+$, by \Cref{lemma: projective north-south dynamics}, 
\[w^+=\lim_{n\to\infty}\gamma_n z_n^+= x~.\] 
Notice that $(\gamma_n^{-1})$ has $y$ and $x$ as its attracting and repelling points respectively. Since $w^-$ and $w^+=x$ are transverse and $\lim_{n\to\infty}w_n^-=w^-$, we have
\[z^-=\lim_{n\to\infty}z_n^-=\lim_{n\to\infty}\gamma_n^{-1}w_n^-=y~.\]

Recall that as $\PGL(d+1,\Rb)$-spaces, we have identified
\[\Hb_\tau^d\cup\partial_\infty^{\Pb}\Hb_\tau^d\cong\mathbb{RP}^d\times\mathbb{RP}^{d*}~,\]
see \Cref{real projective}. With this identification,  \Cref{observation: a spacelike geodesic is totally transverse}(1) implies that $\pi(z)\in\Hb_\tau^d\subset\mathbb{RP}^d\times\mathbb{RP}^{d*}$ is transverse to $z^-=y$, so there is an open neighborhood $O\subset\Hb_\tau^d$ of $\pi(z)$ such that every point in its closure is transverse to $y$. Since $\pi(z_n)\in O$ for $n$ large enough, \Cref{lemma: projective north-south dynamics} implies that $\pi(w_n)=\gamma_n \pi(z_n)\to x=w^+$. However, by assumption, $\pi(w_n)\to \pi(w)$, and again by \Cref{observation: a spacelike geodesic is totally transverse}(1), $\pi(w)$ is transverse to $w^+$. This is a contradiction, so the first claim holds.

Observe that $\WH(\Lambda_\Gamma)$ is a closed, non-empty, $\Gamma$-invariant, flow-invariant, negation-invariant, ${\opp}$-invariant, convex subset of $\T^1\Hb_\tau^d$. Since $\WH(\Lambda_\Gamma)$ is a closed subset of $\Dc_\Gamma$, it follows from the first claim that the $\Gamma$-action on $\WH(\Lambda_\Gamma)$ is properly discontinuous, so $\WH(\Lambda_\Gamma)$ is a good flow space for $\Gamma$.
\end{proof}

Let $U$ be a good flow space for $\Gamma$. By \Cref{prop: spacelike geodesics}, $\Lambda_U$ is transverse and $U=\WH(\Lambda_U)$. Also, $U$ admits a natural $\Gamma\times\Rb\times\Zb_2$-action, where the $\Rb$-action is given by the geodesic flow and the $\Zb_2$-action is induced by the involution ${\opp}$. The following proposition gives some basic properties of the $\Gamma$-action on $\Lambda_U$.

\begin{proposition}\label{proposition: convergence action}
    \begin{enumerate}
        \item $\Gamma$ acts on $\Lambda_U$ as a convergence group.
        \item Suppose that the $\Gamma$-action on $\Lambda_U$ is non-elementary and the $\Gamma\times\Rb\times \Zb_2$-action on $U$ is topologically transitive. Then the limit set of the $\Gamma$-action on $\Lambda_U$ is all of $\Lambda_U$. In particular, the $\Gamma$-action on $\Lambda_U$ is minimal and $\Lambda_U$ is perfect.
    \end{enumerate} 
\end{proposition}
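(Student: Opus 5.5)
For part (1) we use \Cref{Thm: bowditch}: it suffices to show that the diagonal $\Gamma$-action on the space $\Lambda_U^{(3)}$ of pairwise distinct triples is properly discontinuous. (We should also check that $\Lambda_U$ is compact; if it is not closed a priori, we run the argument on triples anyway, since Bowditch's criterion only uses the triple-space action.) The plan is to build a continuous, $\Gamma$-equivariant map $\Lambda_U^{(3)}\to U/\langle \mathsf{ne},\opp\rangle$, or more robustly a map into compact subsets of $U$, that is proper onto its image, and then transfer proper discontinuity from $U$.

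Given a distinct triple $(x,y,w)$, convexity of $U$ and $U=\WH(\Lambda_U)$ give a flow line in $U$ from $y$ to $x$. Since $\Lambda_U$ is transverse, $w\notin\{x,y\}$. By \Cref{prop: unimodal}(2) the function $t\mapsto\Bc_w(\varphi^t z)$ has a unique maximum at the explicit time $t_0$. This picks out a point $z(x,y,w)$ on that flow line, well defined up to the finite ambiguity of $\opp$ and negation; negation is handled by orienting from $y$ to $x$. The formula for $t_0$ shows it depends continuously on $(x,y,w)$, because it is built from $\Delta$. Moreover $\Bc_w$ transforms under $g$ by an additive constant (\Cref{properties of the horofunction}), so the maximizer is $\Gamma$-equivariant: $z(gx,gy,gw)=g\,z(x,y,w)$ modulo $\opp$. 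We therefore obtain a continuous equivariant map $\Phi$ from $\Lambda_U^{(3)}$ to the set $U$ modulo the finite group generated by $\opp$.

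Proper discontinuity then follows. Suppose $\gamma_n\to\infty$ and there are triples $\tau_n\to\tau$ with $\gamma_n\tau_n\to\tau'$, all in $\Lambda_U^{(3)}$. By continuity, $\Phi(\tau_n)$ and $\Phi(\gamma_n\tau_n)=\gamma_n\Phi(\tau_n)$ converge in $U$, up to the finite group. This contradicts proper discontinuity of $\Gamma$ on $U$, because the finite group commutes with $\Gamma$ and a compact set saturated under it is still compact.

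The main obstacle is continuity of $\Phi$ near the diagonal-free locus. We need that the limit triple's flow line is the limit of the flow lines. This follows from the explicit description in \Cref{prop: spacelike geodesics}, together with continuity of $\Delta$ on transverse pairs.

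For part (2), suppose the convergence limit set $L\subsetneq\Lambda_U$ and pick $p\in\Lambda_U\setminus L$. Since $L$ is closed and infinite, we can choose open sets $A,B\subset\Lambda_U$ with disjoint closures such that $A$ is a neighborhood of $p$ missing $L$, and $B$ meets $L$. Let $W_A$ be the set of $z\in U$ with $z^+\in A$, and $W_B$ the set with $z^+,z^-\in B$ and $z^+\neq z^-$. Both are open and invariant under the flow and $\opp$; after symmetrizing under negation, $W_A$ becomes the set of $z$ with $z^\pm\in A$. Topological transitivity gives $\gamma\in\Gamma$ with $\gamma W_A\cap W_B\neq\emptyset$, but a single $\gamma$ is not enough for a contradiction. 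So we instead argue with the $\Gamma$-invariant open set $\Omega:=\{z\in U: z^+\notin L\text{ or } z^-\notin L\}$ and its complement $\{z:z^\pm\in L\}$. The latter has nonempty interior, since it contains a point with endpoints in $B\cap L$ perturbed within $L$ — though interior in $U$ may fail. Hence we use the standard fact that topological transitivity forces every nonempty invariant open set to be dense. $\Omega$ is invariant and open (as $L$ is closed), and nonempty because it contains points with $z^+=p$. So $\Omega$ is dense. Now take two disjoint open sets near $p$ of the form ``endpoints in small disjoint neighborhoods $A_1,A_2$ of points of $\Lambda_U\setminus L$''. Since $\Gamma$ acts properly discontinuously on $\Lambda_U\setminus L$, only finitely many $\gamma$ carry $A_1\times A_2$ back into a neighborhood of itself. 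Combined with the flow and $\opp$ preserving endpoints, this contradicts transitivity for the pair of open sets ``endpoints in $A_1\times A_2$'' and ``endpoints in $A_1'\times A_2'$'', where the second pair lies in a far disjoint region also off $L$, or in $L$ if $\Lambda_U\setminus L$ is small. Hence $L=\Lambda_U$. Minimality and perfectness then follow from the non-elementary convergence-group facts recalled in \Cref{sec: convergence groups}.
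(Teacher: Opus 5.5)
Your proof of (1) is correct and takes a different route from the paper. The paper argues by contradiction: it uses two families of geodesics $\ell_{n,ab},\ell_{n,cb}$ sharing a spherical endpoint, applies \Cref{observation: joining by spacelike is an open condition} to join them by spacelike segments $s_n$ of bounded length, and concludes that $\gamma_n s_n$ would have to become arbitrarily long. You instead build an equivariant barycenter map, with $\Phi(x,y,w)$ the point of maximal $\Bc_w$ on the flow line from $y$ to $x$. Its properties come cheaply:
\begin{itemize}
\item it is well defined in $U/\langle\opp\rangle$, because $\Bc_w$ is $\opp$-invariant and $\opp$ commutes with the flow and with $\Gamma$;
\item it is $\Gamma$-equivariant, because $\Bc_{gw}(g\,\cdot)-\Bc_w(\cdot)$ is constant (\Cref{properties of the horofunction});
\item it is continuous, by the explicit formula for $t_0$ in \Cref{prop: unimodal}(2) together with local sections of the $\PGL(d+1,\Rb)$-action on transverse pairs.
\end{itemize}
Proper discontinuity on triples then follows directly from proper discontinuity on $U$, with no length estimate. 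This makes good use of the horofunctions.

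One correction to (1): your parenthetical workaround for a non-closed $\Lambda_U$ is not valid. Compactness of $X$ is a hypothesis of \Cref{Thm: bowditch} and part of the definition of a convergence group. Like the paper, you need $\Lambda_U$ to be closed. The paper asserts this without proof, and it does not follow from the good-flow-space axioms alone. Take $g=\exp(N)\in\PGL(3,\Rb)$ with $Ne_1=0$, $Ne_2=e_1$, $Ne_3=e_2$, and $x_0=([e_3],[e_2^*])$. The flags $g^nx_0$ are pairwise transverse, and $\WH(\{g^nx_0\})$ is a good flow space for $\langle g\rangle$. Yet $g^nx_0\to([e_1],[e_3^*])$, which is transverse to none of them.

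Part (2) has a genuine gap. Your final step says that, since $\Gamma$ acts properly discontinuously on $\Lambda_U\setminus L$, only finitely many $\gamma$ move $A_1\times A_2$ onto something meeting $A_1'\times A_2'$, and that this contradicts transitivity. It does not: topological transitivity only asks for one such $\gamma$, so a nonempty finite set $R$ of them is consistent with it.

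The missing step is to shrink the open sets so that no element of $R$ works, as the paper does by passing to $W_1'\subset W_1$, $W_2'\subset W_2$ with $\gamma W_1'\cap W_2'=\emptyset$ for $\gamma\in R$. That is only possible if the points of $\Lambda_U\setminus L$ you work near are not isolated in $\Lambda_U$. For this reason the paper proves that $\Lambda_U$ is perfect before the limit-set argument. If $x$ were isolated, transitivity applied to $\{x\}\times O_1$ and $\{x\}\times O_2$ would make $\Stab_\Gamma(x)$ act topologically transitively on the infinite set $\Lambda_U\setminus\{x\}$. That forces $x$ to be an attracting point of a collapsing sequence, hence a point of the perfect set $L$, contradicting isolatedness. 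Equivalently, an isolated point would lie in $\Lambda_U\setminus L$, where its stabilizer is finite, and a finite group cannot act topologically transitively on an infinite Hausdorff space.

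In your write-up, perfectness appears only at the end as a consequence of $L=\Lambda_U$, so the argument is circular as written. The rest of your (2) does not help. The density of $\Omega$ is never used. Placing $A_1'\times A_2'$ inside $L$ is vacuous, since transitivity forces $\Lambda_U\setminus L$ to be dense in $\Lambda_U$, so $L$ has empty interior there.
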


\begin{proof}
    (1). Since $\Lambda_U\subset \Fc_{1,d}$ is closed, it is compact.
    By \Cref{Thm: bowditch} it suffices to show that the $\Gamma$-action on \[\Lambda_U^{(3)} = \{(a,b,c)\in\Lambda_U^3: a,b,c\text{ are mutually distinct}\}\] is properly discontinuous. Suppose for the purpose of contradiction that this were not the case. Then $\Lambda_U$ contains at least $3$ points, and there exists an escaping sequence $(\gamma_n)$ in $\Gamma$ and a sequence $((a_n,b_n,c_n))$ in $ \Lambda_U^{(3)}$ such that $(a_n,b_n,c_n)\to (a,b,c)\in \Lambda_U^{(3)}$ and $\gamma_n(a_n,b_n,c_n)\to (a',b',c')\in \Lambda_U^{(3)}$. 

    For each $n\in \Nb$, let $\ell_{n,ab}$ be a unit speed spacelike geodesic with $a_n$ and $b_n$ as its backward and forward endpoints in $\partial_\infty^{\Pb}\Hb_\tau^d\cong\Fc_{1,d}$ respectively. By taking a subsequence, we may assume that as subsets of $\Hb_\tau^d$, the sequence $(\ell_{n,ab}(\Rb))$ converges to $\ell_{ab}(\Rb)$, where $\ell_{ab}$ is a unit speed spacelike geodesic with backward and forward endpoints $a$ and $b$ respectively. Since $\gamma_n (a_n,b_n)\to (a',b')$, by taking another subsequence, we may also assume that $(\gamma_n\ell_{n,ab}(\Rb))$ converges to $\ell_{a'b'}(\Rb)$, where $\ell_{a'b'}$ is a unit speed spacelike geodesic with $a'$ and $b'$ as its backward and forward endpoints respectively. 

    Let $v\in\T^1\Hb_\tau^d$ be a tangent vector to $\ell_{ab}$. Then for each $n\in\Nb$, let $v_n\in\T^1\Hb_\tau^d$ be a tangent vector to $\ell_{n,ab}$ such that $v_n$ converges to $v$. Notice that $(v_n)$ and $v$ lie in $U$. Since the $\Gamma$-action on $U$ is properly discontinuous, $(\gamma_nv_n)$ does not converge to a vector along $\ell_{a'b'}$. Thus, by taking a subsequence, we may assume that $(\pi(\gamma_nv_n))$ converges to either $a'$ or $b'$. 

    We now only consider the case when the sequence $(\pi(\gamma_nv_n))$ converges to $a'$; the other case when $(\pi(\gamma_nv_n))$ converges to $b'$ is similar. Then for each $n\in\Nb$, let $\ell_{n,cb}$ be a unit speed spacelike geodesic with $c_n$ and $b_n$ as its backward and forward endpoints (in $\partial_\infty^\mathbb{P}\Hb_\tau^d$) respectively, and whose forward endpoint in $\partial_\infty^\mathbb{S}\Hb_\tau^d$ agrees with that of $\ell_{n,ab}$. Taking a further subsequence, we may assume that the sequence $(\ell_{n,cb}(\Rb))$ converges to $\ell_{cb}(\Rb)$, where $\ell_{cb}$ is a unit speed spacelike geodesic with $c$ and $b$ as its backward and forward endpoints respectively. Observe that $\ell_{cb}$ and $\ell_{ab}$ share a forward endpoint $\widetilde b\in \partial_\infty^\mathbb{S}\Hb_\tau^d$. By \Cref{observation: joining by spacelike is an open condition} there are open sets $W_{\widetilde b}$ and $W_{\pi(v)}$ in $\Hb_\tau^d\cup\partial_\infty^{\mathbb S}\Hb_\tau^d$ containing $\widetilde b$ and $\pi(v)$ respectively, such that any point in $W_{\widetilde b}$ and any point in $W_{\pi(v)}$ are joined by a spacelike geodesic in $\Hb_\tau^d$. 
    
    Let $w\in\T^1\Hb_\tau^d$ be a  tangent vector to $\ell_{cb}$, such that $\pi(w)\in W_{\widetilde b}$. For each $n\in\Nb$, let $w_n$ be a vector tangent to $\ell_{n,cb}$ such that $(w_n)$ converges to $w$. By taking the tail of the sequences, we may assume that $\pi(w_n)\in W_{\widetilde b}$ and $\pi(v_n)\in W_{\pi(v)}$ for all $n$. Hence, there are spacelike geodesic segments $s_n$ from $\pi(v_n)$ to $\pi(w_n)$ and a spacelike geodesic segment $s$ from $\pi(v)$ to $\pi(w)$. As before, the proper discontinuity of the $\Gamma$-action on $U$ implies that by taking a further subsequence, we may assume that the sequence $(\pi(\gamma_nw_n))$ converges to either $b'$ or $c'$.

    Since $(s_n)$ converges to $s$, the length of $s_n$ is uniformly bounded as we vary $n\in\Nb$. However, since $\pi(\gamma_n v_n)\to a'$ and $(\pi(\gamma_nw_n))$ converges to either $b'$ or $c'$, it follows that the length of $\gamma_ns_n$ grows to infinity. This is a contradiction, because $\gamma_n$ preserves lengths. Hence, the first claim holds.

    (2). We first show that $\Lambda_U$ is a perfect set. If this were not the case, then there exists an isolated point $x\in \Lambda_U$. For any non-empty open subsets $O_1, O_2\subset \Lambda_U\setminus \{x\}$ (these exist because $\Lambda_U$ has at least two points), observe that \[O'_1: = \{x\}\times O_1\quad\text{and}\quad O'_2: = \{x\}\times O_2\] are non-empty open subsets of the set $\Lambda_U^{(2)}$ of distinct pairs of points in $\Lambda_U$. Since the $\Gamma\times\Rb\times \Zb_2$-action on $U$ is topologically transitive, so is the diagonal $\Gamma$-action on $\Lambda_U^{(2)}$. Hence, there exists $\gamma\in \Gamma$ such that $\gamma O'_1 \cap O'_2$ is non-empty, which in turn implies that $\gamma x = x$ and $\gamma O_1\cap O_2$ is non-empty. Since $O_1,O_2$ are arbitrary, it follows that the  $\mathsf{Stab}_{\Gamma}(x)$-action on $\Lambda_U\setminus \{x\}$ is topologically transitive. Since the $\Gamma$-action on $\Lambda_U$ is non-elementary, $\Lambda_U$ is infinite. Hence, $x$ is the attracting point of some collapsing sequence in $\mathsf{Stab}_{\Gamma}(x)$. In particular, $x$ is not isolated, which is a contradiction.

    Now, let $L$ be the limit set of the $\Gamma$-action on $\Lambda_U$ and suppose that $\Omega:=\Lambda_U\setminus L$ is non-empty. Since $\Lambda_U$ is perfect and $\Omega\subset\Lambda_U$ is open, we may find mutually disjoint pre-compact open subsets $V_1,V_2, W_1, W_2\subset \Omega$. Let \[X_1:= V_1\times W_1\quad\text{and}\quad X_2 := V_2\times W_2~,\] and let $R: = \{\gamma\in \Gamma \mid \gamma X_1\cap X_2\not = \emptyset\}$. Notice that $\gamma\in R$ if and only if $\gamma V_1\cap V_2$ and $\gamma W_1\cap W_2$ are both non-empty. We observed in \Cref{sec: convergence groups} that the $\Gamma$-action on $\Omega$ is properly discontinuous, so $R$ is finite. Hence, the fact that $\Lambda_U$ is perfect implies that we can find non-empty open subsets $W'_1\subset W_1$ and $W'_2\subset W_2$ such that for all $\gamma\in R$, $\gamma W'_1\cap W'_2$ is empty. Now, if we set $X'_1:= V_1\times W'_1$ and $X'_2:= V_2\times W'_2$, then note $X_1'$ and $X_2'$ are non-empty open subsets of $\Lambda_U^{(2)}$, but \[\left(\bigcup_{\gamma\in\Gamma}\gamma X'_1\right)\cap X'_2 = \emptyset~,\] which contradicts the topological transitivity of the $\Gamma$-action on $\Lambda_U^{(2)}$. 
\end{proof}

Next, we define thick-thin decompositions for $U$. Recall that for any $x\in\Lambda_U$ and any $C>0$, $H_x(C)$ denotes the horoball of $U$ centered at $x$ of radius $C$, i.e.
\[H_x(C):=\{z\in U:\Bc_x(z)>C\}~.\]
Then for a finite subset $\Pi\subset\Lambda_U$ consisting of points in pairwise distinct $\Gamma$-orbits and a constant $C\in\Rb$, we set
\[U^{\rm thin}(\Pi,C):=\bigcup_{\gamma\in \Gamma,\ p\in \Pi}\gamma H_p(C)\quad\text{and}\quad U^{\rm thick}(\Pi,C):=U\setminus U^{\rm thin}(\Pi,C)~.\]

\begin{definition}\label{definition: thick-thin decomposition geometric}
    Given a finite subset $\Pi$ contained in the limit set of the $\Gamma$-action on $\Lambda_U$ and a constant $C\in\Rb$, we say that the decomposition \[U=U^{\rm thin}(\Pi,C)\cup U^{\rm thick}(\Pi,C)\] is a \emph{$(\Pi,C)$-thick-thin decomposition of $U$} if the following hold, 
    \begin{itemize}
        \item[(I)] the $\Gamma$-invariant collection of closed horoballs \[\{\gamma \overline{H_p(C)}: \gamma \in \Gamma, \ p\in \Pi\}\] is pairwise disjoint and \emph{locally finite}, that is, any compact subset in $U$ intersects finitely many members in the collection.
        \item[(II)] the $\Gamma$-action on $U^{\rm thick}(\Pi,C)$ is cocompact.
    \end{itemize}
    In this case, we refer to $U^{\rm thin}(\Pi,C)$ and $U^{\rm thick}(\Pi,C)$ as the \emph{$(\Pi,C)$-thin part} and \emph{$(\Pi,C)$-thick part} of $U$ respectively.
\end{definition}

\begin{remark}
Later, we will see that if $U$ has a $(\Pi,C)$-thick-thin decomposition, then every point in $\Pi$ is necessarily a parabolic point of $\Lambda_U$, see \Cref{proposition: gf convergence action}.
\end{remark}

\subsection{Weak hulls of transverse groups and good flow spaces}\label{section: geometric finiteness via relative Anosovness} Now we prove the following, which implies the forward directions of all three parts of \Cref{theoremalpha: characterizations}. 

\begin{theorem}\label{theoremalpha: characterizations1}
Let $\Gamma\subset\PGL(d+1,\Rb)$ be a discrete, infinite subgroup.
\begin{enumerate}
    \item If $\Gamma$ is projective transverse and $\Lambda_\Gamma$ contains at least two points, then $\WH(\Lambda_\Gamma)$ is a good flow space for $\Gamma$. Further, if $\Gamma$ is non-elementary, then the $\Gamma\times\Rb\times \Zb_2$-action on $\WH(\Lambda_\Gamma)$ is topologically transitive.
    \item If $\Gamma$ is relatively projective Anosov and $\Lambda_\Gamma$ contains at least two points, then $\WH(\Lambda_\Gamma)$ admits a thick-thin decomposition.
    \item If $\Gamma$ is projective Anosov, then the $\Gamma$-action on $\WH(\Lambda_\Gamma)$ is cocompact.
\end{enumerate} 
\end{theorem}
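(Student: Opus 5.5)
The plan is to treat the three parts in order. Parts (2) and (3) rest on one mechanism: the position of a point $z\in\WH(\Lambda_\Gamma)$ along its flow line is recorded by the horofunctions $\Bc_c$ for $c\in\Lambda_\Gamma$, and the dynamics on $\Lambda_\Gamma^{(3)}$ controls these.

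\textbf{Part (1).} That $\WH(\Lambda_\Gamma)$ is a good flow space is already \Cref{proposition: domain of discontinuity}; the hypothesis $|\Lambda_\Gamma|\geqslant 2$ only guarantees non-emptiness. For topological transitivity, I would use the endpoint map
\[E:\WH(\Lambda_\Gamma)\to\Lambda_\Gamma^{(2)}~,\qquad z\mapsto (z^-,z^+)~.\]
By \Cref{prop: spacelike geodesics} and \Cref{comparison of boundaries}, $E$ is a $\Gamma$-equivariant fibration whose fibres are exactly the $\Rb\times\Zb_2$-orbits, namely two flow lines exchanged by $\opp$. Concretely, one has a Hopf parametrization $\WH(\Lambda_\Gamma)\cong\Lambda_\Gamma^{(2)}\times\Rb\times\Zb_2$, locally trivial, with the time coordinate measured by $\Bc_{z^+}$ and using \Cref{prop: unimodal}(1). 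Hence $E$ is open. Moreover, for open $A\subset\WH(\Lambda_\Gamma)$, the $\Rb\times\Zb_2$-saturation of $A$ equals $E^{-1}(E(A))$. Now take open sets $A,B$. Since $\Gamma$ is non-elementary, its action on $\Lambda_\Gamma$ is a minimal convergence group action. So \Cref{prop: topological transitive} gives $\gamma$ with $\gamma E(A)\cap E(B)\neq\emptyset$. Pulling back along $E$ and then adjusting by the flow and $\opp$ yields an element of $\Gamma\times\Rb\times\Zb_2$ carrying a point of $A$ into $B$.

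\textbf{Part (3).} I would argue by contradiction. Suppose $(z_n)$ is a sequence in $\WH(\Lambda_\Gamma)$ whose images leave every compact subset of $\WH(\Lambda_\Gamma)/\Gamma$.

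The first step is to attach to each $z_n$ a third limit point $c_n\in\Lambda_\Gamma\setminus\{z_n^\pm\}$ such that the maximum time of $t\mapsto\Bc_{c_n}(\varphi^t z_n)$ is uniformly bounded. Here the maximum time is the quantity $t_0(z_n,c_n)$ of \Cref{prop: unimodal}(2), so the requirement is $|t_0(z_n,c_n)|\leqslant T$ for a fixed $T$.

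Given this, uniformity of the convergence action means $\Lambda_\Gamma^{(3)}/\Gamma$ is compact (Tukia/Bowditch). So there are $\gamma_n$ with $\gamma_n(z_n^-,z_n^+,c_n)$ in a fixed compact set $K_3\subset\Lambda_\Gamma^{(3)}$. Since $t_0$ is continuous and $\Gamma$-invariant (by \Cref{properties of the horofunction} the cocycle does not depend on $z$), the points $\gamma_nz_n$ then lie in the compact set
\[\{w\in\WH(\Lambda_\Gamma):(w^-,w^+,c)\in K_3,\ |t_0(w,c)|\leqslant T\text{ for some }c\}~.\]
This is the desired contradiction.

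\textbf{Main obstacle.} The hard step is producing $c_n$ with bounded $t_0$. As $c\to z^+$ one has $t_0\to+\infty$, and as $c\to z^-$ one has $t_0\to-\infty$. However, $\Lambda_\Gamma$ need not be connected, so I cannot simply invoke the intermediate value theorem. My first attempt would be to show that the gaps in $\{t_0(z,c):c\in\Lambda_\Gamma\}$ are uniformly bounded, by contradiction. A long gap, after renormalizing by $\Gamma$ using compactness of $\Lambda_\Gamma^{(3)}/\Gamma$, would produce a limiting configuration in which $\Lambda_\Gamma$ splits into two clopen pieces accumulating only at $z^+$ and at $z^-$ respectively. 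This would contradict conicality of $z^\pm$, using \Cref{lemma: projective expanding}(1).

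\textbf{Part (2).} Let $\Pi$ be a finite set of representatives of the $\Gamma$-orbits of parabolic points; there are finitely many because the action is geometrically finite. By \Cref{proposition: parabolic subgroups are weakly unipotent} and \Cref{unipotents preserve horofunctions}, each $\Bc_p$ is $\Stab_\Gamma(p)$-invariant. So by \Cref{constant difference}, the family $\{\gamma H_p(C)\}$ is well defined, with each member depending only on the coset $\gamma\Stab_\Gamma(p)$.

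For disjointness, I would use \Cref{lemma: disjointness of two horoballs} combined with a compactness argument. By boundedness of the parabolic points, $(\Lambda_\Gamma\setminus\{p\})/\Stab_\Gamma(p)$ is compact. Thus any pair $(\gamma p,\gamma'p')$ of distinct parabolic points can be moved by $\Gamma$ so that one point equals some $p\in\Pi$ and the other lies in a fixed compact subset of $\Lambda_\Gamma\setminus\{p\}$. Finitely many applications of \Cref{lemma: disjointness of two horoballs} then give a uniform $C$ beyond which the horoballs are disjoint. Local finiteness follows from \Cref{proposition: two of three}.

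Cocompactness of the thick part is proved like part (3). When $t_0$ cannot be bounded because the relevant third point is parabolic, the obstruction is absorbed instead by $z_n$ lying in a horoball. For this I would use \Cref{bounded converges to unbounded}, together with Bowditch's description of geometrically finite actions: $\Lambda_\Gamma^{(3)}$ modulo $\Gamma$ is compact once neighbourhoods of parabolic points are removed. I expect this step to be the main difficulty of part (2).
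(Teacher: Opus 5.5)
Part (1) is the paper's argument.

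\textbf{Part (3): a different but workable route.} The paper gives no separate argument for (3); it obtains (3) as the case $\Pi=\emptyset$ of the thick-part cocompactness in (2). You instead argue directly, via cocompactness of the action on $\Lambda_\Gamma^{(3)}$ for uniform convergence groups and the $\Gamma$-invariant critical time $t_0$. This works, and the step you flag closes more easily than you suggest:
\begin{enumerate}
\item Since $c\mapsto t_0(z_n,c)$ is proper on $\Lambda_\Gamma\setminus\{z_n^\pm\}$, the right end of a gap $(a_n,b_n)$ is attained, say $b_n=t_0(z_n,c_n)$.
\item Choose $\gamma_n$ with $\gamma_n(z_n^-,z_n^+,c_n)\in K_3$ and set $u_n:=\varphi^{b_n}(\gamma_nz_n)$. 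Then $t_0(u_n,\gamma_nc_n)=0$, so a subsequence converges to some $u\in\WH(\Lambda_\Gamma)$.
\item No value of $t_0(u_n,\cdot)$ lies in $(a_n-b_n,0)$, so $t_0(u,c)\geqslant 0$ for all $c\in\Lambda_\Gamma\setminus\{u^\pm\}$.
\item Since $t_0(u,c)\to-\infty$ as $c\to u^-$, the point $u^-$ is isolated in $\Lambda_\Gamma$, contradicting perfectness.
\end{enumerate}
No conicality is needed. You must still treat the elementary case separately: there $\Lambda_\Gamma$ has two points and $\Lambda_\Gamma^{(3)}$ is empty, though the case is easy.

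\textbf{Part (2), disjointness and local finiteness: a genuine gap.} Your disjointness argument ignores the shift in \Cref{constant difference}: one has $\alpha H_{p'}(C)=H_{\alpha p'}(C+k_\alpha)$, where $k_\alpha$ depends on the coset $\alpha\Stab_\Gamma(p')$.
\begin{itemize}
\item After normalizing the centers to $(p,\alpha p')$ with $\alpha p'$ in a compact $Q\subset\Lambda_\Gamma\setminus\{p\}$, \Cref{lemma: disjointness of two horoballs} and compactness give some $C_0$ with $H_p(C_0)\cap H_y(C_0)=\emptyset$ for all $y\in Q$.
\item However, infinitely many cosets can send $p'$ into $Q$, and nothing in your argument bounds $k_\alpha$ from below along them. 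So no uniform $C$ follows.
\item The missing ingredient is that $\Gamma$-orbits of a compact set cannot go arbitrarily deep into $H_p$ (\Cref{lemma: Busemann upper bounds on orbits}). Its proof genuinely uses that a bounded parabolic point is not conical, via \Cref{collapsing condition} and \Cref{proposition: two of three}.
\end{itemize}
Local finiteness has the same defect. \Cref{proposition: two of three} needs $\Bc_{x_n}(z_n)\to+\infty$ for the \emph{unshifted} horofunctions, whereas $z_n\in K\cap\gamma_nH_p(C)$ only gives $\Bc_p(\gamma_n^{-1}z_n)>C$. What you need is that $\Bc_p(\gamma_n^{-1}\,\cdot\,)\to-\infty$ uniformly on $K$ along distinct cosets $\gamma_n\Stab_\Gamma(p)$. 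The paper proves this in \Cref{lemma: mostly negatively infinite}, combining three ingredients:
\begin{itemize}
\item the orbit bound above;
\item cocompactness of $\Stab_\Gamma(p)$ on the bands $\{C'\leqslant\Bc_p\leqslant C\}$ (\Cref{lemma: cocompactness of horospheres});
\item proper discontinuity.
\end{itemize}

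\textbf{Part (2), cocompactness of the thick part: not argued.} Your sketch is not an argument. A precise version of ``triples modulo $\Gamma$ are compact away from the parabolic points'' would still have to be converted into a lower bound on $\Bc_p$ at the correct, shifted level, which needs the same control as above. The paper avoids triples here and proceeds as follows:
\begin{itemize}
\item Let $D(z)$ be the least angular distance between the components of $\pi(z)$, $\pi({\opp}(z))$ and those of $z^\pm$. Then $\{D\geqslant\epsilon\}$ is compact for any transverse group (\Cref{lemma: K compact}).
\item Pick a $D$-maximizer in each orbit, and show that maximizers in the thick part have $D$ bounded below.
\item Near conical points this uses expansion (\Cref{lemma: projective expanding}(1)).
\item Near parabolic points it uses strict contraction by the parabolic stabilizers (\Cref{lemma: projective expanding}(2)). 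Here \Cref{bounded converges to unbounded} keeps $\pi(z_n)$ away from the parabolic point in the thick part.
\end{itemize}
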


The bulk of the work in the proof of \Cref{theoremalpha: characterizations1} is the proof of part (2). We first prove parts (1) and (3) of \Cref{theoremalpha: characterizations1} under the assumption that part (2) holds.

\begin{proof}[Proof of \Cref{theoremalpha: characterizations1} parts (1) and (3).]
    We previously proved the first statement of part (1) as \Cref{proposition: domain of discontinuity}. For the second statement of part (1), note that since $\Gamma$ is non-elementary, the $\Gamma$-action on $\Lambda_\Gamma$ is minimal. Then by \Cref{prop: topological transitive} the $\Gamma$-action on $\Lambda_\Gamma^{(2)}$ is topologically transitive, so the same holds true for the $\Gamma\times\Rb\times \Zb_2$-action on $\WH(\Lambda_\Gamma)$.

    By definition, projective Anosov subgroups are the relatively projective Anosov subgroups where $\WH(\Lambda_\Gamma)^{\rm thick}=\WH(\Lambda_\Gamma)$ (and so $\Lambda_\Gamma$ automatically has at least two points). Thus, part (2) implies part (3). 
\end{proof}

We now focus on the proof of part (2) of \Cref{theoremalpha: characterizations1}. As such, we assume for the remainder of this subsection that $\Gamma\subset\PGL(d+1,\Rb)$ is relatively projective Anosov. Choose a representative in each orbit of parabolic points in $\Lambda_\Gamma$, and let $\Pi$ denote the collection of such choices. Then set \[\Pc := \{\Stab_\Gamma(p) : p \in  \Pi\}\] and \[\Pc^\Gamma := \{\gamma P\gamma^{-1}:\gamma\in \Gamma,\ P\in \Pc\} = \{\Stab_\Gamma(p) : p \in  \Gamma \Pi\}~.\] 
For each $p\in\Pi$ and each $C\in\Rb$, let $H_p(C)$ denote the horoball in $\WH(\Lambda_\Gamma)$ centered at $p$ of radius $C$, i.e.
\[H_p(C):=\{z\in\WH(\Lambda_\Gamma):\Bc_p(z) > C\}~.\]

The proof of part (2) of \Cref{theoremalpha: characterizations1} has two main steps. For the first step, we will prove that for sufficiently large $C\in\Rb$, (I) of \Cref{definition: thick-thin decomposition geometric} holds for $\WH(\Lambda_\Gamma)$.

\begin{lemma}\label{proposition: disjointness collection of horoballs}
    There exists $C\in\Rb$ such that 
    \[\mathcal{H} = \{\gamma \overline{H_{p}(C)}: \gamma\in \Gamma,\ p \in \Pi\}~\] is a locally finite collection of mutually disjoint horoballs.
\end{lemma}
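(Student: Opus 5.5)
We argue by contradiction in two parts: first disjointness, then local finiteness. Both rely on \Cref{proposition: two of three}, on the equivariance in \Cref{properties of the horofunction} and \Cref{constant difference}, and on the fact that parabolic stabilizers are weakly unipotent (\Cref{proposition: parabolic subgroups are weakly unipotent}). By \Cref{unipotents preserve horofunctions}, this last fact gives that each $\Stab_\Gamma(p)$ preserves $\Bc_p$ and hence each $H_p(C)$. Fix a finite set $F\subset\Gamma$ of coset representatives needed below. Because the parabolic points are bounded, for each $p\in\Pi$ we may choose a compact set $K_p\subset\Lambda_\Gamma\setminus\{p\}$ with $\Stab_\Gamma(p)K_p=\Lambda_\Gamma\setminus\{p\}$.

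\textbf{Disjointness.} Suppose that for every $n$ there are distinct horoballs $\gamma_n\overline{H_{p_n}(n)}$ and $\eta_n\overline{H_{q_n}(n)}$ that intersect. Translating by $\gamma_n^{-1}$ and passing to a subsequence, we may take $p_n=p$ and $\gamma_n=\mathrm{id}$. Since $\Stab_\Gamma(p)$ preserves $H_p(n)$, we may also translate by elements of $\Stab_\Gamma(p)$ so that $y_n:=\eta_n q_n\in K_p$; here $y_n\neq p$ because the horoballs are distinct and centers in one orbit correspond to cosets. Passing to a further subsequence, $y_n\to y\in K_p$, so $y\neq p$. By \Cref{constant difference}, $\eta_n\overline{H_{q_n}(n)}=\{z:\Bc_{y_n}(z)\geqslant n+B_n\}$ with $B_n=\frac12(\log\frac{\norm{\eta_n v}}{\norm v}+\log\frac{\norm{\eta_n\phi}}{\norm\phi})$, where $q_n=q=([v],[\phi])$. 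The key point is that $B_n$ is bounded below. The attracting flag of $\eta_n$ lies near $y\neq p$, and $q$ is a fixed point that is transverse to the repelling flag, at least after precomposing $\eta_n$ with an element of $\Stab_\Gamma(q)$ chosen to move the repelling flag into a compact set avoiding $q$. Hence $\norm{\eta_n v}/\norm{v}\gtrsim\sigma_1(\eta_n)$ and $\norm{\eta_n\phi}\gtrsim\sigma_1(\eta_n^{-1})$, so $B_n\to+\infty$ when $\eta_n$ escapes, and $B_n$ takes finitely many values otherwise. We therefore obtain points $z_n$ with $\Bc_p(z_n)\geqslant n$ and $\Bc_{y_n}(z_n)\geqslant n-O(1)$. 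After extracting convergent endpoints and projections, \Cref{proposition: two of three} forces both $p$ and $y$ to equal two of $\{z^+,z^-,a\}$ with $a=a'$. Exactly as in the proof of \Cref{lemma: disjointness of two horoballs}, this contradicts $p\neq y$.

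\textbf{Local finiteness.} Let $K\subset\WH(\Lambda_\Gamma)$ be compact and suppose infinitely many distinct horoballs $\gamma_n\overline{H_{p}(C)}$ meet $K$ at points $z_n\to z$. The centers $x_n=\gamma_np$ are then pairwise distinct, and as above $\Bc_{x_n}(z_n)\geqslant C+B_n$. Passing to a subsequence, $\gamma_n$ is collapsing and $x_n\to x$, and we need $B_n\to+\infty$. This growth is the step I expect to be the main obstacle. It requires showing that, modulo right multiplication by $\Stab_\Gamma(p)$ (which preserves $H_p(C)$ and changes nothing), $p$ stays uniformly transverse to the repelling flag of $\gamma_n$. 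My plan is to choose each $\gamma_n$ as a minimal-length representative of its coset $\gamma_n\Stab_\Gamma(p)$, and to use bounded parabolicity together with \Cref{lemma: projective expanding}(2) to show that the repelling flag cannot converge to $p$. The divergence estimates of \Cref{lemma: projective north-south dynamics} then give $B_n\geqslant\frac12\log\frac{\sigma_1}{\sigma_{d+1}}(\gamma_n)-O(1)\to\infty$. With this in hand, $\Bc_{x_n}(z_n)\to\infty$ while $z_n\to z\in\WH(\Lambda_\Gamma)$. But \Cref{proposition: two of three} requires $a=\pi(z)$ to lie in $\partial_\infty^{\Pb}\Hb_\tau^d$ or $z^+=z^-$, and both are impossible for a genuine point of $\WH(\Lambda_\Gamma)$. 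This contradiction gives local finiteness, and the same bound on $B_n$ is exactly what the disjointness argument uses.
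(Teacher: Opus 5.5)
Your reduction is sound: everything hinges on showing that the cocycle $B_n=\Bc_{\gamma_np}(\gamma_nz)-\Bc_p(z)$ tends to $+\infty$ along pairwise distinct cosets $\gamma_n\Stab_\Gamma(p)$. Note that $B_n$ depends only on the coset, by \Cref{unipotents preserve horofunctions} and the cocycle identity. Given this, local finiteness follows from \Cref{proposition: two of three} as you say, and disjointness follows from \Cref{lemma: disjointness of two horoballs}. But this estimate is the whole content of the lemma, and you have not proved it.

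In the disjointness part the justification is asserted rather than proved, and the mechanism you suggest does not work. From $\eta_nq=y_n\to y$ you may conclude that the attracting flag of $(\eta_n)$ is $y$ only if $q$ is not its repelling flag, which is exactly the point at issue. Nor can you push the repelling flag into a compact set avoiding $q$ by precomposing with $s_n\in\Stab_\Gamma(q)$ and using cocompactness. The repelling flag of $\eta_ns_n$ is the attracting flag of $s_n^{-1}\eta_n^{-1}$, which is not $s_n^{-1}$ applied to the repelling flag of $\eta_n$ once $s_n$ escapes; \Cref{slmsfsvjsgfhb} gives no control there. For example, if $\eta_n$ is constant and $s_n$ escapes in $\Stab_\Gamma(q)$, the repelling flag of $\eta_ns_n$ is $q$ itself.

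In the local-finiteness part you give only a plan, and it does not connect. Nothing in this setting relates word-length minimality of a coset representative to the position of its repelling flag in $\Fc_{1,d}$. Also, \Cref{lemma: projective expanding}(2) concerns divergent sequences inside $\Stab_\Gamma(p)$, not repelling flags of elements $\gamma_n s$.

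The missing idea is that parabolic points are not conical, which is the same input the paper uses in \Cref{lemma: Busemann upper bounds on orbits}. Here is one way to use it.
\begin{enumerate}
\item Suppose the points $x_n=\gamma_np$ are pairwise distinct with $x_n\to x$. Pick $x_0\in\Gamma p\setminus\{x\}$, and pick $t_n\in\Stab_\Gamma(p)$ so that $g_n:=t_n\gamma_n^{-1}$ satisfies $g_nx_0\in K_p$.
\item Take a collapsing subsequence of $(g_n)$ with attracting point $a$ and repelling point $b$. If $b\neq x_0$, then $a\in K_p$, so $a\neq p$.
\item If $b=x_0$, then $g_nx_n=p$ forces $a=p$, while $g_nx_0$ stays in $K_p$. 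This makes $x_0$ conical, which is impossible.
\item Hence the repelling flag of $\gamma_nt_n^{-1}$ converges to $a\neq p$ and is eventually uniformly transverse to $p$. Therefore $B_n\geqslant\frac12\log\frac{\sigma_1}{\sigma_{d+1}}(\gamma_nt_n^{-1})-O(1)\to+\infty$.
\end{enumerate}

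With this step inserted, your argument becomes a valid and somewhat more direct alternative to the paper's. The paper never estimates singular values. Instead it combines \Cref{lemma: Busemann upper bounds on orbits}, \Cref{lemma: cocompactness of horospheres} and proper discontinuity into \Cref{lemma: mostly negatively infinite}. That lemma gives local finiteness outright, and in the disjointness argument it gives the eventual nonnegativity of the corresponding constants $k_n$.
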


For the second step, we prove that for any $C\in\Rb$, (II) of \Cref{definition: thick-thin decomposition geometric} holds for $\WH(\Lambda_\Gamma)$.

\begin{lemma}\label{lemma: cocompactness of thick part of WH}
For all $C\in\Rb$, the $\Gamma$-action on $\WH(\Lambda_\Gamma)^{\rm thick}(\Pi,C)$ is cocompact.
\end{lemma}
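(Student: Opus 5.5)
We argue by contradiction, using the geometric finiteness of the $\Gamma$-action on $\Lambda_\Gamma$. Suppose the $\Gamma$-action on $\WH(\Lambda_\Gamma)^{\rm thick}(\Pi,C)$ is not cocompact. Fix a compact set $K_0\subset\WH(\Lambda_\Gamma)$; we will enlarge it as needed. Non-cocompactness gives a sequence $(z_n)$ in the thick part such that $\Gamma z_n$ eventually leaves every compact set. Since the thick part is a closed, $\Gamma$-invariant, flow-invariant set minus open sets, we may pass to subsequences freely. After passing to a subsequence, we may assume $z_n^\pm\to z^\pm$ in $\Lambda_\Gamma$, $\pi(z_n)\to a$, and $\pi(\opp(z_n))\to a'$. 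If $z^+\neq z^-$ and $a\in\Hb_\tau^d$, then $z_n$ converges in $\WH(\Lambda_\Gamma)$, which is a contradiction. So the unbounded behaviour must come either from $z^+=z^-$ or from $\pi(z_n)$ escaping to $\{z^+,z^-\}$, as in \Cref{proposition: two of three}.

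The key step is a uniform statement near each point of $\Lambda_\Gamma$, which we prove by translating by group elements. Fix $z_n$ and consider the pair $(z_n^+,z_n^-)$ together with the position of $\pi(z_n)$ along its flow line. We split into two cases according to the type of the limit point involved.

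\textbf{Conical case.} We use that the $\Gamma$-action on the space of distinct triples is cocompact away from cusps. By Bowditch's characterization, for a geometrically finite action the triples $(z_n^+,z_n^-,\cdot)$ can be moved by $\gamma_n$ into a compact set unless they degenerate toward a parabolic point. Concretely, choose a third point $c_n\in\Lambda_\Gamma$ whose position relative to $z_n$ corresponds to the parameter where $\Bc_{c_n}(\varphi^t z_n)$ is maximal (\Cref{prop: unimodal}). Then apply $\gamma_n\in\Gamma$ so that $\gamma_n(z_n^+,z_n^-,c_n)$ stays in a compact subset of $\Lambda_\Gamma^{(3)}$; this is possible whenever the triple does not collapse to a bounded parabolic point. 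We then need to compare the flow parameter of $\gamma_nz_n$ with the canonical "projection" of $c_n$ to the geodesic; we use the explicit formula for the critical point $t_0$ in \Cref{prop: unimodal}(2) together with the cocycle formula of \Cref{properties of the horofunction}. This shows that $\gamma_n z_n$ stays within bounded distance of a compact set, which is a contradiction.

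\textbf{Parabolic case.} Here the triples accumulate at a bounded parabolic point $p\in\Gamma\Pi$. After translating, $p\in\Pi$, $z^+=p$ (or $z^-=p$), and $\Bc_p(z_n)\le C$ since $z_n$ lies in the thick part. We use bounded parabolicity: $(\Lambda_\Gamma\setminus\{p\})/\Stab_\Gamma(p)$ is compact. Hence, after applying elements of $\Stab_\Gamma(p)$, the other endpoint stays in a compact set $K\subset\Lambda_\Gamma\setminus\{p\}$. By \Cref{unipotents preserve horofunctions} and \Cref{proposition: parabolic subgroups are weakly unipotent}, these elements preserve $\Bc_p$, so we still have $\Bc_p\le C$. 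If $z^+=z^-=p$, then the stabilizer translation prevents this, since the endpoints are separated. So $z_n^-\to y\in K$ and $z^+=p$. By \Cref{prop: unimodal}(1), $\Bc_p$ is increasing along the flow with slope $1$. Flowing $z_n$ forward or backward only changes $\Bc_p$ by $t$, and because $\Bc_p(z_n)\le C$ the point $\pi(z_n)$ cannot approach $p$; this is \Cref{bounded converges to unbounded}, after possibly negating. It also cannot approach $y$: if it did, then $\Bc_p(z_n)\to-\infty$, and we re-run the argument with $y$ in place of $p$. That case is handled by the conical step if $y$ is conical, or by this argument with $y$ as a new parabolic point. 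So $\pi(z_n)$ stays in a compact set, which is a contradiction.

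\textbf{Main obstacle.} The hard step is the conical case: turning compactness of triples into compactness of $\gamma_nz_n$ in $\T^1\Hb_\tau^d$, uniformly in the flow parameter. In other words, we must show that the flow time needed to reach $\pi(z_n)$ from the canonical projection is bounded unless some horofunction $\Bc_{p}$ becomes large. I would first try to control this using the explicit formulas in the normalized coordinates of the proof of \Cref{prop: unimodal}, combined with compactness of $\Lambda_\Gamma^{(3)}$ modulo $\Gamma$ away from the cusps.
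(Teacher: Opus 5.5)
Your proposal does not yet prove the lemma. The step you label the ``main obstacle'' is essentially the whole content of the statement, and the parabolic case contains an error.

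\textbf{Conical case.} \Cref{Thm: bowditch} only gives proper discontinuity of $\Gamma$ on $\Lambda_\Gamma^{(3)}$, not cocompactness ``away from cusps''. The latter is a separate, nontrivial structure result for geometrically finite convergence groups, which you would need to cite and state precisely; you never say what it means for a triple to ``degenerate toward a parabolic point''. More seriously, nothing guarantees a point $c_n\in\Lambda_\Gamma$ whose critical time $t_0$ from \Cref{prop: unimodal}(2), along the flow line of $z_n$, is uniformly bounded. A flow line in $\WH(\Lambda_\Gamma)$ can have arbitrarily long stretches onto which no limit point ``projects''. Showing that such stretches occur only inside the horoballs $\gamma H_p(C)$ cut out by the specific functions $\Bc_p$ is exactly the thick--thin statement you are trying to establish. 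So the conical case, as written, assumes what is to be proved.

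\textbf{Parabolic case.} The normalization $z^+=p$ is not justified. The appeal to \Cref{bounded converges to unbounded} also fails in general: its hypothesis (2) needs $t\mapsto\Bc_p(\varphi^t(z_n))$ to be increasing on $(-\infty,0]$. That holds if $z_n^+=p$. But if $z_n^+\to p$ with $z_n^+\neq p$ and $z_n$ lies past the maximum of this unimodal function (the flow line has already left the horoball), then thick-part points can perfectly well have $\pi(z_n)\to p$. Negating to restore (2) swaps the endpoints and destroys your normalization $z_n^-\to y\in K$. What is needed instead is to use $\Stab_\Gamma(p)$ to push $z_n^+$, not $z_n^-$, away from $p$. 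Finally, ``re-run the argument with $y$ in place of $p$'' has no termination mechanism, and for conical $y$ it falls back on the unproven conical case.

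\textbf{How the paper proceeds.} It avoids triples altogether. It introduces $D(z)$, the minimal angular distance (in $\mathbb{RP}^d$ and in $\mathbb{RP}^{d*}$ separately) between $\pi(y)$ and $y^\pm$ for $y\in\{z,\opp(z)\}$, and shows $\{D\geqslant\epsilon\}$ is compact. It then picks a $D$-maximizer in each orbit and proves that maximizers in the thick part have $D$ bounded below.
\begin{itemize}
\item Near a conical point, an expanding element from \Cref{lemma: projective expanding}(1) would increase $D$.
\item Near a parabolic point $p$, thickness and \Cref{bounded converges to unbounded} force $t\mapsto\Bc_p(\varphi^t(z_n))$ to be decreasing on $[0,+\infty)$, so $z_n^+\neq p$. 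A $\Stab_\Gamma(p)$-translate moving $z_n^+$ off $p$ then increases $D$ by \Cref{lemma: projective expanding}(2).
\end{itemize}
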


This finishes the proof of \Cref{theoremalpha: characterizations1}~(2), so it remains to prove \Cref{proposition: disjointness collection of horoballs} and \Cref{lemma: cocompactness of thick part of WH}.

\subsubsection{The proof of \Cref{proposition: disjointness collection of horoballs}} Our proof is an adaptation of an argument due to Bowditch \cite{bowditch2012relatively}.
As a preliminary step, we prove the following condition for collapsing sequences in $\Gamma$. Recall that $\pi:\T^1\Hb_\tau^d\to\Hb_\tau^d$ is the projection map.

\begin{lemma}\label{collapsing condition}
    Let $(z_n)$ be a sequence in $\WH(\Lambda_\Gamma)$ converging to $z\in\WH(\Lambda_\Gamma)$ and let $(\gamma_n)$ be a sequence in $\Gamma$ such that $\gamma_nz_n^+\to x\in \Lambda_\Gamma$, $\gamma_nz_n^-\to y\in \Lambda_\Gamma$ as $n\to\infty$ and $x\ne y$. If $\gamma_n \pi(z_n)\to x$ as $n\to\infty$, then $(\gamma_n)$ is a collapsing sequence with $x$ its attracting point and $z^-$ its repelling point.
\end{lemma}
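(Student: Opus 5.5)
First I will show that $(\gamma_n)$ is escaping. If it were not, then after passing to a subsequence $\gamma_n=\gamma$ would be constant, since $\Gamma$ is discrete. Then $\gamma_n\pi(z_n)\to\gamma\pi(z)\in\Hb_\tau^d$, which cannot equal the boundary point $x$. So $(\gamma_n)$ escapes. Because $\Gamma$ is projective transverse, every subsequence has a further subsequence that is collapsing, with attracting flag $a$ and repelling flag $b$ in $\Lambda_\Gamma$ in the sense of \Cref{lemma: projective north-south dynamics}. It therefore suffices to show that in every such case $a=x$ and $b=z^-$. The full sequence is then collapsing with these points: otherwise some subsequence would avoid a neighborhood of the pair $(x,z^-)$ of attracting/repelling data, contradicting the identification.

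Fix such a subsequence. The points $\pi(z)$, $z^+$, $z^-$ are pairwise transverse by \Cref{observation: a spacelike geodesic is totally transverse}(1), and $z^+\neq z^-$. Since $b$ is a single flag, it cannot be non-transverse to two distinct points of $\Lambda_\Gamma$ unless it equals one of them, because distinct points of $\Lambda_\Gamma$ are transverse. I split into cases according to $b$.

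\emph{Case $b\neq z^\pm$.} Then $b$ is transverse to both $z^+$ and $z^-$. By the uniform convergence on compact sets transverse to $b$ in \Cref{lemma: projective north-south dynamics}(2), both $\gamma_nz_n^+$ and $\gamma_nz_n^-$ converge to $a$, so $x=y$. This contradicts the hypothesis $x\neq y$.

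\emph{Case $b=z^+$.} Then $z^-$ is transverse to $b$, so $y=\lim\gamma_nz_n^-=a$. Also $\pi(z)$ is transverse to $z^+=b$, and $\pi(z_n)$ stays in a compact set of points transverse to $b$. Hence $\gamma_n\pi(z_n)\to a=y$. But $\gamma_n\pi(z_n)\to x$ by hypothesis, so $x=y$, again a contradiction.

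\emph{Case $b=z^-$.} Now $z^+$ and $\pi(z)$ are transverse to $b$, so $\gamma_nz_n^+\to a$ and $\gamma_n\pi(z_n)\to a$. This gives $a=x$, which is consistent with the hypothesis. This case yields exactly the conclusion: the attracting point is $x$ and the repelling point is $z^-$.

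The main obstacle is applying \Cref{lemma: projective north-south dynamics}(2) legitimately to the moving sequences $z_n^\pm$ and $\pi(z_n)$, viewed in $\mathbb{RP}^d\times\mathbb{RP}^{d*}$. The point is that their limits are transverse to $b$ in the required sense: the $\mathbb{RP}^d$-component is transverse to $b^d$ and the $\mathbb{RP}^{d*}$-component to $b^1$. For flags, transversality gives both conditions. For $\pi(z)\in\Hb_\tau^d$, \Cref{observation: a spacelike geodesic is totally transverse}(1) gives transversality to $z^\pm$ in the same componentwise sense. One then chooses a compact neighborhood of the limit whose points are all transverse to $b$; by openness of transversality, the sequence eventually lies in it, and uniform convergence applies.
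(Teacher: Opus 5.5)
Your proof is correct and follows essentially the same route as the paper: show $(\gamma_n)$ escapes, pass to a collapsing subsequence, use $x\neq y$ to force the repelling flag to be $z^+$ or $z^-$, and use the uniform transversality of $\pi(z_n)$ to $z^\pm$ to rule out $z^+$. Your explicit three-case split and the componentwise transversality check for $\pi(z)$ spell out what the paper compresses into a few sentences.
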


\begin{proof}
    We prove the lemma by showing that every subsequence of $(\gamma_n)$ admits a further collapsing subsequence that has attracting point $x$ and  repelling point $z^-$.

    The assumption that the sequences $(\gamma_n z_n)$ and $(\gamma_n z_n^+)$ both converge to $x$ implies that $(\gamma_n)$ is an escaping sequence. Since $\Gamma$ is projective transverse, by \Cref{lemma: projective north-south dynamics}, we assume that up to subsequence $(\gamma_n)$ is a collapsing sequence with attracting point $a^+\in \Fc_{1,d}$ and repelling point $a^-\in \Fc_{1,d}$. Since $\gamma_nz_n^+\to x$ and $\gamma_nz_n^-\to y$ as $n\to\infty$ and $x\ne y$, then either $(a^+,a^-)=(x,z^-)$ or $(a^+,a^-)=(y,z^+)$. Since $z_n\to z$ as $n\to \infty$, we may assume that the sequence $(\pi(z_n))$ is uniformly transverse to both $z^+$ and $z^-$ by \Cref{observation: a spacelike geodesic is totally transverse} part (1). Since $\gamma_n \pi(z_n)\to x$ as $n\to\infty$, it follows that $(\gamma_n)$ has attracting point $x$ and  repelling point $z^-$.
\end{proof}

We then prove that $\Gamma$-orbits of a point in $\WH(\Lambda_\Gamma)$ cannot get arbitrarily deep into the horoballs.

\begin{lemma}\label{lemma: Busemann upper bounds on orbits}
    For any compact subset $K\subset \WH(\Lambda_\Gamma)$ and $p\in \Pi$, there exists $C\in\Rb$ such that $\Bc_p(\gamma z) \leqslant C$ for any $\gamma\in \Gamma$ and $z\in K$.
\end{lemma}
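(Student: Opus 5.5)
I plan to argue by contradiction, using \Cref{collapsing condition} and \Cref{proposition: two of three} together with the fact that a parabolic point is never conical.

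Suppose the statement fails. Then there are sequences $(z_n)$ in $K$ and $(\gamma_n)$ in $\Gamma$ with $\Bc_p(\gamma_n z_n)\to+\infty$. By compactness of $K$, after passing to a subsequence we may assume $z_n\to z\in K$. Then $z^+\neq z^-$, so $z_n^\pm\to z^\pm$ with $z^+\ne z^-$. Since $\Bc_p$ is continuous and finite on $\T^1\Hb_\tau^d$, the sequence $(\gamma_n)$ must be escaping.

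Passing to further subsequences, I will assume the following:
\begin{itemize}
\item $\gamma_n z_n^+\to x$ and $\gamma_n z_n^-\to y$;
\item $\pi(\gamma_n z_n)\to a$ and $\pi({\opp}(\gamma_n z_n))\to a'$;
\item $(\gamma_n)$ is collapsing, with attracting point $b^+$ and repelling point $b^-$.
\end{itemize}
Applying \Cref{proposition: two of three} with the constant sequence $x_n=p$, one of three cases holds: either $x=y=p$, or $x=p=a\ne y$, or $y=p=a\neq x$.

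\textbf{Case $x=y=p$.} Both $\gamma_n z_n^+$ and $\gamma_n z_n^-$ converge to $p$, even though $z_n^+\to z^+$ and $z_n^-\to z^-$ are distinct. By the convergence dynamics, at most one of $z^+,z^-$ equals the repelling point $b^-$, so at least one of the two sequences is sent to the attracting point. Hence $b^+=p$, and $b^-$ is one of $z^\pm$, say $z^-$ after negating.

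Now the other sequence $z_n^+\to z^+\ne b^-$ is also pushed to $b^+=p$, which is consistent. I then want to locate $\pi(\gamma_n z_n)$. Since $\pi(z_n)$ stays uniformly transverse to $z^-=b^-$, \Cref{lemma: projective north-south dynamics} gives $\gamma_n\pi(z_n)\to p$.

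To get a contradiction, I plan to use the flow. The points $\varphi^{t}(z_n)$ for $t$ in a fixed compact interval still form a compact set, and $\Bc_p(\gamma_n\varphi^t z_n)\to\infty$ uniformly in $t$ by \Cref{properties of the horofunction} and the unimodality of \Cref{prop: unimodal}. This reduces the problem to the situation where the two endpoints are split. Handling this degenerate case cleanly is likely fiddly; the alternative is to argue that $p=b^+$ is approached by $\gamma_n z_n^+$ and $\gamma_n z_n^-$ while $\gamma_n^{-1}p$ stays near $b^-$.

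\textbf{Case $x=p=a\neq y$.} Here \Cref{collapsing condition}, applied to $\gamma_n z_n\to$ (sequence based at $z_n\to z$), shows that $(\gamma_n)$ is collapsing with attracting point $p$ and repelling point $z^-$. Moreover $\gamma_n z^+\to x=p$, and the point $\gamma_n z_n^-\to y\ne p$.

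The key step is to show that $p$ is then conical, contradicting that $p$ is bounded parabolic. Consider the sequence $\gamma_n^{-1}$. It is collapsing with attracting point $z^-$ and repelling point $p$, and it sends $y$-nearby points to near $z^-$ and $p$-nearby points to near $z^+$. Concretely, $\gamma_n^{-1}(\gamma_n z_n^+)=z_n^+\to z^+\neq z^-$. This is not literally the definition of conical, since the points $\gamma_n z_n^+$ vary with $n$.

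To repair this, I would use the bounded parabolic structure directly: pick a compact fundamental set $F\subset\Lambda_\Gamma\setminus\{p\}$ for $\Stab_\Gamma(p)$, and write $\gamma_n z_n^-\in h_n F$ with $h_n\in\Stab_\Gamma(p)$. Replacing $\gamma_n$ by $h_n^{-1}\gamma_n$ preserves $\Bc_p$ up to zero change, by \Cref{unipotents preserve horofunctions} together with \Cref{proposition: parabolic subgroups are weakly unipotent}. After this replacement, $\gamma_n z_n^-$ stays in the compact set $F$ away from $p$.

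If the new $(\gamma_n)$ is escaping, then the two endpoints $\gamma_n z_n^\pm$ converge to points one of which lies in $F$. By the same collapsing analysis, $\gamma_n z_n^+\to p$ forces the geodesic through $z_n$ to be stretched. I expect that combining this with the attracting point and \Cref{collapsing condition} makes $p$ the attracting point while $\gamma_n^{-1}$ maps $(p,\gamma_n z_n^-)$ to the distinct pair $(z^+,z^-)$. This is exactly the conical configuration (via Bowditch's triple characterization), giving the contradiction.

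If instead the new $(\gamma_n)$ is bounded, then the points $\gamma_n z_n$ lie in a compact set, where $\Bc_p$ is bounded. This contradicts $\Bc_p(\gamma_n z_n)\to\infty$.

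I expect this replacement-and-conical argument to be the main obstacle. The case $y=p=a\ne x$ is symmetric by negation, and the case $x=y=p$ is handled by flowing so that one of the previous two cases applies.
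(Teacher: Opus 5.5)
Your overall plan, to get a contradiction by showing that $p$ is conical using \Cref{proposition: two of three}, \Cref{collapsing condition} and the $\Stab_\Gamma(p)$-invariance of $\Bc_p$, is the paper's plan. In the case $x=p=a\neq y$, your identification of $(\gamma_n)$ as collapsing with attracting point $p$ and repelling point $z^-$ is also correct. The gap is the conicality step that follows. You claim $\gamma_n^{-1}$ sends $p$ near $z^+$, but $p$ is the \emph{repelling} point of $(\gamma_n^{-1})$. So knowing $\gamma_nz_n^+\to p$ and $\gamma_n^{-1}(\gamma_nz_n^+)=z_n^+\to z^+$ tells you nothing about the limit of $\gamma_n^{-1}p$. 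That limit may well be $z^-$, the attracting point, and then $(\gamma_n^{-1})$ witnesses nothing. Your replacement step does not help: it normalizes $\gamma_nz_n^-$, which already converges to $y\neq p$. The endpoint that must be normalized is $\gamma_nz_n^+$, the one tending to $p$.

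The missing idea is a second normalization and a comparison of two collapsing sequences.
\begin{itemize}
\item If $\gamma_nz_n^+=p$ for infinitely many $n$, then along those $n$ you have $\gamma_n^{-1}p=z_n^+\to z^+\neq z^-$, and $p$ is conical.
\item Otherwise, choose $\omega_n\in\Stab_\Gamma(p)$ with $\omega_n\gamma_nz_n^+\to q'\neq p$. The invariance of $\Bc_p$, \Cref{proposition: two of three}, and \Cref{collapsing condition} applied to $-z_n$ show that $(\omega_n\gamma_n)$ is collapsing with attracting point $p$ and repelling point $z^+$.
\end{itemize}
In the second case, since $\omega_n$ fixes $p$, you have $(\omega_n\gamma_n)^{-1}p=\gamma_n^{-1}p$, which converges along a subsequence to some $b$. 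The sequences $(\gamma_n^{-1})$ and $((\omega_n\gamma_n)^{-1})$ both have repelling point $p$ and both send $p$ to $b$. Their attracting points are $z^-$ and $z^+$, which are distinct, so $b$ differs from at least one of them. Hence $p$ is conical, a contradiction.

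Separately, the case $x=y=p$ cannot be reduced to a split case by flowing. Since $(\varphi^tz_n)^\pm=z_n^\pm$, the limits of $\gamma_n(\varphi^tz_n)^\pm$ do not depend on $t$. Instead, normalize at the outset. Because $\gamma_nz_n^+\neq\gamma_nz_n^-$, after passing to a subsequence and negating you may assume $\gamma_nz_n^-\neq p$ for all $n$. Bounded parabolicity then gives $h_n\in\Stab_\Gamma(p)$ with $h_n\gamma_nz_n^-\to y\neq p$. Replacing $\gamma_n$ by $h_n\gamma_n$ leaves $\Bc_p(\gamma_nz_n)$ unchanged, by \Cref{proposition: parabolic subgroups are weakly unipotent} and \Cref{unipotents preserve horofunctions}. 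After this, \Cref{proposition: two of three} leaves only the case $x=p=a\neq y$, which the argument above handles.
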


\begin{proof}
    Suppose that there exists a sequence $(\eta_n)$ in $\Gamma$ and a sequence $(z_n)$ in $K$ such that $\Bc_p(\eta_n z_n)\to +\infty$ as $n\to \infty$. Up to subsequence, we may assume $z_n\to z\in K$ as $n\to \infty$. We show that $p$ is conical, which contradicts that every parabolic point is not conical (\cite[Proposition~6.1]{bowditch2012relatively}).

    Since $p$ is a bounded parabolic point, up to subsequence and negation if necessary, we assume there is a sequence $(\gamma_n)$ in $\Stab_\Gamma(p)$ such that $\gamma_n \eta_n z_n^-\to q \in\Lambda_\Gamma\setminus \{p\}$ as $n\to \infty$. By \Cref{proposition: parabolic subgroups are weakly unipotent} and \Cref{unipotents preserve horofunctions}, \[\Bc_p(\gamma_n \eta_n z_n)= \Bc_p(\eta_n z_n)\to +\infty\quad\text{as}\quad n\to\infty~,\] then by \Cref{proposition: two of three}, both $(\gamma_n \eta_n \pi( z_n))$ and  $(\gamma_n \eta_n z_n^+)$ converge to $p$. Hence, by \Cref{collapsing condition}, $(\gamma_n\eta_n)$  is collapsing with attracting point $p$ and repelling point $z^-$. Then $((\gamma_n\eta_n)^{-1})$ is a collapsing sequence with attracting point $z^-$ and repelling point $p$.

{\bf Case 1.} If the sequence $(\gamma_n\eta_nz_n^+)$ admits a constant subsequence, then by taking a subsequence, we may assume that $\gamma_n\eta_nz_n^+ = p$ for all $n\in\Nb$. Hence,
\[(\gamma_n\eta_n)^{-1} p=(\gamma_n \eta_n)^{-1} (\gamma_n\eta_n z_n^+) = z_n^+ \to z^+\neq z^- \quad \text{as} \quad n\to\infty~.\]
Then $p$ is conical by definition.

{\bf Case 2.} If the sequence $(\gamma_n\eta_nz_n^+)$ admits no constant subsequence, then up to subsequence, we may assume that $(\gamma_n \eta_n z_n^+)$ has pairwise distinct terms which are all distinct from $p$. Again as $p$ is a bounded parabolic point,  there exists a sequence $(\omega_n)$ in $\Stab_\Gamma(p)$ such that $(\omega_n \gamma_n \eta_n z_n^+)$ converges to some $q'\in\Lambda_\Gamma\setminus\{ p\}$. By \Cref{proposition: parabolic subgroups are weakly unipotent} and \Cref{unipotents preserve horofunctions}, $\Bc_p(\omega_n\gamma_n \eta_n z_n) = \Bc_p(\eta_n z_n)\to +\infty$ as $n\to\infty$, and by \Cref{proposition: two of three}, both sequences $(\omega_n\gamma_n \eta_n \pi(z_n))$ and $(\omega_n\gamma_n \eta_n z_n^-)$ converge to $p$. Then by \Cref{collapsing condition}, the attracting point and repelling point of $\omega_n\gamma_n\eta_n$ are $p$ and $z^+$ respectively.

    Up to further subsequence, $((\gamma_n\eta_n)^{-1} p)$ converges to some $b\in \Lambda_\Gamma$. Then both of the following hold,
    \begin{enumerate}
        \item $(\gamma_n\eta_n)^{-1} p \to b$ and $(\gamma_n\eta_n)^{-1} x \to z^-$ for any $x\in \Lambda_\Gamma \setminus \{p\}$ as $n\to \infty$.
        \item $(\omega_n\gamma_n\eta_n)^{-1} p  = (\gamma_n\eta_n)^{-1}\omega_n^{-1} p = (\gamma_n\eta_n)^{-1} p \to b$ and $(\omega_n \gamma_n\eta_n)^{-1} x \to z^+$ for any $x\in \Lambda_\Gamma \setminus \{p\}$ as $n\to \infty$.
    \end{enumerate}
    As $z^+\ne z^-$, either $b\ne z^-$ or $b\ne z^+$. This proves that $p$ is conical.
\end{proof}

Next, we prove that the stabilizer of any parabolic point $p\in\Lambda_\Gamma$ acts cocompactly on the thickened horospheres centered at $p$.

\begin{lemma}\label{lemma: cocompactness of horospheres}
    For any $p \in \Pi$ and real constants $C'\leqslant C$, the $\Stab_\Gamma(p)$-action on $\{z \in \WH(\Lambda_\Gamma): C' \leqslant \Bc_p(z) \leqslant C\}$ is cocompact.
\end{lemma}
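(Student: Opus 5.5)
We argue by contradiction, using that $p$ is a bounded parabolic point, so that $(\Lambda_\Gamma\setminus\{p\})/\Stab_\Gamma(p)$ is compact. Set
\[S:=\{z\in\WH(\Lambda_\Gamma):C'\leqslant\Bc_p(z)\leqslant C\}~.\]
This set is closed in $\WH(\Lambda_\Gamma)$. By \Cref{proposition: parabolic subgroups are weakly unipotent} and \Cref{unipotents preserve horofunctions}, it is $\Stab_\Gamma(p)$-invariant. Fix a compact set $K_0\subset\Lambda_\Gamma\setminus\{p\}$ whose $\Stab_\Gamma(p)$-translates cover $\Lambda_\Gamma\setminus\{p\}$.

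Let $(z_n)$ be an arbitrary sequence in $S$. It suffices to find $\gamma_n\in\Stab_\Gamma(p)$ such that a subsequence of $(\gamma_nz_n)$ converges in $\WH(\Lambda_\Gamma)$.

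First, neither endpoint of $z_n$ can equal $p$ for more than a bounded set of flow times. If $z_n^+=p$, then \Cref{prop: unimodal}(1) shows that $\Bc_p$ is the flow parameter along that flow line, so $S$ meets it in a compact segment. Such flow lines are parametrized by $z_n^-\in\Lambda_\Gamma\setminus\{p\}$, and we may translate $z_n^-$ into $K_0$. In general, after negating, which preserves $S$, we may translate by $\gamma_n\in\Stab_\Gamma(p)$ so that $z_n^-\in K_0$ whenever $z_n^-\neq p$. We then pass to subsequences so that $z_n^\pm\to z^\pm$, $\pi(z_n)\to a$, and $\pi(\opp(z_n))\to a'$ in the compactification.

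There are three cases.
\begin{enumerate}
\item If $z^+\neq z^-$ and $a$ lies in $\Hb_\tau^d$, then $z_n$ converges in $\WH(\Lambda_\Gamma)$, and we are done.
\item If $z^+\neq z^-$ but $\pi(z_n)$ escapes to $z^+$ or to $z^-$, we compare with the limiting flow line. One of $z^\pm$ differs from $p$ (indeed $z^-\in K_0$), so $\Bc_p$ is finite and continuous near the limit line. If $p\notin\{z^+,z^-\}$, then \Cref{prop: unimodal}(2) gives $\Bc_p(\varphi^tz)\to-\infty$ as $t\to\pm\infty$. A uniform-in-$n$ version of this, obtained from the explicit formula for $A(t)$ with coefficients converging, contradicts $\Bc_p(z_n)\geqslant C'$. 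If instead $z^+=p$, then $\Bc_p(\varphi^t z_n)-t$ stays uniformly bounded, so the bound $\Bc_p(z_n)\leqslant C$ prevents $\pi(z_n)\to z^+$. The bound $\Bc_p(z_n)\geqslant C'$ prevents $\pi(z_n)\to z^-$.
\item If $z^+=z^-$, this limit must be $p$, since $z^-\in K_0$ unless $z_n^-=p$. This is the degenerate situation in which $z_n^+$ also tends to $p$. Here we re-choose $\gamma_n$ to normalize $z_n^+$ into $K_0$ instead, and repeat the argument. Alternatively, we use a second sequence in $\Stab_\Gamma(p)$ to separate the endpoints, exactly as in Case 2 of the proof of \Cref{lemma: Busemann upper bounds on orbits}.
\end{enumerate}

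The main obstacle is case (3): ruling out that both endpoints are forced towards $p$ no matter which normalization we choose. I would handle it by choosing $\gamma_n$ to maximize $\min\{\Delta(p,\gamma_nz_n^+),\Delta(p,\gamma_nz_n^-)\}$, that is, making the pair as transverse to $p$ as possible. If this quantity still tends to $0$, then $\Bc^2_p(\gamma_nz_n)\to+\infty$. Combined with the bound $\Bc_p\leqslant C$ and \eqref{horofunction component bound}, this forces $\Bc^1_p(\gamma_n z_n)\leqslant C$, meaning that $\pi(\gamma_nz_n)$ and $\pi(\opp(\gamma_nz_n))$ stay away from $p$. Then the geodesic through $\pi(\gamma_nz_n)$ has both endpoints near $p$ while passing through a point not near $p$. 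Through the cross-ratio description of \Cref{projective geometry description}, this should contradict $\Delta(p,\pi(\cdot))$ being bounded below along the geodesic joining nearly equal endpoints. Verifying this last step, via a computation akin to the proof of \Cref{proposition: two of three}, is where the real work lies.
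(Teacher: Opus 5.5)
There is a genuine gap in your case (2) when $z^+=p$. Normalizing only $z_n^-$ into $K_0$ does not control on which side of the maximum of $t\mapsto\Bc_p(\varphi^t(z_n))$ the point $z_n$ lies. Your assertion that $\Bc_p(\varphi^t(z_n))-t$ stays uniformly bounded holds only when $z_n^+=p$ exactly. If $z_n^+\to p$ with $z_n^+\neq p$, then by \Cref{prop: unimodal}(2) this function has a maximum, at a time tending to $+\infty$, and afterwards decreases to $-\infty$.

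The configuration you want to exclude actually occurs. Fix $b\in K_0$ and take $p_n\in\Lambda_\Gamma\setminus\{p\}$ with $p_n\to p$. Along the limiting line from $b$ to $p$ one has $\Bc_p$ growing like $t$, so by continuity $\Bc_p>C$ on longer and longer time intervals of the lines from $b$ to $p_n$. Let $z_n$ be the point past the maximum on such a line where $\Bc_p=C$. Then $z_n\in S$, $z_n^-=b\in K_0$, $z_n^+\to p$, and $\pi(z_n)\to p$. So no contradiction is available in this subcase; such a sequence has to be negated and renormalized.

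This is exactly why the paper proceeds as follows:
\begin{enumerate}
\item It first uses \Cref{prop: unimodal} to negate each $z_n$ so that $t\mapsto\Bc_p(\varphi^t(z_n))$ is increasing on $(-\infty,0]$, which automatically forces $z_n^-\neq p$.
\item Only then does it move $z_n^-$ into a compact set using $\Stab_\Gamma(p)$.
\item It rules out $\pi(z_n)\to z^+=p$ by \Cref{bounded converges to unbounded} together with $\Bc_p(z_n)\leqslant C$.
\end{enumerate}

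Your case (3) is also off. Once $z_n^-\in K_0$ for all $n$, a common limit $z^+=z^-$ would be a point $b\in K_0$, not $p$. So your claim that the limit must be $p$ is wrong, and this case is left unaddressed. Conversely, the scenario you call the main obstacle, with both endpoints pushed to $p$ under every normalization, cannot arise. The maximization argument you sketch for it is therefore unnecessary, and its step $\Bc^2_p\to+\infty$ is unjustified when both endpoints approach $p$.

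The missing ingredient is the lower bound on $\Bc^2_p$. By \eqref{horofunction component bound}, $\Bc^2_p(z_n)\geqslant\Bc_p(z_n)\geqslant C'$. Moreover $\Bc^2_p$ is $\Stab_\Gamma(p)$-invariant by \Cref{proposition: parabolic subgroups are weakly unipotent} and \Cref{unipotents preserve horofunctions}. But $z_n^\pm\to b\neq p$ would give $\Bc^2_p(z_n)\to\frac{1}{4}\log\frac{\Delta(b,b)}{\Delta(p,b)^2}=-\infty$, a contradiction.

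Your handling of the remaining subcases, where $\pi(z_n)$ escapes to an endpoint transverse to $p$, is correct. The paper does this more cheaply: $\Bc^0_p\geqslant\Bc^1_p\geqslant\Bc_p\geqslant C'$, while $\Bc^0_p(z_n)\to-\infty$ in that situation.
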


\begin{proof}
    Let $(z_n)$ be a sequence in $\{z \in \WH(\Lambda_\Gamma): C'\leqslant \Bc_p(z) \leqslant C\}$. Since the $\Stab_\Gamma(p)$-action on $\Lambda_\Gamma\setminus \{p\}$ is cocompact, by applying \Cref{prop: unimodal}, there exists a sequence $(\gamma_n)$ in $\Stab_\Gamma(p)$, such that up to subsequence and negation, $\gamma_n z_n^-\to b \in\Lambda_\Gamma\setminus\{p\}$ as $n\to\infty$ and $t\mapsto\Bc_p(\varphi^t(\gamma_nz_n))$ is increasing on $(-\infty,0]$. We show that $(\gamma_n z_n)$ admits a subsequence that converges in $\WH(\Lambda_\Gamma)$. 

    Up to subsequence, we assume that $\gamma_n z_n^+ \to c\in\Lambda_\Gamma$ and $\pi(\gamma_nz_n)\to a\in\Hb_\tau^d\cup\partial_\infty^{\Pb}\Hb_\tau^d$ as $n\to \infty$. Note that for all $n\in\Nb$, $\Bc^2_p(z_n) \geqslant \Bc_p(z_n) \geqslant C'$ by \eqref{horofunction component bound}. This implies that $c \ne b$, otherwise \Cref{proposition: parabolic subgroups are weakly unipotent} and \Cref{unipotents preserve horofunctions} imply that 
    \[\mathcal{B}^2_p (z_n)=\mathcal{B}^2_p (\gamma_n z_n)\to\frac{1}{4}\log \frac{\Delta(b,b)}{\Delta(p,b)^2}=-\infty~.\]
    Hence one of the following holds,
    \begin{enumerate}
        \item $a=b$;
        \item $a=c \ne p$;
        \item $a=c = p$;
        \item $a$ lies along a spacelike geodesic between $b$ and $c$.
    \end{enumerate}
    It now suffices to rule out Cases (1), (2), and (3), as Case (4) implies that the sequence $(\gamma_n z_n)$ converges in $\WH(\Lambda_\Gamma)$.
        
    In Cases (1) and (2), observe that \Cref{proposition: parabolic subgroups are weakly unipotent} and \Cref{unipotents preserve horofunctions} give 
    \[\mathcal{B}^0_p(z_n)=\mathcal{B}^0_p(\gamma_nz_n) \to\frac{1}{2}\log \frac{\sqrt{\Delta(a,a)}}{\Delta(p,a)}=-\infty~.\] However, \eqref{horofunction component bound} and \eqref{horofunction component bound2} imply that for all $n\in\Nb$, $\Bc^0_p(z_n)\geqslant\Bc^1_p(z_n)\geqslant \Bc_p(z_n)\geqslant C'$, which is a contradiction. Case (3) is also impossible by \Cref{bounded converges to unbounded}.
\end{proof}

For each $p\in\Pi$, \Cref{proposition: parabolic subgroups are weakly unipotent} and \Cref{unipotents preserve horofunctions} imply that $\Bc_p$ is invariant under $\Stab_\Gamma(p)$, and so for each $q\in\Gamma p$, we set
\[\wh \Bc_q(\cdot) := \Bc_p (\gamma^{-1}\cdot)\quad\text{and}\quad \wh H_q(C) := \gamma H_p(C)=\{z\in\WH(\Lambda_\Gamma): \wh\Bc_q(z)\geqslant C\}~,\]
where $\gamma$ is some (any) element in $\Gamma$ such that $\gamma p=q$. Combining \Cref{lemma: Busemann upper bounds on orbits} and \Cref{lemma: cocompactness of horospheres}, we prove that for any $\Gamma$-invariant collection of horoballs centered at the parabolic points in $\Lambda_\Gamma$ and any compact subset in $\WH(\Lambda_\Gamma)$, the number of horoballs in this collection that intersect a compact subset is finite.

\begin{lemma}\label{lemma: mostly negatively infinite}
    For any compact subset $K \subset \WH(\Lambda_\Gamma)$ and $C'\in \Rb$, the set \[\{q \in \Gamma \Pi:  \sup_{z\in K}\wh \Bc_q(z)\geqslant C'\}\] is finite.
\end{lemma}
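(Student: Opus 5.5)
We argue by contradiction, in the style of Bowditch. Since $\Pi$ is finite, it suffices to fix one $p\in\Pi$ and show that only finitely many $q\in\Gamma p$ satisfy $\sup_K\wh\Bc_q\geqslant C'$. Suppose instead there are infinitely many distinct such $q_n=\eta_n p$. Then we can pick $z_n\in K$ with $\Bc_p(\eta_n^{-1}z_n)=\wh\Bc_{q_n}(z_n)\geqslant C'-1$. Since $K$ is compact, we may pass to a subsequence and assume $z_n\to z\in K$.

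By \Cref{lemma: Busemann upper bounds on orbits}, applied to $K$ and $p$, there is some $C\geqslant C'-1$ with $\Bc_p(\gamma w)\leqslant C$ for all $\gamma\in\Gamma$ and $w\in K$. Hence each point $\eta_n^{-1}z_n$ lies in the thickened horosphere $\{w\in\WH(\Lambda_\Gamma):C'-1\leqslant\Bc_p(w)\leqslant C\}$. By \Cref{lemma: cocompactness of horospheres} there is a compact set $K'\subset\WH(\Lambda_\Gamma)$ and elements $\gamma_n\in\Stab_\Gamma(p)$ with $\gamma_n\eta_n^{-1}z_n\in K'$.

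Now $\gamma_n\eta_n^{-1}$ maps points of the compact set $K$ into the compact set $K'$. Since the $\Gamma$-action on $\WH(\Lambda_\Gamma)$ is properly discontinuous (\Cref{proposition: domain of discontinuity}), the elements $\gamma_n\eta_n^{-1}$ range over a finite set. Passing to a subsequence, we may take $\gamma_n\eta_n^{-1}=h$ constant, so $\eta_n=h^{-1}\gamma_n$. Because $\gamma_n$ fixes $p$, this gives $q_n=\eta_n p=h^{-1}p$ for all $n$. This contradicts the assumption that the $q_n$ are pairwise distinct.

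The main point needing care is the use of \Cref{lemma: Busemann upper bounds on orbits} to supply the upper bound $C$, without which the cocompactness statement does not apply. Two smaller points should also be checked. First, $\wh\Bc_q$ is well defined, which follows from the $\Stab_\Gamma(p)$-invariance of $\Bc_p$. Second, the map $q\mapsto$ (coset of $\Stab_\Gamma(p)$) is compatible with the choice of $\eta_n$.
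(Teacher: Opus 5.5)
Your proof is correct and follows essentially the same route as the paper: both use \Cref{lemma: Busemann upper bounds on orbits} for a uniform upper bound $C$, then \Cref{lemma: cocompactness of horospheres} to move the relevant orbit points into a fixed compact set using elements of $\Stab_\Gamma(p)$, and then proper discontinuity (\Cref{proposition: domain of discontinuity}) to conclude. The only difference is presentation: you argue by contradiction with a sequence of distinct $q_n$, whereas the paper bounds the cardinality directly by that of the finite set $\{\gamma\in\Gamma:\gamma^{-1}K\cap Q\neq\emptyset\}$, working with cosets in $\Gamma/\Stab_\Gamma(p)$.
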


\begin{proof}
    Since $\Pi$ is finite, it suffices to prove that for each $p\in \Pi$ the set \[A:=\{q\in\Gamma p : \sup_{z\in K} \wh \Bc_q(z)\geqslant C'\}\] is finite. By \Cref{lemma: Busemann upper bounds on orbits}, there exists $C\in \Rb$ such that $\sup_{z\in K}\Bc_p(\gamma^{-1}z)\leqslant C$ for all $\gamma\in \Gamma$. As such, if $C'> C$, then $A$ is empty, and if $C'\leqslant C$, then \[\# A=\#\{\gamma\Stab_\Gamma(p)\in\Gamma/\Stab_\Gamma(p) : C'\leqslant \sup_{z\in K}\Bc_p(\gamma^{-1}z)\leqslant C\}~,\] where $\#$ denotes cardinality. By \Cref{lemma: cocompactness of horospheres}, there is a compact set $Q \subset \WH(\Lambda_\Gamma)$ such that \[\Stab_\Gamma(p) Q = \{z \in \WH(\Lambda_\Gamma): C' \leqslant \Bc_p(z) \leqslant C\}~.\] Hence, \[\#A\leqslant \#\{\gamma\in\Gamma: \gamma^{-1}K\cap Q\not = \emptyset\}~,\] which is a finite set because the $\Gamma$-action on $\WH(\Lambda_\Gamma)$ is properly discontinuous.
\end{proof}

Finally, we deduce \Cref{proposition: disjointness collection of horoballs} from \Cref{lemma: mostly negatively infinite}.

\begin{proof}[Proof of \Cref{proposition: disjointness collection of horoballs}]
    We prove the disjointness. Once this is done, local finiteness follows directly from \Cref{lemma: mostly negatively infinite}. 
    
    Suppose otherwise. Then there exist a sequence $(h_n)$ in $\Rb$ that grows to $\infty$, sequences $(p_n)$ and $(q_n)$ in $\Pi$, and sequences $(\gamma_n)$ and $(\eta_n)$ in $\Gamma$ such that for all $n$, $\gamma_n p_n\neq\eta_n q_n$ and $\wh H_{\gamma_n p_n}(h_n)\cap\wh H_{\eta_n q_n}(h_n)$ is non-empty. We may assume up to subsequence that there is some $p,q\in\Pi$ such that for all $n\in\Nb$, $p_n = p$ and $q_n =q$, in which case $\gamma_n^{-1}\eta_n q\ne p$. Since the $\Stab_\Gamma(p)$-action on $\Lambda_\Gamma \setminus \{p\}$ is cocompact, there exists a sequence $(\omega_n)$ in $\Stab_\Gamma(p)$ such that up to a further subsequence, the sequence $(\omega_n \gamma_n^{-1}\eta_n q)$ converges to some $q'\in\Lambda_\Gamma\setminus\{p\}$. For all $n\in\Nb$, set 
    \[\alpha_n := \omega_n \gamma_n^{-1}\eta_n~,\] 
    and we have that $\widehat H_p(h_n)\cap\widehat H_{\alpha_nq}(h_n)=\omega_n\gamma_n^{-1}\big(\widehat H_{\gamma_np}(h_n)\cap\widehat H_{\eta_nq}(h_n)\big)$ is non-empty.
    
    If $(\alpha_n q)$ admits a constant subsequence where every element is $q'$, then $q' = \gamma q$ for some $\gamma\in \Gamma$. Hence, for all $n\in\Nb$, $H_p(h_n) = \wh H_p(h_n)$ intersects $\wh H_{q'}(h_n) = H_{q'}(h_n+ k)$ for some constant $k$ that does not depend on $n$, see \Cref{constant difference}. In particular, for all $n\in\Nb$, $H_p(\min\{h_n,h_n+k\})$ and $H_{q'}(\min\{h_n,h_n+k\})$ intersect, which contradicts \Cref{lemma: disjointness of two horoballs}. 
    
    Thus, by taking a subsequence, we may assume that sequence $(\alpha_n q)$ has mutually distinct terms, and that $\alpha_nq\to q'\in\Lambda_\Gamma$ as $n\to\infty$ Again by \Cref{constant difference}, for each $n\in\Nb$, there exists $k_n\in \Rb$ such that for all $z\in\WH(\Lambda_\Gamma)$,
    \[\Bc_{\alpha_n q}(z) - \wh \Bc_{\alpha_nq}(z) = k_n~.\] 
    
    We now prove that $k_n \geqslant 0$ for all but finitely many $n$. Suppose that this were not the case. Then by taking a subsequence, we may assume that $k_n<0$ for all $n\in\Nb$. Pick any $z\in\WH(\Lambda_\Gamma)$. Since $\Bc_{\alpha_n q}(z)$ converges to $\Bc_{q'}(z)$, it follows that as one varies over all $n\in\Nb$, $\Bc_{\alpha_n q}(z)$ has a uniform lower bound, and so the same holds for \[\wh \Bc_{\alpha_nq}=\Bc_{\alpha_n q}(z) -k_n~.\] This contradicts \Cref{lemma: mostly negatively infinite} since $(\alpha_n q)$ is an infinite sequence.
    
    By taking a subsequence, we may now assume that for all $n\in\Nb$, we have $k_n\geqslant 0$, and so 
    \[\wh H_{\alpha_n q}(h_n) = H_{\alpha_n q}(h_n+k_n) \subset H_{\alpha_n q}(h_n)~.\] 
    This implies that for all $n\in\Nb$, $H_p(h_n)=\widehat{H}_p(h_n)$ and $H_{\alpha_nq}(h_n)$ intersect, which again contradicts \Cref{lemma: disjointness of two horoballs}. 
\end{proof}

\subsubsection{The proof of \Cref{lemma: cocompactness of thick part of WH}}

Let $d_\angle$ denote the angle metrics on $\mathbb{RP}^d$ and $\mathbb{RP}^{d*}$ induced by the standard inner products on $\Rb^{d+1}$ and $\Rb^{d+1*}$. Using the identification  $\Hb_\tau^d\cup\partial_\infty^{\Pb}\Hb_\tau^d\cong\mathbb{RP}^d\times\mathbb{RP}^{d*}$, define $D:\WH(\Lambda_\Gamma) \to \Rb^+$ to be the function given by \[D(z)=\min\left\{d_\angle(\pi(y)^a,(y^\dagger)^a):y\in\{z,{\opp}(z)\},\,\dagger\in\{+,-\},\,a\in\{1,d\}\right\}~.\] Then for any $\epsilon>0$, let \[K(\epsilon):=\{v\in \WH(\Lambda_\Gamma) : D(v)\geqslant \epsilon \}~.\] 
We first prove the following compactness result for $K(\epsilon)$.

\begin{lemma}\label{lemma: K compact}
For any $\epsilon>0$, $K(\epsilon)\subset \WH(\Lambda_\Gamma)$ is compact. 
\end{lemma}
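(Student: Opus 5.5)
Since $\WH(\Lambda_\Gamma)\subset\T^1\Hb_\tau^d$ is closed, it suffices to show that every sequence $(z_n)$ in $K(\epsilon)$ has a subsequence converging to a point of $\T^1\Hb_\tau^d$; the limit then lies in $\WH(\Lambda_\Gamma)$, and also in $K(\epsilon)$, because $D$ is continuous ($\pi$, ${\opp}$ and the endpoint maps are continuous, and $d_\angle$ is continuous). So fix such a sequence. By compactness of $\Lambda_\Gamma$ and of $\mathbb{RP}^d\times\mathbb{RP}^{d*}$, pass to a subsequence so that $z_n^+\to x$, $z_n^-\to y$ with $x,y\in\Lambda_\Gamma$, $\pi(z_n)\to a$ and $\pi({\opp}(z_n))\to a'$, where $a,a'\in\Hb_\tau^d\cup\partial_\infty^{\Pb}\Hb_\tau^d$.

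\textbf{Step 1: $x\neq y$.} Suppose $x=y$. Write $z_n^\pm=([v_n^\pm],[\phi_n^\pm])$ and normalize as in \Cref{prop: spacelike geodesics}, so that $\pi(z_n)=([v_n^++v_n^-],[\phi_n^++\phi_n^-])$. The point $[v_n^++v_n^-]$ lies on the projective line through $[v_n^+]$ and $[v_n^-]$, and $([v_n^+],[v_n^-];[w_n],[u_n])=-1$ by \Cref{projective geometry description}. The same holds for ${\opp}(z_n)$, whose point is $[v_n^+-v_n^-]$. Since $[v_n^+]$ and $[v_n^-]$ converge to the same point, the harmonic conjugate pair $[v_n^+\pm v_n^-]$ cannot both stay $\epsilon$-far from both $[v_n^+]$ and $[v_n^-]$. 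Equivalently, if two points on a projective line are close, then of the two points harmonically separating them with respect to the pair $[v_n^\pm]$, one must be close to them. This contradicts $D(z_n)\geqslant\epsilon$, which bounds the distance between $\pi(y)^1$ and $(y^\dagger)^1$ for both $y\in\{z_n,{\opp}(z_n)\}$. To make this rigorous, I would pick representatives with $\|v_n^\pm\|=1$ and note that $v_n^+\pm v_n^-$ are the two candidates up to sign. If $v_n^+$ and $\pm v_n^-$ are nearly equal, then $v_n^+\mp v_n^-$ is nearly parallel to $v_n^\pm$, or else $v_n^+\pm v_n^-$ is. This step requires a short but slightly careful angle estimate, and it is the main obstacle: the normalizations $\phi_n^+(v_n^-)=1=\phi_n^-(v_n^+)$ fix the relative scalings, so I would instead work with the unnormalized projective picture and the cross-ratio condition.

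\textbf{Step 2: $a$ and $a'$ lie in $\Hb_\tau^d$.} With $x\neq y$ transverse, the cross-ratio description in \Cref{projective geometry description} shows that $a$ is either $x$, or $y$, or a point on a spacelike geodesic joining $x$ and $y$; this is the same trichotomy used in the proof of \Cref{proposition: two of three}. If $a=x$, then $d_\angle(\pi(z_n)^1,(z_n^+)^1)\to0$, contradicting $D(z_n)\geqslant\epsilon$. The case $a=y$ is excluded the same way using the $-$ endpoint. Hence $a\in\Hb_\tau^d$ lies on a spacelike geodesic from $y$ to $x$.

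\textbf{Step 3: convergence in $\T^1\Hb_\tau^d$.} The flow lines through $z_n$ converge to a flow line with endpoints $y,x$, and the base points $\pi(z_n)$ converge to a point $a$ of $\Hb_\tau^d$ on it. By \Cref{prop: spacelike geodesics} and the transitivity in \Cref{vectors}, the unit tangent vector is determined continuously by the base point together with the pair of endpoints, up to the two choices recorded in \Cref{comparison of boundaries}. Therefore $z_n$ converges to the unit tangent vector at $a$ of that geodesic, pointing toward $x$. This vector lies in $\WH(\Lambda_\Gamma)$, which completes the argument.
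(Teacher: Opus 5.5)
Your outline matches the paper's. You first rule out a common limit of $z_n^+$ and $z_n^-$ by playing $z_n$ against ${\opp}(z_n)$, then use transversality of the limit pair together with $D\geqslant\epsilon$ to get convergence. The paper does the first step on the hyperplane side. The components $S_n\neq S_n^{\opp}$ of $\mathbb{RP}^d\setminus((z_n^+)^d\cup(z_n^-)^d)$ containing $\pi(z_n)^1$ and $\pi({\opp}(z_n))^1$ cannot both collapse. Since $\pi(\cdot)^d$ avoids the component containing $\pi(\cdot)^1$ by \Cref{observation: a spacelike geodesic is totally transverse}(2), the hyperplane attached to the non-collapsing component is forced to converge to $H=(z^+)^d$.

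Your point-side version on the line $L_n$ is the dual argument and works in principle. As written, however, Step~1 is not a proof: you leave the key estimate open, and the fix you suggest (unit $v_n^\pm$ with base points $[v_n^+\pm v_n^-]$) is not available. If $v_n^\pm$ are normalized as in \Cref{prop: spacelike geodesics} and $\hat v_n^\pm$ are their unit rescalings, the base points are $[\hat v_n^+\pm s_n\hat v_n^-]$ with $s_n=\norm{v_n^-}/\norm{v_n^+}>0$. Nothing controls $s_n$; flowing along the geodesic sweeps it through all of $(0,\infty)$.

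The missing observation is that only the sign of the coefficient matters, not $s_n$. The points $\pi(z_n)^1=[v_n^++v_n^-]$ and $\pi({\opp}(z_n))^1=[v_n^+-v_n^-]$ lie in the two different components $\{[v_n^++sv_n^-]:s>0\}$ and $\{[v_n^++sv_n^-]:s<0\}$ of $L_n\setminus\{(z_n^+)^1,(z_n^-)^1\}$. Replace $\hat v_n^-$ by $-\hat v_n^-$ if necessary, so that the angle between $\hat v_n^+$ and $\hat v_n^-$ is at most $\pi/2$; it then equals $\theta_n:=d_\angle((z_n^+)^1,(z_n^-)^1)$. One of the two components is now $\{[\hat v_n^++s\hat v_n^-]:s>0\}$. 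Its points are spanned by positive combinations of $\hat v_n^+$ and $\hat v_n^-$, so they lie within angle $\theta_n$ of $(z_n^+)^1$. Hence for some $y_n\in\{z_n,{\opp}(z_n)\}$ we get $D(z_n)\leqslant d_\angle(\pi(y_n)^1,(y_n^+)^1)\leqslant\theta_n\to0$, contradicting $D\geqslant\epsilon$.

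With this in place, Steps~2 and~3 go through. The continuity you assert in Step~3 is best justified as in the paper. Pick $g_n\to g$ in $\PGL(d+1,\Rb)$ with $g_nz_1^\pm=z_n^\pm$. After a subsequence, $g_n^{-1}z_n=\ell_1'(t_n)$ for one fixed unit speed geodesic $\ell_1$ from $z_1^-$ to $z_1^+$ (one of the two in \Cref{comparison of boundaries}). Step~2 rules out $t_n\to\pm\infty$, so $z_n$ subconverges to $g\ell_1'(t)$ for some $t\in\Rb$.
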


\begin{proof}
    Let $(z_n)$ be an arbitrary sequence in $ K(\epsilon)$. We show it admits a convergent subsequence. We assume up to subsequence that there is some $z^+,z^-\in\Lambda_\Gamma$ such that $z_n^+\to z^+$ and $ z_n^-\to z^-$ as $n\to\infty$. 
     
    We first prove that $z^+\ne z^-$. Suppose otherwise. Let $S_n$ and $S_n^{\opp}$ denote the connected components of $\mathbb{RP}^d\setminus((z_n^+)^{d}\cup (z_n^-)^{d})$ such that $\pi(z_n)^1\in S_n$ and $\pi({\opp}(z_n))^1\in S_n^{\opp}$. Notice that $S_n\neq S_n^{\opp}$ and both $\pi(z_n)^{d}\cap S_n$ and $\pi({\opp}(z_n))^{d}\cap S_n^{\opp}$ are empty. Since the sequences $((z_n^+)^{d})$ and $((z_n^-)^{d})$ both converge to \[H:=(z^+)^{d}=(z^-)^{d}~,\] either the sequence $(S_n)$ or the sequence $(S_n^{\opp})$ necessarily converges to $\mathbb{RP}^d\setminus H$, so either the sequence $(\pi(z_n)^{d})$ or the sequence $(\pi({\opp}(z_n))^{d})$) converges to $H$. This contradicts the fact that $z_n\in K(\epsilon)$ for all $n\in\Nb$. Hence $z^+\neq z^-$.
    
    The transversality of $\Gamma$ implies that $z^+$ and $z^-$ are transverse. Since $z_n^\pm\to z^\pm$ as $n\to\infty$, there is a sequence $(g_n)$ in $\PGL(d+1,\Rb)$ that converges to some $g\in\PGL(d+1,\Rb)$, such that \[g_n\cdot z_1^\pm=z_n^\pm\to z^\pm=g\cdot z_1^\pm~.\] For each $n\in\Nb$, let $\ell_n$ be a geodesic tangent to $z_n$. Let $\ell:=g\ell_1$, and note that $z^+$ and $z^-$ are the forward and backward endpoints in $\partial_\infty^{\Pb}\Hb_\tau^d$ of $\ell$. By taking a subsequence, we may ensure that $\ell_n(\Rb)=g_n\ell_1(\Rb)\to g\ell_1(\Rb)=\ell(\Rb)$ as $n\to\infty$. Since $z_n\in K(\epsilon)$ for all $n\in\Nb$, the sequence $(z_n)$ converges to some $z\in \T^1\Hb_\tau^d$ that is tangent to $\ell$ and lies in $K(\epsilon)$.
\end{proof}
 
\begin{remark}
    Notice that unlike the other lemmas used in the proof of part (2) of \Cref{theoremalpha: characterizations1}, \Cref{lemma: K compact} holds not only for relatively projective Anosov subgroups of $\PGL(d+1,\Rb)$, but also for all projective transverse subgroups of $\PGL(d+1,\Rb)$. 
\end{remark}

Using \Cref{lemma: K compact}, we now prove \Cref{lemma: cocompactness of thick part of WH}.

\begin{proof}[Proof of \Cref{lemma: cocompactness of thick part of WH}]
    We first observe that for any $z\in\WH(\Lambda_\Gamma)$, there is a point $z'\in\Gamma\cdot z$ such that $D(z')=\sup D(\Gamma \cdot z)$. Choose any positive $\epsilon<D(z)$. By \Cref{lemma: K compact}, $K(\epsilon)$ is compact. Hence \Cref{proposition: domain of discontinuity} implies that $K(\epsilon)\cap\Gamma\cdot z$ is finite (and non-empty), and thus contains a point $z'$ such that \[D(z')=\max\{D(\eta z):\eta\in\Gamma\text{ and }\eta z\in K(\epsilon)\}=\sup D(\Gamma\cdot z)~,\] where the last equality follows from the definition of $K(\epsilon)$. 
    
    Observe that the set 
    \[\Sc = \{z\in \WH(\Lambda_\Gamma)^{\mathrm{thick}} : D(z)\geqslant D(\gamma\cdot z)\text{ for all }\gamma\in\Gamma \}\]
    has the property that $\bigcup_{\gamma\in\Gamma}\gamma\Sc=\WH(\Lambda_\Gamma)^{\mathrm{thick}}$. Hence, by \Cref{lemma: K compact}, it suffices to show there exists $\epsilon>0$ such that $\Sc\subset K(\epsilon)$. We argue by contradiction. Suppose that there exists a sequence $(z_n)$ in $\Sc$ such that $D(z_n)\to 0$ as $n\to\infty$. Up to dualizing, negating, and taking a subsequence, we may assume that $z_n^+ \to p \in \Lambda_{\Gamma}$ and $z_n^- \to q \in \Lambda_{\Gamma}$ and $\pi(z_n)^1\to p^1$ as $n\to \infty$. We proceed the proof in two cases.
    
{\bf Case 1. $p$ is conical.} By \Cref{lemma: projective expanding}~(1), there exists 
\begin{itemize}
    \item an open neighborhood $O_{1}$ in $\mathbb{RP}^d$ of $p^1$,
    \item an open neighborhood $O_{d}$ in $\mathbb{RP}^{d*}$ of $p^d$,
    \item an element $\gamma\in\Gamma$, and
    \item a constant $C>1$
\end{itemize} 
such that for all  $x^1,y^1\in O_{1}$ and $x^d,y^d\in O_{d}$, we have
\[d_\angle(\gamma x^1, \gamma y^1) \geqslant C d_\angle(x^1, y^1)\quad\text{and}\quad d_\angle(\gamma x^d, \gamma y^d) \geqslant C d_\angle(x^d, y^d)~.\] We derive a contradiction by proving that for sufficiently large $n$, \[D(\gamma z_n)\geqslant CD(z_n)~.\] We split the proof into two further cases depending on whether $p$ and $q$ agree.

{\bf Case 1.1. $p=q$.} Let $C_0>0$ such that $\gamma$ acts as a $C_0$-biLipschitz automorphism on both $\mathbb{RP}^d$ and $\mathbb{RP}^{d*}$ (with respect to $d_\angle$). Let $O_1'\subset\mathbb{RP}^d$ and $O_d'\subset\mathbb{RP}^{d*}$ be open neighborhoods of $p^1=q^1$ and $p^d=q^d$ respectively, such that $\overline{O_1'}\subset O_1$ and $\overline{O_d'}\subset O_d$. Then there exists a constant $M>0$ such that for all $x^1\in\mathbb{RP}^d\setminus O_1$ and $x^d\in\mathbb{RP}^{d*}\setminus O_d$, we have $d_\angle(x^1,O_1')\geqslant M$ and $d_\angle(x^d,O_d')\geqslant M$. 
    
Pick $n$ large enough so that $\big( (z_n^{\pm})^1, (z_n^{\pm})^d\big)\in O_1'\times O_d'$ and $D(z_n) \leqslant \frac{M}{C_0C}$. To prove that $D(\gamma z_n)\geqslant CD(z_n)$, we verify by definition that for all $y_n\in\{z_n,{\opp}(z_n)\}$, $\dagger\in\{+,-\}$, and $a\in\{1,d\}$, we have
\[d_\angle(\pi(\gamma y_n)^a,(\gamma y_n^\dagger)^a)\geqslant C D(z_n)~.\]
If $\pi(y_n)^a\in O_a$, then \[ d_\angle(\pi(\gamma y_n)^a,(\gamma y_n^\dagger)^a)\geqslant C d_\angle(\pi(y_n)^a,(y_n^\dagger)^a) \geqslant C D(z_n)~.\]
Otherwise, $d_\angle(\pi(y_n)^a, (y_n^\dagger)^a)\geqslant M$ and hence \[d_\angle (\pi(\gamma y_n)^a,(\gamma y_n^\dagger)^a)\geqslant \frac{M}{C_0} \geqslant C D(z_n)~.\] 

{\bf Case 1.2. $p\ne q$.} For all $n\in\Nb$, let $\ell_n$ be the unit speed spacelike geodesic tangent to $z_n$. Up to subsequence, we assume that $(\ell_n(\Rb))$ converges to $\ell(\Rb)$, where $\ell$ is a unit speed spacelike geodesic with forward endpoint $p$ and backward endpoint $q$ in $\partial_\infty^{\Pb}\Hb_\tau^d$. The assumption $\pi(z_n)^1\to p^1$ implies that $(\pi(z_n))$ converges to $p$. 

We again verify that for sufficiently large $n$, all $y_n\in\{z_n,{\opp}(z_n)\}$, $\dagger\in\{+,-\}$, and $a\in\{1,d\}$, we have \[d_\angle(\pi(\gamma y_n)^a,(\gamma y_n^\dagger)^a)\geqslant C D(z_n)~.\]

To get the required inequality when $y_n\in\{z_n,{\opp}(z_n)\}$, $a\in\{1,d\}$ and $\dagger = -$, let $C_0>0$ such that $\gamma$ acts as $C_0$-bilipschitz automorphisms on $\mathbb{RP}^d$ and $\mathbb{RP}^{d*}$. Choose $n$ large enough so that    
\[d_\angle(\pi(y_n)^a,(y_n^-)^a)\geqslant \frac{1}{2} \min\{d_\angle(p^1,q^1), d_\angle(p^d,q^d)\}=: M_0~,\]
and $D(y_n)\leqslant \frac{M_0}{C_0C}$. Then
\[d_\angle(\pi(\gamma y_n)^a,(\gamma y_n^-)^a)\geqslant \frac{M_0}{C_0}\geqslant CD(z_n)~.\] For the case when $y_n\in\{z_n,{\opp}(z_n)\}$, $a\in\{1,d\}$ and $\dagger = +$, choose $n$ large enough so that $\pi(y_n)^a\in O_{a}$. Then \[d_\angle(\pi(\gamma y_n)^a,(\gamma y_n^+)^a)\geqslant C d_\angle(\pi(y_n)^a,(y_n^+)^a)\geqslant CD(z_n)~.\] 
This completes the proof for Case 1.

{\bf Case 2. $p$ is parabolic.}  \Cref{prop: unimodal} implies that for each $n\in\Nb$, the map $t\mapsto\Bc_p(\varphi^t(z_n))$ is either increasing on $(-\infty,0]$ or decreasing on $[0,+\infty)$. Since we assumed $\pi(z_n)^1\to p^1$ as $n\to\infty$, we have the following two possibilities.
\begin{itemize}
	\item $p = q$, in which case the role between $p$ and $q$ are equivalent, thus by applying negation if necessary, we can assume $t\mapsto\Bc_p(\varphi^t(z_n))$ is decreasing on $[0,\infty)$ without breaking the assumption that $\pi(z_n)^1\to p^1$ as $n\to\infty$.
	\item $p \ne q$, in which case $z_n\not \in \wh H_p(C)$, then for sufficiently large $n$, $t\mapsto\Bc_p(\varphi^t(z_n))$ is decreasing on $[0,\infty)$. Otherwise, there exists an infinite sequence $n_k \in \mathbb \Nb$ such that $t\mapsto\Bc_p(\varphi^{t}(z_{n_k}))$ is strictly increasing on $(-\infty,0]$. However \Cref{bounded converges to unbounded} implies that no subsequence of $\pi(z_{n_k} )$ converges to $p$, which gives a contradiction.
\end{itemize}

Thus, for each $n\in\Nb$, we assume the map $t\mapsto\Bc_p(\varphi^t(z_n))$ is decreasing on $[0,\infty)$, which implies that $z_n^+\neq p$. Since $p$ is a bounded parabolic point, up to subsequence, there is a sequence $(\gamma_n)$ in $\Stab_\Gamma(p)$ such that \[\gamma_nz_n^+\to b\in\Lambda_\Gamma\setminus\{p\}\quad\text{and}\quad\gamma_nz_n^-\to c\in\Lambda_\Gamma~.\]
Since $z_n^+\to p$, $\gamma_n$ is an escaping sequence. Let $y_n\in \{\gamma_n z_n,\opp(\gamma_n z_n)\}$, $\dagger_n\in\{+,-\}$ and $a_n\in \{1,d\}$ be sequences satisfying
\[D(\gamma_n z_n) = d_{\angle}(\pi(y_n)^{a_n}, (y_n^{\dagger_n})^{a_n})~.\]

The proof now proceeds in two cases, depending on whether $c$ and $p$ agree. In both cases, we will find compact sets $K^1\subset\mathbb{RP}^d$ transverse to $p^d$ and $K^d\subset\mathbb{RP}^{d*}$ transverse to $p^1$ such that for sufficiently large $n$, both $\pi(y_n)^{a_n}$ and $(y_n^{\dagger_n})^{a_n}$ lie in $K^1$ or $K^d$

Indeed, once we do so, then we may apply \Cref{lemma: projective expanding}~(2) to deduce that for sufficiently large $n$,
\begin{align*}
    D(z_n)&\leqslant  d_\angle(\gamma_n^{-1}\pi(y_n)^{a_n}, \gamma_n^{-1}(y_n^{\dagger_n})^{a_n})\\
    &< d_\angle(\pi( y_n)^{a_n}, ( y_n^{\dagger_n})^{a_n})\\
    &= D(y_n)~.
\end{align*}
This gives a contradiction since $D(\gamma_nz_n)>D(z_n)$, but $D(z_n)\geqslant D(\gamma_n z_n)$ as $z_n\in\mathcal S$.

{\bf Case 2.1. $c\ne p$.} In this case there is an open subset $U^1\subset\mathbb{RP}^d$ that contains both $b^1$ and $c^1$ and an open subset $U^d\subset\mathbb{RP}^{d*}$ that contains both $b^d$ and $c^d$, such that all the points in $\overline{U^1}$ are transverse to $p^d$ and all the points in $\overline{U^d}$ are transverse to $p^1$. So for sufficiently large $n$, both $\pi(y_n)^{a_n}$ and $(y_n^{\dagger_n})^{a_n}$ lie in $U^{a_n}$. $K^1:=\overline{U^1}$, $K^d:=\overline{U^d}$ are the compact subsets required above.

{\bf Case 2.2. $c= p$.} 
In this case $b\neq c$, $\Bc_p(y_n)=\Bc_p(z_n)$ has a uniform upper bound as we vary $n$, and the map $t\mapsto\Bc_p(\varphi^t(y_n))=\Bc_p(\varphi^t(z_n))$ is decreasing on $[0,+\infty)$. On the one hand by \Cref{bounded converges to unbounded}, $(\pi(y_n))$ admits no subsequence converging to $p$. On the other hand, since $D(y_n)\leqslant D(z_n)\to 0$ as $n\to \infty$, we have $\dagger_n = +$ for large enough $n$ and $\pi(y_n)\to b$ as $n\to\infty$.

Let $U^1\subset\mathbb{RP}^d$, $U^d\subset\mathbb{RP}^{d*}$ be neighborhoods of $b^1,b^d$ respectively, such that all the points in $\overline{U^1}$ are transverse to $p^d$ and all the points in $\overline{U^d}$ are transverse to $p^1$. Then for sufficiently large $n$, both $\pi(\gamma_n z_n)^{a_n}$ and $(\gamma_n z_n^+)^{a_n}$ lie in $U^{a_n}$. Hence $K^1:=\overline{U^1}$ and $K^d:=\overline{U^d}$ are the compact subsets required above.
\end{proof}

\subsection{Existence of good flow spaces implies group is transverse}\label{section: existence of good flow spaces implies group is transverse}
Let $\Gamma\subset\PGL(d+1,\Rb)$ be a discrete, infinite subgroup for which there is a good flow space $U\subset\T^1\Hb_\tau^d$ for $\Gamma$. We now prove the backward direction of all three parts of \Cref{theoremalpha: characterizations}, i.e., we prove the following proposition.

\begin{proposition}\label{criterion}Suppose that $\Gamma$ is irreducible.
\begin{enumerate}
        \item If the $\Gamma\times\Rb\times \Zb_2$-action on $U$ is topologically transitive, then $\Gamma$ is a projective transverse subgroup. 
        \item If $U$ admits a thick-thin decomposition, then $\Gamma$ is relatively projective Anosov.
        \item If the $\Gamma$-action on $U$ is cocompact, then $\Gamma$ is projective Anosov. 
    \end{enumerate}
    In all cases, $U=\WH(\Lambda_\Gamma)$.
\end{proposition}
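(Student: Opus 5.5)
The plan is to reduce everything to showing that $\Gamma$ is projective divergent with $\Lambda_\Gamma=\Lambda_U$. Transversality of $\Lambda_U$ is automatic, since $U=\WH(\Lambda_U)$ and any two endpoints of a spacelike geodesic are transverse. Once this identification is in hand, parts (2) and (3) reduce to statements about the convergence action of $\Gamma$ on $\Lambda_U$, which is a convergence action by \Cref{proposition: convergence action}(1).

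\textbf{Key step: divergence.} Let $(\gamma_n)$ be an escaping sequence in $\Gamma$. Pass to a subsequence that is collapsing on $\Lambda_U$, with attracting point $a$ and repelling point $b$. I want to show that $\gamma_n$ has attracting flag $a$ and repelling flag $b$ in the sense of \Cref{lemma: projective north-south dynamics}.

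\begin{enumerate}
\item Choose $x,y\in\Lambda_U\setminus\{b\}$ distinct, possible once $\Lambda_U$ has at least three points.
\item Take $z\in U$ with $z^\pm=x,y$. Then $\gamma_n z^\pm\to a$.
\item Since $\Gamma$ acts properly on $U$, $\pi(\gamma_n z)$ leaves compact sets, and by the cross-ratio description in \Cref{projective geometry description} it converges to $a$.
\item Varying $z$ over an open set of points of $U$ gives open sets of points of $\mathbb{RP}^d\times\mathbb{RP}^{d*}$ (the projections $\pi(z)$ and the endpoints) all sent to $a$. Irreducibility should ensure that these points span enough: the span of $\{x^1:x\in\Lambda_U\setminus\{b\}\}$ is all of $\Rb^{d+1}$, since otherwise $\Lambda_U$, being perfect in case (1) and hence $\overline{\Lambda_U\setminus\{b\}}$, spans a proper invariant subspace.
\item Lift to linear maps and argue by contradiction on singular values. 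If $\sigma_1/\sigma_2(\gamma_n)$ stayed bounded along a subsequence, then the images of a spanning set of projective points in general position could not all converge to the single point $a^1$. This yields divergence, and condition (3) of \Cref{lemma: projective north-south dynamics} (applied also to $\gamma_n^{-1}$) identifies the flags.
\end{enumerate}

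Hence $\Lambda_\Gamma\subset\Lambda_U$ and $\Gamma$ is transverse. The reverse inclusion $\Lambda_U\subset\Lambda_\Gamma$ needs $\Lambda_U$ to equal the convergence limit set. In case (1) this is \Cref{proposition: convergence action}(2); non-elementarity is needed there, and it follows from irreducibility (finite invariant flag sets give invariant subspaces). In cases (2) and (3) I would instead show directly that every point of $\Lambda_U$ is conical or bounded parabolic.

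\textbf{Parts (2) and (3).} For (3), given $x\in\Lambda_U$, take $z\in U$ with $z^+=x$. By cocompactness there are $\gamma_n$ with $\gamma_n\varphi^{n}z$ in a fixed compact set. Then $\gamma_n x$ and $\gamma_n z^-$ converge to distinct points, so $x$ is conical. The uniform case then gives that $\Lambda_U$ is the limit set and the action is minimal. For (2), I would use the thick--thin decomposition in the same way. Along the ray $\varphi^t z$, either the ray returns to the thick part infinitely often, giving conical as above, or it eventually stays in a single horoball $\gamma H_p(C)$. Unimodality (\Cref{prop: unimodal}) plus disjointness force $x=\gamma p$ in the latter case.

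I must then show each $p\in\Pi$ is bounded parabolic:
\begin{itemize}
\item $(\Lambda_U\setminus\{p\})/\Stab_\Gamma(p)$ is compact, by mapping each $y\neq p$ to the point where the flow line from $y$ to $p$ exits $\overline{H_p(C)}$. These exit points lie in the thick part meeting $\partial H_p(C)$, which is cocompact modulo $\Gamma$ and, by disjointness and local finiteness, cocompact modulo $\Stab_\Gamma(p)$.
\item $\Stab_\Gamma(p)$ contains no loxodromics. A loxodromic fixing $p$ with axis in $U$ would translate flow lines into and out of the horoball, contradicting the fact that only finitely many horoballs meet a compact set.
\item $\Stab_\Gamma(p)$ is infinite, since $\Lambda_U$ is infinite and $(\Lambda_U\setminus\{p\})/\Stab_\Gamma(p)$ is compact.
\end{itemize}

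\textbf{Main obstacle.} The hardest step is the divergence argument, i.e. passing from collapsing on the abstract set $\Lambda_U$ (and on $U$) to singular-value gaps. The linear-algebra step requires enough points in general position converging to $a$, in both $\mathbb{RP}^d$ and $\mathbb{RP}^{d*}$, and this is where irreducibility enters. In cases (2) and (3), perfectness of $\Lambda_U$ must first be established from the thick--thin structure before this step can be used.
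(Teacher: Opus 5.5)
There is a genuine gap at the conical-point step in parts (2) and (3). The rest of your architecture is the paper's: divergence from irreducibility and perfectness of $\Lambda_U$, then $\Lambda_\Gamma=\Lambda_U$, and bounded parabolic points from the thick--thin structure.

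From $\gamma_n\varphi^n(z)\to w$ you only get $\gamma_nx\to w^+$ and $\gamma_nz^-\to w^-\neq w^+$. So for a collapsing subsequence the repelling point is \emph{either} $x$ or $z^-$. In the second case $w^+$ is the attracting point, and what you have shown is that $z^-$, not $x$, is conical. The convergence-group data cannot tell these two cases apart; the missing input is that you flowed \emph{towards} $x$. In Riemannian rank one this comes from the convergence action on $X\cup\partial X$ together with $\pi(\varphi^nz)\to x$, but no such action is available here. North--south dynamics on $\mathbb{RP}^d\times\mathbb{RP}^{d*}$ would do it, but it requires divergence, which in your ordering is proved only later, from a perfectness you derive from this very step.

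The paper handles this in Step 1 of the proof of \Cref{proposition: gf convergence action}. There one flows backwards towards $z^-$, and the bad case is that $z^+$ is the repelling point. The paper takes a third point $z_*^+\in\Lambda_U$ and uses \Cref{observation: joining by spacelike is an open condition} to join $\pi(g^{-n_k}z)$ by spacelike geodesics, with tangent vectors $z_{*k}$ there, to points $p_k\to z_*^+$ with $\gamma_kp_k\to w^-$. The invariance of $(\cdot,\cdot)_q$ then yields $1=\lim_k(z,g^{n_k}z_{*k})_q=\lim_k(\gamma_kg^{-n_k}z,\gamma_kz_{*k})_q=(w,-w)_q=-1$.

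Alternatively, you can reorder. Your dichotomy already shows that at most one point of $\Lambda_U$ lies outside the convergence limit set. In (2), use that $\Gamma\Pi$ lies in the limit set, and unimodality for rays that stay in one horoball. Such an exceptional point would be a $\Gamma$-fixed flag, contradicting irreducibility. So $\Lambda_U$ is the limit set and hence perfect, and you can prove divergence. Only then exclude the bad case: if $z^-$ were the repelling flag, then since $\pi(\varphi^nz)\to x$ and $x$ is transverse to $z^-$, $\gamma_n\pi(\varphi^nz)$ would tend to the attracting flag in $\partial_\infty^{\Pb}\Hb_\tau^d$ rather than to $\pi(w)\in\Hb_\tau^d$.

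Separately, steps 3 and 4 of your divergence argument rest on false implications, although step 5 (essentially the paper's proof of \Cref{strongly dynamics preserving}) does not need them.
\begin{itemize}
\item Proper discontinuity on $U$ says nothing about $\pi(\gamma_nz)$, since the fibres of $\pi$ are non-compact.
\item Both endpoints tending to $a$ does not force $\pi(\gamma_nz)\to a$. In $\PGL(3,\Rb)$, the escaping, non-divergent sequence $\gamma_n=\diag(1,\epsilon_n,1)$ with $\epsilon_n\to0$ fixes the point $([e_2],[e_2^*])$ on the spacelike geodesic joining $([e_1+e_2],[e_1^*-e_2^*])$ and $([e_2-e_1],[e_1^*+e_2^*])$, while sending both endpoints to $([e_1],[e_2^*])$.
\item $\Lambda_U\subset\Fc_{1,d}$ has empty interior in $\mathbb{RP}^d\times\mathbb{RP}^{d*}$, and $\pi(U)$ is typically lower-dimensional. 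So condition (3) of \Cref{lemma: projective north-south dynamics} is not available for identifying the flags.
\end{itemize}
Identify the flags instead as the paper does for $\Lambda_\Gamma\subset\Lambda$. Once $\frac{\sigma_1}{\sigma_2}(\gamma_n)\to\infty$ and, after a subsequence, the flags $a^\pm$ exist, irreducibility and perfectness give $x,x'\in\Lambda_U\setminus\{b\}$ with $x^1\notin(a^-)^d$ and $(a^-)^1\notin(x')^d$. Then $\gamma_nx^1\to(a^+)^1$ and $\gamma_n(x')^d\to(a^+)^d$, while $\gamma_nx,\gamma_nx'\to a$. Hence $a^+=a$, and symmetrically $a^-=b$.
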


To prove \Cref{criterion}, we will use the pair of lemmas. The first gives conditions under which $\Gamma$ is projective transverse.

\begin{lemma}\label{strongly dynamics preserving}
    Suppose that $\Gamma$ is irreducible. If there is a closed, non-empty, $\Gamma$-invariant, perfect, transverse subset $\Lambda\subset\Fc_{1,d}$ on which $\Gamma$ acts as a convergence group, then $\Gamma$ is projective transverse and $\Lambda_\Gamma\subset\Lambda$.  
\end{lemma}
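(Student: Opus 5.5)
The plan is to show that every escaping sequence $(\gamma_n)$ in $\Gamma$ has a subsequence satisfying condition (3) of \Cref{lemma: projective north-south dynamics}, with attracting and repelling flags in $\Lambda$. This gives projective divergence, since (3) implies (1). It also shows that every attracting flag of an escaping sequence lies in $\Lambda$, so $\Lambda_\Gamma\subset\Lambda$. Transversality of $\Lambda_\Gamma$ then follows from transversality of $\Lambda$.

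First, pass to a subsequence that is collapsing for the convergence action on $\Lambda$, with attracting point $a\in\Lambda$ and repelling point $b\in\Lambda$. Separately, pass to a further subsequence so that, for a suitable normalization, linear representatives $\gamma_n/\norm{\gamma_n}$ converge to a nonzero endomorphism $T$ of $\Rb^{d+1}$ of rank at least $1$. Do the same for the dual action, obtaining $T^*$ from $(\gamma_n^{-1})^T/\norm{\gamma_n^{-1}}$. Then $\gamma_n[v]\to[T v]$ for every $[v]\notin\Pb(\ker T)$, and similarly on $\mathbb{RP}^{d*}$.

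Next, compare these limits with the convergence dynamics on $\Lambda$. Since $\Lambda$ is perfect, $\Lambda\setminus\{b\}$ is infinite, and $\gamma_n x\to a$ for all $x\in\Lambda\setminus\{b\}$. Irreducibility should prevent the first coordinates $\{x^1:x\in\Lambda\setminus\{b\}\}$ from all lying in the proper subspace $\Pb(\ker T)$. Indeed, the span of $\{x^1: x\in\Lambda\}$ is a nonzero $\Gamma$-invariant subspace, hence all of $\Rb^{d+1}$. Removing the single point $b$ does not affect this, because $\Lambda$ is perfect and a closed proper subspace cannot contain a dense subset of $\Lambda$ together with its limit points only partially. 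So there is some $x\in\Lambda\setminus\{b\}$ with $x^1\notin\Pb(\ker T)$, and then $[Tx^1]=a^1$.

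The key step, and the main obstacle, is to prove that $T$ has rank one, i.e.\ that $\sigma_1/\sigma_2(\gamma_n)\to\infty$. The image of $T$ must contain $a^1$. I would argue as follows: the set of $x\in\Lambda\setminus\{b\}$ with $x^1\notin\Pb(\ker T)$ is open and dense in $\Lambda$, and its $x^1$-coordinates span $\Rb^{d+1}$ by the irreducibility argument above. All of these points are mapped by $T$ to the single line $a^1$. Because they span, the image of $T$ is exactly the line $a^1$, so $T$ has rank one with $\operatorname{Im}T=a^1$. The same argument applied to the dual action gives $\sigma_d/\sigma_{d+1}(\gamma_n)\to\infty$ and identifies the dual image as $a^d$.

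It then remains to identify $\ker T$. Applying the same reasoning to $\gamma_n^{-1}$, whose attracting and repelling points are $b$ and $a$, gives $U_1(\gamma_n^{-1})\to b^1$ and $U_d(\gamma_n^{-1})\to b^d$. The attracting flag $(a^1,a^d)$ and the repelling flag $(b^1,b^d)$ both lie in $\Lambda$, which completes condition (1) of \Cref{lemma: projective north-south dynamics}.
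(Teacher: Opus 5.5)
Your proof is correct, but it takes a different route from the paper's. The paper proves divergence by contradiction. If $\sigma_1/\sigma_2(\gamma_n)$ stays bounded, the singular value decomposition gives a limit map on $\mathbb{RP}^d$ that is non-degenerate on a $2$-plane. Irreducibility and perfectness then produce four points of $\Lambda$, two near each of two well-chosen points, whose images cannot collapse to one point, contradicting the convergence property. The paper then proves $\Lambda_\Gamma\subset\Lambda$ separately. Pushing a point $z\in\Lambda$ with $z^1$ transverse to the repelling hyperplane gives $x'\in\Lambda$ with $(x')^1=x^1$; dually one gets $x''\in\Lambda$ with $(x'')^d=x^d$; and transversality of $\Lambda$ forces $x'=x''=x$.

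Your normalized-limit argument ($T$ maps a spanning set into the line $a^1$, so it has rank one) gives divergence directly. It also identifies the attracting and repelling flags of each escaping sequence with the attracting and repelling points of the convergence action on $\Lambda$. So $\Lambda_\Gamma\subset\Lambda$ comes for free; in fact $\Lambda_\Gamma$ is exactly the convergence limit set. Transversality of $\Lambda$ is used only at the end, to see that $\Lambda_\Gamma$ is transverse. The paper instead handles the two coordinates separately and needs transversality to glue them.

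One step needs a small repair. You assert that $\{x\in\Lambda\setminus\{b\}:x^1\notin\Pb(\ker T)\}$ is dense in $\Lambda$ and that its first coordinates span $\Rb^{d+1}$. Neither is justified a priori; density only becomes clear afterwards, once one knows $\ker T=b^d$ and uses transversality. It is also unnecessary. For every $x\in\Lambda\setminus\{b\}$ one has $T(x^1)\subset a^1$, either because $[Tx^1]=a^1$ or because $Tx^1=0$. You already showed these lines $x^1$ span $\Rb^{d+1}$, so $\operatorname{Im}T\subset a^1$.

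You should also state explicitly that, via the singular value decomposition, $\operatorname{Im}T$ is the limit of $U_1(\gamma_n)$, and dually $U_d(\gamma_n)\to a^d$. That is what condition (1) of \Cref{lemma: projective north-south dynamics} requires. Finally, your opening paragraph refers to condition (3), but what you actually verify is condition (1).
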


The second ensures the geometric finiteness of the $\Gamma$-action on $\Lambda_U$ if $U$ admits a $(\Pi,C)$-thick-thin decomposition.

\begin{lemma}\label{proposition: gf convergence action}
    If $U$ admits a $(\Pi,C)$-thick-thin decomposition, then $\Gamma$ acts on $\Lambda_U$ as a geometrically finite convergence group, and $\Gamma\Pi$ is the set of parabolic points.
\end{lemma}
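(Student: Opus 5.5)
The plan is to verify the two defining properties of a geometrically finite convergence action directly, using the horofunctions. By \Cref{proposition: convergence action}(1), $\Gamma$ already acts on $\Lambda_U$ as a convergence group, and $U=\WH(\Lambda_U)$ with $\Lambda_U$ transverse. I will show that every $p\in\Gamma\Pi$ is a bounded parabolic point and every $x\in\Lambda_U\setminus\Gamma\Pi$ is conical. Since a parabolic point is never conical \cite[Proposition~3.2]{bowditch1999convergence}, this identifies the set of parabolic points as exactly $\Gamma\Pi$.

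\textbf{Points of $\Gamma\Pi$.} Fix $p\in\Pi$ and set $S_p:=\{z\in U:\Bc_p(z)=C\}$.

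First I will show that $\Stab_\Gamma(p)$ acts cocompactly on $S_p$.
\begin{itemize}
\item $S_p$ lies in $U^{\rm thick}(\Pi,C)$, by the pairwise disjointness of the closed horoballs.
\item By (II) there is a compact $K\subset U$ with $\Gamma K\supset U^{\rm thick}(\Pi,C)$.
\item If $\gamma z\in K$ with $z\in S_p$, then $\gamma\overline{H_p(C)}$ meets the compact set $K$. By local finiteness (I), only finitely many horoballs $\gamma\overline{H_p(C)}$ meet $K$. This gives finitely many cosets $\gamma\Stab_\Gamma(p)$, hence cocompactness.
\end{itemize}

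Next, consider the continuous, $\Stab_\Gamma(p)$-equivariant map $S_p\to\Lambda_U\setminus\{p\}$ sending $z$ to whichever of $z^\pm$ is not $p$. It is surjective: for $q\neq p$, take a flow line joining $q$ and $p$ (convexity). Along it, by \Cref{prop: unimodal}(1), $\Bc_p$ equals $\pm t$ plus a constant, so the flow line meets the level $C$. Hence $(\Lambda_U\setminus\{p\})/\Stab_\Gamma(p)$ is compact.

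It remains to show that $p$ is a parabolic point, i.e.\ $\Stab_\Gamma(p)$ is infinite and contains no loxodromic element.
\begin{itemize}
\item \emph{Infinite stabilizer.} Suppose $\Stab_\Gamma(p)$ were finite. Then $\Lambda_U\setminus\{p\}$ would be compact, so $p$ is isolated. In that case I expect a flow line ending at $p$ to leave every compact set while staying out of the thin part, contradicting (II) together with proper discontinuity. The degenerate case of two-point $\Lambda_U$ I would treat separately.
\item \emph{No loxodromics.} Let $g\in\Stab_\Gamma(p)$ be loxodromic with other fixed point $q$. Since $gH_p(C)$ and $H_p(C)$ are both in the disjoint collection and share the center $p$, they coincide. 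By \Cref{constant difference}, $\Bc_p\circ g^{-1}-\Bc_p$ is then a constant, and it must be $0$. But $g$ preserves the flow lines joining $q$ and $p$ and translates along them by a nonzero amount (as in \Cref{observation: period length}). By \Cref{prop: unimodal}(1), this changes $\Bc_p$ by a nonzero amount, which is a contradiction.
\end{itemize}

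\textbf{Points outside $\Gamma\Pi$.} Let $x\in\Lambda_U\setminus\Gamma\Pi$, pick $y\neq x$ in $\Lambda_U$, and let $z\in U$ with $z^+=x$, $z^-=y$.

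First I will show the ray $\varphi^tz$, $t\ge0$, enters the thick part at arbitrarily large times. Otherwise it eventually lies in one horoball $\gamma H_p(C)$, because horoballs are disjoint and the ray is connected. By \Cref{prop: unimodal}:
\begin{itemize}
\item if $\gamma p\notin\{x,y\}$, then $\Bc_{\gamma p}\to-\infty$ along the ray;
\item if $\gamma p=y$, then $\Bc_{\gamma p}\to-\infty$ as well.
\end{itemize}
So the ray cannot stay in the horoball unless $\gamma p=x$, which is excluded.

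So choose $t_n\to\infty$ and $\gamma_n\in\Gamma$ with $\gamma_n\varphi^{t_n}z\to w\in U$. Then $\gamma_n$ is escaping by proper discontinuity, since $\varphi^{t_n}z$ leaves every compact set. Moreover $\gamma_nx\to w^+$ and $\gamma_ny\to w^-$, and $w^+\neq w^-$. Passing to a collapsing subsequence, its repelling point lies in $\{x,y\}$.

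\textbf{Main obstacle.} The hard step is ruling out that the repelling point is $y$ rather than $x$. My first attempt will be a horofunction argument. By the cocycle formula in \Cref{properties of the horofunction}, the shift $\Bc_{\gamma_n y}(\gamma_n\cdot)-\Bc_y(\cdot)$ is controlled, while $\Bc_y(\varphi^{t_n}z)=\Bc_y(z)-t_n\to-\infty$ and $\Bc_{w^-}(w)$ stays finite. I will try to show this forces the normalizing constants to blow up in a way compatible only with $y$ being attracting for $\gamma_n^{-1}$, i.e.\ $x$ repelling for $\gamma_n$.

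If that fails, I will repeat the construction with a second backward endpoint $y'\notin\{x,y\}$ and the same sequence $t_n$. If the repelling point were $y$, then $\gamma_n$ would send both $x$ and $y'$ to the attracting point $w^+$. I would then compare the two rays toward $x$ via \Cref{observation: joining by spacelike is an open condition}, which gives spacelike segments of bounded length between them, and argue that their images cannot collapse to a single limit point in this way. Either route gives $x$ repelling with $\gamma_nx\to w^+$ distinct from the attracting point $w^-$, so $x$ is conical.
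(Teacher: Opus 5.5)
Your overall architecture matches the paper's: conical points off $\Gamma\Pi$, bounded parabolic points on $\Gamma\Pi$, horosphere cocompactness from (I)--(II), and no loxodromics via $gH_p(C)=H_p(C)$. However, the decisive step of the conical part is not proved.

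\textbf{The conical step.} You correctly reduce to showing that the repelling point of $(\gamma_n)$ is $x$ rather than $y$, but neither route you sketch does this.
\begin{itemize}
\item \emph{Cocycle route.} This must turn the growth of $\tfrac12\log(\norm{\gamma_n v}\norm{\gamma_n\phi})$ into information about the attracting and repelling points of the abstract convergence action on $\Lambda_U$. The only bridge is singular-value dynamics (\Cref{lemma: projective north-south dynamics}), which presupposes projective divergence. Here $\Gamma$ is merely discrete with a good flow space; this lemma is itself used to prove relative Anosovness, and irreducibility is not assumed. (Also, ``$y$ attracting for $\gamma_n^{-1}$'' is exactly the case you want to exclude.)
\item \emph{Second route.} \Cref{observation: joining by spacelike is an open condition} gives nothing for two rays converging to the same point $\widetilde x\in\partial_\infty^{\mathbb S}\Hb_\tau^d$. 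More seriously, bounded spacelike length confines nothing: point stabilizers are non-compact, so a point at bounded spacelike distance from a convergent sequence can still escape. ``Their images cannot collapse'' is precisely what needs proof.
\end{itemize}
The missing idea is a $\PGL(d+1,\Rb)$-invariant of two unit vectors at a \emph{common} basepoint. In the bad case, proceed as follows.
\begin{itemize}
\item Pick $y_*\in\Lambda_U\setminus\{x,y\}$, so $\gamma_n y_*\to w^+$.
\item Take a flow line from $y_*$ to $x$ sharing the spherical endpoint $\widetilde x$ with that of $z$. Choose points $p_n$ on it tending to its other spherical endpoint $\widetilde y_*$, fast enough that $\gamma_n p_n\to w^+$.
\item Using openness at the \emph{distinct} pair $(\widetilde x,\widetilde y_*)$, take unit vectors $u_n$ at $\pi(\varphi^{t_n}z)$ tangent to spacelike geodesics heading to $p_n$.
\item Translate back by the element of $\PGL(d+1,\Rb)$ moving the geodesic of $z$ by $-t_n$. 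It carries $u_n$ to vectors at $\pi(z)$ converging to $-z$, so $(-\varphi^{t_n}z,u_n)_q\to(-z,-z)_q=1$.
\item Applying $\gamma_n$ instead gives $(-\varphi^{t_n}z,u_n)_q\to(-w,w)_q=-1$, a contradiction.
\end{itemize}
This is the paper's Step~1, written with $x$ as the forward endpoint.

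\textbf{The infinite-stabilizer step.} This is also wrong as sketched. By \Cref{prop: unimodal}(1), $\Bc_p\to+\infty$ along any flow line ending at $p$, so such a line enters $H_p(C)$ rather than avoiding the thin part. More fundamentally, you never use that $\Pi$ lies in the limit set of the $\Gamma$-action on $\Lambda_U$, which is part of \Cref{definition: thick-thin decomposition geometric}. Conditions (I)--(II) alone do not exclude a horoball centred at an isolated point of $\Lambda_U$ with finite stabilizer (for instance $\Gamma=\langle g\rangle$ and $\Lambda_U=\{g^+,g^-\}\cup\langle g\rangle p$). The paper proves the conical statement first, so that $\Lambda_U$ is the entire limit set. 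Then either $\Lambda_U$ is perfect, so $\Lambda_U\setminus\{p\}$ is non-compact and cocompactness forces $\Stab_\Gamma(p)$ to be infinite; or $\Lambda_U$ has two points and $\Gamma$ being infinite suffices.

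\textbf{Smaller repairs.}
\begin{itemize}
\item The map $S_p\to\Lambda_U\setminus\{p\}$ is only defined on the closed invariant subset of vectors having $p$ as an endpoint. The paper uses $\{z\in U:z^+=p,\ \Bc_p(z)=C\}$.
\item Passing from finitely many horoballs to finitely many cosets of $\Stab_\Gamma(p)$ needs that a horoball determines its centre. The paper shows $\Stab_\Gamma(\partial H_p)\subset\Stab_\Gamma(p)$.
\item For loxodromics, replace $g$ by $g^2$ so that it preserves, rather than swaps, each flow line from $q$ to $p$. Deduce that the translation is nonzero from proper discontinuity, not from \Cref{observation: period length}, which needs $g$ biproximal.
\end{itemize}
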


Assuming these two lemmas, we prove \Cref{criterion}.

\begin{proof}[Proof of \Cref{criterion}]
(1). Notice that since $\Gamma$ is irreducible, the $\Gamma$-action on $\Lambda_U$ is non-elementary. Then \Cref{proposition: convergence action} implies that $\Gamma$ acts on $\Lambda_U$ as a minimal convergence group. In particular, $\Lambda_U$ is a perfect set. Thus, we may apply \Cref{strongly dynamics preserving} with $\Lambda=\Lambda_U$ to deduce that $\Gamma$ is projective transverse and $\Lambda_\Gamma=\Lambda_U$.

(2). If $U$ admits a thick-thin decomposition, then \Cref{proposition: gf convergence action} implies that $\Gamma$ acts on $\Lambda_U$ as a geometrically finite convergence group, so the limit set of this $\Gamma$-action is all of $\Lambda_U$. The irreducibility of $\Gamma$ implies that this $\Gamma$-action is non-elementary. Hence, $\Gamma$ acts minimally on $\Lambda_U$, and $\Lambda_U$ is perfect.   By \Cref{strongly dynamics preserving}, $\Lambda_\Gamma=\Lambda_U$. Therefore, $\Gamma$ is relatively projective Anosov.

(3). This follows from the same argument as (2), with the additional observation that if the thin part of $U$ is empty, then \Cref{proposition: gf convergence action} implies that $\Lambda_\Gamma$ has no parabolic points.
\end{proof}

It remains to prove \Cref{strongly dynamics preserving,proposition: gf convergence action}. We will start with \Cref{strongly dynamics preserving}.

\begin{proof}[Proof of \Cref{strongly dynamics preserving}]
We first prove that $\Gamma$ is projective divergent. Suppose otherwise that there is a constant $C\geqslant 1$ and an escaping sequence $(\gamma_n)$ in $\Gamma$ such that $1\leqslant \frac{\sigma_1(\gamma_n)}{\sigma_2(\gamma_n)}\leqslant C$. For each $n\in\Nb$, let $\gamma_n=k_n a(\gamma_n) l_n$ be a singular value decomposition of $\gamma_n$, where $k_n, l_n\in\PO(d+1)$ and $a(\gamma_n)=\diag(\sigma_1(\gamma_n),\dots,\sigma_{d+1}(\gamma_n))$. We assume up to subsequence that $k_n\to k$ and $l_n\to l$ as $n\to \infty$.

Set $V_1:=l^{-1}\Span(e_1,e_2)$, $V_2:=l^{-1}\Span(e_3,\dots,e_{d+1})$, $W_1:=k\Span(e_1,e_2)$, and $W_2:=k\Span(e_3,\dots,e_{d+1})$ where $(e_1,\dots,e_{d+1})$ is the standard basis of $\Rb^{d+1}$. Let
\[p_V:\Rb^{d+1}\to V_1\quad\text{and}\quad p_W:\Rb^{d+1}\to W_1\] be the projections whose kernels are $V_2$ and $W_2$ respectively. Observe that $p_V$ and $p_W$ restrict and descend to maps 
\[p_V:\mathbb{RP}^d\setminus \Pb(V_2)\to\Pb(V_1)\quad\text{and}\quad p_W:\mathbb{RP}^d\setminus \Pb(W_2)\to\Pb(W_1)~.\]

As $(\gamma_n)$ is an escaping sequence, there is a positive integer $i$ such that $\frac{\sigma_i(\gamma_n)}{\sigma_{i+1}(\gamma_n)}\to+\infty$ as $n\to\infty$. We assume $i$ is the minimal one with this property, and by our assumption, $i\geqslant 2$. Let $U_1:=\Span(e_1,...,e_i)$ and $U_2:=\Span(e_{i+1},\dots,e_{d+1})$.  The sequence of maps $a(\gamma_n):\mathbb{RP}^d\setminus \Pb(U_2)\to\mathbb{RP}^d$ converges uniformly on any compact subsets to a map $g:\mathbb{RP}^d\setminus \Pb(U_2)\to\Pb(U_1)$ up to subsequence.

Since $\Gamma$ is irreducible, there are points $([v_1],[\phi_1]),([v_2],[\phi_2])\in\Lambda_\Gamma$ such that $(p_V(v_1),p_V(v_2))$ is a basis of $V_1$. Let $q:\Fc_{1,d}\to\mathbb{RP}^d$ be the projection to the first coordinate, and let $O_1,O_2\subset\Fc_{1,d}$ be open neighborhoods of $([v_1],[\phi_1])$ and $([v_2],[\phi_2])$ respectively with disjoint closures, such that $q(O_1),q(O_2)\subset\mathbb{RP}^d\setminus \Pb(V_2)$, and $p_V\circ q(O_1)$ and $p_V\circ q(O_2)$ are disjoint open subsets of $\Pb(V_1)$. Since $\Lambda$ is perfect, for each $j=1,2$, take a point \[([v_j'],[\phi_j'])\in (O_i\cap\Lambda)\setminus\{([v_j],[\phi_j])\}~.\]

By definition, $U_2\subset l(V_2)$, so $l q(O_1),l q(O_2)\subset  \mathbb{RP}^d\setminus \Pb(U_2)$. Thus,
\[p_W(\gamma_n\,[v_1])=p_W(k_na(\gamma_n)l_n\,[v_1])\to p_W(kgl\,[v_1])=kgl\,p_V([v_1])\in kgl\,p_V\circ q(O_1)~.\]
Similarly, 
\[p_W(\gamma_n\,[v_1'])\in kgl\,p_V\circ q(O_1)\quad\text{and}\quad p_W(\gamma_n\,[v_2]),p_W(\gamma_n\,[v_2'])\in kgl\,p_V\circ q(O_2)~.\]
This implies that $\lim_{n\to\infty}\gamma_n([v_1],[\phi_1])$ and $\lim_{n\to\infty}\gamma_n([v_1'],[\phi_1'])$ are distinct from $\lim_{n\to\infty}\gamma_n([v_2],[\phi_2])$, while $\lim_{n\to\infty}\gamma_n([v_2],[\phi_2])$ and $\lim_{n\to\infty}\gamma_n([v_2'],[\phi_2'])$ are distinct from $\lim_{n\to\infty}\gamma_n([v_1],[\phi_1])$. This contradicts the assumption that $\Gamma$ acts on $\Lambda$ as a convergence group. Hence, $\Gamma$ is projective divergent.

Since $\Lambda$ is transverse, it now suffices to verify $\Lambda_\Gamma\subset\Lambda$. Pick any point $x\in\Lambda_\Gamma$, and let $(\gamma_n)$ be a sequence in $\Gamma$ such that $(U_1(\gamma_n),U_d(\gamma_n))\to x$ as $n\to\infty$. By taking a subsequence, we may assume that $(U_1(\gamma_n^{-1}),U_d(\gamma_n^{-1}))\to y$ as $n\to\infty$ for some $y\in\Lambda_\Gamma$. The irreducibility of $\Gamma$ then implies that there is some point $z\in\Lambda$ such that $z^1$ is transverse to $y^d$. Then by \Cref{lemma: projective north-south dynamics}, $\gamma_n z^1\to x^1$ as $n\to\infty$, so there is some $x'\in\Lambda$ such that $(x')^1=x^1$. Similarly, there is some $x''\in\Lambda$ such that $(x'')^{d}=x^{d}$. In particular, $x'$ and $x''$ are not transverse, so the fact that $\Lambda$ is transverse implies that $x=x'=x''\in\Lambda$. Since $x\in\Lambda_\Gamma$ was arbitrary, $\Lambda_\Gamma\subset\Lambda$.
\end{proof}

Next, we turn our attention to the proof of \Cref{proposition: gf convergence action}. We will regard $\Pi$ and $C$ as fixed in the remainder of this section, so we simplify notation by denoting
\[U^{\rm thin}:=U^{\rm thin}(\Pi,C)\quad\text{and}\quad U^{\rm thick}:=U^{\rm thick}(\Pi,C)~.\]
Also, for every $x\in\Lambda_U$, we denote
\[H_x:=H_x(C)\quad\text{and}\quad \partial H_x:=\{z\in U:\Bc_x(z)=C\}\subset U^{\mathrm{thick}}~.\] 

We begin with the following compactness result for every $p\in\Pi$. 

\begin{lemma}\label{horoball cocompactness}
    For every $p\in\Pi$, $\Stab_\Gamma(\partial H_p)$ acts cocompactly on $\partial H_p$.
\end{lemma}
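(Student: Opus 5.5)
The plan is to transport the cocompactness of the thick part, condition (II) of \Cref{definition: thick-thin decomposition geometric}, to $\partial H_p$, using disjointness and local finiteness, condition (I), to control which group elements can carry a fixed compact set onto $\partial H_p$.

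\textbf{Step 1 (a compact fundamental set).} Since $\partial H_p\subset U^{\rm thick}$ and $\Gamma$ acts cocompactly on $U^{\rm thick}$, there is a compact $K\subset U^{\rm thick}$ with $\Gamma K=U^{\rm thick}$. I would enlarge $K$ to the compact set
\[K':=\{\varphi^s(k),\ \varphi^s({\opp}(k)),\ -\varphi^s(k):k\in K,\ |s|\leqslant 1\},\]
which is still contained in $U$ because $U$ is flow-, $\opp$- and negation-invariant. By (I), $K'$ meets only finitely many closed horoballs $\gamma\overline{H_q}$ with $\gamma\in\Gamma$ and $q\in\Pi$.

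\textbf{Step 2 (every point of $\partial H_p$ lies in $\overline{H_p}$).} I would show that every $z\in\partial H_p$ is a limit of points of $H_p$, and I expect this to be the main obstacle.
\begin{itemize}
\item If $p\in\{z^+,z^-\}$, then by \Cref{prop: unimodal}(1) the function $s\mapsto\Bc_p(\varphi^s(z))$ equals $C\pm s$, so $\varphi^{\pm s}(z)\in H_p$ for small $s>0$.
\item If $p\notin\{z^+,z^-\}$ and $0$ is not the maximum of $s\mapsto\Bc_p(\varphi^s(z))$, then by \Cref{prop: unimodal}(2) this function exceeds $C$ at some $s$ near $0$.
\item The remaining case is a flow line tangent to $\partial H_p$ from inside, where the maximum along the flow line equals exactly $C$. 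Here I would perturb one endpoint of $z$ inside $\Lambda_U$, using convexity of $U$, so that the maximum of $\Bc_p$ along the new flow line increases. The explicit formula for the critical value coming from $A(t)$ in the proof of \Cref{prop: unimodal}, together with continuity of $\Bc_p$, should make this work.
\item If that fails, I would replace $\partial H_p$ by the slab $\{C\leqslant\Bc_p\leqslant C+\epsilon\}$ and obtain $\partial H_p$ as a closed subset of a cocompact set.
\end{itemize}

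\textbf{Step 3 (identifying $\Stab_\Gamma(\partial H_p)$).} If $\gamma\overline{H_p}\cap\overline{H_p}\neq\emptyset$, then by (I) we must have $\gamma\overline{H_p}=\overline{H_p}$. Using \Cref{constant difference}, $\gamma H_p$ is a horoball centered at $\gamma p$. Then \Cref{lemma: disjointness of two horoballs}, or the observation that $\gamma p$ is recovered as the common forward endpoint of deep points via \Cref{proposition: two of three}, gives $\gamma p=p$, and the radius shift is forced to be $0$. Hence $\gamma$ preserves $\partial H_p$. So the elements with $\gamma\overline{H_p}=\overline{H_p}$ are exactly $\Stab_\Gamma(\partial H_p)$.

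\textbf{Step 4 (conclusion).} Let $z\in\partial H_p$ and write $z=\gamma k$ with $k\in K$. By Step 2, $k=\gamma^{-1}z$ lies in $\gamma^{-1}\overline{H_p}$, so $\gamma^{-1}\overline{H_p}$ is one of the finitely many closed horoballs $\eta_1\overline{H_{q_1}},\dots,\eta_m\overline{H_{q_m}}$ meeting $K'$. For each $j$ with $q_j=p$, the elements $\gamma$ satisfying $\gamma^{-1}\overline{H_p}=\eta_j\overline{H_p}$ form the single coset $\Stab_\Gamma(\partial H_p)\eta_j^{-1}$, by Step 3. Therefore
\[\partial H_p\subset\bigcup_{j}\Stab_\Gamma(\partial H_p)\,\eta_j^{-1}\big(K\cap\eta_j\partial H_p\big).\]
Each set $\eta_j^{-1}(K\cap\eta_j\partial H_p)$ is compact, since $\partial H_p$ is closed by continuity of $\Bc_p$. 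Their finite union is thus a compact set whose $\Stab_\Gamma(\partial H_p)$-translates cover $\partial H_p$, which proves cocompactness.
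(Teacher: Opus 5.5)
You have the same skeleton as the paper's proof. Cocompactness of $U^{\rm thick}$ gives a compact $K$, and disjointness plus local finiteness in (I) leave only finitely many cosets of $\Stab_\Gamma(\partial H_p)$ that can carry points of $K$ into $\partial H_p$. Your Steps 3 and 4 are correct; the enlargement $K'$ is never used.

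The genuine gap is Step 2. Both the inclusion $\partial H_p\subset U^{\rm thick}$ in Step 1 and the claim in Step 4 that $k=\gamma^{-1}z$ lies in a closed horoball depend on it. Your first two cases are fine. The tangency case cannot be settled by perturbing endpoints: convexity only lets you move endpoints inside $\Lambda_U$, and $\Lambda_U$ can have gaps. In fact, for the level set $\{z\in U:\Bc_p(z)=C\}$ the claim is false in general.

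For a counterexample, take the following setup.
\begin{itemize}
\item Let $\Gamma$ be the image, under the irreducible representation $\mathsf{PSL}(2,\Rb)\to\PGL(d+1,\Rb)$, of a geometrically finite Fuchsian group of the second kind with a cusp at $\infty$ fixed by $z\mapsto z+1$.
\item Let $p\in\Lambda_\Gamma$ correspond to $\infty$, and let $U=\WH(\Lambda_\Gamma)$.
\item Apply \Cref{unipotents preserve horofunctions} to translations and \Cref{properties of the horofunction} to dilations. This shows that the maximum of $\Bc_p$ along a flow line whose endpoints correspond to $a<b$ in $\Rb$ is $\kappa\log(b-a)+c_0$.
\item Here $\kappa>0$, because $\Bc_p\leqslant\Bc^2_p\to-\infty$ as $b-a\to 0$.
\item Let $(\alpha,\beta)$ be a complementary interval of the Cantor limit set, and set $a=\beta$ and $b=\alpha+N$.
\item Every point of $U$ near the apex vector $z$ of this flow line has endpoints corresponding to some $a'\geqslant a$ and $b'\leqslant b$.
\item Set $C:=\kappa\log(b-a)+c_0$. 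This can be made arbitrarily large, so it is admissible for a thick-thin decomposition. Indeed, the conclusion of \Cref{proposition: disjointness collection of horoballs} persists as $C$ increases, and \Cref{lemma: cocompactness of thick part of WH} holds for every $C$.
\item Then $z$ lies in $\{\Bc_p=C\}$ but not in $\overline{H_p(C)}$.
\end{itemize}

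Your slab fallback does not help either. The slab $\{C\leqslant\Bc_p\leqslant C+\epsilon\}$ lies in the thin part, where (II) gives nothing. Its cocompactness under $\Stab_\Gamma(p)$ amounts to cocompactness on $\Lambda_U\setminus\{p\}$, which the paper deduces from this very lemma.

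The fix is to run your argument on the frontier $F_p:=\overline{H_p}\setminus H_p$ of $H_p$ in $U$ instead of on the level set.
\begin{itemize}
\item Step 2 then holds by definition.
\item $F_p\subset U^{\rm thick}$ follows from (I).
\item $F_p$ is closed and nonempty.
\item Steps 3 and 4 go through verbatim with $\Stab_\Gamma(F_p)$.
\end{itemize}
This is effectively what the paper's proof does when it identifies $\bigcup_{\gamma,q}\gamma\partial H_q$ with the boundary of $U^{\rm thin}$. It also suffices for the later use in \Cref{proposition: gf convergence action}. There one needs $W^{\rm ss}_p\subset F_p$, which holds because $\Bc_p(\varphi^s(z))=C+s$ for every $z\in W^{\rm ss}_p$.
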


\begin{proof}
Since the $\Gamma$-action on $U$ is properly discontinuous and the collection of horoballs $\{\gamma \overline{H_q}: \gamma \in \Gamma, \ q\in \Pi\}$ is locally finite, notice that the disjoint union
\[A:=\bigcup_{\gamma \in \Gamma}\bigcup_{q\in\Pi} \gamma\partial H_q\]
is the boundary of the open subset $U^{\rm thin}\subset U$, and is thus a $\Gamma$-invariant, locally finite closed subset of $U$.  Since $A\subset U^{\rm thick}$ and $\Gamma$ acts cocompactly on $U^{\rm thick}$, $\Gamma$ also acts cocompactly on $A$. So there exists a compact $K\subset A$ such that $\Gamma \cdot K$ covers $A$. 

Again, by local finiteness of the horoballs, there exists finitely many $\gamma_1,\dots ,\gamma_k\in \Gamma$ representing distinct cosets in $\Gamma /\Stab_{\Gamma}(\partial H_p)$ such that $\gamma _i\partial H_p\cap  K$ is non-empty. Note that for any $x\in \partial H_p$, there exists $\gamma\in \Gamma$ and $y\in K$ such that $x = \gamma y$, which means $\gamma^{-1}\partial H_p\cap K$ is nonempty, so there exists $1\leqslant j\leqslant k$ such that $\gamma\gamma_j \in \Stab_{\Gamma}(\partial H_p)$. As a result, $\partial H_p$ can be covered by the $\Stab_{\Gamma}(\partial H_p)$-orbit of $\bigcup_{j=1}^k \gamma_j^{-1}K$, which is compact.
\end{proof}

We are now ready to prove \Cref{proposition: gf convergence action}.

\begin{proof}[Proof of \Cref{proposition: gf convergence action}]
We divide the proof into two steps. First, we prove that every point in $\Lambda_U\setminus\Gamma\Pi$ is conical. Second, we prove that every point in $\Pi$ is a bounded parabolic point. 

{\bf Step 1.} Pick any $z^-\in\Lambda_U\setminus \Gamma\Pi$. We now prove that $z^-$ is a conical point. Let $z^+\in\Lambda_U\setminus \Gamma\Pi$ be distinct from $z^-$. Since $U$ is convex, there exists $z\in \T^1\Hb_\tau^d$ with $z^+$ and $z^-$ as its forward and backward endpoint respectively. Let $\ell$ be the geodesic in $\Hb_\tau^d$ tangent to $z$. By \eqref{glt formula}, there is a biproximal, diagonal $g\in \PGL(d+1,\Rb)$ that preserves the image of $\ell$, has $z^+$ and $z^-$ as its attractor and repeller respectively, and $g z = \varphi^1(z)$. By \Cref{prop: unimodal}(2) and the disjointness of horoballs, there is a sequence $(n_k)$ of real numbers such that $n_k \to +\infty$ as $k\to\infty$ and $\varphi^{-n_k}(z) = g^{-n_k}z\in U^{\mathrm{thick}}$. Since the $\Gamma$-action on $U^{\mathrm{thick}}$ is cocompact, there exists an escaping sequence $(\gamma_k)$ in $\Gamma$ such that $\gamma_k g^{-n_k}z$ remains in a compact set $K \subset U^{\mathrm{thick}}$ such that $\Gamma K = U^{\mathrm{thick}}$. Hence up to subsequence, we may assume that $(\gamma_k g^{-n_k}z)$ converges to some $w\in U$. Then $\gamma_k z^+\to w^+$ and $\gamma_k z^- \to w^-$ as $k\to \infty$, where $w^+,w^-\in\Lambda_U$ are the forward and backward endpoints of the spacelike geodesic tangent to $w$.

Since $\Gamma$ acts on $\Lambda_U$ as a convergence group, by taking a further subsequence, we know that there is some $a,b\in\Lambda_U$ such that $\gamma_k\vert_{\Lambda_U\setminus a}$ converges uniformly on compact sets to $b$. Thus, either $b=w^+$, in which case $a=z^-$, or $b=w^-$, in which case $a=z^+$. It suffices to prove that the former holds.

Suppose for the purpose of contradiction that the latter holds, i.e. $b=w^-$ and $a=z^+$. Pick a point $ z_*^+\in \Lambda_U\setminus \{z^-,z^+\}$. Then $z_*^+$ and $z^-$ are transverse, so \Cref{prop: spacelike geodesics} implies that there is a spacelike geodesic $\ell_*$ with $z_*^+$ and $z^-$ as its forward and backward endpoints in $\partial_\infty^{\Pb}\Hb_\tau^d$ respectively, such that $\ell$ and $\ell_*$ share a backward endpoint $\widetilde z^-\in\partial_\infty^\mathbb{S}\Hb_\tau^d$. Let $\widetilde{z}_*^+\in\partial_\infty^\mathbb{S}\Hb_\tau^d$ denote the forward endpoint of $\ell_*$. By \Cref{observation: joining by spacelike is an open condition}, there are open sets $W_{\widetilde z_*^+}$ and $W_{\widetilde z^-}$ in $\Hb_\tau^d\cup\partial_\infty^{\mathbb S}\Hb_\tau^d$ that contain $\widetilde z_*^+$ and $\widetilde z^-$ respectively, so that any point in $W_{\widetilde z_*^+}$ and any point in $W_{\widetilde z^-}$ is joined by a spacelike geodesic in $\Hb_\tau^d$. 

Since $z_*^+\neq z^+$, our assumption implies that $\gamma_k z_*^+\to w^-$ as $k\to\infty$. Hence, there is a sequence of points $(p_k)$ along $\ell_*$ that converges to $z_*^+$ such that $\gamma_k(p_k)\to w^-$ (we just need to ensure that $(p_k)$ converges to $z_*^+$ fast enough). By taking the tail of the sequences, we may assume that $\pi(g^{-n_k}z)\in W_{\widetilde z^-}$ and $p_k\in W_{\widetilde z_*^+}$ for all $k\in\Nb$, so there is a spacelike geodesic $\ell_{*k}$ in $\Hb_\tau^d$ that passes through $\pi(g^{-n_k}z)$ and $p_k$ in that order. For each $k\in\Nb$, let $z_{*k}$ denote the tangent vector to $\ell_{*k}$ at $\pi(g^{-n_k}z)$, and let $z_{*k}^+$ denote the forward endpoint of $\ell_{*k}$ in $\partial_\infty^{\Pb}\Hb_\tau^d$. 

Notice that the sequence of spacelike geodesics $(\ell_{*k})$ converges to $\ell_*$, so the sequence $(z_{*k}^+)$ converges to $z_*^+$. Thus, there is a compact set $K\subset\Fc_{1,d}$ consisting of flags that are transverse to $z^-$, such that $z_{*k}^+\in K$ for large enough $k$. Since $z^+$ and $z^-$ are the attractor and repeller of $g$ respectively, we have $g^{n_k} z_{*k}^+ \to z^+$ as $k\to\infty$. Also, 
\[\pi(g^{n_k}z_{*k})=g^{n_k}\pi(z_{*k})=g^{n_k}\pi(g^{-n_k}z)=\pi(z)~.\] 
Hence, the sequence of geodesics $(g^{n_k}\ell_{*k})$ converges to the geodesic with forward endpoint $z^+$ and which passes through $\pi(z)$, which is exactly $\ell$. Therefore $g^{n_k}z_{*k}$ converges to $z$, and so \[(z,g^{n_k} z_{*k})_q \to 1\text{ as }k\to \infty~.\] 

On the other hand, since $\pi (\gamma_kg^{-n_k} z)\to \pi(w)$ and $\gamma_k p_k\to w^-$, the sequence of geodesics $(\gamma_k\ell_{*k})$ converges to the geodesic with forward endpoint $w^-$ and which passes through $\pi(w)$. In particular, $\gamma_k z_{*k} \to -w$ as $k\to \infty$. It follows that 
\[(z,g^{n_k} z_{*k})_q = (g^{-n_k}z,z_{*k})_q = (\gamma_kg^{-n_k} z,\gamma_k z_{*k})_q\to (w,-w)_q= -1\text{ as }k\to \infty~,\]
which is a contradiction.

{\bf Step 2.} Pick $p\in \Pi$. We now prove that $p$ is a bounded parabolic point. To do so, we will need to prove that
\begin{enumerate}
    \item [(1)] $\Stab_\Gamma(p)$ acts cocompactly on $\Lambda_U\setminus\{p\}$, 
    \item [(2)] $\Stab_\Gamma(p)$ is infinite, and
    \item [(3)] $\Stab_\Gamma(p)$ does not contain any loxodromic elements.
\end{enumerate}

First, we verify that $\Stab_\Gamma(\partial H_p)\subset\Stab_\Gamma(p)$. Let 
\[f:\partial H_p\to(\mathbb{RP}^d\times\mathbb{RP}^{d*})^2\]
be the $\Stab_\Gamma(\partial H_p)$-equivariant embedding given by $f:z\mapsto(\pi(z),z^+)$, and let 
\[\Delta:=\{(x,y)\in(\mathbb{RP}^d\times\mathbb{RP}^{d*})^2:x=y\}~.\]
Notice that $\{(p,p)\}=\overline{f(\partial H_p)}\cap\Delta$. Indeed, if $(z_n)$ is a sequence in $\partial H_p$ such that $f(z_n)\to (q,q)\in\Delta$ for some $q\in(\mathbb{RP}^d\times\mathbb{RP}^{d*})\setminus\{p\}$, then \eqref{horofunction component bound} implies that $\mathcal{B}^2_p(z_n)\ge C$ for all $n\in\Nb$. Thus, by taking a subsequence we may assume that the sequence $(z_n^-)$ converges to some $q' \in\Fc_{1,d}$ that is transverse to $q$. Then by \Cref{prop: unimodal}, $\mathcal{B}_p(z_n)\to -\infty$ as $n\to\infty$, which is a contradiction. Hence, if $\gamma\in\Stab_\Gamma(\partial H_p)$, then
\[\{(\gamma p,\gamma p)\}=\overline{ f(\gamma\partial H_p)}\cap\gamma\Delta=\overline{f(\partial H_p)}\cap\Delta=\{(p,p)\}~,\]
so $\gamma\in\Stab_\Gamma(p)$.

Since $\Stab_\Gamma(\partial H_p)\subset\Stab_\Gamma(p)$, we see that
\[W^{\rm ss}_p:=\{z \in U: z^+ = p, \ \Bc_{p} (z) = C\}\subset\partial H_p\] 
is closed and $\Stab_\Gamma(\partial H_p)$-invariant. Then \Cref{horoball cocompactness} implies that $\Stab_\Gamma(\partial H_p)$ also acts cocompactly on $W^{\rm ss}_p$. Since the map 
\[W^{\rm ss}_p \to \Lambda_U\setminus\{p\}\quad\text{given by}\quad z\mapsto z^-\] is $\Stab_\Gamma(\partial H_p)$-equivariant, continuous and two-to-one, we have that $\Stab_\Gamma(\partial H_p)$, and hence $\Stab_\Gamma(p)$, acts cocompactly on $\Lambda_U\setminus\{p\}$. This proves (1). 

Since any point in $\Lambda_U\setminus \Gamma\Pi$ is conical and $\Gamma\Pi$ is contained in the limit set, then the whole $\Lambda_U$ is the limit set of the $\Gamma$-action on itself.
If the $\Gamma$-action on $\Lambda_U$ is elementary, $\Lambda_U$ has two points. Since $\Gamma$ is infinite, it follows that the stabilizer of every point in $\Lambda_U$ (and hence in $\Pi$) is infinite. On the other hand, if the $\Gamma$-action on $\Lambda_U$ is non-elementary, $\Lambda_U$ is a perfect set, so $\Lambda_U\setminus \{p\}$ is non-compact. The fact that $\Stab_\Gamma(p)$ acts cocompactly on $\Lambda_U\setminus\{p\}$ then implies that $\Stab_\Gamma(p)$ is infinite. Thus, (2) holds.

Next, suppose for the purpose of contradiction that there is a loxodromic element $\gamma\in \Stab_\Gamma(p)$. Then $\gamma$ has a fixed point  $q\in\Lambda_U\setminus\{p\}$. By replacing $\gamma$ with $\gamma^2$, we may assume that $\gamma$ preserves a flow line in $\T^1\Hb_\tau^d$, call it $\ell$, whose projection to $\Hb_\tau^d$ is a spacelike geodesic that  joins $q$ and $p$. \Cref{constant difference} implies that one of $H_p$ and $\gamma H_p$ contains the other, so by the disjointness assumption of horoballs, we have $H_p = \gamma H_p$. Thus, $\gamma$ fixes the point of intersection $\ell(\Rb)\cap \partial H_p$. However, since $\gamma$ has infinite order, this contradicts fact that the $\Gamma$-action on $U$ is properly discontinuous. We have verified (3).
\end{proof}

\section{Hyperbolicity of the flow spaces}\label{section: axiom A flows}

In this section, we prove \Cref{theoremalpha: nonwandering} and \Cref{theoremalpha: contraction} from the Introduction. We recall some definitions from hyperbolic dynamics that were used in the statement of these two theorems.

Let $\varphi^t$ be a smooth flow on a smooth manifold $M$. A point $x\in M$ is \emph{non-wandering} if for every open neighborhood $U\subset M$ of $x$ and every $T>0$, there is some $t>T$ such that $\varphi^t(U)\cap U$ is non-empty. The \emph{non-wandering set} of $(M,\varphi^t)$, denoted $\NW(M)$,  is the set of non-wandering points in $M$. Notice that $\NW(M)$ is $\varphi^t$-invariant and closed. 

A $\varphi^t$-invariant subset $H\subset M$ is a \emph{hyperbolic set} if $\T M|_H$ admits a continuous Riemannian metric whose norm we denote $\norm{\cdot}$, as well as a continuous, $\varphi^t$-invariant splitting
\[\T M\vert_H = E_s \oplus E_0 \oplus E_u\]
such that the following hold. 
\begin{enumerate}
\item The line bundle $E_0$ is spanned by the tangent direction to the flow.
\item There are constants $C>0$ and $0<\lambda<1$ such that for all $t>0$, and all $v\in E_s$,
\[\norm{D\varphi^t v}\leqslant C\lambda^t\norm{v}~.\]
\item There are constants $C>0$ and $0<\lambda<1$ such that for all $t>0$, and all $v\in E_u$,
\[\norm{D\varphi^{-t} v}\leqslant C\lambda^t\norm{v}~.\]
\end{enumerate}

In \Cref{section: the non-wandering sets}, we give the proof of \Cref{theoremalpha: nonwandering}. The proof of \Cref{theoremalpha: contraction} is more involved: for any relatively projective Anosov group $\Gamma$, we have to construct a continuous and $\varphi^t$-invariant splitting of the vector bundle $\T(\T^1\Hb_\tau^d)|_{\WH(\Lambda_\Gamma)}/\Gamma$, as well as a continuous Riemannian metric on said bundle, for which the splitting we constructed satisfy the required contraction properties in the definition of a hyperbolic set. In \Cref{section: splitting} and \Cref{section: thick-thin decompositions of flow spaces}, we will construct the required splitting and Riemannian metric respectively, before proving that the required contraction properties hold in \Cref{section: Hyperbolic flows of relatively Anosov subgroups}. Finally, in Section \Cref{section: DMS}, we describe the relationship between our results in the Anosov case and those of Delarue, Monclair, and Sanders \cite{delarue2025locally}.

\subsection{The weak hull is non-wandering}\label{section: the non-wandering sets}

Let $\Gamma\subset\PGL(d+1,\Rb)$ be a projective transverse, non-elementary, and torsion free subgroup. Recall that  $\Lambda_\Gamma\subset \Fc_{1,d}$ denotes the limit set of $\Gamma$, \[\Dc_\Gamma := \{z \in \T^1\Hb_\tau^d : \forall \eta \in \Lambda_{\Gamma}, \text{ at least one of } z^+, z^- \text{ is transverse to } \eta\}~,\] $\varphi^t$ denotes the geodesic flow on $\Dc_\Gamma$, and $\WH(\Lambda_\Gamma)\subset\Dc_\Gamma$ denotes the weak hull of $\Lambda_\Gamma$. By \Cref{proposition: domain of discontinuity}, the $\Gamma$-action on $\Dc_\Gamma$ is properly discontinuous. We again denote by \[\varphi^t: \Dc_\Gamma/\Gamma\to \Dc_\Gamma/\Gamma\] the flow that descends from the geodesic flow on $\Dc_\Gamma$. Notice that the quotient map $\Dc_\Gamma\to\Dc_\Gamma/\Gamma$ is a covering map, $\Dc_\Gamma/\Gamma$ is a smooth manifold, and the flow $\varphi^t$ on $\Dc_\Gamma/\Gamma$ is smooth. 

We now prove \Cref{theoremalpha: nonwandering}, i.e. that the non-wandering set of $(\Dc_\Gamma/\Gamma,\varphi^t)$ is $\WH(\Lambda_\Gamma)/\Gamma$.

\begin{proof}[Proof of \Cref{theoremalpha: nonwandering}]
    Since $\Gamma$ is non-elementary, the minimality of the $\Gamma$-action on $\Lambda_\Gamma$ and \Cref{dense pairs} implies that the set of closed orbits in $\WH(\Lambda_\Gamma)/\Gamma$ is dense in $\WH(\Lambda_\Gamma)/\Gamma$. At the same time, $\NW(\Dc_\Gamma/\Gamma)\subset\Dc_\Gamma/\Gamma$ is a closed subset that contains all closed orbits, so $\WH(\Lambda_\Gamma)/\Gamma\subset\NW(\Dc_\Gamma/\Gamma)$. 
    
    To prove the reverse inclusion, we pick an arbitrary point $\overline z\in \NW(\Dc_\Gamma/\Gamma)$ and prove that it lies in $\WH(\Lambda_\Gamma)$. Choose a lift $z\in\Dc_\Gamma$ of $\overline z$ and choose a sufficiently small neighborhood $U_1\subset\Dc_\Gamma$ of $z$ such that $\overline{U_1}$ is compact and the covering map $\Dc_\Gamma\to\Dc_\Gamma/\Gamma$ is injective on $U_1$. Let $(U_n)$ be a sequence of neighborhoods of $z$, with $U_n\subset U_{n-1}$ for all $n\in\Nb$ and $\bigcap U_n=\{z\}$.
    
    Since $\overline z$ is non-wandering, there exists an increasing sequence $(t_n)$ in $\Rb_{\geqslant 0}$ and a sequence $(\gamma_n)$ in $\Gamma$ such that $t_n \to +\infty$ as $n \to +\infty$ and for all $n\in\Nb$, \[\varphi^{t_n}(U_n)\cap \gamma_n U_n \ne \emptyset~.\] Observe that $(\gamma_n)$ is escaping because $U_1$ has compact closure.

    Since $\Gamma$ is projective transverse, after passing to a subsequence we may assume that $(\gamma_n)$ is a collapsing sequence with attracting point $a$ and repelling point $b$ in $\Lambda_\Gamma$. It suffices to establish that $\{z^+,z^-\}=\{a,b\}$; indeed, this implies that $z^+$ and $z^-$ both lie in $\Lambda_\Gamma$, which implies that $\overline z\in\WH(\Lambda_\Gamma)/\Gamma$.
    
    To do so, observe first that by the definition of $\Dc_\Gamma$, at least one of $z^+$ and $z^-$ is transverse to $b$. We only give the proof in the case when $z^+$ is transverse to $b$; the other case is identical with the roles of $z^+$ and $z^-$ interchanged. In this case, for every sufficiently large $n\in\Nb$, every point in $\overline{U_n^+}$ are transverse to $b$, where \[U_n^+:=\{u^+:u\in U_n\}\subset \partial_\infty^{\Pb}\Hb_\tau^d~.\] Fix $N\in\Nb$. By \Cref{lemma: projective north-south dynamics}, $\gamma_n U_N^+\to a$ as $n\to\infty$. Since \[U_N^+\cap(\gamma_n U_N^+)=\varphi^{t_n}(U_N)^+\cap(\gamma_n U_N^+)\supset\varphi^{t_n}(U_n)^+\cap(\gamma_n U_n^+) \ne \emptyset\] for all $n\geqslant N$, we have $a\in\overline{U_N^+}$. Since $U_N^+\to z^+$ as $N\to\infty$, it follows that $z^+=a$. In particular, $z^-$ transverse to $a$. Therefore we may apply the same argument to the escaping sequence $(\gamma_n^{-1})$ to get $z^-=b$. Hence, $\{z^+,z^-\}=\{a,b\}$ as required.
\end{proof}

\subsection{The splitting of $\T(\T^1\Hb_\tau^d)|_{\WH(\Lambda_\Gamma)}$}\label{section: splitting}

Suppose henceforth that $\Gamma\subset\PGL(d+1,\Rb)$ is relatively projective transverse, non-elementary, and torsion free subgroup. We now construct the splitting of the vector bundle \[\widetilde E:=\T(\T^1\Hb_\tau^d )|_{\WH(\Lambda_\Gamma)}\] that is required in the proof of \Cref{theoremalpha: contraction}.

To do so, it is convenient to use the following algebraic description of $\T^1\Hb_\tau^d$ and the geodesic flow on it. Let $e_1,\dots,e_{d+1}$ be the standard basis on $\Rb^{d+1}$. By \Cref{prop: spacelike geodesics}, the map $\ell:\Rb\to\Hb_\tau^d$ given by 
\[\ell(t)=\left[e_+\left(\frac{e^t}{\sqrt{2}}e_1+\frac{e^{-t}}{\sqrt{2}}e_{d+1}\right)+e_-\left(-\frac{e^t}{\sqrt{2}}e_1-\frac{e^{-t}}{\sqrt{2}}e_{d+1}\right)\right]\]
is a unit speed spacelike geodesic in $\Hb_\tau^d$ with $[e_+e_1-e_-e_1]$ and $[e_+e_{d+1}-e_-e_{d+1}]$ in $\partial_\infty^{\mathbb S}\Hb_\tau^d$ as its forward and backward endpoints. In the standard basis, let 
\[\mathsf L:=\left\{\begin{bmatrix}
    a&0&0\\
    0&M&0\\
    0&0&a \end{bmatrix}\in\PGL(d+1,\Rb):a\in\Rb\setminus\{0\}\text{ and } M\in\GL(d-1,\Rb)\right\}~,\]
and for any $s\in\Rb$, let \[\alpha(s)=\begin{bmatrix}
    e^s&0&0\\
    0&\Id&0\\
    0&0&e^{-s} \end{bmatrix}\in\PGL(d+1,\Rb)~.\] 

\begin{proposition}\label{algebraic flow description}
The map
\[\T^1\Hb_\tau^d\to\PGL(d+1,\Rb)/\mathsf L\quad\text{given by}\quad g\cdot\ell'(0)\mapsto g\cdot\mathsf L\]
is an isomorphism of $\PGL(d+1,\Rb)$-spaces. Via this isomorphism, the geodesic flow on $\T^1\Hb_\tau^d$ is given by $\varphi^s(g\cdot \mathsf L)=g\alpha(s)\cdot\mathsf L$.
\end{proposition}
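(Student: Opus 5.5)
The plan is to apply the orbit--stabilizer theorem to the $\PGL(d+1,\Rb)$-action on $\T^1\Hb_\tau^d$, and then read off the flow from the computation \eqref{glt formula}.

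\emph{Step 1: transitivity.} By \Cref{vectors} and the transitivity of $\PGL(d+1,\Rb)$ on $\Hb_\tau^d$, the group $\PGL(d+1,\Rb)$ acts transitively on $\T^1\Hb_\tau^d$. Hence every spacelike unit vector has the form $g\cdot\ell'(0)$, and the map $g\cdot\ell'(0)\mapsto g\cdot\Stab(\ell'(0))$ is a well-defined equivariant bijection.

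\emph{Step 2: the stabilizer.} I will show $\Stab(\ell'(0))=\mathsf L$. Since the action is by isometries, an element fixing $\ell'(0)$ fixes the geodesic $\ell$ pointwise. It therefore fixes both endpoints $[e_+e_1-e_-e_1]$ and $[e_+e_{d+1}-e_-e_{d+1}]$ in $\partial_\infty^{\mathbb S}\Hb_\tau^d$. Here the flags are $([e_1],[e_1^TQ])=([e_1],[e_{d+1}^*])$ and $([e_{d+1}],[e_1^*])$, using the identification $v\mapsto v^TQ$.

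In the basis used in the proof of \Cref{prop: spacelike geodesics}, such an element has a block form $\diag(a,M,b)$ with $ab>0$. By \eqref{glt formula} it acts on $\ell$ by translation by $\tfrac12\log\frac ab$, so fixing $\ell(0)$ forces $a=b$. Conversely, any $\diag(a,M,a)$ fixes $\ell$ pointwise. It is linear on $\hat\ell$ up to the scalar $a$, which lies in $\Rb^*\subset\mathcal U$ as a center element, so it also fixes the tangent vector $\ell'(0)=\hat\ell'(0)$ modulo $\mathcal U$.

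One subtlety needs a check: that $\diag(a,M,a)$ with $a<0$ is really the same class in $\PGL$. Since we may rescale by $-1$, this reduces to $\diag(1,-M,1)$, which is fine. Another is that, once both spherical endpoints are fixed, the block form holds with $M\in\GL(d-1,\Rb)$ acting on $[\phi_1]\cap[\phi_2]=\Span(e_2,\dots,e_d)$. Both follow directly from the cited proof.

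\emph{Step 3: homeomorphism.} The orbit map is a continuous bijection $\PGL(d+1,\Rb)/\mathsf L\to\T^1\Hb_\tau^d$. It is a homeomorphism, indeed a diffeomorphism, by the standard fact that for a transitive smooth action of a Lie group with closed stabilizer the orbit map is a diffeomorphism (second countability and Sard/Baire). Alternatively, one can observe that $\T^1\Hb_\tau^d$ is a smooth submanifold and apply the equivariant rank theorem.

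\emph{Step 4: the flow.} A direct computation gives $\alpha(s)\hat\ell(t)=\hat\ell(t+s)$. Indeed, $\alpha(s)$ scales $e_1$ by $e^s$ and $e_{d+1}$ by $e^{-s}$ on the $e_+$ part, and acts by $Q(\alpha(s)^{-1})^TQ$ on the $e_-$ part, which likewise scales $e_1$ by $e^s$ and $e_{d+1}$ by $e^{-s}$. This is the case $a=e^s$, $b=e^{-s}$ of \eqref{glt formula}. Therefore $\alpha(s)\ell'(0)=\ell'(s)=\varphi^s(\ell'(0))$. Since the geodesic flow commutes with isometries,
\[\varphi^s(g\ell'(0))=g\varphi^s(\ell'(0))=g\alpha(s)\ell'(0),\]
which is the claimed formula. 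Note also that $\alpha(s)$ normalizes, in fact centralizes, $\mathsf L$, so the flow is well defined on cosets.

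I expect the main obstacle to be Step 2, specifically the tangent-vector issue. One must ensure that an element fixing $\ell(0)$ and mapping $\ell$ to itself with the correct orientation cannot instead swap the geodesic with $\ell^{\opp}$ or reverse its direction. The cases with $ab<0$ send $\ell$ to $\ell^{\opp}$, by \Cref{comparison of boundaries}, which are disjoint from $\ell$, and reversal would exchange the endpoints. Both are excluded because the element fixes the forward endpoint in $\partial_\infty^{\mathbb S}\Hb_\tau^d$.
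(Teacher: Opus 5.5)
Your proposal is correct and follows essentially the same route as the paper. Transitivity comes from \Cref{vectors}. The identification $\Stab(\ell'(0))=\mathsf L$ comes from the block form of elements fixing both spherical endpoints together with the translation formula \eqref{glt formula}. The flow formula comes from $\alpha(s)\ell(t)=\ell(t+s)$ and the fact that the flow commutes with the $\PGL(d+1,\Rb)$-action.

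Your Step 3, showing that the equivariant bijection is a homeomorphism, makes explicit a standard point the paper leaves implicit. The converse in your Step 2 can be shortened: an element fixing $\ell$ pointwise fixes $\ell'(0)$ directly by the chain rule, so you do not need the bookkeeping with representatives modulo $\mathcal U$.
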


\begin{proof}The subgroup of $\PGL(d+1,\Rb)$ that fixes both endpoints of $\ell$ in $\partial_\infty^{\mathbb S}\Hb_\tau^d$ is 
\[\left\{\begin{bmatrix}
a&0&0\\
0&M&0\\
0&0&b \end{bmatrix}:ab>0\text{ and }M\in\GL(d-1,\Rb)\right\}~,\]
and we saw by \eqref{glt formula} that for any $t\in\Rb$ and any 
\[g=\begin{bmatrix}
a&0&0\\
0&M&0\\
0&0&b \end{bmatrix}\]
in this subgroup, we have $g(\ell(t))=\ell\left(t+\frac{1}{2}\log\frac{a}{b}\right)$. It now follows that the stabilizer in $\PGL(d+1,\Rb)$ of $\ell'(0)$ is $\mathsf L$, because the said stabilizer is the subgroup that fixes the endpoints of $\ell$ and also fixes $\ell(0)$. By \Cref{vectors}, $\PGL(d+1,\Rb)$ acts transitively on $\T^1\Hb_\tau^d$, so the first claim follows. The second claim follows since the $\PGL(d+1,\Rb)$-action on $\T^1\Hb_\tau^d$ commutes with the geodesic flow, and $\alpha(s)\ell(t)=\ell(s+t)$ for all $s,t\in\Rb$. 
\end{proof}

Let $\mathfrak{sl}(d+1,\Rb)$ denote the Lie algebra of $\PGL(d+1,\Rb)$, and let $\mathfrak h$ and $\mathfrak d$ denote the subalgebras that correspond to the Lie subgroups $\mathsf L$ and $\{\alpha(s):s\in\Rb\}$ of $\PGL(d+1,\Rb)$. Also, let $\mathsf P^+$ and $\mathsf P^-$ denote the stabilizers in $\PGL(d+1,\Rb)$ of the flags $([e_1],[e_{d+1}^*])$ and $([e_{d+1}],[e_1^*])$ respectively, and let $\mathfrak n^+$ and $\mathfrak n^-$ respectively denote the Lie algebras of the unipotent radicals in $\mathsf P^+$ and $\mathsf P^-$. Explicitly, $\mathfrak{sl}(d+1,\Rb)$ is the vector space of traceless, $(d+1)\times (d+1)$, real valued matrices equipped with the bracket $[A,B]:=AB-BA$, 
\begin{align*}
    \mathfrak h&=\left\{\begin{pmatrix} 
-\frac{1}{2}\tr(M)&0&0\\
0&M&0\\
0&0&-\frac{1}{2}\tr(M)
\end{pmatrix}:\begin{array}{c}
M\text{ is a }(d-1)\times(d-1)\\
\text{real valued matrix}
\end{array}\right\},\\
\mathfrak d&=\left\{\begin{pmatrix}
s&0&0\\
0&0&0\\
0&0&-s \end{pmatrix}:s\in\Rb\right\},\\
\mathfrak n^+&=\left\{\begin{pmatrix}
0&v^T&r\\
0&0&w\\
0&0&0 \end{pmatrix}:v,w\in\Rb^{d-1}\text{ and }r\in\Rb\right\}\\
\mathfrak n^-&=\left\{\begin{pmatrix}
0&0&0\\
w&0&0\\
r&v^T&0 \end{pmatrix}:v,w\in\Rb^{d-1}\text{ and }r\in\Rb\right\}.
\end{align*}

Notice that we have the direct sum decomposition 
\[\mathfrak{sl}(d+1,\Rb)=\mathfrak h\oplus\mathfrak n^-\oplus\mathfrak d\oplus\mathfrak n^+~,\] 
and that the adjoint action of $\mathsf L$ preserves this decomposition. As such, have a $\mathsf L$-invariant splitting
\[\T_{\mathsf L}\left(\PGL(d+1,\Rb)/{\mathsf L}\right)\cong\mathfrak{sl}(d+1,\Rb)/\mathfrak h\cong\mathfrak n^-\oplus\mathfrak d\oplus\mathfrak n^+~.\]
Then by \Cref{algebraic flow description}, we may extend this to a $\PGL(d+1,\Rb)$-invariant splitting of the tangent bundle \[\T(\T^1\Hb_\tau^d)\cong\T\left(\PGL(d+1,\Rb)/{\mathsf L}\right)=\widetilde E_u\oplus \widetilde E_0\oplus \widetilde E_s~\] where $\widetilde E_u\vert_{\mathsf L}=\mathfrak n^-$, $\widetilde E_0\vert_{\mathsf L}=\mathfrak d$ and $\widetilde E_s\vert_{\mathsf L}=\mathfrak n^+$. Note that for all $s\in\Rb$, $\Ad(\alpha(s))$ preserves $\mathfrak n^-$, $\mathfrak d$ and $\mathfrak n^+$. Therefore \Cref{algebraic flow description} also implies that this splitting is invariant under $\D\varphi^t$. Note that the involution ${\rm opp}$ preserves this splitting, while negation switches $\widetilde E_u$ and $\widetilde E_s$ and preserves $\widetilde E_0$.

Recall that for $x\in\Fc_{1,d}$, $\Bc_x:\T^1\Hb_\tau^d\to\Rb$ denotes the horofunction defined in \Cref{section: the horofunctions}. For any $z\in\T^1\Hb_\tau^d$, we define \[M_u(z):=\{w \in \T^1\Hb_\tau^d: w^- = z^-\text{ and } \Bc_{z^-} (w) = \Bc_{z^-} (z)\}\] and \[M_s(z):=\{w \in \T^1\Hb_\tau^d: w^+ = z^+\text{ and } \Bc_{z^+} (w) = \Bc_{z^+} (z)\}~.\] Also, define the maps \[\beta^\pm:\T^1\Hb_\tau^d\to\partial_\infty^{\Pb}\Hb_\tau^d\cong\Fc_{1,d}\] by $\beta^\pm(z)=z^\pm$, and notice that they are smooth. The next proposition relates the splitting of $\T(\T^1\Hb_\tau^d)$ to the horofunctions.

\begin{proposition}\label{proposition: differential of endpoint map}
Let $z\in\T^1\Hb_\tau^d$. Then the following hold,
\begin{enumerate}
    \item $\widetilde E_u|_z=\T_zM_u(z)$, and the map \[(\D\beta^+)_z:\T(\T^1\Hb_\tau^d)|_z\to\T_{z^+}\Fc_{1,d}\] when restricted to $\widetilde E_u|_z$ is a linear isomorphism onto $\T_{z^+}\Fc_{1,d}$.
    \item $\widetilde E_s|_z=\T_zM_s(z)$, and the map \[(\D\beta^-)_{z}:\T(\T^1\Hb_\tau^d)|_{z}\to\T_{z^-}\Fc_{1,d}\] when restricted to $\widetilde E_s|_z$ is a linear isomorphism onto $\T_{z^-}\Fc_{1,d}$.
\end{enumerate}
\end{proposition}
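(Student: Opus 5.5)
By $\PGL(d+1,\Rb)$-equivariance of the splitting, of $M_u,M_s$ (via \Cref{properties of the horofunction}: $gM_u(z)=M_u(gz)$ since $\Bc_{gz^-}(g\cdot)-\Bc_{z^-}(\cdot)$ is constant), and of $\beta^\pm$, together with \Cref{algebraic flow description}, it suffices to treat the basepoint $z=\ell'(0)=\mathsf L$. Moreover, (2) follows from (1) by applying negation, which swaps $\widetilde E_u$ and $\widetilde E_s$, swaps $z^+$ and $z^-$, and hence swaps $M_u$ and $M_s$. So we only prove (1) at $z=\mathsf L$, where $z^+=([e_1],[e_{d+1}^*])$ and $z^-=([e_{d+1}],[e_1^*])$.

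\textbf{Step 1: the $\mathsf N^-$-orbit lies in $M_u(z)$.} Let $\mathsf N^-=\exp(\mathfrak n^-)$, the unipotent radical of $\mathsf P^-=\Stab(z^-)$. For $n\in\mathsf N^-$ we have $(n\cdot\mathsf L)^-=nz^-=z^-$. Since $n$ is unipotent, hence weakly unipotent, and fixes $z^-$, \Cref{unipotents preserve horofunctions} gives $\Bc_{z^-}(nz)=\Bc_{z^-}(z)$. Thus $\mathsf N^-\cdot z\subset M_u(z)$, and its tangent space at $z$ is exactly $\mathfrak n^-$ (mod $\mathfrak h$), which is $\widetilde E_u|_z$, because $\mathfrak n^-\cap\mathfrak h=0$.

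\textbf{Step 2: $\mathsf N^-\cdot z$ is an open subset of $M_u(z)$.} The key is a dimension count. The set $\{w:w^-=z^-\}$ is the $\mathsf P^-$-orbit of $z$: $\mathsf P^-$ acts transitively on flags transverse to $z^-$, and the stabilizer of the pair $(z^+,z^-)$ acts transitively on the geodesics joining them together with their parametrizations. Indeed, by \Cref{prop: spacelike geodesics} and \Cref{comparison of boundaries}, its elements with $ab<0$ realize $\opp$. This orbit has tangent space $\mathfrak n^-\oplus\mathfrak d$ modulo $\mathfrak h$, since $\mathfrak p^-=\mathfrak h\oplus\mathfrak d\oplus\mathfrak n^-$. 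Along the $\mathfrak d$-direction, $\Bc_{z^-}$ has derivative $-1$ by \Cref{prop: unimodal}(1). Hence the level set $M_u(z)$ is a smooth submanifold of the $\mathsf P^-$-orbit of codimension one, with tangent space $\mathfrak n^-$. Concretely, $\mathsf P^-\cdot z\cong \mathsf N^-\times\{\alpha(s)\}\times\{\pm\}$, and $\Bc_{z^-}(n\alpha(s)z)=\Bc_{z^-}(z)-s$, so $M_u(z)$ is a union of at most two $\mathsf N^-$-orbits, namely those of $z$ and ${\opp}(z)$. In all cases $\T_zM_u(z)=\mathfrak n^-=\widetilde E_u|_z$.

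\textbf{Step 3: $\D\beta^+$ restricted to $\widetilde E_u|_z$.} The map $n\mapsto nz^+$ is the standard diffeomorphism of $\mathsf N^-$ onto the open Bruhat cell of flags transverse to $z^-$. Its differential at the identity is therefore the isomorphism $\mathfrak n^-\to\mathfrak{sl}(d+1,\Rb)/\mathfrak p^+\cong\T_{z^+}\Fc_{1,d}$, which holds because $\mathfrak n^-\oplus\mathfrak p^+=\mathfrak{sl}(d+1,\Rb)$. The dimensions agree: $\dim\mathfrak n^-=2d-1=\dim\Fc_{1,d}$.

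\textbf{Main obstacle.} The delicate point is Step 2: correctly identifying the fibre $\{w^-=z^-\}$ as a $\mathsf P^-$-orbit and checking that the horofunction is a submersion there with kernel $\mathfrak n^-$. This is handled by the explicit $\Bc_{z^-}(n\alpha(s)z)=\Bc_{z^-}(z)-s$ formula.
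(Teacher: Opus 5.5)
Your proof is correct. It shares the paper's skeleton: reduce to $z=\mathsf L$ by homogeneity, then use \Cref{unipotents preserve horofunctions} to get $\exp(\mathfrak n^-)\cdot\mathsf L\subset M_u(\mathsf L)$, hence $\widetilde E_u|_{\mathsf L}\subset\T_{\mathsf L}M_u(\mathsf L)$. Where you differ is in how you get the reverse inclusion and the statement about $\D\beta^+$.

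The paper treats $M_u$ and $M_s$ together. From $M_u(\mathsf L)\cap M_s(\mathsf L)=\{\mathsf L,\opp(\mathsf L)\}$ it infers that $\T_{\mathsf L}M_u(\mathsf L)$ and $\T_{\mathsf L}M_s(\mathsf L)$ are transverse and avoid $\widetilde E_0|_{\mathsf L}$, so a dimension count turns both inclusions into equalities. For $\D\beta^+$ it appeals to $\beta^+$ being a smooth bijection onto the flags transverse to $z^-$, together with $\exp(\mathfrak n^-)$-homogeneity.

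You instead:
\begin{itemize}
\item identify $\{w:w^-=z^-\}$ with $\mathsf P^-\cdot z$ (this is in fact immediate from transitivity of $\PGL(d+1,\Rb)$ on $\T^1\Hb_\tau^d$);
\item use $\Bc_{z^-}(n\alpha(s)z)=\Bc_{z^-}(z)-s$ to show $M_u(z)=\mathsf N^-z\sqcup\mathsf N^-\opp(z)$;
\item read off $\D\beta^+$ from $\mathfrak n^-\oplus\mathfrak p^+=\mathfrak{sl}(d+1,\Rb)$;
\item deduce (2) from (1) by negation rather than repeating the argument.
\end{itemize}

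Your route is a little longer but more self-contained. It actually shows that $M_u(z)$ is a smooth submanifold near $z$, which the paper's passage from a discrete intersection to transversality of tangent spaces takes for granted. It also correctly records that $\beta^+$ is two-to-one on $M_u(z)$, since $\opp(z)\in M_u(z)$ has the same forward endpoint. The paper's wording (\emph{bijection}, \emph{$\exp(\mathfrak n^-)$-homogeneous}) glosses over this, though harmlessly, because only the component through $z$ matters.
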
 

\begin{proof}
First, we prove that $\widetilde E_u|_z=\T_zM_u(z)$ and $\widetilde E_s|_z=\T_zM_s(z)$. By the homogeneity of the $\PGL(d+1,\Rb)$-action on $\T^1\Hb_\tau^d\cong\PGL(d+1,\Rb)/{\mathsf L}$, we need only to verify this at $z=\mathsf L\in \PGL(d+1,\Rb)/{\mathsf L}$. Observe from \Cref{prop: unimodal}~(1) that $M_u(\mathsf{L})\cap M_s(\mathsf{L})=\{\mathsf{L},{\rm opp}(\mathsf{L})\}$, so $\T_{\mathsf{L}}M_u(\mathsf{L})$ and $\T_{\mathsf{L}}M_s(\mathsf{L})$ are transverse subspaces of $\T_{\mathsf{L}}(\T^1\Hb_\tau^d)$ that do not contain $\widetilde E_0|_{\mathsf{L}}$. It thus suffices to show that $\widetilde E_u|_{\mathsf{L}}\subset\T_{\mathsf{L}}M_u(\mathsf{L})$ and $\widetilde E_s|_{\mathsf{L}}\subset\T_{\mathsf{L}}M_s(\mathsf{L})$. We will only verify the former; the latter is similar. Via the identification $\T_{\mathsf L}(\T^1\Hb_\tau^d)\cong\mathfrak n^-\oplus\mathfrak d\oplus\mathfrak n^+$, $\tilde{E}_u|_{\mathsf{L}}$ is identified with $\mathfrak n^-$, so $\tilde{E}_u|_{\mathsf{L}}$ is tangent to $\exp(\mathfrak n^-)\cdot \mathsf L\subset\T^1\Hb_\tau^d$. By \Cref{unipotents preserve horofunctions}, $\exp(\mathfrak n^-)\cdot \mathsf L\subset M_u(\mathsf L)$, so $\widetilde E_u|_{\mathsf{L}}\subset\T_{\mathsf{L}}M_u(\mathsf{L})$.

By \Cref{points along spacelike geodesics v2} and \Cref{prop: unimodal}~(1), $\beta^+|_{M_u(\mathsf L)}$ is a bijection between $M_u(\mathsf L)$ and the set of flags in $\Fc_{1,d}$ that are transverse to $z^-$. Since $\beta^+$ is smooth and $M_u({\mathsf L})$ is $\exp(\mathfrak n^-)$-homogeneous, it follows that $(\D\beta^+)_z$ when restricted to $\widetilde E_u|_z$ is a linear isomorphism onto $\T_{z^+}\Fc_{1,d}$. This finishes the proof of (1). We can similarly finish the proof of (2). 
\end{proof}

By restricting the vector bundle $\T(\T^1\Hb_\tau^d)$ and its splitting to $\WH(\Lambda_\Gamma)\subset\T^1\Hb_\tau^d$, we obtain the required continuous, $\Gamma$-invariant, ${\rm opp}$-invariant, $\varphi^t$-invariant splitting of 
\begin{align}\label{sjgnhslkjnf}
\widetilde E=\widetilde E_u|_{\WH(\Lambda_\Gamma)}\oplus \widetilde E_0|_{\WH(\Lambda_\Gamma)}\oplus \widetilde E_s|_{\WH(\Lambda_\Gamma)}~.
\end{align}
We refer to this splitting as the \emph{canonical splitting} of $\widetilde E$. 

\subsection{The Riemannian metric on $\T(\T^1\Hb_\tau^d)|_{\WH(\Lambda_\Gamma)}$}\label{section: thick-thin decompositions of flow spaces}
Our next goal is to give a careful construction of the Riemannian metric on $\widetilde E$ used in the proof of \Cref{theoremalpha: contraction}. Our construction is inspired by an analogous construction from \cite{zhu2022relatively} (see also \cite[Section~6]{wang2023notions}). First, we prove that if the flow line of a point in $\WH(\Lambda_\Gamma)$ passes through a horoball, then the time that this flow line enters and exits the horoball varies continuously with the point. Using this fact, we then explicit construct a class of $\Gamma$-invariant, continuous Riemannian metrics on $\widetilde E$, called \emph{$\lambda$-extensions}.

\subsubsection*{Continuous entry time and exit time}

Let $\Lambda\subset\Fc_{1,d}$ be a transverse set. For any $x\in\Lambda$ and any $A>0$, let \[\mathsf{touch}_{x,A}:=\{z\in\WH(\Lambda):\Bc_x(\varphi^t(z))\geqslant A\text{ for some }t\in\Rb\}~.\] Since $\Bc_x$ is continuous, \[\interior(\mathsf{touch}_{x,A}):=\{z\in\mathsf{touch}_{x,A}:\Bc_x(\varphi^t(z))> A\text{ for some }t\in\Rb\}\] and \[\partial(\mathsf{touch}_{x,A}):=\mathsf{touch}_{x,A}\setminus\interior(\mathsf{touch}_{x,A})\] are the interior and topological boundary of $\mathsf{touch}_{x,A}$ respectively, both of which are non-empty. Then define the maps \[\mathsf{in}_{x,A}:\mathsf{touch}_{x,A}\to[-\infty,+\infty)\quad\text{by}\quad\mathsf{in}_{x,A}(z):=\inf\{t\in\Rb:\Bc_x(\varphi^t(z)) \geqslant A\}\] and \[\mathsf{out}_{x,A}:\mathsf{touch}_{x,A}\to(-\infty,+\infty]\quad\text{by}\quad\mathsf{out}_{x,A}(z):=\sup\{t\in\Rb:\Bc_x(\varphi^t(z))\geqslant A\}~.\] Using \Cref{prop: unimodal}, we deduce several properties of the maps $\mathsf{in}_{x,A}$ and $\mathsf{out}_{x,A}$.

\begin{proposition}\label{corollary: continuity of in and out}
    Let $x\in\Lambda$ and $A>0$.
    \begin{enumerate}
        \item For every $z\in\interior(\mathsf{touch}_{x,A})$, $\mathsf{in}_{x,A}(z)<\mathsf{out}_{x,A}(z)$, and at least one of $\mathsf{in}_{x,A}(z)$ or $\mathsf{out}_{x,A}(z)$ is a real number.
        \item For every $z\in\partial(\mathsf{touch}_{x,A})$, $\mathsf{in}_{x,A}(z)= \mathsf{out}_{x,A}(z)$.
        \item For every $z\in\mathsf{touch}_{x,A}$, $-z\in\mathsf{touch}_{x,A}$ and $\mathsf{in}_{x,A}(z)=-\mathsf{out}_{x,A}(-z)$.
        \item The maps $\mathsf{in}_{x,A}$ and $\mathsf{out}_{x,A}$ are both continuous.
    \end{enumerate}
\end{proposition}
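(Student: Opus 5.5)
The plan is to fix $z\in\WH(\Lambda)$ and study the one-variable function $f_z(t):=\Bc_x(\varphi^t(z))$ by splitting into cases according to whether $x\in\{z^+,z^-\}$. If $x=z^+$, \Cref{prop: unimodal}(1) gives $f_z(t)=\Bc_x(z)+t$. Then $\{t:f_z(t)\geqslant A\}=[A-\Bc_x(z),+\infty)$, so $\mathsf{in}_{x,A}(z)=A-\Bc_x(z)\in\Rb$ and $\mathsf{out}_{x,A}(z)=+\infty$. The case $x=z^-$ is symmetric. If $x\notin\{z^+,z^-\}$, \Cref{prop: unimodal}(2) says $f_z$ is strictly increasing up to its unique critical point $t_0$, strictly decreasing afterwards, and tends to $-\infty$ at both ends. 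So $\{t:f_z(t)\geqslant A\}$ is a compact interval $[\mathsf{in},\mathsf{out}]$, which is nondegenerate exactly when $f_z(t_0)>A$ and reduces to the point $\{t_0\}$ when $f_z(t_0)=A$. This gives (1) and (2): in the case $x=z^\pm$, $z$ always lies in the interior, and one of $\mathsf{in},\mathsf{out}$ is infinite while the other is finite.

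For (3), we use that $\Bc_x$ is negation-invariant and that $\varphi^t(-z)=-\varphi^{-t}(z)$. Hence $f_{-z}(t)=f_z(-t)$, and the superlevel set for $-z$ is the reflection of the one for $z$. In particular $-z\in\mathsf{touch}_{x,A}$ and $\mathsf{in}_{x,A}(z)=-\mathsf{out}_{x,A}(-z)$.

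The main work is (4). By (3), it suffices to treat $\mathsf{in}_{x,A}$, so let $z_n\to z$ in $\mathsf{touch}_{x,A}$. Joint continuity of $(t,w)\mapsto\Bc_x(\varphi^t(w))$ gives two easy facts.
\begin{enumerate}
\item If $f_z(s)>A$ for some $s$, then $f_{z_n}(s)>A$ for large $n$, so $\limsup\mathsf{in}(z_n)\leqslant s$.
\item If $f_z(s)<A$ for all $s$ in some interval $(-\infty,s_1]$ with $s_1$ finite, then the same holds for $z_n$ on a compact piece $[s_0,s_1]$.
\end{enumerate}
Unimodality makes these sufficient. If $\mathsf{in}(z)$ is finite, $f_z$ is strictly increasing near $\mathsf{in}(z)$, or equals $A$ only at the peak in the boundary case. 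For any $\epsilon>0$ we have $f_z(\mathsf{in}(z)-\epsilon)<A$, and for $z$ in the interior also $f_z(\mathsf{in}(z)+\epsilon)>A$, which gives $|\mathsf{in}(z_n)-\mathsf{in}(z)|<\epsilon$ eventually.

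The genuine obstacle is excluding a jump of $\mathsf{in}(z_n)$ to very negative values (or to $-\infty$) while $\mathsf{in}(z)$ is finite. This would require $f_{z_n}\geqslant A$ at times $s_n\to-\infty$. Unimodality then forces $f_{z_n}\geqslant A$ on all of $[s_n,\mathsf{in}(z)-\epsilon]$, so the only possibility is that the backward rays of $z_n$ enter the horoball arbitrarily far back. I plan to rule this out with a limiting argument in the spirit of \Cref{bounded converges to unbounded}. Since $z_n^-\to z^-$ and $\mathsf{in}(z)>-\infty$, we have $z^-\neq x$. Because $f_z(s)\to-\infty$ as $s\to-\infty$, the points $\varphi^s(z)$ for $s\ll0$ are uniformly deep in the complement, and by continuity so are the $\varphi^s(z_n)$ for $s$ in any fixed compact range. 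To control the noncompact range I will use the explicit formula for $\Bc^1_x$ along flow lines from the proof of \Cref{prop: unimodal}(2). The function $A(t)$ there has coefficients depending continuously on $z_n^\pm$ and $\pi(z_n)$, and $e^{-4t}$ dominates as $t\to-\infty$ uniformly in $n$ because the relevant coefficient $(a_1b_{d+1})^2$ stays bounded away from $0$ when $z^-\neq x$. This shows $f_{z_n}(s)<A$ for all $s\leqslant s_0$ uniformly in $n$.

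If instead $\mathsf{in}(z)=-\infty$, then $x=z^-$ and $f_z(s)>A$ for all $s$, so fact (1) gives $\mathsf{in}(z_n)\to-\infty$. Continuity of $\mathsf{out}_{x,A}$ then follows from (3).
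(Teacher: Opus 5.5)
Your arguments for (1)--(3) match the paper's. So does the architecture of (4): prove semicontinuity of $\mathsf{in}_{x,A}$, then transfer to $\mathsf{out}_{x,A}$ via (3). The real difference is the lower bound $\liminf\mathsf{in}(z_n)\geqslant\mathsf{in}(z)$.

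The paper obtains that bound by rerunning its compact-interval argument ``with $(-\infty,c]$ in place of $[a,b]$''. Continuity of $(t,w)\mapsto\Bc_x(\varphi^t(w))$ gives no uniformity over a non-compact ray, so that step needs extra input, and you correctly identified and supplied it. Your tail estimate works once you make three things explicit:
\begin{itemize}
\item use $\Bc_x\leqslant\Bc^1_x$ from \eqref{horofunction component bound};
\item choose $g_n\to g$ in $\PGL(d+1,\Rb)$ carrying a fixed base vector to $z_n$ and $z$, so the cocycle of \Cref{properties of the horofunction} stays bounded;
\item note that the $e^{-4t}$ coefficient is bounded away from $0$ exactly because $x$ is transverse to $z^-$, while the other coefficients stay bounded.
\end{itemize}
There is a shorter route to the same uniformity. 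If $s_n\to-\infty$, then $\pi(\varphi^{s_n}(z_n))\to z^-$. Since $\Bc_x\leqslant\Bc^1_x\leqslant\Bc^0_x$, $\Delta(x,z^-)>0$ and $\Delta(z^-,z^-)=0$, you get $\Bc_x(\varphi^{s_n}(z_n))\to-\infty$. This is the same trick as in Cases (1)--(2) of the proof of \Cref{lemma: cocompactness of horospheres}.

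What your write-up never establishes is the upper bound $\limsup\mathsf{in}(z_n)\leqslant\mathsf{in}(z)$ when $z\in\partial(\mathsf{touch}_{x,A})$. Your fact (1) is unavailable there, because $f_z\leqslant A$ everywhere. Nothing you wrote prevents the superlevel set of $f_{z_n}$ from sitting far to the right of the peak of $f_z$.

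This closes with what you already have. Your lower-semicontinuity argument applies at every point of $\mathsf{touch}_{x,A}$, in particular at $-z$. By (3), $\mathsf{out}_{x,A}$ is then upper semicontinuous at $z$. Using (2), this gives $\limsup\mathsf{in}(z_n)\leqslant\limsup\mathsf{out}(z_n)\leqslant\mathsf{out}(z)=\mathsf{in}(z)$, which is exactly the paper's sandwich. Alternatively, run your tail estimate at $+\infty$; this is legitimate because $x\notin\{z^+,z^-\}$ at boundary points.

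Two smaller points:
\begin{itemize}
\item At interior points the tail estimate is unnecessary. For large $n$ you have $f_{z_n}(\mathsf{in}(z)+\epsilon)>A>f_{z_n}(\mathsf{in}(z)-\epsilon)$, and the superlevel set of $f_{z_n}$ is an interval, so $\mathsf{in}(z_n)>\mathsf{in}(z)-\epsilon$ immediately. Your sentence ``unimodality forces $f_{z_n}\geqslant A$ on $[s_n,\mathsf{in}(z)-\epsilon]$'' needs a point to the right where $f_{z_n}\geqslant A$. So it is justified only in the interior case, and there it already contradicts your fact (2). The tail estimate is genuinely needed only at boundary points, and there it does not rely on that sentence.
\item When $\mathsf{in}(z)=-\infty$, you have $f_z(s)>A$ only for $s$ sufficiently negative, not for all $s$. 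That is all fact (1) requires, so the conclusion is unaffected.
\end{itemize}
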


\begin{proof}
Since $x$ and $A$ are fixed, in this proof, we denote $\mathsf{touch}_{x,A}$, $\mathsf{in}_{x,A}$, and $\mathsf{out}_{x,A}$ simply by $\mathsf{touch}$, $\mathsf{in}$, and $\mathsf{out}$ respectively.

(1). Since $z\in\interior(\mathsf{touch})$, there is some $t_0\in\Rb$ such that $\Bc_x(\varphi^{t_0}(z))>A$. The continuity of both $\Bc_x$ and the flow then imply that $\mathsf{in}(z)<t_0<\mathsf{out}(z)$. The fact that at least one of $\mathsf{in}(z)$ or $\mathsf{out}(z)$ is a real number follows from \Cref{prop: unimodal}.

(2). If $z\in\partial(\mathsf{touch})$, then by \Cref{prop: unimodal}, there is some $t_0\in\Rb$ such that $\Bc_x(\varphi^{t_0}(z))=A$ and $\Bc_x(\varphi^t(z))<A$ for all $t\in\Rb\setminus\{t_0\}$. Necessarily, $t_0 =\mathsf{in}(z) = \mathsf{out}(z)$.

(3). Notice that for any $z\in\WH(\Lambda)$ and any $t\in\Rb$, we have 
\[\Bc_x(\varphi^t(-z))=\Bc_x(-\varphi^{-t}(z))=\Bc_x(\varphi^{-t}(z))~.\]
As such, if $z\in\mathsf{touch}$, then $-z\in\mathsf{touch}$ and
\[-\mathsf{out}(-z)=-\sup\{t\in\Rb:\Bc_x(\varphi^{-t}(z))\geqslant A\}=\inf\{-t\in\Rb:\Bc_x(\varphi^{-t}(z))\geqslant A\}=\mathsf{in}(z)~.\]

(4). Suppose that we can show that $\mathsf{in}$ is lower semicontinuous on $\mathsf{touch}$ and upper semicontinuous on $\interior(\mathsf{touch})$. Then by (3), $\mathsf{out}$ is upper semicontinuous on $\mathsf{touch}$ and lower semicontinuous on $\interior(\mathsf{touch})$. Thus, $\mathsf{in}$ and $\mathsf{out}$ are continuous on $\interior(\mathsf{touch})$. The continuity of $\mathsf{in}$ and $\mathsf{out}$ on $\partial(\mathsf{touch})$ follow from the fact that for all $z_0\in\partial(\mathsf{touch})$, we have 
\[\mathsf{in}(z_0) \leqslant \liminf_{z\to z_0}\mathsf{in}(z) \leqslant \limsup_{z\to z_0}\mathsf{out}(z) \leqslant \mathsf{out}(z_0)=\mathsf{in}(z_0)\] 
where first and third inequality follows from the semicontinuity properties of $\mathsf{in}$ and $\mathsf{out}$, while the equality is part (2).

It remains to prove that $\mathsf{in}$ is lower semicontinuous on ${\rm touch}$ and upper semicontinuous on ${\rm int}(\mathsf{touch})$. If $z_0 \in \interior(\mathsf{touch})$, then (1) implies that there are $a,b\in\Rb$ such that $[a,b]\subset(\mathsf{in}(z_0),\mathsf{out}(z_0))$. Then by \Cref{prop: unimodal} there is some $\epsilon>0$ such that  
\[\Bc_x(\varphi^{[a,b]}(z_0))\subset (A+\epsilon,+\infty)~.\]
Since $\Bc_x$ and the flow are both continuous, there is some open neighborhood $U\subset\interior(\mathsf{touch})$ of $z_0$ such that for all $z\in U$,
\[\Bc_x(\varphi^{[a,b]}(z))\subset (A+\epsilon,+\infty)~.\]
This implies that $\mathsf{in}(z)<a$ for all $z\in U$. Since the interval $[a,b]\subset(\mathsf{in}(z_0),\mathsf{out}(z_0))$ is arbitrary, we have that 
\[\limsup_{z\to z_0}\mathsf{in}(z)\leqslant \mathsf{in}(z_0)~.\]

We now assume $z_0$ is an arbitrary tangent vector in $\mathsf{touch}$ rather than only in its interior. If $\mathsf{in}(z_0)\in\Rb$, then by using intervals of the form $(-\infty,c]\subset(-\infty,\mathsf{in}(z_0))$ in place of intervals of the form $[a,b]\subset(\mathsf{in}(z_0),\mathsf{out}(z_0))$, a similar argument implies that
\[\liminf_{z\to z_0}\mathsf{in}(z)\geqslant \mathsf{in}(z_0)~.\]
If $\mathsf{in}(z_0)=-\infty$, then this is also holds trivially.
\end{proof}

We now specialize to the case when the transverse set $\Lambda$ is the limit set $\Lambda_\Gamma$ (recall here that $\Gamma\subset\PGL(d+1,\Rb)$ is a relatively projective Anosov subgroup). By \Cref{theoremalpha: characterizations1}, there is some finite subset $\Pi\subset\Lambda_\Gamma$ and constant $A\in\Rb$ such that 
\[\WH(\Lambda_\Gamma)=\WH(\Lambda_\Gamma)^{\rm thin}(\Pi,A)\cup\WH(\Lambda_\Gamma)^{\rm thick}(\Pi,A) \]
is a thick-thin decomposition of $\WH(\Lambda_\Gamma)$. For the remainder of this section, we will largely regard $\Pi$ and $A$ are fixed, so we drop them from the notation and denote
\[\WH(\Lambda_\Gamma)^{\rm thin}:=\WH(\Lambda_\Gamma)^{\rm thin}(\Pi,A)\quad\text{and}\quad\WH(\Lambda_\Gamma)^{\rm thick}:=\WH(\Lambda_\Gamma)^{\rm thick}(\Pi,A)~.\]

Notice that for each $p\in\Pi$, $\Bc_p$ is invariant under $\Stab_\Gamma(p)$, so 
$\mathsf{touch}_{p,A}$ is $\Gamma$-invariant, and both $ \mathsf{in}_{p,A}$ and $ \mathsf{out}_{p,A}$ are invariant under pre-composition by elements in $\Stab_\Gamma(p)$. As such, for any $q\in\Gamma\Pi$, we may define 
\[\mathsf{touch}_q:=\gamma\mathsf{touch}_{p,A}~,\]
\[\mathsf{in}_q:= \mathsf{in}_{p,A}\circ\gamma^{-1}:\mathsf{touch}_q\to[-\infty,+\infty)~,\]
and
\[\mathsf{out}_q =: \mathsf{out}_{p,A}\circ\gamma^{-1}:\mathsf{touch}_q\to(-\infty,+\infty]~,\]
for any $p\in\Pi$ and $\gamma\in\Gamma$ that satisfy $q=\gamma p$. 

\subsubsection*{$\lambda$-extensions on $E$} 
Fix a continuous, $\Gamma$-invariant  Riemannian metric $h$ on $\widetilde{E}|_{\WH(\Lambda_\Gamma)^{\rm thick}}$ for which that the splitting \eqref{sjgnhslkjnf} is orthogonal (such $h$ can be shown to exist using a standard partitions of unity argument). For any $\lambda>0$, we now construct the \emph{$\lambda$-extension of $h$}, denoted $h^\lambda$, which is a continuous, $\Gamma$-invariant Riemannian metric on $\widetilde E$ that restricts to $h$ on $\widetilde{E}|_{\WH(\Lambda_\Gamma)^{\rm thick}}$.

To do so, we will use the following lemma, which tells us that the weighted geometric mean of two inner products on a finite dimensional vector space is a well-defined inner product.

\begin{lemma}\cite[Proposition~3.14]{zhu2022relatively}\label{lemma: interpolated metric}
    Let $V$ be a finite-dimensional real vector space, and let $h_0,h_1$ be two inner products on $V$. Then there is a basis $\{v_1,v_2,...,v_d\}$ of $V$ that is orthogonal with respect to both $h_0,h_1$. Furthermore, for all $\eta\in[0,1]$, the inner product $h_0^{1-\eta}h_1^\eta$ with the defining property that 
    \[h_0^{1-\eta}h_1^\eta(v_i,v_j):= h_0(v_i,v_j)^{1-\eta} h_1(v_i,v_j)^\eta\] for any $i,j \in \{1,2,...,d\}$ does not depend on the choice of the basis $\{v_1,v_2,...,v_d\}$. 
\end{lemma}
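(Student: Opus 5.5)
The plan is to reduce to simultaneous diagonalization and then check basis-independence directly.

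For existence of the common orthogonal basis, I would regard $h_0$ as the reference inner product. By the Riesz representation theorem there is a unique linear map $T:V\to V$ with $h_1(u,w)=h_0(Tu,w)$ for all $u,w\in V$. Symmetry of $h_1$ makes $T$ self-adjoint with respect to $h_0$, and positivity of $h_1$ makes $T$ positive definite. The spectral theorem (in the Euclidean space $(V,h_0)$) then gives an $h_0$-orthonormal basis $\{v_1,\dots,v_d\}$ of eigenvectors, $Tv_i=\mu_iv_i$ with $\mu_i>0$. Consequently
\[h_0(v_i,v_j)=\delta_{ij}\quad\text{and}\quad h_1(v_i,v_j)=\mu_i\delta_{ij}.\]
Any rescaling of such a basis is still orthogonal for both forms. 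More generally, any basis orthogonal for both $h_0$ and $h_1$ consists of eigenvectors of $T$: if $h_0(v_i,v_j)=h_1(v_i,v_j)=0$ for $i\ne j$, then $h_0(Tv_i,v_j)=0$ for $j\neq i$, so $Tv_i\in\Rb v_i$.

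The defining formula only prescribes values on basis pairs, with the convention that off-diagonal entries, which are $0$ for both forms, give $0$. So $h_0^{1-\eta}h_1^\eta$ is the bilinear form that is diagonal in the chosen basis with diagonal entries $h_0(v_i,v_i)^{1-\eta}h_1(v_i,v_i)^\eta>0$. This is automatically a symmetric, positive definite inner product.

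For independence of the basis, I would write $v_i=c_iu_i$ with $u_i$ $h_0$-unit. The $i$-th diagonal entry is then
\[c_i^2\,h_1(u_i,u_i)^{\eta}=c_i^2\mu_i^\eta,\]
so the form equals $h_0(T^\eta\,\cdot,\cdot)$, where $T^\eta$ is defined by functional calculus (it acts as $\mu^\eta$ on the $\mu$-eigenspace of $T$). Since $T$ and $T^\eta$ depend only on $h_0$ and $h_1$, this expression is basis-free.

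The only point needing care is a repeated eigenvalue, where a doubly orthogonal basis is not unique up to scaling. Here a basis can mix vectors within an eigenspace $E_\mu$. Because $T^\eta$ acts as the scalar $\mu^\eta$ on $E_\mu$, the identity $h_0^{1-\eta}h_1^\eta(u,w)=h_0(T^\eta u,w)$ holds for any basis vectors $u,w$ lying in eigenspaces. This settles the repeated-eigenvalue case, which I expect to be the main (minor) obstacle.
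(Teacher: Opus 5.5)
Your proof is correct. The paper gives no argument of its own (it cites Zhu--Zimmer), and your route is the standard simultaneous-diagonalization proof: write $h_1=h_0(T\cdot,\cdot)$ with $T$ positive and $h_0$-self-adjoint, observe that every basis orthogonal for both forms is a $T$-eigenbasis, and identify $h_0^{1-\eta}h_1^\eta$ with $h_0(T^\eta\cdot,\cdot)$. That identification already handles repeated eigenvalues, so no separate argument is needed. The basis-free formula $h_0(T^\eta\cdot,\cdot)$ also makes the joint continuity of $(h_0,h_1,\eta)\mapsto h_0^{1-\eta}h_1^\eta$ evident, which the paper tacitly uses when it asserts that the $\lambda$-extension $h^\lambda$ is continuous in case (c).
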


Now, pick any $z\in \overline{\WH(\Lambda_\Gamma)^{\rm thin}}$. Recall that for all $p\in\Pi$, the horoball centered at $p$ of radius $A$ is \[H_p(A):=\{z\in\WH(\Lambda_\Gamma):\Bc_p(z)> A\}~,\] and that for all $q=\gamma p\in\Gamma\Pi$, we denote \[\widehat{H}_q(A):=\gamma H_p(A)~.\] Since
\begin{align}\label{equation: thin part closure}
    \overline{\WH(\Lambda_\Gamma)^{\rm thin}}=\bigcup_{q\in\Gamma\Pi}\overline{\widehat{H}_q(A)}
\end{align}
is a disjoint union, there is a unique $q\in\Gamma\Pi$ such that $z\in\overline{\widehat{H}_q(A)}$ (in particular, $z\in\mathsf {touch}_q$). To simplify notation, we set 
\[t_1:=\mathsf{in}_q(z)\in[-\infty,0),\quad t_2:=\mathsf{out}_q(z)\in(0,+\infty],\quad\text{and}\quad L:= t_2-t_1\in[0,+\infty]~.\]
If $t_1\in\Rb$, set $z_1:=\varphi^{t_1}(z)$, and if $t_2\in\Rb$, set $z_2:=\varphi^{t_2}(z)$. Also, if $L<+\infty$ (equivalently, both $t_1$ and $t_2$ lie in $\Rb$), set \[t':=t_1+\frac{1}{3}L,\quad t'':=t_2-\frac{1}{3}L,\quad z':=\varphi^{t'}(z),\quad\text{and}\quad z'':=\varphi^{t''}(z)~,\] and notice that $\varphi^{\frac{1}{3}L}(z_1)=z'$, $\varphi^{\frac{1}{3}L}(z')=z''$, and $\varphi^{\frac{1}{3}L}(z'')=z_2$. Finally, if $-t_1,t_2\geqslant\frac{1}{3}L$, set
\[\eta:=-\frac{3t'}{L}~,\]
and notice that $\eta\in[0,1]$ and $1-\eta=\frac{3t''}{L}$. \Cref{corollary: continuity of in and out}~(1) implies that one of the following conditions hold for $z$:
\begin{enumerate}[label=(\alph*)]
    \item \label{item1} $t_1\in\Rb$ and $-t_1\leqslant \frac{1}{3}L$,
    \item \label{item2} $t_2\in\Rb$ and $t_2\leqslant \frac{1}{3}L$,
    \item \label{item3} $t_1,t_2\in\Rb$ and $-t_1,t_2\geqslant\frac{1}{3}L$.
\end{enumerate}

We now define the inner product $h_z^\lambda$ on $E|_z$ in the above three cases separately. For any $v\in E|_z$, we decompose \[v=v_u+v_0+v_s~,\] where $v_u\in\widetilde E_u|_z$, $v_0\in\widetilde E_0|_z$, and $v_s\in \widetilde E_s|_z$. If condition (a) holds, then define
\begin{align}\label{first chunk}
    h_z^\lambda(v,w):=e^{-2\lambda t_1}h_{z_1}(v^1_u,w^1_u)+h_{z_1}(v^1_0,w^1_0)+e^{2\lambda t_1}h_{z_1}(v^1_s,w^1_s)
\end{align}
for all $v,w\in E|_z$, where $v^1:=\D \varphi^{t_1}(v)$ and $w^1:=\D \varphi^{t_1}(w)$ are vectors in $E|_{z_1}$. If condition (b) holds, then define
\begin{align}\label{last chunk}
    h_z^\lambda(v,w):=e^{-2\lambda t_2}h_{z_2}(v^2_u,w^2_u)+h_{z_2}(v^2_0,w^2_0)+e^{2\lambda t_2}h_{z_2}(v^2_s,w^2_s)
\end{align}
for all $v,w\in E|_z$, where $v^2:=\D \varphi^{t_2}(v)$ and $w^2:=\D \varphi^{t_2}(w)$ are vectors in $E|_{z_2}$. If condition (c) holds, then observe that the pushforwards $\D\varphi^{-t'}(h^\lambda_{z'})$ and $\D\varphi^{-t''}(h^\lambda_{z''})$ are inner products on $E|_z$, so we may define
\begin{align}\label{middle chunk}
    h^\lambda_z:=\D\varphi^{-t'}(h^\lambda_{z'})^{1-\eta}\D\varphi^{-t''}(h^\lambda_{z''})^{\eta}~.
\end{align}

Notice that if $z\in\overline{\WH(\Lambda_\Gamma)^{\rm thin}}\cap \WH(\Lambda_\Gamma)^{\rm thick}$, then either $t_1=0$ and $z=z_1$, or $t_2=0$ and $z=z_2$. In both cases, we see from the definition that $h^\lambda_z=h_z$. Also, notice that conditions (a) and (b) are mutually exclusive. Furthermore, if conditions (a) (respectively, (b)) and (c) hold simultaneously, then $t'=0$, $\eta=0$, and $z=z'$ (respectively, $t''=0$, $\eta=1$ and $z=z''$), so the two definitions of $h^\lambda_z$ agree. We have thus verified that $h^\lambda$ is indeed well-defined.

\begin{proposition}
    For any $\lambda>0$ and any $\Gamma$-invariant, continuous Riemannian metric $h$ on $E|_{\WH(\Lambda_\Gamma)^{\rm thick}}$, $h^\lambda$ is a continuous Riemannian metric on $E$.
\end{proposition}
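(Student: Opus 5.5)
The plan is to check three things in turn. First, $h^\lambda_z$ is a positive definite inner product at each point. Second, $h^\lambda$ is continuous on the open set $\WH(\Lambda_\Gamma)^{\rm thin}$ and on the interior of the thick part. Third, it is continuous across the common boundary $\overline{\WH(\Lambda_\Gamma)^{\rm thin}}\cap\WH(\Lambda_\Gamma)^{\rm thick}=\bigcup_q \partial\widehat H_q(A)$.

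On the thick part $h^\lambda=h$, so continuity there is the hypothesis. Positive definiteness is immediate in cases (a) and (b): the formulas \eqref{first chunk} and \eqref{last chunk} pull back $h_{z_i}$ by the linear isomorphism $\D\varphi^{t_i}$, which preserves the canonical splitting, and rescale the three orthogonal summands by positive factors. In case (c), $z',z''$ themselves satisfy (a) and (b) respectively, since $-t'$ and $t''$ measured from $z',z''$ equal $\tfrac13 L$. Hence the two pushed-forward inner products are genuine inner products, and \Cref{lemma: interpolated metric} gives a well-defined inner product for $\eta\in[0,1]$.

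For continuity I work on one horoball at a time. Since \eqref{equation: thin part closure} is a disjoint and (by \Cref{proposition: disjointness collection of horoballs}) locally finite union of closed sets, it suffices to prove continuity on each $\overline{\widehat H_q(A)}$ separately. There the times $t_1=\mathsf{in}_q(z)$ and $t_2=\mathsf{out}_q(z)$ are continuous by \Cref{corollary: continuity of in and out}~(4), as maps into $[-\infty,+\infty)$ and $(-\infty,+\infty]$. The points $z_i=\varphi^{t_i}(z)$ (when $t_i$ is finite), and likewise $z'$, $z''$ and $\eta$ (when $L<\infty$), therefore vary continuously. The flow and its derivative are smooth, the canonical splitting is continuous, and $h$ is continuous on the thick part, where $z_i$ lies because $\Bc_q(z_i)=A$. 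It follows that each of the formulas \eqref{first chunk}, \eqref{last chunk} and \eqref{middle chunk} is continuous on the set where its case condition holds. Continuity of the weighted geometric mean in $(h_0,h_1,\eta)$ follows from writing it as $h_0^{1/2}(h_0^{-1/2}h_1h_0^{-1/2})^\eta h_0^{1/2}$ in matrix form.

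The regions (a), (b), (c) are closed in $\mathsf{touch}_q\cap\overline{\widehat H_q(A)}$ relative to the locus where the relevant $t_i$ is finite. Since the formulas agree on overlaps, as already verified in the text, the pasting lemma gives continuity. The main obstacle is the points where $t_1=-\infty$ or $t_2=+\infty$, that is, flow lines ending at $q$. Near such a $z$ with, say, $t_2=+\infty$ and $t_1$ finite, nearby points have $t_2$ large, so $L$ is large and $-t_1<\tfrac13L$. Nearby points therefore lie in case (a), and the same formula \eqref{first chunk} applies continuously because $t_1$ is continuous and finite there. The symmetric argument handles $t_1=-\infty$. Both cannot be infinite by \Cref{corollary: continuity of in and out}~(1), except when $z^+=z^-$, which is impossible in $\WH$.

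Finally, $\Gamma$-invariance follows because $h$ and the splitting are $\Gamma$-invariant and $\mathsf{in}_q,\mathsf{out}_q$ are defined equivariantly; this is not strictly needed for the statement.
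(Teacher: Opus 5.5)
Your proposal is correct and follows the same route as the paper. Like the paper, you reduce to a single closed horoball $\overline{\widehat H_q(A)}$ via the disjoint, locally finite decomposition, use continuity of $\mathsf{in}_q$ and $\mathsf{out}_q$ to get closed case regions on which the defining formulas are continuous, and paste; your positive-definiteness check, the matrix form of the weighted geometric mean, and the separate treatment of points with $t_1=-\infty$ or $t_2=+\infty$ spell out details the paper leaves implicit.
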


\begin{proof}
    Since $h^\lambda$ is an extension of $h$, it is continuous on $E|_{\WH(\Lambda_\Gamma)^{\rm thick}}$, so it suffices to verify that $h^\lambda$ is continuous on $E|_{\overline{\WH(\Lambda_\Gamma)^{\rm thin}}}$. The fact that \eqref{equation: thin part closure} is a disjoint union that implies that it suffices to pick $q\in\Gamma\Pi$ and verify that $h^\lambda$ is continuous on $E|_{\overline{\widehat{H}_q(A)}}$. 

    By \Cref{corollary: continuity of in and out}, $\mathsf{in}_q(z)$ and $\mathsf{out}_q(z)$, and hence $L$, vary continuously with $z\in \overline{\widehat{H}_q(A)}$. It follows that for each (i) from \ref{item1}, \ref{item2}, \ref{item3}, the subset \[X_{(i)}:=\left\{z\in\overline{\widehat{H}_q(A)}:\text{condition (i) holds}\right\}\subset\overline{\widehat{H}_q(A)}\] is closed, and $h^\lambda$ is continuous on $E|_{X_{(i)}}$. Since $\overline{\widehat{H}_q(A)}=X_{\text{(a)}}\cup X_{\text{(b)}}\cup X_{\text{(c)}}$, we have shown that $h^\lambda$ is continuous on $E|_{\overline{\widehat{H}_q(A)}}$. 
\end{proof}

\begin{remark}
    If we fix $\lambda>0$, then different choices of a thick-thin decomposition of $\WH(\Lambda_\Gamma)$ and different choices of a Riemannian metric $h$ on $\WH(\Lambda_\Gamma)^{\rm thick}$ yield biLipschitz $\lambda$-extensions. Hence, for the required contraction properties to hold for $h^\lambda$, the choice of $\lambda$ is important, but the choices of thick-thin decomposition and of $h$ are irrelevant.
\end{remark}

\subsection{Hyperbolic flows of relatively Anosov subgroups}\label{section: Hyperbolic flows of relatively Anosov subgroups}
Recall that we have fixed a thick-thin decomposition of $\WH(\Lambda_\Gamma)$ and a continuous, $\Gamma$-invariant Riemannian metric $h$ on $\WH(\Lambda_\Gamma)^{\rm thick}$. Our goal now is to prove that there is some $\lambda>0$ so that \Cref{theoremalpha: contraction} holds, where the splitting of $\widetilde E$ is the canonical splitting and the Riemannian metric is the $\lambda$-extension of $h$. 

To do so, we require two different estimates of the contraction rates of $E_s$ and $E_u$ under the flow; one for the part of the flow with endpoints in $\WH(\Lambda_\Gamma)^{\rm thick}$, and one for the part of the flow that lies in $\WH(\Lambda_\Gamma)^{\rm thin}$.

\subsubsection*{Thick part estimate}

Let $\norm{\cdot}$ denote the norm of the Riemannian metric $h$. The following proposition states the required contraction estimate for the part of the flow with endpoints in $\WH(\Lambda_\Gamma)^{\rm thick}$.

\begin{proposition}\label{proposition: thick estimate}
    There are constants $C,c>0$ such that if $z\in \WH(\Lambda_\Gamma)^{\rm thick}$, and $t\geqslant 0$ satisfies  and $\varphi^t(z)\in \WH(\Lambda_\Gamma)^{\rm thick}$, then \[\norm{\D\varphi^t(v)}_{\varphi^t(z)} \leqslant C e^{-ct}\norm{v}_z \quad \text{for all}\quad v\in \widetilde E_s\vert_z\] \[\text{and} \quad \norm{\D\varphi^{-t}(v)}_z \leqslant C e^{-ct}\norm{v}_{\varphi^t(z)} \quad \text{for all}\quad v\in \widetilde E_u\vert_{\varphi^t(z)}~.\]
\end{proposition}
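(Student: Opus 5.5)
The plan is to turn the statement into a derivative estimate for an element of $\Gamma$ acting on $\Fc_{1,d}$, using cocompactness of the thick part together with the endpoint maps. I treat $\widetilde E_s$ in detail; $\widetilde E_u$ follows by applying the same argument to $-z$, since negation swaps $\widetilde E_s$ and $\widetilde E_u$ and commutes with $\Gamma$.

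\emph{Step 1 (compact comparison).} By \Cref{lemma: cocompactness of thick part of WH}, fix a compact $K\subset \WH(\Lambda_\Gamma)^{\rm thick}$ with $\Gamma K=\WH(\Lambda_\Gamma)^{\rm thick}$. Since $\beta^-(\varphi^t w)=w^-$, we have $\D\beta^-\circ \D\varphi^t=\D\beta^-$ on $\widetilde E_s$. By \Cref{proposition: differential of endpoint map}(2), $\D\beta^-$ restricts to an isomorphism $\widetilde E_s|_w\to \T_{w^-}\Fc_{1,d}$, and this isomorphism depends continuously on $w$. Comparing $h$ with the angular norm $\norm{\cdot}'$ on the compact set $K$ and using $\Gamma$-invariance of $h$, I get a constant $C_1$ with the following property: if $\gamma w\in K$, then for all $v\in\widetilde E_s|_w$,
\[ C_1^{-1}\norm{v}_w\leqslant \norm{\D\gamma(\D\beta^- v)}'\leqslant C_1\norm{v}_w. \]
Now pick $\gamma_0,\gamma_t\in\Gamma$ with $\gamma_0 z\in K$ and $\gamma_t\varphi^t(z)\in K$, and set $g:=\gamma_t\gamma_0^{-1}$ and $x:=\gamma_0 z^-$. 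Then
\[ \norm{\D\varphi^t v}_{\varphi^tz}\leqslant C_1^2\,\norm{\D g|_{\T_x\Fc_{1,d}}}'\,\norm{v}_z. \]
So it suffices to show $\norm{\D g|_{\T_x\Fc_{1,d}}}'\leqslant Ce^{-ct}$, uniformly over all such $z$ and $t$.

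\emph{Step 2 (the cocycle controls the size of $g$ at the endpoints).} Set $w:=\gamma_0 z\in K$, so that $gx=(\gamma_t\varphi^t z)^-$, $g\varphi^t(w)\in K$, and $x=w^-$. The function $u\mapsto \Bc_{u^\pm}(u)$ is continuous, hence bounded on $K$. By \Cref{prop: unimodal}(1), $\Bc_{w^-}(\varphi^t w)=\Bc_{w^-}(w)-t$. Combining this with \Cref{properties of the horofunction} applied to $g$ at $x=([v],[\phi])$ gives
\[ \tfrac12\Bigl(\log\tfrac{\norm{gv}}{\norm{v}}+\log\tfrac{\norm{g\phi}}{\norm{\phi}}\Bigr)=\Bc_{gx}(g\varphi^t w)-\Bc_x(\varphi^t w)=t+O(1). \]
Doing the same at $y:=w^+$, where \Cref{prop: unimodal}(1) gives $+t$ instead of $-t$, yields $\log(\norm{gv_+}\norm{g\phi_+})-\log(\norm{v_+}\norm{\phi_+})=-2t+O(1)$. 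Since $x$ and $y$ are uniformly transverse (they are the endpoints of $w\in K$), these two estimates should force the singular value gaps $\sigma_1/\sigma_2(g)$ and $\sigma_d/\sigma_{d+1}(g)$ to be at least $e^{2t}$ up to a bounded factor. They should also place $x$ within bounded distance of the expanding data $U_1(g^{-1}),U_d(g^{-1})$ and $y$ uniformly transverse to it, or the reverse, depending on the sign convention. Either way, $g$ acts near $x$ with derivative of size $\asymp e^{\pm 2t}$.

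\emph{Step 3 (derivative bound), the main obstacle.} Converting Step 2 into the bound $\norm{\D g|_{\T_x}}'\leqslant Ce^{-ct}$ is where I expect the real work. First I will settle the sign: I will check in the model $\mathsf L\in\PGL(d+1,\Rb)/\mathsf L$, using $\Ad(\alpha(-t))$ on $\mathfrak n^+$ and $\Ad(\alpha(t))$ on $\mathfrak n^-$, which of $x$ or $y$ sits at the contracting flag of $g$. If the computation shows $x$ is at the expanding side, I will instead run Steps 1--3 with $g^{-1}$ and $\widetilde E_u$, which is equivalent after negation. For the contraction I will then apply \Cref{lemma: attracting contraction}(3). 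Its constant has to be made uniform, because $g$ ranges over an infinite set. I plan to get uniformity from a compactness-and-contradiction argument: take sequences $(z_n,t_n)$ with $t_n\to\infty$ violating the bound, normalize them using $K$, pass to limits of the transverse pairs $(x_n,y_n)$ and of the singular value decomposition frames of $g_n$, and contradict \Cref{lemma: attracting contraction} together with the Step 2 gap. Bounded $t$ is harmless, because $\D\varphi^t$ is uniformly bounded on compact sets of the form $\Gamma$-translates of $\varphi^{[0,T]}(K)$.
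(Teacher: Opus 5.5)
\textbf{Verdict: there is a genuine gap in Step~2.} The cocycle identities cannot force the singular value gaps, and without a quantitative property of $\Gamma$ you get no exponential rate.

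\textbf{What is correct.} Your Step~1 and the compactness-and-contradiction part of Step~3 are essentially the paper's own reduction. The paper proves
\[\norm{\D\varphi^t(v)}_{\varphi^t(z)}\leqslant A_0\rho(\gamma)\norm{v}_z,\qquad \rho(\gamma)=\tfrac{\sigma_2}{\sigma_1}(\gamma)+\tfrac{\sigma_{d+1}}{\sigma_d}(\gamma),\]
by transporting through $\D\beta^-$ on the compact set $K$. It then bounds the derivative of the tracking element on $\Fc_{1,d}$ by \Cref{lemma: attracting contraction}~(3), using that $x$ stays uniformly transverse to the repelling flag of $g$. Your cocycle computation in Step~2 is also correct. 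Since $\norm{gv}/\norm{v}\leqslant\sigma_1(g)$ and $\norm{g\phi}/\norm{\phi}\leqslant\sigma_{d+1}(g)^{-1}$, it gives $R(g)=\frac{\sigma_1}{\sigma_{d+1}}(g)\geqslant e^{2t-O(1)}$. That is a neat substitute for the lower bound in the paper's lemma comparing $R(\gamma)$ with $e^{2t}$.

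\textbf{The gap.} The two cocycle identities do not force $\frac{\sigma_1}{\sigma_2}(g)$ or $\frac{\sigma_d}{\sigma_{d+1}}(g)$ to be large.
\begin{itemize}
\item Any $g$ with $w,\ g\varphi^t(w)\in K$ has the form $g=kh\alpha(-t)f^{-1}$, with $f,k$ in a fixed compact set and $h=\diag(1,h',1)\in\mathsf L$. Both identities hold for every $h'\in\GL(d-1,\Rb)$, because $h\alpha(-t)$ scales $e_{d+1}$ and $e_1^*$ by $e^{t}$ (and $e_1$, $e_{d+1}^*$ by $e^{-t}$) regardless of $h'$.
\item For example, when $d\geqslant 2$, take $\diag(e^{-t},s,1,\dots,1,e^{t})$ with $s\geqslant e^{t}$. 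It satisfies both identities, yet has no gap and does not contract at $[e_{d+1}]$ in the $e_2$-direction.
\item Even for $g=\alpha(-t)$ the derivative at $x$ has size $e^{-t}$, not $e^{-2t}$. The derivative on $\Fc_{1,d}$ is governed by $\rho(g)$, not by $R(g)^{-1}$.
\item For honest Anosov groups the gap is only about $e^{2t/d}$, e.g.\ the image of a cocompact Fuchsian group under the irreducible representation into $\PGL(d+1,\Rb)$. So ``$e^{2t}$ up to a bounded factor'' is false as well.
\end{itemize}

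\textbf{Consequence for Step~3.} Your compactness argument can at best give $\norm{\D g|_{\T_x\Fc_{1,d}}}'\leqslant A\rho(g)$, together with $\rho(g)\to 0$ from divergence, with no rate. In the presence of cusps you also cannot bootstrap a rate by iterating over short pieces. A thick-to-thick segment may stay inside a single horoball for an arbitrarily long time, and Lemma~2.5(2) only gives non-quantitative contraction there.

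\textbf{Missing ingredient.} You need the uniform regularity of relatively projective Anosov groups (Kapovich--Leeb; Zhu--Zimmer, as cited in the paper): there are $c_1,C_1>0$ such that for all $\gamma\in\Gamma$,
\[\log\tfrac{\sigma_1}{\sigma_2}(\gamma),\ \log\tfrac{\sigma_d}{\sigma_{d+1}}(\gamma)\ \geqslant\ c_1\log\tfrac{\sigma_1}{\sigma_{d+1}}(\gamma)-C_1.\]
This gives $\rho(g)\leqslant C_0R(g)^{-c_1}\leqslant C'e^{-2c_1t}$. With that input, your Steps~1--3 (Step~2 reinterpreted as the lower bound on $R(g)$) close up as in the paper.
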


The proof of \Cref{proposition: thick estimate} uses the following lemmas. 

Since the $\Gamma$-action on  $\WH(\Lambda_\Gamma)^{\rm thick}$ is cocompact, we may fix a compact subset $K\subset\WH(\Lambda_\Gamma)^{\rm thick}$ whose $\Gamma$-translates cover $\WH(\Lambda_\Gamma)^{\rm thick}$. The following lemma describes the attracting point and repelling point of collapsing sequences that track flowlines starting from points in $K$.

\begin{lemma}\label{lemma: classical flow contraction}
    Suppose that $(z_n)$ is a sequence in $K$, $(\gamma_n)$ is a collapsing sequence in $\Gamma$, and $(t_n)$ is a sequence in $\Rb_{\geqslant 0}$ such that $w_n:=\gamma_n^{-1}\varphi^{t_n}(z_n)\in K$ for all $n\in\Nb$. Then the sequences $(z_n^+)$ and $(w_n^-)$ in $\Fc_{1,d}$ converge respectively to the attracting point and repelling point of $(\gamma_n)$.
\end{lemma}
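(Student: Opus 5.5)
The plan is to argue by subsequences. Let $a$ and $b$ be the attracting and repelling points of $(\gamma_n)$. It suffices to show that every subsequence of $(z_n,w_n,t_n)$ has a further subsequence along which $z_n^+\to a$ and $w_n^-\to b$. Since $K$ is compact, we may pass to a subsequence so that $z_n\to z\in K$, $w_n\to w\in K$, and $t_n\to t_\infty\in[0,+\infty]$. Because $z,w\in\WH(\Lambda_\Gamma)$, we have $z^+\neq z^-$ and $w^+\neq w^-$, and continuity of the endpoint maps gives $z_n^\pm\to z^\pm$ and $w_n^\pm\to w^\pm$.

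First I rule out $t_\infty<\infty$. In that case $\varphi^{t_n}(z_n)\to\varphi^{t_\infty}(z)$. Since $\gamma_n w_n=\varphi^{t_n}(z_n)$ and $w_n\to w$, the escaping sequence $(\gamma_n)$ (collapsing sequences are escaping by definition) would move a compact set of $\WH(\Lambda_\Gamma)$ onto a set meeting another compact set infinitely often. This contradicts the proper discontinuity from \Cref{proposition: domain of discontinuity}. Hence $t_n\to+\infty$.

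Next I show that $\pi(\varphi^{t_n}(z_n))\to z^+$ in $\Hb_\tau^d\cup\partial_\infty^{\Pb}\Hb_\tau^d\cong\mathbb{RP}^d\times\mathbb{RP}^{d*}$. I expect this to be the main technical point, though it should be routine. I would use \Cref{algebraic flow description}: write $z_n=g_n\mathsf L$ with $g_n\to g$ in $\PGL(d+1,\Rb)$, where $z=g\mathsf L$. Then $\pi(\varphi^{t_n}(z_n))=g_n\alpha(t_n)\pi(\mathsf L)$. The explicit formula for $\ell(t)$ shows that $\alpha(t)\pi(\mathsf L)=\ell(t)\to([e_1],[e_{d+1}^*])$ as $t\to\infty$. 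Uniform convergence of $g_n\to g$ on $\mathbb{RP}^d\times\mathbb{RP}^{d*}$ then gives convergence to $g([e_1],[e_{d+1}^*])=z^+$.

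Now apply \Cref{collapsing condition} to the sequence $(w_n)$, which converges to $w$, and to $(\gamma_n)$. We have $\gamma_n w_n^+=(\varphi^{t_n}z_n)^+=z_n^+\to z^+$ and $\gamma_n w_n^-=z_n^-\to z^-$, with $z^+\neq z^-$. We also have $\gamma_n\pi(w_n)=\pi(\varphi^{t_n}(z_n))\to z^+$. The lemma then says that $(\gamma_n)$ is collapsing with attracting point $z^+$ and repelling point $w^-$. Since attracting and repelling points of a collapsing sequence are unique, $a=z^+$ and $b=w^-$. Therefore $z_n^+\to a$ and $w_n^-\to b$ along this subsequence, and because the subsequence was arbitrary, the full sequences converge.
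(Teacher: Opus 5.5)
Your proof is correct, but it runs in the opposite direction from the paper's. The paper starts from the dynamics of $(\gamma_n)$. After a small flow perturbation it arranges that $\pi(w_\infty)$ is transverse to the repelling point. It then applies \Cref{lemma: projective north-south dynamics} to get that $\gamma_n\pi(w_n)=\pi(\varphi^{t_n}(z_n))$ converges to the attracting point, and reads this limit off as $z_\infty^+$. Finally it applies north--south dynamics a second time, to $(\gamma_n^{-1})$ and the sequence $(z_n^-)$, which is uniformly transverse to $z_\infty^+$, to get that $w_n^-=\gamma_n^{-1}z_n^-$ converges to the repelling point.

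You instead establish the geometric input first: $t_n\to+\infty$ via proper discontinuity, and $\pi(\varphi^{t_n}(z_n))\to z^+$ via the model $\PGL(d+1,\Rb)/\mathsf L$. You then let \Cref{collapsing condition} identify both points at once.

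Your route has two small advantages. First, \Cref{collapsing condition} already forces the repelling point to lie in $\{w^+,w^-\}$, to which $\pi(w_n)$ is automatically uniformly transverse, so the perturbation step becomes unnecessary. Second, you make explicit the divergence $t_n\to+\infty$. The paper uses this tacitly when it writes $\lim\varphi^{t_n}(z_n)=z_\infty^+$; there it follows because the limit is a boundary point.

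The paper's route is more self-contained. It uses only two applications of north--south dynamics, and needs neither proper discontinuity nor the earlier lemma.
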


\begin{proof}
Let $a$ and $b$ be the repelling point and attracting point of $(\gamma_n)$ respectively. By taking a subsequence, we may assume that there is some $z_\infty, w_\infty\in K$ such that $z_n\to z_\infty$ and $w_n\to w_\infty$ as $n\to\infty$. 

Notice that $a$ is transverse to either $w_\infty^-$ and $w_\infty^+$. Thus, by picking a small $\epsilon>0$, replacing $K$ with $\varphi^{[-\epsilon,0]}(K)$, and replacing each $z_n$ and $t_n$ with $\varphi^{-\frac{\epsilon}{2}}(z_n)$ and $t_n+\frac{\epsilon}{2}$ respectively if necessary, we may assume that $w_\infty$ is transverse to $a$. Then by taking the tail, we may assume that the sequence $(w_n)$ is uniformly transverse to $a$. Hence by \Cref{lemma: projective north-south dynamics}, 
\[b=\lim_{n\to\infty}\gamma_nw_n=\lim_{n\to\infty}\varphi^{t_n}(z_n)=z_\infty^+=\lim_{n\to\infty}z_n^+~.\]
In particular, the sequence $(z_n^-)$ is uniformly transverse to $b$, which is the repelling point of the collapsing sequence $(\gamma_n^{-1})$. Thus
\[a=\lim_{n\to\infty}\gamma_n^{-1}z_n^-=\lim_{n\to\infty}w_n^-~.\qedhere\]
\end{proof}

For convenience, for every $g\in\PGL(d+1,\Rb)$, we denote 
\[\rho(g):= \frac{\sigma_2}{\sigma_1}(g) + \frac{\sigma_{d+1}}{\sigma_d}(g) \quad \text{and} \quad R(g):=\frac{\sigma_1}{\sigma_{d+1}}(g)~.\]
The next lemma bounds, for any $v\in K$, the quantity $\norm{\D\varphi^t(v)}_{\varphi^t(z)}$ in terms of the singular values of the element $\gamma\in\Gamma$ that tracks the flowline through $v$.

\begin{lemma}\label{lemma: stable singular estimate}
    There exists a constant $A_0>0$ such that
    whenever $z\in K$, $v\in \widetilde E_s|_z$, $t\geqslant 0$, and $\gamma \in \Gamma$ satisfies $\varphi^t(z)\in \gamma K$, one has \[\norm{\D\varphi^t(v)}_{\varphi^t(z)}
    \leqslant A_0\rho(\gamma)\norm{v}_z~.\]
\end{lemma}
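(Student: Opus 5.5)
The approach is to transfer the estimate from $\widetilde E_s$ to the tangent bundle of the flag manifold using the endpoint map $\beta^-$ from \Cref{proposition: differential of endpoint map}, and then to apply the contraction estimate \Cref{lemma: attracting contraction}(3) to $\gamma^{-1}$ acting near the point $z^-$.

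The first step records the relation between the two bundles. By \Cref{proposition: differential of endpoint map}(2), $(\D\beta^-)_z$ restricts to a linear isomorphism $\widetilde E_s|_z\to\T_{z^-}\Fc_{1,d}$. This isomorphism depends continuously on $z$. On the compact set $K$ (and on a slightly enlarged compact neighbourhood of $K$ if needed), there is therefore a constant $A_1\geqslant 1$ such that
\[A_1^{-1}\norm{v}_z\leqslant\norm{(\D\beta^-)_z v}'\leqslant A_1\norm{v}_z\]
for all $z\in K$ and $v\in\widetilde E_s|_z$. Here $\norm{\cdot}'$ is the product angular norm on $\T\Fc_{1,d}$.

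The second step handles the flow and the group. The endpoint map satisfies $\beta^-\circ\varphi^t=\beta^-$, it is $\PGL(d+1,\Rb)$-equivariant, and $\widetilde E_s$ is invariant under both $\D\varphi^t$ and $\Gamma$. Set $w:=\gamma^{-1}\varphi^t(z)\in K$ and $v':=\D(\gamma^{-1}\circ\varphi^t)(v)\in\widetilde E_s|_w$. Then
\[(\D\beta^-)_w v'=\D\gamma^{-1}\big((\D\beta^-)_z v\big),\]
with $w^-=\gamma^{-1}z^-$. Since $h$ is $\Gamma$-invariant, $\norm{\D\varphi^t(v)}_{\varphi^t(z)}=\norm{v'}_w$. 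Combining this with the first step gives
\[\norm{\D\varphi^t(v)}_{\varphi^t(z)}\leqslant A_1^2\,\frac{\norm{\D\gamma^{-1}(u)}'_{\gamma^{-1}z^-}}{\norm{u}'_{z^-}}\norm{v}_z,\qquad u=(\D\beta^-)_zv.\]
It therefore suffices to show that $\norm{\D\gamma^{-1}}'$ at $z^-$ is at most $A\rho(\gamma^{-1})$ for some uniform constant $A$. This is enough because $\rho(\gamma^{-1})=\rho(\gamma)$, which follows from $\sigma_i(\gamma^{-1})=\sigma_{d+2-i}(\gamma)^{-1}$.

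The third step, which is the main obstacle, is to make \Cref{lemma: attracting contraction}(3) uniform over all of $\Gamma$. I would argue by contradiction. Suppose there are sequences $z_n\in K$, $t_n\geqslant 0$ and $\gamma_n$ for which the ratio divided by $\rho(\gamma_n)$ tends to infinity. The case where $(\gamma_n)$ stays in a finite set is trivial, because each fixed $\gamma$ is biLipschitz and $\rho(\gamma)$ is bounded below by a positive constant. So we may pass to a collapsing subsequence. The sequence $\gamma_n^{-1}$ is then also collapsing; its attracting flag is the repelling point $a$ of $(\gamma_n)$, and its repelling flag is the attracting point $b$ of $(\gamma_n)$. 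By \Cref{lemma: classical flow contraction}, $z_n^+\to b$ and $w_n^-\to a$. Because $z_n\in K$, the points $z_n^-$ lie in a compact set, and they converge to $z_\infty^-$. This limit is transverse to $z_\infty^+=b$, since $z_\infty^+$ and $z_\infty^-$ are distinct points of the transverse set $\Lambda_\Gamma$. The points $z_n^-$ therefore lie in a fixed compact set of flags transverse to $b$. Applying \Cref{lemma: attracting contraction}(3) to $(\gamma_n^{-1})$ on this compact set gives the bound $A\rho(\gamma_n^{-1})=A\rho(\gamma_n)$ for large $n$, which contradicts the assumption.

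A small subtlety is the case $t_n$ small with $\gamma_n$ escaping. This cannot occur, because proper discontinuity of the action, together with $\varphi^{[0,T]}(K)$ being compact, forces $t_n\to\infty$ whenever $\gamma_n$ escapes. The argument in \Cref{lemma: classical flow contraction} already uses this implicitly. Taking $A_0:=A_1^2A$, after the contradiction argument, completes the proof.
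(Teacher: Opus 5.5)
Your proof is correct and follows the paper's argument. Both compare $\widetilde E_s$ with $\T\Fc_{1,d}$ via $(\D\beta^-)_z$ on $K$, then argue by contradiction, using \Cref{lemma: classical flow contraction} to identify $z_\infty^+$ as the attracting point of $(\gamma_n)$ and \Cref{lemma: attracting contraction}(3) applied to $(\gamma_n^{-1})$ on flags near $z_\infty^-$. The differences are only organizational: you isolate the operator-norm bound for $\D\gamma^{-1}$ at $z^-$ before the contradiction step, and you make explicit two facts the paper uses tacitly, namely $\rho(\gamma^{-1})=\rho(\gamma)$ and $t_n\to\infty$ when $(\gamma_n)$ escapes.
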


\begin{proof}
    Let $\norm{\cdot}'$ denote the norm on $\T\Fc_{1,d}$ induced by the angular metric $d_\angle$ on $\mathcal F_{1,d}$. \Cref{proposition: differential of endpoint map}~(2) implies that for every $z\in K$, the derivative $(\D\beta^-)_z$ restricted to $\widetilde E_s\vert_z$ is a linear isomorphism onto $\T_{z^-}\mathcal F_{1,d}$. Since $K$ is compact, there is a constant $A_1>1$ such that, for every $z\in K$ and every $v\in \widetilde E_s\vert_z$,
    \begin{equation}\label{equation: endpoint-comparison} A_1^{-1}\norm{v}_z \leqslant \norm{\D\beta^-(v)}'_{z^-}\leqslant A_1\norm{v}_z~. \end{equation}

    Suppose that the lemma fails. Let $(k_n)$ be a sequence in $\Nb$ that grows to $\infty$. Then for all $n\in\Nb$, there exists $z_n\in K$, a unit (in the norm $\norm{\cdot}$) vector $v_n\in\widetilde E_s|_{z_n}$, $t_n\geqslant 0$, and $\gamma_n\in\Gamma$ such that $\varphi^{t_n}(z_n)\in\gamma_n K$ and $\norm{\D\varphi^{t_n}(v_n)}_{\varphi^{t_n}(z_n)}\geqslant k_n\rho(\gamma_n)$. By taking a subsequence, we may ensure that as $n\to\infty$, we have that $z_n\to z_\infty$ and $v_n\to v_\infty$ for some $z_\infty\in K$ and some unit vector $v_\infty\in\widetilde E_s|_{z_\infty}$. Notice that the sequence $(t_n)$ is bounded above if and only if the sequence $(\gamma_n)$ does not escape in $\Gamma$. When this happens, by taking a subsequence, we may assume that $t_n\to t_\infty\in\Rb$ and $\gamma_n\to\gamma_\infty\in\Gamma$ as $n\to\infty$, in which case $\norm{\D\varphi^{t_\infty}(v_\infty)}_{\varphi^{t_\infty}(z_\infty)}=\infty$, which is impossible. Thus, by further refinement, we may assume that the sequence $(t_n)$ increases to $\infty$ and the sequence $(\gamma_n)$ is collapsing. By \Cref{lemma: classical flow contraction}, $z_\infty^+$ is the attracting point of $(\gamma_n)$, or equivalently, the repelling point of $(\gamma_n^{-1})$. Since $(z_n^-)$ is uniformly transverse to $z_\infty^+$, we may now apply \Cref{lemma: attracting contraction} to deduce that there is a constant $A_2>0$ such that \begin{align}\label{equation: equivariant shrinking}
        \norm{\D\gamma_n^{-1}\circ\D\beta^-(v_n)}'_{\gamma_n^{-1}z_n^-} \leqslant A_2 \rho(\gamma_n)\norm{\D\beta^-(v_n)}'_{z_n^-}~.
    \end{align}

    Combining all these, we see that for all $n\in\Nb$, we have
    \begin{align*}
        k_n&\leqslant \frac{1}{\rho(\gamma_n)}\norm{\D(\gamma_n^{-1}\circ\varphi^{t_n})(v_n)}_{\gamma_n^{-1}\varphi^{t_n}(z_n)}\\
        &\leqslant A_1\frac{1}{\rho(\gamma_n)}\norm{\D\beta^-\circ\D\gamma_n^{-1}(v_n)}_{\gamma_n^{-1}z_n^-}'\\
        &=A_1\frac{1}{\rho(\gamma_n)}\norm{\D\gamma_n^{-1}\circ\D\beta^-(v_n)}_{\gamma_n^{-1}z_n^-}'\\
        &\leqslant A_1A_2\norm{\D\beta^-(v_n)}_{z_n^-}'\\
        &\leqslant A_1^2A_2.
    \end{align*}
    Above, the first inequality follows from the defining property of the sequences $(z_n)$, $(v_n)$, $(t_n)$ and $(\gamma_n)$, together with the $\Gamma$-invariance of $\norm{\cdot}$. The second inequality is the first inequality of \eqref{equation: endpoint-comparison}, together with the fact that $\varphi^t$ preserves endpoints for all $t\in\Rb$. The equality holds because $\beta^\pm$ are equivariant with respect to the $\Gamma$-actions. Finally, the third and fourth inequalities follow from \eqref{equation: equivariant shrinking} and the second inequality of \eqref{equation: endpoint-comparison} respectively. Since $(k_n)$ is a sequence that grows to $\infty$, we have arrived at a contradiction.
\end{proof}

The next lemma tells us that if $\gamma$ tracks a flow line through $K$, then $R(\gamma)$ grows exponentially with the length of the said flowline. 

\begin{lemma}\label{wrejfnwjl}
    There exists a constant $B_0 \geqslant 1$ such that whenever $z\in K$, $t\geqslant 0$ and $\gamma \in \Gamma$ satisfies $\varphi^t(z)\in \gamma K$, we have $B_0^{-1} e^{2t} \leqslant R(\gamma) \leqslant B_0 e^{2t}$.
\end{lemma}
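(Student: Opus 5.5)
The plan is to compare $\frac12\log R(\gamma)$ with the displacement $t$ using the horofunction cocycle of \Cref{properties of the horofunction} together with the flow identity of \Cref{prop: unimodal}(1), and to absorb all errors into constants coming from compactness of $K$. The key observation is that $\log R(\gamma)=\log\frac{\sigma_1}{\sigma_{d+1}}(\gamma)$ is, up to a bounded error, a maximum over flags $x$ of $\log\frac{\norm{\gamma v}}{\norm v}+\log\frac{\norm{\gamma\phi}}{\norm\phi}$, where $x=([v],[\phi])$. It is attained, up to bounded multiplicative error, at any flag $x$ that is uniformly transverse to the repelling flag of $\gamma$.

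First I set up the cocycle identity. Write $w:=\gamma^{-1}\varphi^t(z)\in K$. Then $\varphi^t(z)=\gamma w$, and for $x=z^+$, \Cref{prop: unimodal}(1) gives $\Bc_{z^+}(\varphi^t z)=\Bc_{z^+}(z)+t$. On the other hand, \Cref{properties of the horofunction} applied with $g=\gamma$ and the flag $\gamma^{-1}z^+=w^+$ gives
\[\Bc_{z^+}(\gamma w)-\Bc_{w^+}(w)=\tfrac12\Big(\log\tfrac{\norm{\gamma v}}{\norm v}+\log\tfrac{\norm{\gamma\phi}}{\norm\phi}\Big),\]
where $w^+=([v],[\phi])$. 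Since $K$ is compact and $\Bc_{y^+}(y)$ is a continuous real-valued function of $y$ (by (1), $\Bc^2_{y^+}$ contributes nothing and $\Bc_{y^+}=\Bc^1_{y^+}$ is finite), both $\Bc_{z^+}(z)$ and $\Bc_{w^+}(w)$ are bounded uniformly over $K$. Hence
\[2t=\log\tfrac{\norm{\gamma v}}{\norm v}+\log\tfrac{\norm{\gamma\phi}}{\norm\phi}+O(1).\]

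Second, I compare the right-hand side with $\log R(\gamma)$. The upper bound is immediate: $\norm{\gamma v}/\norm v\le\sigma_1(\gamma)$ and $\norm{\gamma\phi}/\norm\phi\le\sigma_1(\gamma^{-1})=\sigma_{d+1}(\gamma)^{-1}$, which gives $e^{2t}\le B\,R(\gamma)$. For the lower bound I need $\norm{\gamma v}\gtrsim\sigma_1(\gamma)\norm v$ and $\norm{\gamma\phi}\gtrsim\sigma_{d+1}(\gamma)^{-1}\norm\phi$. With $\gamma=k\,a(\gamma)\,l$, this holds as soon as $lv$ has a definite $e_1$-component, i.e. $[v]$ is uniformly transverse to $U_d(\gamma^{-1})$, and dually for $\phi$.

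The main obstacle is this uniform transversality, for which I argue by contradiction with a sequence. If it failed, there would be sequences with $t_n$ and $\gamma_n$ for which the ratio $e^{2t_n}/R(\gamma_n)$ tends to $0$. If $(\gamma_n)$ stays bounded, then $t_n$ is bounded by proper discontinuity, which is absurd. Otherwise, passing to a collapsing subsequence, \Cref{lemma: classical flow contraction} shows that $w_n^-$ tends to the repelling flag $a$ of $(\gamma_n)$. Transversality of $\Lambda_\Gamma$ then gives that $w_n^+$ stays uniformly transverse to $a$, unless $w_n^+$ also tends to $a$, which is impossible since $w_n\in K$ forces $w_n^+$ and $w_n^-$ to stay apart. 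By \Cref{lemma: projective north-south dynamics}, $U_d(\gamma_n^{-1})\to a^d$ and $U_1(\gamma_n^{-1})\to a^1$, so $[v_n]$ is eventually uniformly transverse to $U_d(\gamma_n^{-1})$ and $[\phi_n]$ to $U_1(\gamma_n^{-1})$. The singular value estimate then bounds the ratio below, a contradiction. Combining the two bounds and taking $B_0$ to be the larger constant gives $B_0^{-1}e^{2t}\le R(\gamma)\le B_0e^{2t}$.
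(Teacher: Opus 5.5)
Your proof is correct, but it takes a different route from the paper's.

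\textbf{The paper's route.} The paper does not use horofunctions here. It identifies $\T^1\Hb_\tau^d\cong\PGL(d+1,\Rb)/\mathsf L$ and lifts $K$ to a compact set $\widehat K$. It then writes $\gamma=f\alpha(t)hk^{-1}$ with $f,k\in\widehat K$ and $h\in\mathsf L$. The key tool is the bounded distortion of $\frac{\sigma_1}{\sigma_{d+1}}$ under multiplication by compact sets, \cite[Lemma~A.2]{bochi2019anosov}. With this, $R(\gamma)\geqslant B_0^{-1}e^{2t}$ is read off directly from the block-diagonal form of $\alpha(t)h$. The bound $R(\gamma)\leqslant B_0e^{2t}$ is proved by contradiction: \Cref{lemma: classical flow contraction} and \Cref{slmsfsvjsgfhb} show that the extreme singular values of $\alpha(t_n)h_n$ are eventually exactly $e^{\pm t_n}$.

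\textbf{Your route.} You combine \Cref{prop: unimodal}(1) with the cocycle of \Cref{properties of the horofunction} to obtain
\[2t=\log\tfrac{\norm{\gamma v}}{\norm v}+\log\tfrac{\norm{\gamma\phi}}{\norm\phi}+O(1),\qquad w^+=([v],[\phi]).\]
This is the exact analogue of ``displacement equals Busemann cocycle.'' The easy inequality is then just the operator-norm bound. The hard inequality reduces to a standard fact: the cocycle is within $O(1)$ of $\log R(\gamma)$ at flags uniformly transverse to $(U_1(\gamma^{-1}),U_d(\gamma^{-1}))$. You secure that transversality with essentially the same compactness argument via \Cref{lemma: classical flow contraction} and projective divergence that the paper uses.

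\textbf{Comparison.} Both easy directions are direct, and both hard directions rest on the same compactness input. The real difference is the bridge between $t$ and singular values. The paper uses the $\mathsf L$-coset decomposition, which stays purely Lie-theoretic. You use the horofunction cocycle, which avoids the homogeneous model and the distortion lemma and explains the factor $2$ conceptually.

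\textbf{Small points to tighten.}
\begin{itemize}
\item Boundedness of $y\mapsto\Bc_{y^+}(y)$ on $K$ needs joint continuity in the center and the point. This holds because $\exp(-\Bc_x(z))$ is built from $\Delta(x,\pi(z))$, $\Delta(x,\pi(\opp z))$, $\Delta(x,z^\pm)$, $\Delta(z^+,z^-)$ and $\Delta(\pi(z),\pi(z))$, with the last two positive. It is also positive at $x=z^+$, since $\pi(z)$ is transverse to $z^+$.
\item In the contradiction argument, what you negate is the inequality $R(\gamma)\leqslant B_0e^{2t}$, not the uniform transversality; the latter is what you then derive along a subsequence.
\item When $(\gamma_n)$ is bounded, proper discontinuity is not needed: $t_n\geqslant 0$ and boundedness of $R(\gamma_n)$ already keep $e^{2t_n}/R(\gamma_n)$ away from $0$.
\item $w_\infty^+$ is transverse to $a=w_\infty^-$ simply because $w_\infty^\pm$ are the endpoints of a spacelike geodesic.
\end{itemize}
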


\begin{proof}
    We write $w = \gamma^{-1}\varphi^t(z)\in K$. By \Cref{algebraic flow description}, we may identify $\T^1\Hb_\tau^d\cong \PGL(d+1,\Rb)/\mathsf L$. Since $K$ is compact, we may choose a compact subset $\widehat K\subset \PGL(d+1,\Rb)$ such that $K\subset \widehat K \mathsf L$. From  Bochi--Potrie--Sambarino~\cite[Lemma~A.2]{bochi2019anosov}, there exists $B_1 \geqslant 1$ such that for any $f,k\in \widehat K$, $ g \in \PGL(d+1,\Rb)$ and $\mathcal{S}\in \{\frac{\sigma_1}{\sigma_2}, \frac{\sigma_d}{\sigma_{d+1}} , \frac{\sigma_1}{\sigma_{d+1}}\}$,
    \begin{equation}\label{equation: singular-control}
        B_1^{-1} \mathcal{S}(g) \leqslant \mathcal{S}(f g k^{-1}),\ \mathcal{S}(g k^{-1}),\ \mathcal{S}(f g) \leqslant B_1 \mathcal{S}(g)~.
    \end{equation}

    Let $f,k\in \widehat K$ such that $z = f \mathsf L$ and $w = k \mathsf L$. Then \[\gamma k \mathsf L=\gamma w=\varphi^t(z)= f \alpha(t)\mathsf L ~.\] In particular, there exists $h \in \mathsf L$ such that $\gamma=f\alpha(t) h k^{-1}$. By the definition of $\mathsf L$, we may represent $h$ by a block matrix of the form
    \[h= \begin{pmatrix}
        1 & 0 & 0\\
        0 & h' & 0\\
        0 & 0 & 1 \end{pmatrix}~,\] 
    where $h'\in \mathsf{GL}(d-1,\Rb)$. Then
    \[\alpha(t) h = \begin{pmatrix}
        e^t & 0 & 0\\
        0 & h' & 0\\
        0 & 0 & e^{-t} \end{pmatrix}~,\]
    and so $\frac{\sigma_1}{\sigma_{d+1}}(\alpha(t)h)\geqslant e^{2t}$. By \eqref{equation: singular-control}, we have \[R(\gamma) = \frac{\sigma_1}{\sigma_{d+1}}(f \alpha(t)h k^{-1}) \geqslant B_1^{-1} \frac{\sigma_1}{\sigma_{d+1}}(\alpha(t)h)\geqslant B_1^{-1} e^{2t}~. \] We have thus proven the first inequality for any $B_0\geqslant B_1$. 

    We now prove the other inequality by contradiction. Suppose that it fails. Let $(k_n)$ be a sequence in $\Nb$ that grows to $\infty$. Then for all $n\in\Nb$, there exists $\gamma_n\in\Gamma$, $t_n\geqslant 0$, and $z_n\in K$ such that $\varphi^{t_n}(z_n)\in \gamma_n K$ and $R(\gamma_n)>k_ne^{2t_n}$. By taking a subsequence, we may assume that $(\gamma_n)$ is a collapsing sequence and the sequence $(t_n)$ increases to $\infty$.
    
    For each $n\in\Nb$, set \[w_n:=\gamma_n^{-1}\varphi^{t_n}(z_n)\in K~,\] and choose $f_n,k_n\in\widehat K$ such that $z_n=f_n\mathsf L$ and $w_n=k_n\mathsf L$. By the same argument as we used above, we see that there is some $h_n\in\mathsf L$ such that $\gamma_n = f_n \alpha(t_n) h_n k_n^{-1}$, and \[\alpha(t_n)h_n = \begin{bmatrix}
    e^{t_n}&0&0\\
    0&h_n'&0\\
    0&0&e^{-t_n} \end{bmatrix}~\]
    for some $h_n'\in \mathsf{GL}(d-1,\Rb)$. By \eqref{equation: singular-control}, to obtain a contradiction, it suffices to show that $R(\alpha(t_n)h_n)e^{-2t_n}$ has a uniform upper bound. We will in fact verify that for $n\in\Nb$ large enough, $R(\alpha(t_n)h_n)e^{-2t_n}=1$.

Observe that if we set 
\[F_+^0 = (\Span\{e_1\},\Span\{e_1,e_2,\cdots, e_d\})\] and \[F_-^0 = (\Span\{e_{d+1}\},\Span\{e_2,\cdots, e_d,e_{d+1}\})~,\]
then $z_n^+=f_nF_+^0$, $w_n^-=k_nF_-^0$. Up to subsequence, we assume that there exists $f,k\in\widehat K$ such that $f_n\to f$ and $k_n\to k$ as $n \to \infty$. Set 
\[z_\infty:= f \mathsf L\quad\text{and}\quad w_\infty := k \mathsf L~.\] 
By \Cref{lemma: classical flow contraction}, $z_\infty^+= f F_+^0$ and $w_\infty^-=kF_-^0$ are respectively the attracting point and repelling point of $(\gamma_n)$. Since $\Gamma$ is projective divergent, \Cref{lemma: projective north-south dynamics} implies that
\[(U_1(\gamma_n),U_d(\gamma_n))\to  f F_+^0~, \quad (U_1(\gamma_n^{-1}),U_d(\gamma_n^{-1}))\to k F_-^0~.\]
As $\frac{\sigma_1}{\sigma_2}(\gamma_n), \frac{\sigma_d}{\sigma_{d+1}}(\gamma_n)\to \infty$, \eqref{equation: singular-control} implies that
\[\frac{\sigma_1}{\sigma_2}(\alpha(t_n)h_n), \frac{\sigma_d}{\sigma_{d+1}}(\alpha(t_n)h_n)\to \infty~,\] and so by \Cref{slmsfsvjsgfhb}, we have
\[(U_1(\alpha(t_n)h_n),U_d(\alpha(t_n)h_n))\to F_+^0~, \quad (U_1(h_n^{-1}\alpha(t_n)^{-1} ),U_d(h_n^{-1}\alpha(t_n)^{-1} ))\to F_-^0~.\]
Since $\alpha(t_n)h_n$ is block diagonal, $U_1( \alpha(t_n)h_n)\to [e_1]$ and $\frac{\sigma_1}{\sigma_2}( \alpha(t_n)h_n)\to +\infty$ as $n\to \infty$, we see that $\sigma_1(\alpha(t_n)h_n )=e^{t_n}$. Similarly $\sigma_{d+1}( \alpha(t_n)h_n)=e^{-t_n}$. As such, 
\[ R(\alpha(t_n)h_n)e^{-2t_n} = 1~.\qedhere\]
\end{proof}

Finally, we combine \Cref{lemma: stable singular estimate} and \Cref{wrejfnwjl} to prove \Cref{proposition: thick estimate}.

\begin{proof}[Proof of \Cref{proposition: thick estimate}]

First, we observe that the second inequality follows from the first. Let 
\[\mathsf{ne}:\T^1\Hb_\tau^d\to\T^1\Hb_\tau^d\] denote negation, i.e. $\mathsf{ne}(z)=-z$. We observed earlier that the differential $\D\mathsf{ne}$ on $\widetilde E$ switches the sub-bundles $\widetilde E_s$ and $\widetilde E_u$, while preserving $\widetilde E_0$. Also, since the $\Gamma$-action on $\WH(\Lambda_\Gamma)^{\rm thick}$ is cocompact and $h$ is $\Gamma$-invariant, the Riemannian metrics $\D\mathsf{ne}(h)$ an $h$ are $C_2$-biLipschitz for some $C_2\geqslant 1$. As such, if $v\in \widetilde E_u\vert_{\varphi^t(z)}$, then $\D\mathsf{ne}(v)\in \widetilde E_s\vert_{\varphi^{-t}(-z)}$, and so
\begin{align*}
    \norm{\D\varphi^{-t}(v)}_z&\leqslant C_2\norm{\D\mathsf{ne}\circ\D\varphi^{-t}(v)}_{-z}\\
    &=C_2\norm{\D\varphi^t\circ\D\mathsf{ne}(v)}_{-z}\\
    &\leqslant C_2C e^{-ct}\norm{\D\mathsf{ne}(v)}_{\varphi^{-t}(-z)}\\
    &\leqslant C_2^2C e^{-ct}\norm{v}_{\varphi^t(z)}
\end{align*}
as required (we enlarge the constant $C$ to $C_2^2C$).

Given \Cref{lemma: stable singular estimate} and \Cref{wrejfnwjl}, to prove the first inequality, it remains to verify that there exists a constant $C_0,c_0> 0$, such that $\rho(\gamma) \leqslant C_0 R(\gamma)^{-c_0}$ for any $\gamma \in R$. By the uniform regularity properties of relatively Anosov subgroups introduced by \cite{kapovich2018relativizing} (see \cite[Theorem~1.7, Proposition~1.13, Proposition~4.2]{zhu2022relatively} for explicit statements), we know that there are constants $c_1 \geqslant 0$ and $C_1\geqslant 0$ such that for all $\gamma\in\Gamma$,
\[\log \frac{\sigma_1}{\sigma_2}(\gamma) \geqslant c_1 \log\frac{\sigma_1}{\sigma_d}(\gamma) - C_1~.\]
Since $\frac{\sigma_1}{\sigma_d}(g)=\frac{\sigma_1}{\sigma_d}(g^{-1})$ for all $g\in\PGL(d+1,\Rb)$, the required inequality now holds with $C_0:=2e^{C_1}$ and $c_0:=c_1$.
\end{proof}

\subsubsection*{Thin part estimate}
Next, we prove the required contraction estimate for the part of the flow that lies in $\WH(\Lambda_\Gamma)^{\rm thin}$. Its proof follows the argument of \cite[Lemma~9.6]{zhu2022relatively} and \cite[Lemma~6.10]{wang2023notions}. 

Let $C,c>0$ be the constants in \Cref{proposition: thick estimate}. By enlarging $C$ if necessary, we may assume $C\geqslant 1$. Let $\norm{\cdot}$ denote the norm of $h^c$, the $c$-extension of $h$ (on $\widetilde E|_{\Lambda_\Gamma^{\rm thick}}$, $\norm{\cdot}$ is the norm used in \Cref{proposition: thick estimate}).

\begin{proposition}\label{proposition: thin estimate}
If $z\in\WH(\Lambda_\Gamma)^{\rm thin}$ and $t\geqslant 0$ such that $\varphi^{t'}(z)\in\WH(\Lambda_\Gamma)^{\rm thin}$ for all $0\leqslant t'\leqslant t$, then 
\[\norm{\D\varphi^t(v)}_{\varphi^t(z)} \leqslant C e^{-ct}\norm{v}_z \quad \text{for all}\quad v\in \widetilde E_s\vert_z~,\] 
\[\text{and} \quad \norm{\D\varphi^{-t}(v)}_z \leqslant C e^{-c t}\norm{v}_{\varphi^t(z)} \quad \text{for all}\quad v\in \widetilde E_u\vert_{\varphi^t(z)}~.\]
\end{proposition}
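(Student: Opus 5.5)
The plan is to reduce everything to the explicit formulas defining $h^c$ on a single closed horoball, in the same spirit as \cite[Lemma~9.6]{zhu2022relatively}. Since $\varphi^{t'}(z)$ stays in the thin part for $t'\in[0,t]$ and \eqref{equation: thin part closure} is a disjoint union, the whole segment lies in one $\overline{\widehat H_q(A)}$ for a single $q\in\Gamma\Pi$. Along this flowline the quantities $t_1,t_2,L$ are fixed up to translation: if $z$ has parameters $(t_1,t_2)$, then $\varphi^{s}(z)$ has $(t_1-s,t_2-s)$, while $z_1,z_2,z',z''$ are unchanged. The segment therefore passes through regions (a), (c), (b) in this order, and it suffices to treat the stable inequality; the unstable one then follows by negation, since $\mathsf{ne}$ swaps $\widetilde E_s$ and $\widetilde E_u$ and, by \Cref{corollary: continuity of in and out}(3), exchanges $\mathsf{in}$ with $-\mathsf{out}$, so that the definition of $h^c$ is symmetric under $\mathsf{ne}$ up to the comparison constant already absorbed in $C$.

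\textbf{Regions (a) and (b).} Fix $v\in\widetilde E_s|_z$. Because the splitting is $\D\varphi$-invariant and $h$ is orthogonal for it, the component $v_s$ behaves very simply. In region (a),
\[\norm{\D\varphi^s v}^2_{\varphi^s z}=e^{2c(t_1-s)}\norm{\D\varphi^{t_1}v}^2_{z_1}=e^{-2cs}\norm{v}^2_z,\]
which is exact exponential contraction with rate $c$ and constant $1$. The same computation holds in region (b), with $z_2$ replacing $z_1$.

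\textbf{Region (c).} Here $h^c$ is the geometric interpolation \eqref{middle chunk} between the pullbacks of $h^c_{z'}$ and $h^c_{z''}$. Restricted to the line spanned by $\D\varphi^{s}v$ inside $\widetilde E_s$, both endpoint metrics are $\varphi$-transported, so it is enough to compare the two scalars: $\norm{\D\varphi^{t''-t'}w}_{z''}$ versus $\norm{w}_{z'}$. The point $z'$ has (a)-data and $z''$ has (b)-data, and they are linked through $z_1$ and $z_2$, which lie in the thick part with $\varphi^L(z_1)=z_2$. Applying \Cref{proposition: thick estimate} to $z_1$ gives $\norm{\D\varphi^L u}_{z_2}\le Ce^{-cL}\norm{u}_{z_1}$. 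Inserting the weights $e^{\pm 2c\cdot L/3}$, the ratio of $\norm{\cdot}_{z''}$ to $\norm{\cdot}_{z'}$ across the middle third is at most $Ce^{-cL/3}$. Since $\eta$ is affine in the time parameter, the logarithm of the norm is affine in $s$ across the middle third, with total drop at least $cL/3-\log C$. This gives a bound $C^{\theta}e^{-cs}$ with $\theta\in[0,1]$, and hence $\le Ce^{-cs}$ because $C\ge1$.

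\textbf{Concatenation.} Finally, chain the three regimes along $[0,t]$. The estimates from (a) and (b) have constant $1$, and the factor $C$ from (c) appears at most once, because the middle third is crossed at most once. This gives $\norm{\D\varphi^t v}\le Ce^{-ct}\norm{v}$. The main obstacle is the (c) step: one must check that the interpolation of \Cref{lemma: interpolated metric} restricts compatibly to the stable subspace (it does, since $\widetilde E_s$ is orthogonal for both metrics), and that the log-norm is genuinely affine in $s$. That requires noting that $\D\varphi$ is an isometry between the pulled-back endpoint metrics at different times on the same flowline.
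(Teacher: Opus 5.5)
Your overall structure matches the paper's proof. You restrict to one horoball and get the exact rate $e^{-cs}$ with constant $1$ on the first and last thirds from \eqref{first chunk} and \eqref{last chunk}. You apply \Cref{proposition: thick estimate} from $z_1$ to $z_2$ to get $\norm{\D\varphi^{L/3}w}_{z''}\leqslant Ce^{-cL/3}\norm{w}_{z'}$ for $w\in\widetilde E_s|_{z'}$, pay the factor $C$ only once in the middle third, and handle $\widetilde E_u$ by negation.

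The step that fails is your treatment of region (c). The inner product $h_0^{1-\eta}h_1^{\eta}$ of \Cref{lemma: interpolated metric} is not computed line by line. The identity $\norm{w}_{h_0^{1-\eta}h_1^\eta}=\norm{w}_{h_0}^{1-\eta}\norm{w}_{h_1}^{\eta}$ holds only for vectors of a common orthogonal basis. So restricting to the line spanned by $\D\varphi^s v$ and comparing two scalars does not compute $\norm{\D\varphi^s v}_{\varphi^s(z)}$. For a general $v\in\widetilde E_s|_z$, the function $s\mapsto\log\norm{\D\varphi^s v}_{\varphi^s(z)}$ is \emph{not} affine on the middle third. If $v=\sum_i a_iv_i$ in a common orthogonal basis, it equals $\frac12\log\sum_i a_i^2e^{2(\alpha_i+\beta_i s)}$, with slopes $\beta_i$ that vary with $i$. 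Nor can an inequality rescue the line-by-line comparison. H\"older only gives $\norm{w}_{h_0^{1-\eta}h_1^\eta}\leqslant\norm{w}_{h_0}^{1-\eta}\norm{w}_{h_1}^{\eta}$, which is the wrong direction for bounding $\norm{v}_z$ from below.

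The repair is exactly the paper's case (III), and it uses the two facts you already isolated.
\begin{enumerate}
\item[(i)] Restrict the forms $\D\varphi^{-t'}(h^c_{z'})$ and $\D\varphi^{-t''}(h^c_{z''})$ to $\widetilde E_s|_z$, which is legitimate because $\widetilde E_s$ is orthogonal to $\widetilde E_u\oplus\widetilde E_0$ for both. Choose a basis $v_1,\dots,v_k$ orthogonal for both restricted forms.
\item[(ii)] Since $r'=t'-t$ and $r''=t''-t$, $\D\varphi^t$ carries this pair of forms to the corresponding pair at $\varphi^t(z)$. So the vectors $\D\varphi^t v_i$ are orthogonal for both forms there, hence for $h^c_{\varphi^t(z)}$, just as the $v_i$ are orthogonal for $h^c_z$.
\item[(iii)] For each $v_i$ your scalar computation is now valid. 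Using $3t/L\leqslant 1$ and $C\geqslant 1$, it gives
\[\norm{\D\varphi^t v_i}_{\varphi^t(z)}=\norm{v_i}_z\Bigl(\frac{\norm{\D\varphi^{t''}v_i}_{z''}}{\norm{\D\varphi^{t'}v_i}_{z'}}\Bigr)^{3t/L}\leqslant C^{3t/L}e^{-ct}\norm{v_i}_z\leqslant Ce^{-ct}\norm{v_i}_z.\]
\item[(iv)] For $v=\sum_i a_iv_i$, conclude by Pythagoras on both sides.
\end{enumerate}
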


\begin{proof}
We saw in the proof of \Cref{proposition: thick estimate} that $h$ and its pushforward under negation are biLipschitz. Then by construction, the same is true for $h^c$. Therefore, for the same reasons as in the proof of \Cref{proposition: thick estimate}, the second inequality of the proposition reduces to the first by negation.

To prove the first inequality, let $q\in\Gamma\Pi$ be the unique point such that $z\in\widehat{H}_q(C)$, and let 
\[t_1:=\mathsf{in}_q(z),\quad t_2:=\mathsf{out}_q(z),\quad\text{and}\quad L:=t_2-t_1~.\]
Observe that 
\[r_1:=t_1-t=\mathsf{in}_q(\varphi^t(z))\quad\text{and}\quad r_2:=t_2-t=\mathsf{out}_q(\varphi^t(z))~,\] 
and that $r_1\leqslant t_1\leqslant 0$ and $0\leqslant r_2\leqslant t_2$.

We will first argue that it suffices to prove the following:
\begin{enumerate}
\item [(I)] If $t_1,r_1\in\Rb$ and $-t_1,-r_1\leqslant \frac{1}{3}L$, then for all $v\in \widetilde E_s\vert_z$,
\[\norm{\D\varphi^t(v)}_{\varphi^t(z)}=e^{-c t}\norm{v}_{z}~.\]
\item [(II)] If $t_2,r_2\in\Rb$ and $t_2,r_2\leqslant \frac{1}{3}L$, then for all $v\in \widetilde E_s\vert_z$,
\[\norm{\D\varphi^t(v)}_{\varphi^t(z)}=e^{-c t}\norm{v}_{z}~.\]
\item [(III)] If $t_1,r_1,r_2,r_2\in\Rb$ and $-t_1,-r_1,t_2,r_2\geqslant\frac{1}{3}L$, then for all $v\in \widetilde E_s\vert_z$,
\[\norm{\D\varphi^t(v)}_{\varphi^t(z)} \leqslant C e^{-c t}\norm{v}_z~.\]
\end{enumerate}
Notice that when $L=\infty$, then either (I) or (II) holds. On the other hand, if $L<\infty$, then $I:=\varphi^{[t_1,t_2]}(z)$ is a compact segment of a flowline, and (I) (respectively, (II) and (III)) holds if and only if both $z$ and $\varphi^t(z)$ lies along the first (respectively, last and middle) third of $I$. Hence, if $L<\infty$, we can decompose $t=t(1)+t(2)+t(3)$, where $t(1),t(2),t(3)\geqslant 0$, so that if we set $z(1):=z$, $z(2):=\varphi^{t(1)}(z)$, and $z(3):=\varphi^{t(2)}(z)$, then for each $j=1,2,3$, either $t(j)=0$ or both $z(j)$ and $\varphi^{t(j)}(z(j))$ lie along the closed- $j$-th third of $I$. Then by (I), (II), and (II), we have that for all $v\in \widetilde E_s\vert_z$, \[\norm{\D\varphi^t(v)}_{\varphi^t(z)}=\norm{\D\varphi^{t(3)}\circ\D\varphi^{t(2)}\circ\D\varphi^{t(1)}(v)}_{\varphi^t(z)}\leqslant Ce^{-c t}\norm{v}_z\] as required.

Now, we verify (I) and (II). Suppose first that $t_1,r_1\in\Rb$ and $-t_1,-r_1\leqslant \frac{1}{3}L$. By definition, see \eqref{first chunk}, for all $v\in \widetilde E_s\vert_z$ we have
\begin{align}\label{wjiwelknekfw}\begin{split}
\norm{\D\varphi^t(v)}_{\varphi^t(z)}&=e^{cr_1}\norm{\D\varphi^{r_1}\circ\D\varphi^t (v)}_{\varphi^{r_1}\circ\varphi^{t}(z)}\\
&=e^{c(t_1-t)}\norm{\D\varphi^{t_1}( v)}_{\varphi^{t_1}(z)}\\
&=e^{-ct}\norm{v}_{z}.
\end{split}
\end{align}
Similarly, if $t_2,r_2\in\Rb$ and $t_2,r_2\leqslant \frac{1}{3}L$, then using \eqref{last chunk} in place of \eqref{first chunk} in the argument above, we see that for all $v\in \widetilde E_s\vert_z$ we have
\begin{align}\label{equation: precise exponential contraction}
    \norm{\D\varphi^t(v)}_{\varphi^t(z)} = e^{-ct}\norm{v}_{z}.
\end{align} 

Finally, we verify (III). If $t_1,r_1,t_2,r_2\in\Rb$ and $-t_1,-r_1,t_2,r_2\geqslant\frac{1}{3}L$, set 
\[t':=t_1+\frac{1}{3}L,\quad r':=r_1+\frac{1}{3}L,\quad t'':=t_2-\frac{1}{3}L,\quad r'':=r_2-\frac{1}{3}L~,\]
\[\eta:=-\frac{3t'}{L},\quad\text{and}\quad\theta:=-\frac{3r'}{L}~.\]
Also denote
\[z_1:=\varphi^{t_1}(z)=\varphi^{r_1}(\varphi^t(z)),\quad z_2:=\varphi^{t_2}(z)=\varphi^{r_2}(\varphi^t(z))~,\]
\[z':=\varphi^{t'}(z)=\varphi^{r'}(\varphi^t(z)),\quad\text{and}\quad z'':=\varphi^{t''}(z)=\varphi^{r''}(\varphi^t(z))~.\]
By \Cref{lemma: interpolated metric}, there is a basis $(v_1,\dots,v_k)$ of $E_s\vert_z$ that is orthogonal with respect to both inner products $\D\varphi^{-t'}(h^\lambda_{z'})$ and $\D\varphi^{-t''}(h^\lambda_{z''})$. Then for each $1\leqslant i\leqslant k$,
\begin{align}\label{lekjrlksglk}
\begin{split}
\norm{\D\varphi^{t'}(v_i)}_{z'}&=e^{-c L/3}\norm{\D\varphi^{t_1}(v_i)}_{z_1}\\
&\geqslant \frac{1}{C}e^{2cL/3}\norm{\D\varphi^{t_2}(v_i)}_{z_2}\\
&=\frac{1}{C}e^{cL/3}\norm{\D\varphi^{t''}(v_i)}_{z''}
\end{split}
\end{align}
where the first and third equality holds by \eqref{wjiwelknekfw} and \eqref{equation: precise exponential contraction} respectively, and the inequality holds by \Cref{proposition: thick estimate}. Therefore, 
\begin{align*}
\norm{\D\varphi^t(v_i)}_{\varphi^t(z)}&=\norm{\D\varphi^{r'}\circ\D\varphi^t(v_i)}_{z'}^{1-\theta}\norm{\D\varphi^{r''}\circ\D\varphi^t(v_i)}_{z''}^{\theta}\\
&=\norm{\D\varphi^{t'}(v_i)}_{z'}^{1-\eta}\norm{\D\varphi^{t''}(v_i)}_{z''}^{\eta}\norm{\D\varphi^{t'}(v_i)}_{z'}^{\eta-\theta}\norm{\D\varphi^{t''}(v_i)}_{z''}^{\theta-\eta}\\
&=\norm{v_i}_z\norm{\D\varphi^{t'}(v_i)}_{z'}^{\eta-\theta}\norm{\D\varphi^{t''}(v_i)}_{z''}^{\theta-\eta}\\
&\leqslant Ce^{-c t}\norm{v_i}_z
\end{align*}
Here, the first and third equalities hold by definition, see \eqref{middle chunk}. The second equality follows from the definitions of $r'$, $t'$, $r''$, and $t''$. The inequality holds by \eqref{lekjrlksglk}, the assumption that $C\geqslant 1$, and the observation that $\eta-\theta=-\frac{3t}{L}\in[-1,0]$. We may now conclude that if $v=\sum_{i=1}^ka_iv_i\in \widetilde E_s\vert_z$, then
\[\norm{\D\varphi^t (v)}_{\varphi^t(z)}=\sqrt{\sum_{i=1}^ka_i^2\norm{\D\varphi^t(v_i)}_{\varphi^t(z)}^2}\leqslant Ce^{-c t}\sqrt{\sum_{i=1}^ka_i^2\norm{v_i}_z^2}\leqslant Ce^{-ct}\norm{v}_z~.\qedhere\]
\end{proof}

We now have all the ingredients to finish the  proof of \Cref{theoremalpha: contraction}.

\begin{proof}[Proof of \Cref{theoremalpha: contraction}]
If $\varphi^{[0,t]}(z)\subset\WH(\Lambda_\Gamma)^{\rm thin}$, then the theorem holds by \Cref{proposition: thin estimate}. Otherwise, we may find $0\leqslant a\leqslant b\leqslant t$ such that both $\varphi^a(z)$ and $\varphi^b(z)$ lie in $\WH(\Lambda_\Gamma)^{\rm thick}$, $\varphi^{[0,a)}\subset\WH(\Lambda_\Gamma)^{\rm thin}$, and $\varphi^{(b,t]}\subset\WH(\Lambda_\Gamma)^{\rm thin}$. Then for all $v\in \widetilde E_s\vert_z$,
\[\norm{\D\varphi^t(v)}_{\varphi^t(z)}=\norm{\D\varphi^{t-b}\circ\D\varphi^{b-a}\circ\D\varphi^a(v)}_{\varphi^t(z)}
\leqslant C^3e^{-c t}\norm{v}_z~,\]
where the inequality holds by \Cref{proposition: thin estimate} and \Cref{proposition: thick estimate}. Similarly, for all $v\in \widetilde E_u\vert_{\varphi^t(z)}$, we have
\[\norm{\D\varphi^{-t}(v)}_z\leqslant C^3e^{-ct}\norm{v}_{\varphi^t(z)}~.\]
The theorem now holds with $B:=C^3$ and $b=c$.
\end{proof}

\subsection{Relation to the Delarue-Monclair-Sanders flow}\label{section: DMS} Let $\Gamma$ be a non-elementary, torsion-free, projective Anosov subgroup $\Gamma\subset\PGL(d+1,\Rb)$.
Delarue, Monclair, and Sanders \cite{delarue2025locally} constructed a flow space $(\mathcal{M}_{\mathsf{Ad}(\Gamma)}, \phi^t)$ that satisfies Axiom A. In this subsection, we construct a two-to-one flow-equivariant morphism 
\[\pi_{\mathrm{DMS}}:(\mathcal{D}_{\Gamma}/\Gamma,\varphi^t)\to(\mathcal{M}_{\mathsf{Ad}(\Gamma)}, \phi^{t}).\] 
We will also show that in this case, $\WH(\Lambda_{\Gamma})/\Gamma$ is mapped onto the hyperbolic set $\mathcal{K}_{\mathsf{Ad}(\Gamma)}$ of $\mathcal{M}_{\mathsf{Ad}(\Gamma)}$.  Via $\pi_{\rm DMS}$, \Cref{theoremalpha: nonwandering} and \Cref{theoremalpha: contraction} can be viewed as a generalization of \cite[Theorem~A]{delarue2025locally}.

Let $\mathsf{End}(\Rb^{d+1})$ denote the space of endomorphisms of $\Rb^{d+1}$. We have the canonical isomorphisms
\[\mathsf{End}(\Rb^{d+1})\cong\mathfrak{sl}(d+1,\Rb)\cong \mathbb{R}^{d+1}\otimes \mathbb{R}^{d+1*}\quad\text{and}\quad\mathsf{End}(\Rb^{d+1})^*\cong\mathbb{R}^{d+1*}\otimes \mathbb{R}^{d+1}.\]
Let $\mathsf{Ad}: \mathsf{PGL}(d+1,\mathbb{R})\to \mathsf{SL}(\mathsf{End}(d+1))$ be the adjoint representation. Note that if $\Gamma \subset \mathsf{PGL}(d+1,\mathbb{R})$ is non-elementary, torsion-free and projective Anosov, then so is $\mathsf{Ad}(\Gamma)$.

Following the notation in \cite[Section~3]{delarue2025locally}, we define
\[\mathbb{L}:= \{[v:\alpha]\in \mathbb{P}(\mathsf{End}(\Rb^{d+1})\times \mathsf{End}(\Rb^{d+1})^*): \alpha(v)>0\}.\]
Then $\mathbb{L}$ is naturally a smooth manifold that is equipped with a $\Ad(\Gamma)$-action and a smooth flow $\phi^t$ given by $\phi^t([v:\alpha]):= [e^{2t} v:e^{-2t}\alpha]$ for all $t\in \mathbb{R}$ and $[v:\alpha]\in \mathbb{L}$. Notice that the $\Ad(\Gamma)$-action commutes with the flow $\phi^t$. Also, both
\[\tilde{\mathcal{M}}_{\mathsf{Ad}(\Gamma)}:= \left\{[v:\alpha]\in \mathbb{L} :\begin{array}{l} \text{for all } ([u],[\psi])\in \Lambda_{\Gamma},\text{ either } \\ u\otimes \psi \text{ is transverse to } \alpha\\ \text{or } \psi\otimes u \text{ is transverse to } v \end{array} \right\}\]
and
\[\tilde{\mathcal{K}}_{\mathsf{Ad}(\Gamma)}:= \left\{[v:\alpha]\in \mathbb{L} :\begin{array}{l} ([v],[\alpha])= ([u\otimes \psi],[u'\otimes \psi']) \text{ for some}\\\text{distinct } ([u],[\psi]),([\psi'],[u'])\in \Lambda_{\Gamma}  \end{array}\right\}\] are $\Ad(\Gamma)$-invariant and $\phi^t$-invariant subsets, and $\tilde{\mathcal{K}}_{\mathsf{Ad}(\Gamma)}\subset\tilde{\mathcal{M}}_{\mathsf{Ad}(\Gamma)}$. As such, $\phi^t$ descends to a flow, also denoted $\phi^t$, on $\mathcal{M}_{\mathsf{Ad}(\Gamma)} = \tilde{\mathcal{M}}_{\mathsf{Ad}(\Gamma)}/\mathsf{Ad}(\Gamma)$, which contains $\mathcal{K}_{\mathsf{Ad}(\Gamma)}: = \tilde{\mathcal{K}}_{\mathsf{Ad}(\Gamma)}/\mathsf{Ad}(\Gamma)\subset \mathcal{M}_{\mathsf{Ad}(\Gamma)}$ as a $\phi^t$-invariant subset. 

The definition of $\mathcal{D}_{\Gamma}$ allows us to define a map
\[\tilde{\pi}_{\mathrm{DMS}}: \mathcal{D}_{\Gamma}\to  \tilde{\mathcal{M}}_{\mathsf{Ad}(\Gamma)}\quad\text{by}\quad z\mapsto  \left[\frac{v_1\otimes\phi_1}{\phi_z(v_1) \phi_1(v_z)}: \frac{v_2\otimes \phi_2}{\phi_z(v_2) \phi_2(v_z)}\right]~,\] where $u_1, u_2,u_z\in \mathbb{R}^{d+1}, \psi_1,\psi_2,\psi_z\in \mathbb{R}^{d+1*}$ are vectors such that $z^+ = ([u_1],[\psi_1])$, $z^- = ([u_2],[\psi_2])$, and $\pi(z)= ([u_z],[\psi_z])$. It can be verified that $\tilde{\pi}_{\mathrm{DMS}}$ is a $\Gamma$-equivariant two-to-one map and $\tilde{\pi}_{\mathrm{DMS}}(\WH(\Lambda_{\Gamma})) =  \tilde{\mathcal{K}}_{\mathsf{Ad}(\Gamma)}$. \Cref{prop: spacelike geodesics} shows that $\tilde{\pi}_{\mathrm{DMS}}\circ \varphi^t = \phi^t \circ \tilde{\pi}_{\mathrm{DMS}}$. Thus, $\tilde{\pi}_{\mathrm{DMS}}$ descends to a two-to-one, flow-equivariant map $\pi_{\mathrm{DMS}}: \mathcal{D}_{\Gamma}/\Gamma\to  \mathcal{M}_{\mathsf{Ad}(\Gamma)}$, whose restriction to $\WH(\Lambda_\Gamma)/\Gamma$ is a two-to-one map onto $\mathcal{K}_{\mathsf{Ad}(\Gamma)}$. It now follows that in this case, \Cref{theoremalpha: nonwandering} and \Cref{theoremalpha: contraction} imply that $(\mathcal{M}_{\mathsf{Ad}(\Gamma)}, \phi^t)$ is an Axiom A flow with limit set $\mathcal{K}_{\mathsf{Ad}(\Gamma)}$, thus recovering \cite[Theorem~A]{delarue2025locally}.

\bibliographystyle{alpha}
\bibliography{reference.bib}

\end{document}